\newtheorem{theorem}{Theorem}
\newtheorem{proposition}[theorem]{Proposition}
\newtheorem{corollary}[theorem]{Corollary}
\newtheorem{cor}[theorem]{Corollary}
\newtheorem{lemma}[theorem]{Lemma}
\newtheorem{lma}[theorem]{Lemma}
\theoremstyle{definition}
\newtheorem{definition}[theorem]{Definition}
\theoremstyle{remark}
\newtheorem{remark}[theorem]{Remark}
\newtheorem{rmk}[theorem]{Remark}
\numberwithin{theorem}{section}
\numberwithin{equation}{section}
\newenvironment{pf}{\begin{proof}}{\end{proof}}
\def\A{\mathcal{A}}
\def\d{\partial}
\def\R{\mathbb{R}}
\def\Z{\mathbb{Z}}
\def\F{\mathcal{F}}
\def\sgn{\operatorname{sgn}}
\def\op{\textrm{op}}
\newcommand{\C}{{\mathbb{C}}}
\newcommand{\Q}{{\mathbb{Q}}}
\newcommand{\Nn}{{\mathcal{N}}}
\newcommand{\act}{{\mathfrak{a}}}
\newcommand{\Ordo}{{\mathbf{O}}}
\newcommand{\M}{{\mathcal{M}}}
\newcommand{\CZ}{\operatorname{CZ}}
\newcommand{\inr}{\operatorname{int}}
\newcommand{\Lag}{\operatorname{Lag}}
\newcommand{\la}{\langle}
\newcommand{\ra}{\rangle}
\newcommand{\pa}{\partial}
\newcommand{\id}{\operatorname{id}}
\newcommand{\krn}{\operatorname{ker}}
\newcommand{\cokrn}{\operatorname{coker}}
\newcommand{\ol}{\overline}
\newcommand{\half}{\text{\rm \sfrac{1}{2}}}
\begin{document}

\title[Subcritical Legendrian Contact Homology]{Legendrian Contact Homology
in the Boundary of a Subcritical Weinstein $4$-Manifold}

\author{Tobias Ekholm}
\address{Uppsala University, Box 480, 751 06 Uppsala, Sweden\newline
\indent Insitute Mittag-Leffler, Aurav 17, 182 60 Djursholm, Sweden}
\email{tobias@math.uu.se}
\author{Lenhard Ng}
\address{Mathematics Department, Duke University, Durham, NC 27708-0320, USA}
\email{ng@math.duke.edu}

\begin{abstract}
We give a combinatorial description of the Legendrian contact homology
algebra associated to a Legendrian link in $S^1\times S^2$ or any
connected sum $\#^k(S^1\times S^2)$, viewed as the contact boundary of
the Weinstein manifold obtained by attaching 1-handles to the
4-ball. In view of the surgery formula for symplectic homology
\cite{bib:BEE}, this gives a combinatorial description of the
symplectic homology of any 4-dimensional Weinstein manifold (and of
the linearized contact homology of its boundary). We also study
examples and discuss the invariance of the Legendrian homology algebra
under deformations, from both the combinatorial and the analytical
perspectives.
\end{abstract}

\maketitle

\tableofcontents

\section{Introduction}\label{Sec:intro}
Legendrian contact homology is a part of Symplectic Field Theory,
which is a generalization of Gromov--Witten theory to a certain class
of noncompact symplectic manifolds including symplectizations of
contact manifolds. SFT contains holomorphic curve theories for contact
geometry, where Legendrian contact homology in a sense is the most
elementary building block. Although Legendrian contact homology is a
holomorphic curve theory, it is often computable as the homology of a
differential graded algebra (DGA) that can be described more simply
and combinatorially. For example, the DGA of a Legendrian $(n-1)$-link
in the contact $(2n-1)$-sphere at the boundary of a symplectic
$2n$-ball
can be computed in terms of Morse flow trees, see \cite{E-Morse}.

In a different direction, the computation of Legendrian contact
homology for 1-dimensional links in $\R^{3}$ was famously reduced by
Chekanov \cite{bib:Chekanov} to combinatorics of polygons determined
by the knot diagram of the link. The main goal of the current paper is
to generalize this combinatorial description to Legendrian links in
general boundaries of subcritical Weinstein $4$-manifolds. These
boundaries are topologically connected sums $\#^{k}(S^{1}\times
S^{2})$ of $k$ copies of $S^{1}\times S^{2}$ where $k$ is the rank of
the first homology of the subcritical Weinstein $4$-manifold; we
discuss Weinstein manifolds in more detail later in the introduction.

In Section \ref{ssec:dga}, we present a combinatorial model for the
Legendrian contact homology DGA for an arbitrary Legendrian link in
$\#^k(S^1\times S^2)$. This description follows Chekanov's, but
several new features are needed due to the presence of
$1$-handles. Perhaps most notably, the DGA is generated by a countably
infinite, rather than finite, set of generators (Reeb chords); cf.\
\cite{bib:Sabloffcirclebundle}, where infinitely many Reeb chords also
appear but in a different context. We accordingly present a
generalization of the usual notion due to Chekanov of equivalence of
DGAs, ``stable tame isomorphism'', to the infinite setting. Briefly,
Chekanov's stable tame isomorphisms involve finitely many
stabilizations and finitely many elementary automorphisms; here we
allow compositions of infinitely many of both, as long as they behave
well with respect to a filtration on the algebra.

One can view the combinatorial DGA abstractly as an invariant of
Legendrian links in $\#^{k}(S^{1}\times S^{2})$, and indeed one can
give a direct but somewhat involved algebraic proof that the DGA is
invariant under Legendrian isotopy, without reference to holomorphic
curves and contact homology. This is the content of
Theorem~\ref{thm:invariance} below. One can use the DGA, much as for
Legendrian links in $\R^3$, to extract geometric information: e.g.,
the DGA provides an obstruction to Legendrian links in
$\#^{k}(S^{1}\times S^{2})$ being destabilizable.

We then join the combinatorial and geometric sides of the story in our
main result, Theorem~\ref{thm:main}, which states that the
combinatorial DGA coincides with the DGA for Legendrian contact
homology, defined via holomorphic disks. This result has interesting
consequences for general 4-dimensional Weinstein manifolds that we
discuss next.

One of the main motivations for the study undertaken in this paper is
the surgery formula that expresses the symplectic homology of a
Weinstein manifold in terms of the Legendrian contact homology of the
attaching sphere of its critical handles, see \cite{bib:BEE}. Here, a
Weinstein manifold is a $2n$-dimensional symplectic manifold $X$ which
outside a compact subset agrees with $Y\times[0,\infty)$ for some
contact $(2n-1)$-manifold $Y$ and which has the following
properties. The symplectic form $\omega$ on $X$ is exact,
$\omega=d\lambda$, and agrees with the standard symplectization form
in the end $Y\times[0,\infty)$, $\lambda= e^{t}\alpha$, where
$t\in[0,\infty)$ and $\alpha$ is a contact form on $Y$; and the
Liouville vector field $Z$ $\omega$-dual to $\lambda$,
$\omega(Z,\cdot)=\lambda$, is gradient-like for some Morse function
$H\colon X\to\R$ with $H(y,t)=t$, $(y,t)\in Y\times[0,\infty)\subset
X$. The zeros of $Z$ are then exactly the critical points of $H$ and
the flow of $Z$ gives a finite handle decomposition for
$X$. Furthermore, since $Z$ is a Liouville vector field, the unstable
manifold of any zero of $Z$ is isotropic and hence the handles of $X$
have dimension at most $n$. The isotropic handles of dimension $<n$
are called subcritical and the Lagrangian handles of dimension $n$
critical.

A Weinstein manifold is called subcritical if all its handles are subcritical. The
symplectic topology of subcritical manifolds is rather easy to control. More precisely, any subcritical Weinstein $2n$-manifold $X$ is symplectomorphic to a product $X'\times \R^{2}$, where $X'$ is a Weinstein $(2n-2)$-manifold, see \cite[Section 14.4]{bib:CE}. Furthermore, any symplectic tangential
homotopy equivalence between two subcritical Weinstein manifolds is
homotopic to a symplectomorphism, see \cite[Sections
14.2--3]{bib:CE}. As a consequence of these results, the nontrivial
part of the symplectic topology of a Weinstein manifold is
concentrated in its critical handles. More precisely, a Weinstein
manifold $X$ is obtained from a subcritical Weinstein manifold $X_0$
by attaching critical handles along a collection of Legendrian
attaching spheres $\Lambda\colon \bigsqcup_{j=1}^{m}S^{n-1}\to Y_{0}$,
where $Y_0$ is the ideal contact boundary manifold of $X_0$. In
particular, the Legendrian isotopy type of the link $\Lambda$ in $Y_0$
thus determines $X$ up to symplectomorphism.

An important invariant of a Weinstein manifold $X$ is its symplectic
homology $SH(X)$, which is a certain limit of Hamiltonian Floer
homologies for Hamiltonians with prescribed behavior at
infinity. Symplectic homology and Legendrian contact homology are
connected: \cite[Corollary 5.7]{bib:BEE} expresses $SH(X)$ as the
Hochschild homology of the Legendrian homology DGA
$(\A(\Lambda),\d(\Lambda))$ of the Legendrian attaching link $\Lambda$
of its critical handles. Similarly, the linearized contact homology of
the ideal contact boundary of $X$ is expressed as the corresponding
cyclic homology, see \cite[Theorem 5.2]{bib:BEE}.

In view of the above discussion, Theorem~\ref{thm:main} then leads to
a combinatorial formulation for the symplectic homology of any
Weinstein 4-manifold (as well as the linearized contact homology of
its ideal boundary). As one consequence, we deduce a new proof of a
result of McLean \cite{bib:McLean} that states that there are exotic
Stein structures on $\R^8$. We note that our construction of exotic
Stein $\R^8$'s (and corresponding exotic contact $S^{7}$'s) is
somewhat different from McLean's.

Here is an outline of the paper.
Our combinatorial setup and computation of the DGA are presented in Section \ref{sec:comb} and sample calculations and applications are given in Section~\ref{sec:calc}.
In Section~\ref{sec:geom}, we set up the contact topology needed to define Legendrian contact homology in our context. This leads to the proof in Section~\ref{Sec:openclosed}
that the combinatorial formula indeed agrees with the holomorphic curve count in the definition of the DGA. In the Appendices, we demonstrate invariance of the DGA under Legendrian isotopy in two ways: from the analytical perspective in Appendix~\ref{app:aninv}, and from the combinatorial perspective in Appendix~\ref{app:combinv}, which also includes a couple of deferred proofs of results from Section~\ref{sec:comb}. It should be mentioned that the analytical invariance proof depends on a perturbation scheme for so-called M-polyfolds (the most basic level of polyfolds), the details of which are not yet worked out.

\subsection*{Acknowledgments}

We thank Mohammed Abouzaid, Paul Seidel, and Ivan Smith for many
helpful discussions. TE was partially supported by the Knut and Alice
Wallenberg Foundation as a Wallenberg scholar. LN thanks Uppsala
University for its hospitality during visits in 2009, when this
project began, and 2010. LN was partially supported by NSF grants
DMS-0706777 and DMS-0846346.

\section{Combinatorial definition of the invariant}
\label{sec:comb}

In this section, we present a combinatorial definition of the DGA for
the Legendrian contact homology of a Legendrian link in $\#^k(S^1\times S^2)$
with the usual
Stein-fillable contact structure. (For the purposes of this paper, ``link'' means ``knot or link''.) We first
define a ``normal form'' in Section~\ref{ssec:stdform} for
presenting Legendrian links in
$\#^k(S^1\times S^2)$, and describe an easy algorithm in
Section~\ref{ssec:resolution} for deducing a
normal form from the front of a Legendrian link, akin to the
resolution procedure from \cite{bib:NgCLI}. We then define the DGA
associated to a Legendrian link in normal form, in two parts: in
Section~\ref{ssec:intdga} we
present a differential subalgebra, the ``internal DGA'', which is
associated to the
portion of the link inside the $1$-handles and depends only on the
number of strands of the link passing through each $1$-handle; then we
extend this in Section~\ref{ssec:dga} to a DGA that takes account of
the rest of the Legendrian link, with an example in Section~\ref{ssec:ex}.
Finally, in Section~\ref{ssec:sti} we present a version of stable
tame isomorphism, an equivalence relation on DGAs, which allows us to
state the algebraic invariance result for the DGA.

\subsection{Normal form for the $xy$ projection of a Legendrian
  link}
\label{ssec:stdform}

As is the case in $\R^3$, it is most convenient to define the DGA for
a Legendrian link in $\#^k(S^1\times S^2)$ in terms of the projection
of the link (or the portion outside of the $1$-handles) in the $xy$
plane. Let $A,M>0$.

\begin{definition}
A tangle in $[0,A]\times [-M,M] \times [-M,M]$ is \textit{Legendrian}
if it is everywhere tangent to the standard contact structure
$dz-y\,dx$, where $x,y,z$ are the usual coordinates.
\label{def:stdform}
A Legendrian
tangle $\Lambda \subset [0,A]\times [-M,M] \times [-M,M]$ is
\textit{in normal form} if there exist integers $n_1,\ldots,n_k \geq
0$ such that
\begin{align*}
\Lambda \cap \{x=0\} &= \{(0,y_i^\ell,z_i^\ell)\,|\,
1\leq \ell\leq k,~ 1\leq i\leq n_\ell\}, \\
\Lambda \cap \{x=A\} &= \{(A,\tilde{y}_i^\ell,\tilde{z}_i^\ell)\,|\,
1\leq \ell\leq k,~ 1\leq i\leq n_\ell\},
\end{align*}
with $y_i^\ell,z_i^\ell,\tilde{y}_i^\ell,\tilde{z}_i^\ell \in [-M,M]$
satisfying the following conditions:
\begin{itemize}
\item
For some (arbitrarily small) $\epsilon>0$, for any fixed $\ell$,
each of the following sets lies in an interval of length less than
$\epsilon$: $\{y_1^\ell,\ldots,y_{n_\ell}^\ell\}$,
$\{z_1^\ell,\ldots,z_{n_\ell}^\ell\}$,
$\{\tilde{y}_1^\ell,\ldots,\tilde{y}_{n_\ell}^\ell\}$, and
$\{\tilde{z}_1^\ell,\ldots,\tilde{z}_{n_\ell}^\ell\}$;
\item
if $\ell_1<\ell_2$ then for all $i,j$,
\begin{align*}
y_i^{\ell_1} &> y_j^{\ell_2}, &
z_i^{\ell_1} &> z_j^{\ell_2}, \\
\tilde{y}_i^{\ell_1} &> \tilde{y}_j^{\ell_2}, &
\tilde{z}_i^{\ell_1} &> \tilde{z}_j^{\ell_2};
\end{align*}
\item
for $1\leq i<j\leq n_\ell$,
\begin{align*}
y_i^\ell &> y_j^\ell, &
z_i^\ell &> z_j^\ell, \\
\tilde{y}_i^\ell &< \tilde{y}_j^\ell, &
\tilde{z}_i^\ell &> \tilde{z}_j^\ell.
\end{align*}
\end{itemize}
\end{definition}

Less formally, $\Lambda$ meets $x=0$ and $x=A$ in $k$ groups of strands,
with groups of size $n_1,\ldots,n_k$. The groups are arranged from top to bottom in
both the $xy$ and the $xz$ projections. Within the $\ell^{\rm th}$ group, the
strands can be labeled by $1,\ldots,n_\ell$ in such a way that the
strands appear in increasing order from top to bottom in both the $xy$
and $xz$ projections at $x=0$ and in the $xz$ projection at $x=A$, and
from bottom to top in the $xy$ projection at $x=A$.

Any Legendrian tangle in normal form corresponds to a Legendrian
link in $\#^k(S^1\times S^2)$ by attaching $k$ $1$-handles joining the
portions of the $xz$ projection of the tangle at $x=0$ to the portions
at $x=A$. The $\ell^{\rm th}$ $1$-handle joins the $\ell^{\rm th}$ group at $x=0$
to the $\ell^{\rm th}$ group at $x=A$, and within this group, the strands
with the same label at $x=0$ and $x=A$ are connected through the
$1$-handle.

See Figure~\ref{fig:stdform} for an illustration of the $xy$ and $xz$
projections of a Legendrian tangle in normal form. Note that the
$xy$ projection can be deduced from the $xz$ projection as usual by
setting $y=dz/dx$.

\begin{figure}
\centerline{
\includegraphics[width=\textwidth]{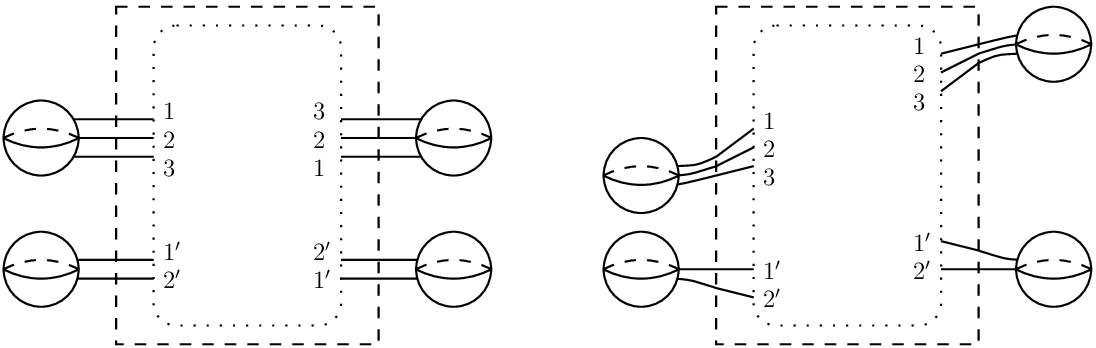}
}
\caption{
The $xy$ (left) and $xz$ (right) projections of a Legendrian tangle in
normal form. The dashed boxes are $[0,A] \times [-M,M]$ in $(x,y)$ or
$(x,z)$ coordinates, and the tangle continues into the dotted boxes. In
the terminology of Definition~\ref{def:stdform},
the strands labeled $1$, $2$, $3$ intersect the left side of the
dashed boxes at $(0,y_1^1,z_1^1)$, $(0,y_2^1,z_2^1)$,
$(0,y_3^1,z_3^1)$ and the right side at
$(A,\tilde{y}_1^1,\tilde{z}_1^1)$,
$(A,\tilde{y}_2^1,\tilde{z}_2^1)$,
$(A,\tilde{y}_3^1,\tilde{z}_3^1)$,
respectively; strands $1'$, $2'$ intersect the left side at
$(0,y_1^2,z_1^2)$, $(0,y_2^2,z_2^2)$ and the right side at
$(A,\tilde{y}_1^2,\tilde{z}_1^2)$,
$(A,\tilde{y}_2^2,\tilde{z}_2^2)$.
}
\label{fig:stdform}
\end{figure}

\begin{definition}
A tangle diagram is in \textit{$xy$-normal form} if it is the $xy$
projection of a Legendrian tangle in normal form.
\label{def:xystdform}
\end{definition}

In Section~\ref{ssec:dga}, we will associate a differential graded algebra to a
tangle diagram in $xy$-normal form.

\subsection{Resolution}
\label{ssec:resolution}

As in the case of $\R^3$ \cite{bib:Chekanov}, it is not necessarily
easy to tell whether a tangle diagram is (planar isotopic to a
diagram) in $xy$-normal form. However, in practice a Legendrian link in
$\#^k(S^1\times S^2)$ is typically presented as a front diagram,
following Gompf \cite{bib:Gompf}. In this subsection, we
describe a procedure called \textit{resolution} that inputs a front
diagram for a Legendrian link in $\#^k(S^1\times S^2)$, and outputs a
tangle diagram in $xy$-normal form that represents a Legendrian-isotopic link.

Gompf represents a Legendrian link in $\#^k(S^1\times S^2)$ by a front
diagram in a box $[0,A] \times [-M,M]$ that is nearly identical in form to
the $xz$ projections of our normal-form tangles from
Definition~\ref{def:stdform}, but with the intersections with $x=0$
and $x=A$ aligned horizontally; see Figure~\ref{fig:gompf} for an
illustration. We say that such a front is in \textit{Gompf standard
  form}.

\begin{figure}
\labellist
\small\hair 2pt
\pinlabel {tangle} at 127 64
\pinlabel {tangle} at 415 64
\endlabellist
\centerline{\includegraphics[width=\textwidth]{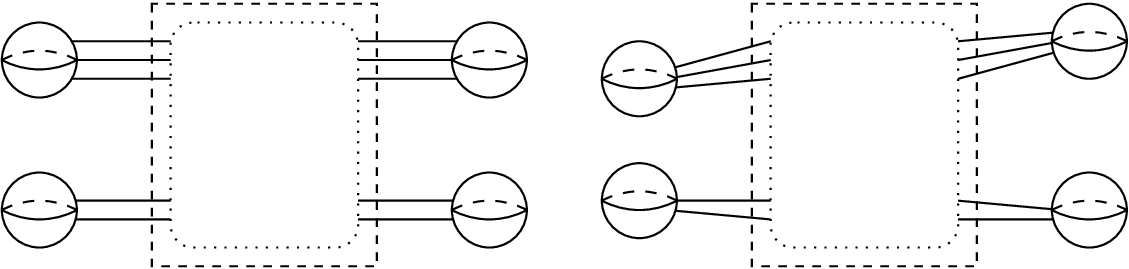}}
\caption{
A Legendrian front diagram in $\#^2(S^1\times S^2)$ in Gompf standard form
(left), and perturbed to be the $xz$ projection of a tangle in
normal form in the sense of Definition~\ref{def:stdform} (right).
}
\label{fig:gompf}
\end{figure}

Any front in Gompf standard form can be perturbed to be the $xz$
projection of a tangle in normal form. This merely involves perturbing the
portions of the front near $x=0$ and $x=A$ so that, rather than being
horizontal, they are nearly horizontal but with slopes increasing from
bottom to top along $x=0$, and increasing from bottom to top along
$x=A$ except decreasing within each group of strands corresponding to
a $1$-handle. The resulting front then gives a Legendrian tangle whose
$xy$ projection satisfies the ordering condition of
Definition~\ref{def:stdform}. See Figure~\ref{fig:gompf}.

Although one can deduce the $xy$ projection of a Legendrian tangle from its $xz$ projection by using $y = dz/dx$, this can be somewhat difficult to effect in practice. However, as in $\R^3$ \cite{bib:NgCLI}, if we allow the tangle to vary by Legendrian isotopy (in fact, planar isotopy in the $xz$ plane), then it is possible to obtain a front whose corresponding $xy$ projection is easy to describe.

\begin{figure}
\centerline{\includegraphics[width=\textwidth]{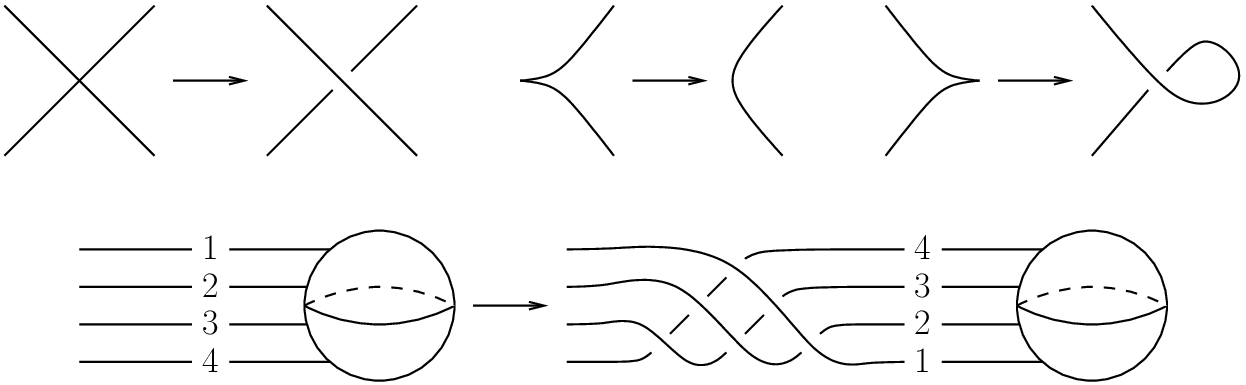}}
\caption{
Resolving a front in Gompf standard form.
}
\label{fig:htwist}
\end{figure}

\begin{definition}
The \textit{resolution} of a front in Gompf standard form is the tangle diagram obtained by resolving the singularities of the front as shown in Figure~\ref{fig:htwist} and, for each $1$-handle, adding a half-twist to the strands that pass through that $1$-handle at the $x=A$ end of the tangle.
\end{definition}

\begin{figure}
\centerline{\includegraphics[width=\textwidth]{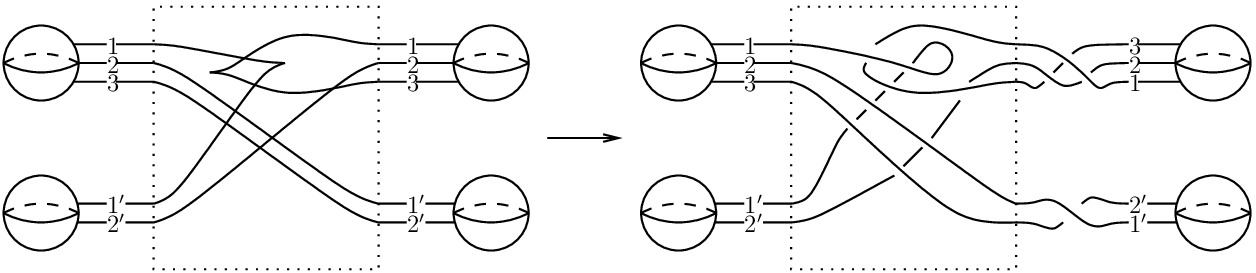}}
\caption{
Example of front resolution for a Legendrian link in $\#^2(S^1\times S^2)$.
}
\label{fig:resolve-ex}
\end{figure}

Note that the half-twist has the effect of reversing the order of the strands entering the $1$-handles at $x=A$. In Gompf standard form, the strands entering a $1$-handle from the left and the right are identified with each other in the obvious way, by identifying $x=0$ and $x=A$; in the resolution of such a front, the top strand entering a particular $1$-handle from the left is identified with the bottom strand entering the $1$-handle from the right, and so forth. See Figure~\ref{fig:resolve-ex} for an example of a resolution.

\begin{proposition}
Let $\Lambda$ be a Legendrian link in $\#^k(S^1\times S^2)$
represented by a front in Gompf standard form.
\label{prop:resolution}
Then the resolution of
the front is (up to planar isotopy) the $xy$ projection of a
Legendrian link in normal form that is Legendrian isotopic to $\Lambda$.
\end{proposition}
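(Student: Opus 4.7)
The plan is to adapt the resolution argument of \cite{bib:NgCLI} for Legendrian links in $\R^3$ to the $\#^k(S^1\times S^2)$ setting, handling the new complication from the $1$-handle boundaries. Starting from a front $F$ in Gompf standard form, I would first perturb the portions of $F$ near $x=0$ and $x=A$ as in the discussion preceding Figure~\ref{fig:gompf}, obtaining a Legendrian tangle $\Lambda'$ whose $xz$ projection satisfies the $z$-ordering conditions of Definition~\ref{def:stdform}. This modification is supported in arbitrarily small neighborhoods of $\{x=0\}$ and $\{x=A\}$ and is a Legendrian isotopy in $\#^k(S^1\times S^2)$.

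Next, I would apply the local model of Figure~\ref{fig:htwist} to the interior of $F$, replacing each right cusp by a loop and resolving each crossing by the prescribed tilted configuration. As in the $\R^3$ case, each of these local moves is realized by a Legendrian isotopy in a small neighborhood of the relevant feature: one slightly modifies the $z$-coordinate of the strands so that $y = dz/dx$ produces the desired $xy$-projection, and the change in the front is supported in a disk disjoint from all other features. This yields a Legendrian tangle $\Lambda''$ whose $xy$ projection, away from neighborhoods of $\{x=0\}$ and $\{x=A\}$, coincides (up to planar isotopy) with the Ng-style resolution of $F$.

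The remaining issue is to match the ordering at $x=A$. After the Ng-style resolution, the strands at $x=A$ in the $\ell^{\mathrm{th}}$ group appear top-to-bottom in the $xy$ projection in the same order as in the $xz$ projection, since all outgoing slopes $y=dz/dx$ are nearly positive and close in value. The normal form requires the reversed ordering $\tilde y_1^\ell<\cdots<\tilde y_{n_\ell}^\ell$. I would correct this by adjoining, in a thin collar $[A-\delta,A]\times[-M,M]\times[-M,M]$, a half-twist on the $n_\ell$ strands passing through the $\ell^{\mathrm{th}}$ $1$-handle; concretely, one permutes the $z$-heights of the incoming strands by the order-reversing permutation, producing in the $xy$-projection the standard half-twist diagram. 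Because this modification is supported near the $1$-handle and can be realized by sliding the strands through the $1$-handle (using the freedom to reconnect the two ends), it is a genuine Legendrian isotopy in $\#^k(S^1\times S^2)$. After checking the three bullet conditions of Definition~\ref{def:stdform} (the $\epsilon$-closeness within each group from taking a small perturbation; the between-group ordering from the corresponding property of the Gompf front; the within-group ordering from the perturbation at $x=0$ and from the half-twist at $x=A$), the composition $\Lambda\to\Lambda'\to\Lambda''\to\Lambda'''$ provides the required Legendrian isotopy, whose $xy$ projection is (up to planar isotopy) the resolution of $F$.

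The hardest step is the half-twist at $x=A$: one must verify that the $n_\ell(n_\ell-1)/2$ new double points introduced in the $xy$ projection correspond to genuine transverse self-intersections of the Lagrangian projection of a Legendrian arc, and that the resulting tangle extends smoothly through the $1$-handle to give a closed Legendrian link in normal form. Once a local model for the half-twist is written down explicitly (as a Legendrian braid realizing the order-reversing permutation in a standard contact neighborhood of the $1$-handle disk), this reduces to a direct check.
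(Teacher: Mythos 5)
Your first two steps (perturbing near $x=0,A$ and running Ng's left-to-right resolution on the interior of the front) match the paper's argument, but the one genuinely new step --- producing the half-twist at $x=A$ --- is exactly where your proposal has a gap. You propose to ``permute the $z$-heights of the incoming strands by the order-reversing permutation'' and then appeal to ``the freedom to reconnect the two ends'' through the $1$-handle. Neither part works as stated: braiding the strands in the front near $x=A$ would reverse the $z$-ordering there, destroying the condition $\tilde z_1^\ell>\cdots>\tilde z_{n_\ell}^\ell$ required by Definition~\ref{def:stdform}, and the identification of strands through the $1$-handle is fixed by the handle attachment, so there is no freedom to reconnect ends; an isotopy that drags strands through the handle simply carries the connectivity along with it. You flag this as ``the hardest step'' and defer it to ``a direct check,'' but that check is the content of the proposition.

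The paper's mechanism is different and avoids all of this: the half-twist is a feature of the $xy$ projection only, not of the front. After the interior resolution, the outgoing segments at $x=A_0$ have slopes increasing from bottom to top. One then extends, for each handle, a single chosen segment out to $x=A$ and cones the remaining endpoints of that group to its endpoint; for $A\gg A_0$ the slopes of these connecting segments increase from handle to handle but \emph{decrease} from bottom to top within each group. Since $y=dz/dx$, this is precisely the ordering $\tilde y_i^\ell<\tilde y_j^\ell$, $\tilde z_i^\ell>\tilde z_j^\ell$ of normal form, and the reversal of the $y$-ordering relative to the $z$-ordering forces the $n_\ell(n_\ell-1)/2$ crossings of the half-twist to appear in the $xy$ projection --- with no crossings introduced in the front, no isotopy through the handle, and the whole construction being a planar isotopy of the front diagram. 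If you replace your half-twist step with this slope argument, the rest of your outline goes through.
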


See Appendix~\ref{app:d2proof} for the proof of Proposition~\ref{prop:resolution}, which is very similar to the analogous proof for resolutions in $\R^3$ from \cite{bib:NgCLI}.

In practice, to compute the DGA associated to a Legendrian link in
$\#^k(S^1\times S^2)$, we begin with a Gompf standard form for the
link, resolve it as above, and then apply the combinatorial formula
for the DGA to be described in Section~\ref{ssec:dga} below.

\subsection{Internal differential graded algebra}
\label{ssec:intdga}

Here we present the subalgebra of the contact homology differential
graded algebra determined
by the Reeb chords in each $1$-handle, and holomorphic disks with positive
puncture at one of these Reeb chords. Let $n\geq 1$ be an integer,
representing the number of strands of the Legendrian link $\Lambda=\Lambda_1\cup\dots\cup\Lambda_s$ passing
through the $1$-handle, where the $\Lambda_j$ denote the knot components of $\Lambda$. To define a grading on the subalgebra, we need
two auxiliary pieces of data: an $s$-tuple of integers $(r_1,\ldots,r_s)$ associated to the components $\Lambda_j$, $j=1,\dots,s$, and an $n$-tuple of integers
$(m(1),\ldots,m(n))$. These represent the rotation numbers of the
Legendrian link (which only appear here in the grading of the
homology variables $t_j$) and a choice of Maslov potential for
each of the strands passing through the $1$-handle; see also
Section~\ref{ssec:dga}.

Given $(r_1,\dots, r_s)$ and $(m(1),\ldots,m(n))$, let $(\A_n,\d_n)$ denote the differential graded algebra
given as follows. As an algebra, $\A_n$ is the tensor algebra over the coefficient ring $\Z[H_1(\Lambda)]$, that is,
\[
\Z[\mathbf{t},\mathbf{t}^{-1}]:=\Z[t_1,t_1^{-1},\dots, t_s,t_s^{-1}],
\]
freely
generated by generators $c_{ij}^0$ for $1\leq i<j\leq n$ and
$c_{ij}^p$ for $1\leq i,j\leq n$ and $p \geq 1$. (See Remark~\ref{rmk:coeffs} for a discussion of the geometric significance of the coefficient ring.)
This algebra is
graded by setting $|t_j|=-2r_j$, $|t_j^{-1}|=2r_j$, and
\[
|c_{ij}^p| = 2p-1+m(i)-m(j)
\]
for all $i,j,p$. The differential
$\d_n$ is defined on generators by
\begin{align*}
\d_n(c_{ij}^0) &= \sum_{m=1}^n \sigma_i\sigma_m c_{im}^0 c_{mj}^0 \\
\d_n(c_{ij}^1) &= \delta_{ij} + \sum_{m=1}^n \sigma_i\sigma_m c_{im}^0 c_{mj}^1 + \sum_{m=1}^n
\sigma_i\sigma_m c_{im}^1 c_{mj}^0 \\
\d_n(c_{ij}^p) &= \sum_{\ell=0}^p \sum_{m=1}^n \sigma_i\sigma_m c_{im}^\ell c_{mj}^{p-\ell}
\end{align*}
where $p \geq 2$, $\sigma_i = (-1)^{m(i)}$ for all $i$, $\delta_{ij}$ is the Kronecker delta, and
we set $c_{ij}^0 = 0$ for $i \geq j$. Extend $\d_n$ to all of $\A_n$ in the usual way by the Leibniz rule
\[
\d_n(xy) = (\d_nx)y + (-1)^{|x|} x (\d_ny).
\]

It is clear that $\d_n$ has degree $-1$, and easy to check that $\d_n^2=0$. Note that $\A_n$ is
infinitely generated as an algebra but has a natural increasing
filtration given by the superscripts, with respect to which $\d_n$ is a filtered
differential.

Given a Legendrian link $\Lambda \subset \#^k(S^1\times S^2)$, we can associate a DGA $(\A_{n_i},\d_{n_i})$ as above to each of the $k$ $1$-handles; see Section~\ref{ssec:dga} below. We refer to the DGA whose generators are the collection of generators of $\A_{n_i}$, $i=1,\ldots,k$, and whose differential is induced from $\d_{n_i}$, as the \textit{internal DGA} of $\Lambda$.

\subsection{The DGA for an $xy$ projection in normal form}
\label{ssec:dga}

We can now define the DGA associated to a Legendrian link in
$\#^k(S^1\times S^2)$, or more precisely to a tangle in $xy$-normal
form in the terminology of Section~\ref{ssec:stdform}, with one base point for each link component.

Suppose that we have a Legendrian link $\Lambda=\Lambda_1\cup\dots\cup\Lambda_s \subset \#^k(S^1\times
S^2)$ in normal form; then its projection $\pi_{xy}(\Lambda)$ to the
$xy$ plane is a tangle diagram in $xy$-normal form. Let $a_1,\dots, a_n$ denote the crossings of the tangle diagram. Label the $k$
$1$-handles appearing in the diagram by $1,\ldots,k$ from top to
bottom; let $n_\ell$ denote the number of strands of the tangle
passing through handle $\ell$. For each $\ell$, label the strands
running into the $1$-handle on the left side of the diagram by
$1,\ldots,n_\ell$ from top to bottom, and label the strands running
into the $1$-handle on the right side by $1,\ldots,n_\ell$
\textit{from bottom to top}. Also choose base points $\ast_j$, $j=1,\dots,s$ in the tangle diagram such that $\ast_j$ lies on component $\Lambda_j$ for all $j=1,\ldots,s$, and no $\ast_j$ lies at any of the crossings (or in the $1$-handles).

We can now define the DGA. Our definition involves three parts: the
algebra, the grading, and the differential.

\subsubsection{The algebra}
Let
$\A$ be the tensor algebra over
$\Z[\mathbf{t},\mathbf{t}^{-1}]=\Z[t_1,t_1^{-1},\dots,t_s,t_s^{-1}]$ freely generated by:
\begin{itemize}
\item
$a_1,\ldots,a_n$;
\item
$c_{ij;\ell}^0$ for $1\leq \ell\leq k$ and $1\leq i<j\leq n_\ell$;
\item
$c_{ij;\ell}^p$ for $1\leq \ell\leq k$, $p>0$, and $1\leq i,j\leq
n_\ell$.
\end{itemize}
(We will drop the index $j$ in $t_j$ when the Legendrian $\Lambda$ is a single-component knot, and the index $\ell$ in $c_{ij;\ell}^p$ when there is only one $1$-handle.)
Note that $\A$ contains $k$ subalgebras
$\A_{n_1}^1,\ldots,\A_{n_k}^k$, where $\A_{n_\ell}^\ell$ is the tensor
algebra over $\Z[\mathbf{t},\mathbf{t}^{-1}]$ freely generated by $c_{ij;\ell}^0$ for
$1\leq i<j\leq n_\ell$ and $c_{ij;\ell}^p$ for $p>0$ and $1\leq i,j\leq
n_\ell$. Each of these subalgebras $\A_{n_\ell}^\ell$ should be
thought of as the internal DGA corresponding to the $\ell^{\rm th}$ handle,
and the grading and differential on this subalgebra will be defined
accordingly. Together, the $\A_{n_\ell}^\ell$ generate a differential subalgebra of $\A$, which we call ``the'' internal DGA of $\Lambda$.

It should be noted that we have chosen our formulation of the algebra
in such a way that all of the $t^{\pm 1}_j$ commute with each other and with the generators $a_i$ and
$c_{ij;\ell}^p$. This suffices for our purposes but is not strictly
necessary. It is possible to elaborate on the construction and consider
an algebra over $\Z$ generated by $a_i$, $c_{ij;\ell}^p$, and $t^{\pm
  1}_j$, modulo only the obvious relations $t_j\cdot t_j^{-1} = t^{-1}_j\cdot t_j
= 1$. See, e.g., \cite[Section~2.3.2]{bib:EENS} or \cite[Remark~2.2]{bib:Ngsurvey} for more discussion.

\subsubsection{Grading}
The grading on $\A$ is determined by stipulating a grading on $t_j$ and
on each generator $a_i$ and $c_{ij;\ell}^p$ of $\A$. We will do each of these in turn.

We begin with some preliminary definitions. A \textit{path} in
$\pi_{xy}(\Lambda)$ is a path that traverses some amount of
$\pi_{xy}(\Lambda)$, connected except for points where it enters a
$1$-handle (i.e., approaches $x=0$ or $x=A$ along a labeled strand)
and exits the $1$-handle
along the corresponding strand (i.e., departs $x=A$ or $x=0$ along the
strand with the same label). In particular, the tangent vector in
$\R^2$ to a path varies continuously as we traverse the path (note that the
strands entering or exiting a $1$-handle are horizontal). The
\textit{rotation number} $r(\gamma)$ of a path $\gamma$ consisting of unit vectors in $\R^2$ (i.e., points in $S^1$) is the number of counterclockwise revolutions made by $\gamma(t)$ around $S^1$ as we traverse the path (i.e., the total curvature $\int_\gamma\kappa\,ds$ divided by $2\pi$); note that this is generally a real number, and is an integer if and only if $\gamma$ is closed. By slight abuse of notation, we will often speak of the rotation number of a path $\gamma$ in $\pi_{xy}(\Lambda)$ to mean the rotation number of its unit tangent vector $\gamma'(t)/|\gamma'(t)|$.

In this terminology, the rotation number $r_j=r(\Lambda_j)$ is the rotation
number of the path in $\pi_{xy}(\Lambda)$ that begins and ends at the
base point $\ast_j$ on the $j^{\rm th}$ component $\Lambda_j$ and traverses the diagram once in the direction of
the orientation of $\Lambda_j$. We define
\[
|t_j| = -2r(\Lambda_j).
\]

To define the remainder of the grading on $\A$, we need to make some auxiliary choices joining tangent directions to the various base points $\ast_1,\ldots,\ast_s$, although the grading only depends on these choices in the case of multi-component links ($s\geq 2$). For $i=1,\ldots,s$, let $v_i \in S^1$ denote the unit tangent vector to the oriented curve $\pi_{xy}(\Lambda_i)$ at the base point $\ast_i$. Now for $i=2,\ldots,s$, pick a path $\xi_i$ in $S^1$ from $v_1$ to $v_i$; then for $1\leq i,j\leq s$, let $\xi_{ij}$ denote the path in $S^1$ from $v_i$ to $v_j$ given by the orientation reverse of $\xi_i$ followed by $\xi_j$, considered up to homotopy (so we can choose $\xi_{ii}$ to be constant). Note that there is a $\Z^{s-1}$ worth of possible choices for the paths $\xi_{ij}$.

With $\xi_{ij}$ chosen, we next define the grading of the $a_i$ generators. Let $a_i^+$ and $a_i^-$ denote the preimages $a_i^{+}$ and $a_i^{-}$ in $\Lambda$ of the crossing point $a_i$ in the upper and lower strands of the crossing, respectively, and suppose that $a_i^+,a_i^-$ belong to components $\Lambda_{i^+},\Lambda_{i^-}$, respectively. There are unique paths $\gamma^{\pm}$ in the $xy$ projection of the component $\Lambda_{i^{\pm}}$ connecting $a_i^{\pm}$ to the base point $\ast_{i^{\pm}}$ and following the orientation of $\Lambda_{i^{\pm}}$, such that the lifts of $\gamma^{\pm}$ to $\Lambda$ are embedded.
Let $\sigma_{i}$ be the path of unit tangent vectors along $\gamma^+$, followed by $\xi_{j^{+}j^{-}}$, and followed by the path of unit tangent vectors along $\gamma^-$ traversed backwards. Assume that the crossing at $a_i$ is
transverse (else perturb the diagram); then $r(\sigma_i)$ is neither
an integer nor a half-integer, and we define
\[
|a_i| = \lfloor 2r(\sigma_i) \rfloor,
\]
where $\lfloor x \rfloor$ denotes the largest integer smaller than $x$.

It remains to define the grading of the $c_{ij;\ell}^p$ generators. This can be done by adding dips and treating $c_{ij;\ell}^0$ generators as crossings in a dipped diagram; cf.\ the proof of Proposition~\ref{prop:d2} in Appendix~\ref{app:d2proof}. We however use a slightly different approach here.
Choose a \textit{Maslov potential} $m$ that associates an integer to each strand passing through each $1$-handle, in such a way that the following conditions hold:
\begin{itemize}
\item
if $S_L,S_R$ are strands on the left and right of $\pi_{xy}(\Lambda)$ that correspond to the ends of a strand of $\Lambda$ passing through a $1$-handle, then $m(S_L) = m(S_R)$, and these Maslov potentials are even if $\Lambda$ is oriented left to right (i.e., it passes through the $1$-handle from $x=A$ to $x=0$), and odd if $\Lambda$ is oriented right to left;
\item
if $S,S'$ are endpoints of strands through $1$-handles with $S \in \Lambda_i$ and $S'\in\Lambda_j$, such that $\Lambda_i,\Lambda_j$ are oriented from $S,S'$ to $\ast_i,\ast_j$ respectively, then
\[
m(S')-m(S) = -2r(\sigma)
\]
where $\sigma$ is the path of unit tangent vectors along $\Lambda_i$ from $S$ to $\ast_i$, followed by $\xi_{ij}$, followed by the path of unit tangent vectors along $\Lambda_j$ from $\ast_j$ to $S'$; note that this last path is traversed opposite to the orientation on $\Lambda_j$, and that $r(\sigma) \in \frac{1}{2}\Z$ since strands are horizontal as they pass through $1$-handles.
\end{itemize}
It is easy to check that the Maslov potential is well-defined (given choices for $\xi_{ij}$) up to an overall shift by an even integer.

As suggested by Section~\ref{ssec:intdga}, we now grade the ``internal
generators'' $c_{ij;\ell}^p$ of $\A$ as follows:
\[
|c_{ij;\ell}^p| = 2p-1+m(S_{i;\ell})-m(S_{j;\ell}),
\]
where $S_{i;\ell},S_{j;\ell}$
are the strands running through handle $\ell$ labeled by $i$ and $j$. This completes the definition of the grading on $\A$.

\begin{remark}
Note that the grading on $\A$ is independent of the choice of Maslov potential. Different choices of $\gamma_{ij}$ do however lead to different gradings for $s\geq 2$. As mentioned previously, there is a $(2\Z)^{s-1}$ worth of choices for $\xi_{ij}$. Given the grading $|\cdot|$ on $\A$ resulting from such a choice, and any $(2n_1,\ldots,2n_s) \in (2\Z)^s$, one can obtain another grading $|\cdot|'$ on $\A$ by defining
\[
|a_i|' = |a_i| + 2(n_{i^+}-n_{i^-})
\]
and similarly for the other generators $c_{ij;\ell}^p$. (The grading on the homology generators $t_1,\ldots,t_s$ is unchanged.) Any such grading comes from a different choice of $\xi_{ij}$, and conversely.
\end{remark}

\begin{remark}
If $\pi_{xy}(\Lambda)$ is the resolution of a front diagram of an $s$-component link, then we
can calculate the grading on generators of $\A$ directly from the
front diagram, as follows. We can associate a Maslov potential to
connected components of a
front diagram minus cusps and the base points $\ast_j$, $1,\dots,s$, in such a way
that the following conditions hold:
\begin{itemize}
\item
the same Maslov potential is assigned to the left and right sides of
the same strand (connected through a $1$-handle), and this potential is even if the strand is oriented left to right (from $x=A$ to $x=0$) and odd otherwise;
\item
at a cusp, the upper component (in the $z$ direction) has Maslov
potential one more than the lower component.
\end{itemize}
As before, we set $|t_j| = -2r(\Lambda_j)$ and $|c_{ij;\ell}^p| =
2p-1+m(S_{i;\ell})-m(S_{j;\ell})$. The other generators of $\A$ are in
one-to-one correspondence to: right cusps in
the front; crossings in the front; and pairs of strands entering the
same $1$-handle at $x=A$
(corresponding to the half-twists in the resolution).
Let $a$ be one of these generators. If $a$ is a right cusp, define
$|a|=1$ (this assumes that no base point $\ast_j$ is in the portion
of the resolution given by the loop at $a$). If $a$ is a crossing,
then $|a| = m(S_o)-m(S_u)$, where $S_u$ is the undercrossing strand at
$a$ (in the front projection, i.e., the strand with more positive
slope) and $S_o$ is the overcrossing strand at $a$. Finally, if $a$ is
a crossing in the half-twist near a handle and involving strands labeled $i$ and $j$ with $i<j$, then
\[
|a| = m(S_i)-m(S_j)
\]
where $S_i$ and $S_j$ are the strands labeled $i$ and $j$ at the
$1$-handle.
\end{remark}

\subsubsection{Differential}
Finally, we define the differential $\partial$ on $\A$. It suffices to
define the differential on generators of $\A$, and then impose the
Leibniz rule. We set $\partial(t_j) = \partial(t^{-1}_j) = 0$. On each
$\A_{n_\ell}^\ell$, we then define the differential by $\partial
= \partial_{n_\ell}$, as defined in Section~\ref{ssec:intdga}.

It remains to define the differential for crossings $a_i$. To do this,
decorate
the quadrants of the crossings in $\pi_{xy}(\Lambda)$ by
the ``Reeb signs'' shown in the left diagram in Figure~\ref{fig:signs},
as in Chekanov \cite{bib:Chekanov}.

\begin{figure}
\centerline{
\includegraphics[width=3.5in]{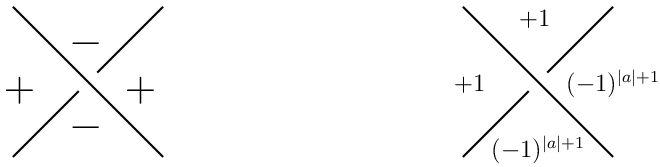}
}
\caption{Reeb signs on the left; orientation signs at a crossing $a$ on the right.}
\label{fig:signs}
\end{figure}

For $r\geq 0$, let $a_i,b_1,\ldots,b_r$ be a collection of (not
necessarily distinct) generators
of $\A$ such that $a_i$ is a crossing in
$\pi_{xy}(\Lambda)$, and each of $b_1,\ldots,b_r$ is either a crossing
$a_j$ or a generator of the form $c_{j_1j_2;\ell}^0$ for $j_1<j_2$. Define
$\Delta(a_i;b_1,\ldots,b_r)$ to be the set of immersed disks with convex corners (up to
parametrization) with boundary on $\pi_{xy}(\Lambda)$, such that the
corners of the disk are, in order as we traverse the
boundary counterclockwise, a ``positive corner'' at $a_i$ and
``negative corners'' at each of $b_1,\ldots,b_r$. Here positive and
negative corners are as depicted in Figure~\ref{fig:corners}.
Disks are not allowed to pass through a $1$-handle, but they can have
a negative corner $c_{j_1j_2;\ell}^0$ on either side of the $1$-handle.

\begin{figure}
\centerline{
\includegraphics[width=\textwidth]{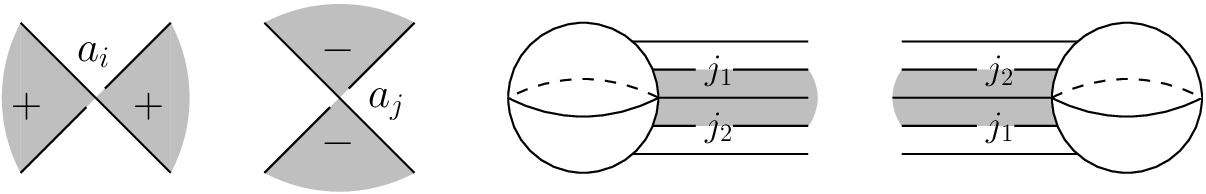}
}
\caption{
Positive and negative corners for a disk (locally depicted by one of
the shaded regions). From left to right: two possible positive corners at a crossing
$a_i$; two possible negative corners at $a_j$; and two possible negative corners at
$c_{j_1j_2;\ell}^0$, where $j_1<j_2$ and $\ell$ is the label of the
depicted handle.
}
\label{fig:corners}
\end{figure}

We now set
\[
\d(a_i) = \sum_{r \geq 0} ~\sum_{b_1,\ldots,b_r} ~\sum_{\Delta\in
\Delta(a_i;b_1,\ldots,b_r)} \sgn(\Delta)\, t_1^{-n_1(\Delta)}\cdots t_s^{-n_s(\Delta)}\, b_1\cdots b_r,
\]
where $n_j(\Delta)$ is the signed number of times that the boundary of
$\Delta$ passes through $\ast_j$, and $\sgn(\Delta)$ is a sign to be
defined below. Extend $\d$ to $\A$ via the Leibniz rule. For any
crossing $a_i$, the set of all possible immersed disks with $+$ corner
at $a_i$ and any number of $-$ corners is finite, by the usual area
argument (or see the proof of Proposition~\ref{prop:d2} below), and so the sum in $\d(a_i)$ is finite.

To define the sign associated to an immersed disk, we assign
``orientation signs'' (entirely distinct from Reeb signs) at every
corner of the disk, as follows. For corners at a $c_{j_1j_2;\ell}^0$, we
associate the orientation sign:
\begin{itemize}
\item
$+1$ for a corner reaching the handle from the
right (at $x=0$; see the second diagram from the right in
Figure~\ref{fig:corners});
\item
 $(-1)^{|c_{j_1j_2;\ell}^0|+1} = (-1)^{m(j_1)-m(j_2)}$ for
  a corner reaching the handle from the left (at $x=A$; see the
  rightmost diagram in Figure~\ref{fig:corners}), where
  $m(j_1),m(j_2)$ are the Maslov potentials associated to strands
  $j_1,j_2$.
\end{itemize}

Next we consider corners at crossings of $\pi_{xy}(\Lambda)$.
At a crossing of odd degree, all orientation signs are $+1$. At a crossing of even degree, there are two possible choices for assigning two $+1$ and two $-1$ signs to the corners (corresponding to rotating the diagram in Figure~\ref{fig:signs} by $180^\circ$), with only the stipulation that adjacent corners on the same side of the understrand have the same sign; either choice will do, and the two choices are related by an algebra automorphism.
For the sake of definiteness, in computations involving the resolution
of a front, we will take the $-1$ corners to be the south and east
corners at every corner of even degree.

Finally, for an immersed disk $\Delta$ with corners, we set
$\sgn(\Delta)$ to be the product of the orientation signs at all
corners of $\Delta$. This completes the definition of the differential $\partial$.

\begin{remark}\label{rem:signconv}
Our sign convention agrees with the convention in
\cite{bib:ENS}, up to an algebra automorphism that multiplies some
even crossings by $-1$. For disks that do not pass through the
$1$-handles and do not involve the $c$ generators or half-twist crossings, this agrees precisely with the convention in \cite{bib:NgCLI}.

Furthermore, our orientation scheme is induced from the non-null-cobordant spin structures on the circle components of the link. For calculations related to symplectic homology it is important to use the null-cobordant spin structure since the link components are boundaries of the core disks of the handle and we need to orient moduli spaces of holomorphic disks with boundaries on these in a consistent way. From an algebraic point of view the change in our formulas are minor: changing the spin structure on the component $\Lambda_j$ corresponds to substituting $t_j^{\pm}$ in the formulas above by $-t_j^{\pm}$. We refer to \cite[Section 4.4s]{EES-orientation} for a detailed discussion.
\end{remark}

With the definition of $(\A,\d)$ in hand, we conclude this subsection by stating the usual basic facts about the differential.
%

\begin{proposition}
The map $\partial$ has degree $-1$ and is a differential, $\partial^2=0$.
\label{prop:d2}
\end{proposition}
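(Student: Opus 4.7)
The proof splits into two independent claims---that $\partial$ lowers degree by one, and that $\partial^2 = 0$---and in each case it suffices to verify the statement on the generators $t_j^{\pm 1}$, $c_{ij;\ell}^p$, and $a_i$, since the Leibniz rule then extends the conclusion to all of $\A$.

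First, the degree claim. The generators $t_j^{\pm 1}$ are closed. For the internal generators, $|c_{ij;\ell}^p| = 2p - 1 + m(i) - m(j)$ by definition, so each term $c_{im;\ell}^r c_{mj;\ell}^{p-r}$ appearing in $\partial c_{ij;\ell}^p$ has degree $(2r - 1 + m(i) - m(m)) + (2(p-r) - 1 + m(m) - m(j)) = |c_{ij;\ell}^p| - 1$, and the $p=1$ Kronecker $\delta_{ij}$ term is checked separately. For a crossing $a_i$ and a disk $\Delta \in \Delta(a_i; b_1, \ldots, b_r)$ contributing to $\partial a_i$, the identity $|a_i| - 1 = 2\sum_j n_j(\Delta) r(\Lambda_j) + \sum_\ell |b_\ell|$ follows from the standard rotation-number argument for immersed polygons: the unit tangent to the counterclockwise boundary of $\Delta$ undergoes one full counterclockwise rotation, and breaking this rotation into contributions from smooth arcs, exterior angles at corners, and base-point crossings---then translating through the definitions of $|a_i|$ and $|b_\ell|$ via the paths $\sigma_i$ and the Maslov potential---gives the desired equation.

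For $\partial^2 = 0$, the internal case is a direct algebraic check: expanding $\partial^2 (c_{ij;\ell}^p)$ via the Leibniz rule produces a sum indexed by double factorizations $c_{im}^a c_{mn}^b c_{nj}^{p - a - b}$, in which each triple appears twice with opposite signs due to the signs $\sigma_i \sigma_m \sigma_n$ and the identity $\sigma_m^2 = 1$. For crossings $a_i$, the plan is to reduce to the classical Chekanov argument by \emph{dipping}: one introduces dips in $\pi_{xy}(\Lambda)$ near each $1$-handle that convert each generator $c_{ij;\ell}^0$ into an ordinary crossing of a modified tangent diagram, with higher $c_{ij;\ell}^p$ generators arising from iterated dips. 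In this dipped picture, the disks counted by $\partial$ become ordinary immersed polygons with corners only at crossings, and pairs of terms in $\partial^2 a_i$ correspond to the two ways of resolving a one-parameter family of immersed polygons with a degenerate boundary self-intersection, so they cancel. Because $\A$ is filtered by superscript and disks are bounded in area by the generators appearing as their corners, each $\partial a_i$ and $\partial^2 a_i$ is a finite sum, so no convergence issues arise from the infinite generation of $\A$.

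The main obstacle lies in the orientation-sign bookkeeping at corners involving $c_{ij;\ell}^0$ generators: the asymmetric orientation sign $(-1)^{m(i)-m(j)}$ assigned to corners reaching a $1$-handle from the left (versus $+1$ on the right) must be shown to match the Chekanov sign convention for the new crossings produced by dipping, which requires careful tracking of Maslov potentials and orientation conventions through the dipping procedure, together with a verification that the signs from the internal relations $\partial_n c_{ij;\ell}^p$ are compatible with the signs obtained by iterating dips.
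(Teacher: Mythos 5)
Your proposal follows essentially the same route as the paper: the internal part is checked algebraically, and for crossings one replaces each $1$-handle by half-twist ``dips'' so that the $c_{ij;\ell}^0$ become honest crossings of a tangle in $\R^3$ and Chekanov's degree and $\partial^2=0$ arguments apply verbatim. Two clarifications. First, no iterated dips are needed for the higher generators $c_{ij;\ell}^p$, $p\geq 1$: since disks with positive corner at a crossing only have negative corners at crossings and at $c_{j_1j_2;\ell}^0$ generators, and since $\partial(c_{j_1j_2;\ell}^0)$ involves only $c^0$ generators, the entire computation of $\partial^2(a_i)$ takes place in the subalgebra generated by the external crossings and the $c^0$'s, which is exactly what a single dip on each side of the handle realizes as a Chekanov DGA (the $p\geq1$ generators never enter, and their $\partial_n^2=0$ is a separate algebraic check). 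Second, the orientation-sign matching you flag as the main obstacle is resolved in the paper simply by choosing the sign convention at the new dip crossings (south and west corners carry $-1$ when $|c_{ij}^0|$ is even) so that the Chekanov orientation signs reproduce the asymmetric $+1$ / $(-1)^{m(j_1)-m(j_2)}$ rule for corners at the two sides of the handle; with that choice the two differentials agree on the nose and nothing further needs to be tracked.
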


Proposition~\ref{prop:d2} can be proven either combinatorially or geometrically. The combinatorial proof is based on the proof of the analogous result in $\R^3$ and is deferred to Section~\ref{app:d2proof}.
The geometric proof relates the differential to moduli spaces of holomorphic disks, in the usual Floer-theoretic way, see Remark \ref{rmk:d^2=0}.

\subsection{An example}
\label{ssec:ex}

To illustrate the definition in Section \ref{ssec:dga},
we describe the differential graded algebra associated to the Legendrian knot
in Figure~\ref{fig:torus-example2}. Note that this example appears in
\cite[Figure~36]{bib:Gompf}, in the context of constructing a
Stein structure on $T^2\times D^2$.

\begin{figure}
\centerline{
\includegraphics[width=\textwidth]{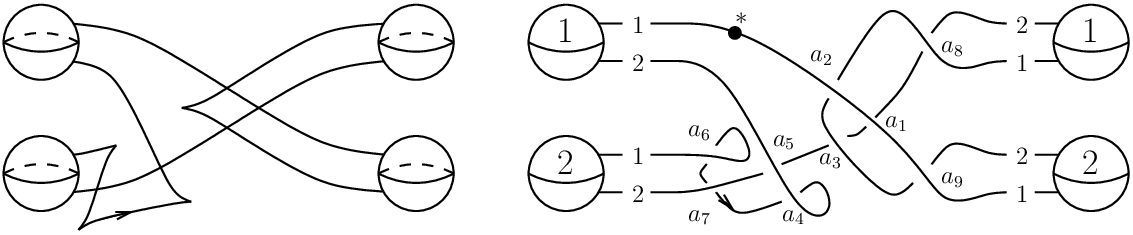}
}
\caption{An oriented Legendrian knot in $\#^2 (S^1\times S^2)$: front
  in Gompf standard form (left), and resolution with base point $\ast$ (right).}
\label{fig:torus-example2}
\end{figure}

The knot has $tb=1$ and $r=0$ (note that $tb$ is well-defined since
the knot is null-homologous). The differential graded algebra $\A$
associated to the knot has generators
$a_1,\dots,a_9,c_{12}^0,\tilde{c}_{12}^0$, and
$c_{ij}^p,\tilde{c}_{ij}^p$ for $1\leq i,j\leq 2$ and $p\geq 1$, with
grading
\begin{gather*}
|a_1|=|a_4|=|a_6|=|a_8|=|a_9|=1, \\
|a_2|=|a_3|=|a_5|=|a_7|=|c_{12}^0|=|\tilde{c}_{12}^0|=0, \\
|c_{11}^p|=|\tilde{c}_{11}^p|=|c_{22}^p|=|\tilde{c}_{22}^p|=2p-1,\\
|c_{12}^p|= |\tilde{c}_{12}^p|
= 2p,\\
|c_{21}^p| = |\tilde{c}_{21}^p| = 2p-2.
\end{gather*}
(Here for notational simplicity we have dropped the second subscript
on $c_{ij;\ell}^p$ and instead write $c_{ij}^p := c_{ij;1}^p$,
$\tilde{c}_{ij}^p := c_{ij;2}^p$.)

The differential on $\A$ is given by
\begin{align*}
\d(a_1) &= -a_2a_3+t^{-1}c_{12}^0 a_5 &
\d(c_{12}^0) &= \d(\tilde{c}_{12}^0) = 0 \\
\d(a_4) &= 1+a_5a_7 & \d(c_{21}^1) &= \d(\tilde{c}_{21}^1) = 0 \\
\d(a_6) &= 1+\tilde{c}_{12}^0 a_7 & \d(c_{11}^1) &= 1-c_{12}^0c_{21}^1 \\
\d(a_8) &= -c_{12}^0+a_3 &
\d(\tilde{c}_{11}^1) &= 1-\tilde{c}_{12}^0\tilde{c}_{21}^1 \\
\d(a_9) &= -\tilde{c}_{12}^0-a_2 & \d(c_{22}^1) &= 1-c_{21}^1c_{12}^0 \\
\d(a_i) &= 0,~~~~i\neq 1,4,6,8,9  &
\d(\tilde{c}_{22}^1) &= 1-\tilde{c}_{21}^1\tilde{c}_{12}^0 \\
&& \d(c_{12}^1) &= -c_{12}^0c_{22}^1 + c_{11}^1c_{12}^0 \\
&& \d(\tilde{c}_{12}^1) &= -\tilde{c}_{12}^0\tilde{c}_{22}^1 +
\tilde{c}_{11}^1\tilde{c}_{12}^0
\end{align*}
and so forth for the differentials of $c_{ij}^p$ and
$\tilde{c}_{ij}^p$, $p \geq 2$.

We remark that $(\A,\d)$ has a graded augmentation over $\Z/2$ (and indeed over $\Z$ if we set $t=-1$), given by the
graded algebra map $\epsilon :\thinspace \A \to \Z/2$ determined by
$\epsilon(a_2)=\epsilon(a_3)=\epsilon(a_5)=\epsilon(a_7)=\epsilon(c_{12}^0)
= \epsilon(\tilde{c}_{12}^0) = \epsilon(c_{21}^1) =
\epsilon(\tilde{c}_{21}^1) = 1$ and $\epsilon=0$ for all other
generators of $\A$. It follows from the results of the subsequent
section (see Corollary~\ref{cor:augmentation}) that the Legendrian
knot pictured in
Figure~\ref{fig:torus-example2} is not destabilizable.

\subsection{Stable tame isomorphism for countably generated DGAs}
\label{ssec:sti}

In this section, we discuss the notion of equivalence of DGAs that we
need in order to state the invariance result for the DGAs described in
Section~\ref{ssec:dga}. For finitely generated semifree DGAs, this equivalence
was first described by Chekanov \cite{bib:Chekanov}, who called it
``stable tame isomorphism''. We extend his notion here to countably
generated semifree DGAs.

\begin{definition}
Let $I$ be a countable index set, either $\{1,\ldots,n\}$ for some $n$
or $\mathbb{N} = \Z_{>0}$, and let $R$ be a commutative ring with unit. A
\textit{semifree algebra} over $R$ is an algebra
$\A$ over $R$, along with a distinguished set of generators $\{a_i
\,|\,
i\in I\} \subset \A$, such that $\A$ is the unital tensor algebra over
$R$ freely generated by the $\{a_i\}$:
\[
\A = R\langle a_1,a_2,\ldots \rangle.
\]
Thus $\A$ is freely generated as an $R$-module by finite-length words in the
$a_i$, including the empty word. A \textit{semifree differential
  graded algebra} $(\A,\d)$ over $R$ is a semifree algebra $\A$ over
$R$, equipped with a grading (additive over products, with $R$ in
grading $0$) and a degree
$-1$ differential $\d$ satisfying the signed Leibniz rule:
$\d(ab) = (\d a)b+(-1)^{|a|}a(\d b)$.
\end{definition}

Note that the differential $\d$ on a semifree DGA is determined by its
values on the generators $a_i$, $i\in I$. In practice, $R$ will be
either $\Z[\mathbf{t},\mathbf{t}^{-1}]$ or a quotient such as $\Z/2$.

We next define two classes of automorphisms of a semifree DGA, the
elementary and the tame automorphisms. These do not involve the differential.

\begin{definition}
An \textit{ordering} of a semifree algebra $\A$ over $R$ is a bijection $\sigma
\colon I\to I$, which we picture as giving an increasing total
order of the generators of $\A$ by setting
$a_{\sigma(1)} < a_{\sigma(2)} < a_{\sigma(3)} < \cdots$. Any ordering
produces a filtration on $\A$,
\[
R = \F^0\A \subset \F^1\A \subset \F^2\A \subset \cdots \subset \A,
\]
where $\F^k\A = R\langle
a_{\sigma(1)},a_{\sigma(2)},\ldots,a_{\sigma(k)}\rangle$.
\end{definition}

\begin{definition}
Let $\A$ be a semifree algebra over $R$. An \textit{elementary
  automorphism} of $\A$ is a grading-preserving algebra map $\phi
:\thinspace \A\to\A$
such that there exists an ordering $\sigma$ of $\A$ for which for all
$k\in I$,
\[
\phi(a_{\sigma(k)}) = u_k a_{\sigma(k)} + v_k,
\]
where $u_k$ is a unit in $R$ and $v_k \in \F^{k-1}\A$.
\label{def:elementary}
\end{definition}

\noindent
Informally, an elementary automorphism is a map that sends each
generator $a_k$ to itself plus terms that are strictly lower in the
ordering than $a_k$. Note that any elementary automorphism preserves
the corresponding filtration $\{F^k\A\}$ of $\A$.

\begin{remark}
The more familiar notion of an
elementary automorphism in the
sense of Chekanov \cite{bib:Chekanov}
(which is defined when the index set $I$ is finite, but the notion can
be extended to any index set)
is also an elementary
automorphism in the sense of Definition~\ref{def:elementary}. For
Chekanov (see also \cite{bib:ENS}), an algebra map $\phi :\thinspace
\A\to\A$ is elementary automorphism if
there exists an $i$ such that $\phi(a_i) = ua_i+v$ for $u$ a unit and
$v\in\A$ not involving $a_i$, and $\phi(a_j) = a_j$ for all $j\neq
i$. Given such a $\phi$, suppose the generators of $\A$ appearing in
$v$ are $a_{j_1},\ldots,a_{j_\ell}$ where $j_1,\ldots,j_\ell \neq
i$. Then any ordering $\sigma$ satisfying
$\sigma(1) = j_1$, \ldots, $\sigma(\ell) = j_\ell$, $\sigma(\ell+1)=i$
fulfills the condition of Definition~\ref{def:elementary}.

Conversely, an elementary automorphism $\phi$ as given in
Definition~\ref{def:elementary} is a composition of Chekanov's
elementary automorphisms: for $k\in I$, define $\phi_k$ by
$\phi_k(a_{\sigma(k)}) = u_k a_{\sigma(k)} + v_k$ and $\phi_k(a_j) =
a_j$ for all $j\neq \sigma(k)$; then
\[
\phi = \cdots \circ \phi_3 \circ \phi_2 \circ \phi_1.
\]
Note that this composition is infinite if $I$ is infinite, but
converges when applied to any element of $\A$.
\end{remark}

From now on, the term ``elementary automorphism'' will be in the sense
of Definition~\ref{def:elementary}.

\begin{proposition}
Any elementary automorphism of a semifree algebra is invertible, and
its inverse is also an
elementary automorphism.
\label{prop:elemaut}
\end{proposition}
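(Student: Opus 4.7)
The plan is to construct the inverse $\psi$ directly by induction along the ordering $\sigma$ that exhibits $\phi$ as elementary, show that the formula produced is itself of elementary form with respect to the same ordering $\sigma$, and finally verify that $\psi$ is a two-sided inverse. Since a grading-preserving algebra map out of a semifree algebra is determined by its values on the generators $\{a_i\}_{i\in I}$, it suffices to specify $\psi$ on each $a_{\sigma(k)}$ in such a way that the defining identity $\psi\circ\phi = \id$ holds on every generator; if this succeeds, extending multiplicatively and $R$-linearly yields a well-defined algebra endomorphism.

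The induction is driven by the requirement $\psi(\phi(a_{\sigma(k)})) = a_{\sigma(k)}$. Writing $\phi(a_{\sigma(k)}) = u_k a_{\sigma(k)} + v_k$ with $u_k$ a unit in $R$ and $v_k\in\F^{k-1}\A$, this forces
\[
\psi(a_{\sigma(k)}) \;=\; u_k^{-1}a_{\sigma(k)} \;-\; u_k^{-1}\psi(v_k).
\]
The key observation is that $v_k$ is a polynomial in $a_{\sigma(1)},\dots,a_{\sigma(k-1)}$, so $\psi(v_k)$ is already determined by the inductive hypothesis. Moreover, since each previously defined $\psi(a_{\sigma(j)})$ lies in $\F^{j}\A\subseteq \F^{k-1}\A$, we get $\psi(v_k)\in\F^{k-1}\A$, so $\psi(a_{\sigma(k)})$ has exactly the shape required by Definition~\ref{def:elementary} with ordering $\sigma$ and leading unit $u_k^{-1}$. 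Grading-preservation of $\psi$ is automatic: since $\phi$ preserves degrees and $R$ sits in degree $0$, one has $|v_k| = |a_{\sigma(k)}|$, and induction gives $|\psi(v_k)| = |v_k| = |a_{\sigma(k)}|$.

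Having produced $\psi$, it remains to check $\phi\circ\psi = \id$. The slickest route is to apply the same inductive construction to $\psi$ in place of $\phi$, which produces an elementary $\psi'$ with $\psi'\circ\psi = \id$; then $\psi' = \psi'\circ(\psi\circ\phi) = (\psi'\circ\psi)\circ\phi = \phi$, so $\phi\circ\psi = \id$ as well. Alternatively one can verify $\phi\circ\psi = \id$ directly on each $a_{\sigma(k)}$ by induction on $k$, using that $\phi\circ\psi$ already agrees with the identity on $\F^{k-1}\A$ to evaluate the $\psi(v_k)$ term. The only conceptual subtlety, and the step that makes the countably generated case not entirely formal, is well-definedness of the algebra map $\psi$ when $I$ is infinite: this is legitimate precisely because the formula for $\psi(a_{\sigma(k)})$ references only the finitely many previously defined $\psi(a_{\sigma(j)})$ with $j<k$, so one never has to form an infinite sum inside $\A$ and the recursion terminates at each generator. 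I expect this bookkeeping with the filtration $\{\F^k\A\}$ to be the main point to be careful about; the algebraic content of the argument is otherwise routine.
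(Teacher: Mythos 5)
Your construction of $\psi$ by induction along the ordering $\sigma$, with $\psi(a_{\sigma(k)}) = u_k^{-1}(a_{\sigma(k)}-\psi(v_k))$ and the observation that $\psi(v_k)\in\F^{k-1}\A$ forces $\psi$ to be elementary for the same ordering, is exactly the paper's argument, and your verification of the two-sided inverse matches the paper's inductive check. The proposal is correct and takes essentially the same approach.
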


\begin{proof}
Suppose that $\phi$ is an elementary automorphism of $\A$ with
ordering $\sigma$, units $u_k$, and algebra elements $v_k$ as in
Definition~\ref{def:elementary}. Note that $v_1 \in \F^0\A =
R$. Construct an algebra map $\psi \colon \A\to\A$ as follows.
We define $\psi(a_{\sigma(k)})$ inductively on $k$ by:
\begin{align*}
\psi(a_{\sigma(1)}) &= u_1^{-1}(a_{\sigma(1)}-v_1), \\
\psi(a_{\sigma(k)}) &= u_k^{-1}(a_{\sigma(k)}-\psi(v_k)).
\end{align*}
Note that this is constructed so that $\psi(v_k) \in \F^{k-1}\A$ and
$\psi(a_{\sigma(k)}) \in \F^k\A$ for all $k$, as is clear by
induction. (In particular, $\psi(v_k)$ is determined by
$\psi(a_{\sigma(1)}),\ldots,\psi(a_{\sigma(k-1)})$.) It is
straightforward to check by induction on $k$ that
$\psi(\phi(a_{\sigma(k)})) = \phi(\psi(a_{\sigma(k)})) =
a_{\sigma(k)}$, and so $\psi = \phi^{-1}$.
\end{proof}

\begin{definition}
A \textit{tame automorphism} of a semifree algebra is a composition of
finitely many elementary automorphisms.
\end{definition}

\noindent
Note crucially that different elementary automorphisms may have
different orderings associated to
them: it is not the case that a tame automorphism must preserve one
particular filtration of the algebra.

It follows from Proposition~\ref{prop:elemaut} that every tame
automorphism is invertible, with another tame automorphism as its
inverse. Thus the set of tame automorphisms forms a group, and the
following relation is an equivalence relation.

\begin{definition}
A \textit{tame isomorphism} between
two semifree differential graded algebras $(\A,\d)$ and $(\A',\d')$,
with generators $\{a_i \,|\, i\in I\}$ and $\{a_i' \,|\, i\in I\}$
respectively for a common index set $I$, is a graded algebra map $\phi
\colon \A\to\A'$ with
\[
\phi \circ \d = \d' \circ \phi,
\]
such that we can write $\phi = \phi_2 \circ \phi_1$, where
$\phi_1 \colon \A\to\A$ is a tame automorphism and $\phi_2$ is
the algebra map sending $a_i$ to $a_{\sigma(i)}'$ for all $i\in I$,
where $\sigma \colon I\to I$ is any bijection such that $|a_i| =
|a_{\sigma(i)}'|$ for all $i$. If there is a tame isomorphism between
$(\A,\d)$ and $(\A',\d')$, then the DGAs are \textit{tamely isomorphic}.
\end{definition}

The final ingredient in stable tame isomorphism is the notion of an
algebraic stabilization of a DGA. Our definition of stabilization
extends the corresponding definition in \cite{bib:Chekanov} by
allowing countably many generators to be added simultaneously.

\begin{definition}
Let $(\A,\d)$ be a semifree DGA over $R$ generated by $\{a_i \,|\, i\in
I\}$. A \textit{stabilization} of $(\A,\d)$ is a semifree DGA
$(S(\A),\d)$ constructed as follows.
Let $J$ be a countable (possibly finite) index set. Then $S(\A)$ is
the tensor algebra over $R$ generated by $\{a_i \,|\, i\in
I\} \cup \{e_j \,|\, j\in J\} \cup \{f_j \,|\, j\in J\}$, graded in
such a way that the grading on the $a_i$ is inherited from $\A$, and
$|e_j| = |f_j|+1$ for all $j\in J$. The differential on $S(\A)$
agrees on $\A\subset S(\A)$ with the original differential $\d$, and
is defined on the $e_j$ and $f_j$ by
\[
\d(e_j) = f_j, ~~~~~~~~~~
\d(f_j) = 0
\]
for all $j\in J$; extend to all of $S(\A)$ by the Leibniz rule as usual.
\end{definition}

Now we can define our notion of equivalence for countably generated DGAs.

\begin{definition}
Two semifree DGAs $(\A,\d)$ and $(\A',\d')$ are \textit{stable tame
  isomorphic} if some stabilization of $(\A,\d)$ is tamely isomorphic
to some stabilization of $(\A',\d')$.
\end{definition}

\noindent
Note that stable tame isomorphism is an equivalence relation.

With this in hand, we can state the main algebraic invariance result
for the DGA associated to a Legendrian link in $\#^k(S^1\times S^2)$.

\begin{theorem}
Let $\Lambda$ and $\Lambda'$ be Legendrian links in $\#^k(S^1\times
S^2)$ in normal form, and suppose that $\Lambda$ and $\Lambda'$ are
Legendrian isotopic. Let $(\A,\d)$ and $(\A',\d')$ be the semifree DGAs over
$\Z[\mathbf{t},\mathbf{t}^{-1}]$ associated to the diagrams $\pi_{xy}(\Lambda)$ and
$\pi_{xy}(\Lambda')$, which are in $xy$-normal form. Then $(\A,\d)$
and $(\A',\d')$ are stable tame isomorphic.
\label{thm:invariance}
\end{theorem}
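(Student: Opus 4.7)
My plan is to follow the well-known strategy of Chekanov, reducing a Legendrian isotopy to a finite sequence of elementary moves and exhibiting a stable tame isomorphism for each move, but with the additional care needed for the infinite internal generators $c_{ij;\ell}^p$ and for moves that interact with the 1-handles. The first step is to show that any Legendrian isotopy between two links in normal form can be factored, after small perturbation, as a finite composition of local moves of two kinds: (i) standard Legendrian moves in the complement of the 1-handles, namely the Legendrian Reidemeister moves R1, R2, R3, triple point interchanges, and planar isotopies of the $xy$ projection; and (ii) handle-local moves, namely interchanging heights of two adjacent endpoints of strands at $x=0$ or $x=A$, pulling a Reidemeister-I loop across a handle boundary, adding/removing a cancelling pair of strands through a handle (an R1-type move occurring at a 1-handle), and the ``handle-slide'' of a strand past a handle. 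This decomposition follows from the standard theory of Legendrian fronts in Gompf normal form together with genericity arguments.

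The second step is to verify invariance under the moves of type (i). Here I would use the ``dipping'' technique as adapted to this setting: by inserting dips just to the right of $x=0$ and just to the left of $x=A$ we obtain, away from the handles, a diagram in which the differential is a sum of local pieces, and the Reidemeister moves are handled by essentially the same tame isomorphisms as in \cite{bib:Chekanov} and \cite{bib:NgCLI}. The only new wrinkle is that dip-generators at a dip adjacent to a handle must be identified with the $c_{ij;\ell}^0$ generators via a stable tame isomorphism; this isomorphism is of finite type relative to the $c^0$-filtration and extends to the full infinite algebra by tensoring with the identity on the $c_{ij;\ell}^p$ generators for $p\geq 1$.

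The third, and most substantial, step is invariance under the handle-local moves of type (ii). For each such move, I would write down an explicit algebra map $\phi$ between the two DGAs, exhibit it as a (possibly infinite) composition of elementary automorphisms and a stabilization, and verify that it intertwines the differentials. The delicate point is that these maps typically modify all of the $c_{ij;\ell}^p$'s at once: for example, in a handle-slide, the substitution rule on $c_{ij;\ell}^0$ propagates through the defining recursion $\partial_n(c_{ij}^p)=\sum c_{im}^\ell c_{mj}^{p-\ell}$ and forces nontrivial substitutions on $c_{ij;\ell}^p$ for all $p$. Accordingly, $\phi$ must be built as an infinite composition of Chekanov-style elementary automorphisms, one for each $p$, and one uses the $p$-filtration $\F^N\A=\A_{\leq N}$ (generated by $a_i$, $c_{ij;\ell}^0,\ldots,c_{ij;\ell}^N$) to check convergence: each individual elementary automorphism affects only finitely many filtration levels above a given one, so the composition is well-defined on every element of $\A$ in the sense of our extended Definition~\ref{def:elementary}.

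The principal obstacle will be verifying the chain-map condition $\phi\circ\partial=\partial'\circ\phi$ for the handle-slide and the handle-R1 moves, since both sides involve the internal differential $\partial_n$ and the substitutions must be compatible with the quadratic recursion defining $\partial_n(c_{ij}^p)$. I expect to handle this by induction on $p$: the base case $p=0$ reduces to a finite combinatorial identity about disks meeting the handle boundary, and the inductive step follows formally from the recursive definition of $\partial_n$ once the $c^0$-level identity is established. Once this is done for each elementary move, composing the resulting stable tame isomorphisms along a factorization of the Legendrian isotopy yields the theorem; the deferred details are carried out in Appendix~\ref{app:combinv}.
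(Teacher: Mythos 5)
Your overall architecture coincides with the paper's: reduce the isotopy to a finite list of local moves, produce a stable tame isomorphism for each, and use infinite compositions of elementary automorphisms controlled by a filtration (plus countable stabilizations) to absorb the fact that a handle move disturbs all of the $c_{ij;\ell}^p$ at once. But two of your steps are thinner than they can afford to be. The decomposition into moves is not a genericity exercise; it is precisely Gompf's theorem \cite{bib:Gompf} that two fronts in standard form represent isotopic links iff they differ by Legendrian isotopy inside the box together with three specific moves (Gompf moves 4, 5, 6) and their $180^\circ$ rotations, the latter of which the paper disposes of via the opposite DGA $(\A,\d_{\op})$ (Lemma~\ref{lem:rotation}). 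Your list of ``handle-local moves'' must be matched against that list; as written it is neither obviously complete nor obviously generated by Gompf's moves, and the rotated variants are absent. Also, for the interior moves the paper does not dip: it leaves the internal generators untouched and runs Chekanov's induction on crossing height, which requires the action estimate of Lemma~\ref{lem:action} (the $c^0$ chords can be arranged to have height smaller than every crossing). Your dipping route could likely be made to work, but you would still need such an estimate or an explicit ordering to make the Reidemeister~II induction converge.

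The genuine gap is in your mechanism for the handle moves: ``establish a $c^0$-level identity and then induct on $p$ formally from the recursion for $\partial_n(c_{ij}^p)$'' does not describe what actually has to happen. For the cusp-pair move (Gompf move 4, Proposition~\ref{prop:gompf4inv}) the required automorphism sends $c_{ij}^p$ to a sum over words of \emph{unbounded length} in auxiliary elements $c_{n-\half,n-\half}^{\lambda}$, and the chain-map identity is verified by a telescoping cancellation ($S_r^0=-S_{r-1}^1$), not as a formal consequence of the quadratic recursion. For the move sliding a strand through the handle (Gompf move 6, Proposition~\ref{prop:gompf6inv}) the isomorphism is not an induction on $p$ at all: one must first stabilize by a countably infinite family of new generator pairs $\{a_q,b_q\}_{q\ge n+1}$ and then realize the tame isomorphism as a limit of automorphisms $\Phi_s$ indexed by an auxiliary parameter $s$, with the differential interpolating through an infinite sequence $\d_s$ between $\d$ and $\d''$ (Lemma~\ref{lem:gompf6}). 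Without these constructions the hardest verifications do not close up, so the ``formal induction on $p$'' is the step that would fail as stated.
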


Theorem~\ref{thm:invariance} will be proven in Section~\ref{sec:combpf}. In
practice, given a front projection for a Legendrian link in
$\#^k(S^1\times S^2)$ in Gompf standard form, one resolves it
following the procedure in Section~\ref{ssec:resolution} and then computes the DGA
associated to the resolved diagram; up to stable tame isomorphism,
this DGA is an invariant of the original Legendrian link.

We conclude this section with some general algebraic remarks about
stable tame isomorphism.
First, just as for finitely generated DGAs, stable tame isomorphism is a
special case of quasi-isomorphism.

\begin{proposition}
If $(\A,\d)$ and $(\A',\d')$ are stable tame isomorphic, then
$H_*(\A,\d) \cong H_*(\A',\d')$.
\end{proposition}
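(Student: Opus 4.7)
The plan is to reduce the statement to two easier assertions: (i) any tame isomorphism is in particular a chain isomorphism, and (ii) for any stabilization $S(\A)$ of a semifree DGA $(\A,\d)$, the inclusion $\iota\colon \A \hookrightarrow S(\A)$ is a quasi-isomorphism. Once both are in hand, the conclusion is immediate: if $\phi\colon S(\A)\to S(\A')$ is a tame isomorphism exhibiting the stable tame isomorphism of $(\A,\d)$ and $(\A',\d')$, then
\[
H_*(\A,\d) \xrightarrow{\;\iota_*\;} H_*(S(\A),\d) \xrightarrow{\;\phi_*\;} H_*(S(\A'),\d') \xleftarrow{\;\iota'_*\;} H_*(\A',\d')
\]
is a zig-zag of isomorphisms.

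Assertion (i) is essentially a tautology: elementary automorphisms preserve gradings by definition, so tame automorphisms are grading-preserving algebra automorphisms. A tame isomorphism is then a grading-preserving algebra isomorphism commuting with the differentials, hence descends to a $\Z$-graded isomorphism on homology.

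Assertion (ii) is the substantive step, and its infinite-generator form is where the real work lies. The strategy I would use is filtration plus passage to a colimit. Define an increasing filtration $F_\bullet$ on $S(\A)$ as $R$-modules by letting $F_n S(\A)$ be the $R$-span of those monomials in $\{a_i\}\cup\{e_j,f_j\}$ that contain at most $n$ letters from $\{e_j,f_j \mid j\in J\}$. Because $\d a_i\in \A$, $\d e_j = f_j$, and $\d f_j = 0$, the differential preserves this count exactly, so each $F_n$ is a subcomplex with $F_0 = \A$ and $\bigcup_n F_n = S(\A)$. On the subquotient $F_n/F_{n-1}$, a normal-form representative of a monomial is $\alpha_0 x_1 \alpha_1 \cdots x_n \alpha_n$ with $\alpha_i\in\A$ and $x_i\in\{e_j,f_j\}$, and the induced differential acts either by applying $\d$ to some $\alpha_i$ (preserving the normal form) or by replacing a letter $e_j$ by $f_j$ (a letter $f_j$ being killed). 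A Künneth-type computation then identifies $F_n/F_{n-1}$ as a chain complex with $\A^{\otimes(n+1)}$ tensored with $n$ copies of the two-term acyclic complex $R\langle e_j,f_j\rangle$, hence $H_*(F_n/F_{n-1}) = 0$ for $n\ge 1$. The long exact sequences associated to $F_{n-1}\hookrightarrow F_n \twoheadrightarrow F_n/F_{n-1}$ give $H_*(F_n) \cong H_*(F_{n-1})$ inductively, so $H_*(F_n)\cong H_*(\A)$ for every $n$. Finally, since homology commutes with filtered colimits of chain complexes, $H_*(S(\A)) = H_*(\operatorname{colim}_n F_n) = \operatorname{colim}_n H_*(F_n) = H_*(\A)$, and inspecting the argument shows this isomorphism is induced by $\iota$.

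The main obstacle is the infinite stabilization case, since the classical Chekanov argument for one pair $(e,f)$ extends to finite stabilizations by iteration but not immediately to the countable setting. The filtration above is precisely designed to handle this: although $F_n$ is \emph{not} a sub-DGA (multiplication can raise the $e,f$-count arbitrarily), it is a subcomplex that is preserved by $\d$, and this is all that is needed for the spectral-sequence/colimit argument. An alternative, more hands-on route would be to build an explicit degree $+1$ chain homotopy $h$ with $\d h + h\d = \id - \iota\pi$, where $\pi\colon S(\A)\to\A$ sends every $e_j,f_j$ to $0$, by locating the leftmost letter in $\{e_j,f_j\}$ of each monomial and replacing an $f_j$ there by $e_j$ with an appropriate Koszul sign; this converges termwise regardless of $|J|$ and would bypass the colimit argument, but the filtration proof is cleaner.
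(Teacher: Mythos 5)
Your proposal is correct, but your main argument takes a different route from the paper's. For the tame isomorphism step you and the paper say the same thing (a tame isomorphism is a chain isomorphism). For the stabilization step, however, the paper gives a one-line explicit contraction: with $\pi\colon S(\A)\to\A$ the projection killing every word containing an $e_j$ or $f_j$, it defines $H(vf_jw)=(-1)^{|v|+1}ve_jw$ and $H(ve_jw)=H(v)=0$ for $v\in\A$, and verifies $H\circ\d+\d\circ H=\iota\circ\pi-\Id_{S(\A)}$ directly. This is exactly the ``alternative, more hands-on route'' you sketch in your last sentence, so the approach you relegated to a footnote is the paper's actual proof. Your primary argument via the $e/f$-letter-count filtration is also valid, and in fact slightly stronger than you state: since $\d a_i\in\A$, $\d e_j=f_j$, and $\d f_j=0$, the differential preserves the exact number of $e/f$-letters, so $S(\A)$ splits as a \emph{direct sum} of subcomplexes indexed by that count, and the summands with count $\ge 1$ are tensor products involving the contractible two-term complexes $Re_j\to Rf_j$ (contractible, not merely acyclic, which is what makes the K\"unneth step painless over an arbitrary $R$); this lets you skip the long-exact-sequence induction and the colimit entirely. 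The trade-off is the usual one: the paper's homotopy is shorter and gives the quasi-isomorphism with an explicit inverse-up-to-homotopy, while your decomposition argument is more structural and makes it transparent why the countably infinite index set $J$ causes no trouble.
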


\begin{proof}
This is essentially the same as the corresponding proof in
\cite{bib:Chekanov}, see also \cite[Cor.~3.11]{bib:ENS}. If $(\A,\d)$
and $(\A',\d')$ are tamely isomorphic, then they are chain isomorphic
and the result follows. It suffices to check that if $(S(\A),\d)$ is a
stabilization of $(\A,\d)$, then the homologies are isomorphic. Let
$\iota :\thinspace \A\to S(\A)$ denote inclusion, and let $\pi
:\thinspace S(\A)\to\A$ denote the projection that sends any term
involving an $e_j$ or $f_j$ to $0$. Then $\pi\circ\iota = Id_{\A}$. If
we define $H :\thinspace S(\A)\to S(\A)$ by
\begin{align*}
H(v) &= 0 && v\in\A \\
H(ve_jw) &= 0 && v\in\A,~w\in S(\A) \\
H(vf_jw) &= (-1)^{|v|+1} ve_jw && v\in\A,~w\in S(\A),
\end{align*}
then it is straightforward to check that on $S(\A)$,
\[
H\circ\d + \d\circ H = \iota\circ\pi - Id_{S(\A)}.
\]
The result follows.
\end{proof}

Second, we can apply the usual machinery (augmentations,
linearizations, the
characteristic algebra \cite{bib:NgCLI}, etc.) to semifree DGAs up to
stable tame isomorphism. For now, we consider augmentations.

\begin{definition}
A \textit{graded augmentation} (over $\Z/2$) of a semifree DGA $(\A,\d)$ is
a graded algebra map $\epsilon :\thinspace \A \to \Z/2$, where $\Z/2$
lies in degree $0$, for which $\epsilon \circ \d = 0$.
\end{definition}

\begin{corollary}
The existence or nonexistence of a graded augmentation of $(\A,\d)$ is
invariant under Legendrian isotopy.
\label{cor:augmentation}
\end{corollary}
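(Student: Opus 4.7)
The plan is to combine Theorem~\ref{thm:invariance} with two elementary algebraic observations: that the existence of a graded augmentation is preserved under (i) tame isomorphism, and (ii) stabilization. Since stable tame isomorphism is generated by these two operations, the result then follows immediately.

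For (i), suppose $\phi \colon (\A,\d) \to (\A',\d')$ is a tame isomorphism. Then $\phi$ is in particular a grading-preserving algebra map with $\phi\circ\d = \d'\circ\phi$, and $\phi$ is invertible with a tame-isomorphism inverse by Proposition~\ref{prop:elemaut} combined with the obvious fact that the ``relabeling'' map $a_i \mapsto a_{\sigma(i)}'$ is invertible. Thus, given a graded augmentation $\epsilon'$ of $\A'$, the composition $\epsilon := \epsilon' \circ \phi$ is a graded algebra map $\A \to \Z/2$ with $\epsilon \circ \d = \epsilon' \circ \phi \circ \d = \epsilon' \circ \d' \circ \phi = 0$, hence a graded augmentation of $\A$. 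The reverse direction uses $\phi^{-1}$.

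For (ii), let $(S(\A),\d)$ be a stabilization of $(\A,\d)$, with added generator pairs $\{e_j, f_j\}_{j \in J}$ satisfying $\d e_j = f_j$ and $\d f_j = 0$. If $\epsilon$ is a graded augmentation of $\A$, extend it to $\tilde{\epsilon} \colon S(\A) \to \Z/2$ by $\tilde{\epsilon}(e_j) = \tilde{\epsilon}(f_j) = 0$ for all $j$. This is evidently a graded algebra map (the assignment $0$ is compatible with any degree), and one checks on each generator type that $\tilde{\epsilon} \circ \d = 0$: on generators of $\A$ this is the hypothesis on $\epsilon$, on $e_j$ one has $\tilde{\epsilon}(\d e_j) = \tilde{\epsilon}(f_j) = 0$, and on $f_j$ one has $\tilde{\epsilon}(\d f_j) = 0$. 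Conversely, any augmentation of $S(\A)$ restricts to an augmentation of $\A$, since for every generator $a_i$ of $\A$ one has $\d a_i \in \A \subset S(\A)$.

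There is no serious obstacle here; the only minor point of care is the coefficient ring $\Z[\mathbf{t},\mathbf{t}^{-1}]$, since $\epsilon$ must send each $t_j$ to a unit in $\Z/2$, forcing $\epsilon(t_j) = 1$, which is consistent with the grading as $|t_j| = -2r_j$ is even. Combining (i), (ii), and Theorem~\ref{thm:invariance} completes the proof.
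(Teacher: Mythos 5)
Your proposal is correct and is essentially the argument the paper leaves implicit: Corollary~\ref{cor:augmentation} is stated as an immediate consequence of Theorem~\ref{thm:invariance} together with the standard facts that graded augmentations pull back under tame isomorphisms and extend by zero to (respectively, restrict from) stabilizations, exactly as you spell out. One small imprecision: for $\epsilon(t_j)=1$ to be compatible with the grading one needs $|t_j|=-2r_j=0$, not merely even, so a graded augmentation forces $r_j=0$; but since the rotation numbers are Legendrian isotopy invariants this does not affect the conclusion.
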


As in \cite{bib:Chekanov}, if $\Lambda$ is a (geometric) stabilization
of another Legendrian link $\Lambda'$, then the differential graded
algebra for $\Lambda$ is trivial up to stable tame isomorphism, and in
particular has no graded augmentations.

\begin{corollary}
If $(\A,\d)$ has a graded
augmentation, then $\Lambda$ is not destabilizable.
\end{corollary}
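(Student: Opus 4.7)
The plan is to argue by contrapositive: assume $\Lambda$ is destabilizable, and show that $(\A,\d)$ admits no graded augmentation. So suppose $\Lambda$ is Legendrian isotopic to a Legendrian link $\Lambda'$ that is a geometric stabilization of some Legendrian link $\Lambda_{0}\subset \#^{k}(S^{1}\times S^{2})$. Writing $(\A',\d')$ for the DGA associated to a normal form of $\Lambda'$, my goal in the first half of the proof is to produce a generator $a \in \A'$ with $\d' a = 1 + (\text{higher order terms})$, and in the second half to invoke Corollary~\ref{cor:augmentation} to pull the non-augmentability back to $(\A,\d)$.

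For the first half, I would appeal to the informal statement in the remark preceding the corollary: a geometric stabilization introduces, in the front projection, a small zigzag supported in a ball disjoint from the $1$-handles; after resolution this produces a short Reeb chord $a$ of degree $|a| = 1$ that bounds a unique immersed bigon in $\pi_{xy}(\Lambda')$ with no other corners and no other disks present (an area/degree argument rules them out). Thus $\d' a = \pm 1 + v$, where $v$ is a polynomial in generators of $\A'$ that lie in the augmentation ideal (no constant term, no $t_j^{\pm1}$). Any graded algebra map $\epsilon \colon \A' \to \Z/2$ satisfies $\epsilon(1)=1$, and if $\epsilon \circ \d' = 0$ then $\epsilon(\d' a)=0$, which combined with $\epsilon(v)=0$ (for degree reasons, $v$ lies in degree $0$ and has no constant term, hence augments to $0$) would force $1 = 0$ in $\Z/2$, a contradiction. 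So $(\A',\d')$ admits no graded augmentation.

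For the second half, Corollary~\ref{cor:augmentation} applies directly: the existence of a graded augmentation is a Legendrian-isotopy invariant. Since $\Lambda \simeq \Lambda'$, the DGA $(\A,\d)$ also admits no graded augmentation, contradicting the hypothesis that it does. This completes the contrapositive.

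The only genuinely nontrivial point — and it is the one potential obstacle — is the structural claim that a geometric stabilization of a Legendrian link in $\#^{k}(S^{1}\times S^{2})$ is reflected algebraically by a new generator $a$ with $\d'a = \pm 1 + v$ as above. In $\R^{3}$ this is Chekanov's original observation \cite{bib:Chekanov}, and in our setting the identical local argument works since the zigzag is inserted in a Darboux ball disjoint from the $1$-handles, so the relevant disk count is identical to the one in $\R^{3}$. (Alternatively, one could phrase this as saying that $(\A',\d')$ is stable tame isomorphic to the stabilization of $(\A_{0},\d_{0})$ in the sense of Section~\ref{ssec:sti}, which transparently lacks a graded augmentation for the same $\epsilon(1)=\epsilon(\d e)=0$ reason.)
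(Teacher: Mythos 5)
Your overall route --- argue the contrapositive, pass to a geometrically stabilized representative $\Lambda'$, show its DGA admits no graded augmentation, and transport back via Corollary~\ref{cor:augmentation} --- is exactly the paper's (the paper compresses the middle step into the sentence preceding the corollary, citing Chekanov for the fact that the DGA of a stabilized link is trivial up to stable tame isomorphism). The problem is your justification of that middle step. From $\d'a = \pm 1 + v$ with $v$ in the augmentation ideal you conclude $\epsilon(v)=0$ ``for degree reasons,'' but this does not follow: $v$ is a sum of \emph{nonconstant} words of total degree $0$, and such a word can be a product of several degree-$0$ generators each of which augments to $1$. For instance $\d'a = 1 + bc$ with $|b|=|c|=0$ is perfectly consistent with a graded augmentation sending $b,c\mapsto 1$, since then $\epsilon(\d'a)=1+1=0$ over $\Z/2$. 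A single relation of the form $1+v\in\img\d'$ does not obstruct augmentations. What is actually needed is that the DGA of a geometric stabilization is \emph{trivial}: after a Legendrian isotopy making the zigzag small and innermost, the height estimate of Lemma~\ref{lem:action} forces the new right-cusp generator $a$ to satisfy $\d'a = \pm 1$ (or $\pm t_j^{\pm 1}$) \emph{exactly}, with no correction term $v$; only then does $\epsilon(\d'a)=0$ yield $1=0$.

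Your parenthetical ``alternative'' is a separate and more serious confusion. Being stable tame isomorphic to the \emph{algebraic} stabilization $S(\A_0,\d_0)$ of Section~\ref{ssec:sti} would not preclude augmentations at all: there $\d e_j = f_j$ (not $1$), so any graded augmentation of $(\A_0,\d_0)$ extends by $\epsilon(e_j)=\epsilon(f_j)=0$. Indeed, Corollary~\ref{cor:augmentation} rests precisely on the fact that algebraic stabilization \emph{preserves} the existence of augmentations. A geometric stabilization is reflected algebraically not by $S(\cdot)$ but by trivialization of the homology (i.e.\ $1\in\img\d'$ up to stable tame isomorphism), and that is the statement you should either cite from Chekanov or prove via the small-zigzag argument above.
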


\begin{figure}
\centerline{
\includegraphics[width=\textwidth]{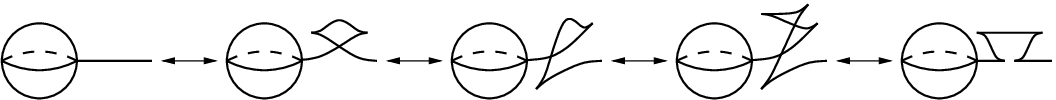}
}
\caption{A Legendrian link passing exactly once through a $1$-handle
  is Legendrian isotopic to its double stabilization. The second
  move is Gompf move 6 (see Figure~\ref{fig:gompfmoves}); the
  others are planar Legendrian isotopies.}
\label{fig:stab}
\end{figure}

\begin{rmk}\label{rmk:loose}
Suppose that the Legendrian link $\Lambda$ passes through any of the
$1$-handles exactly once ($n_j=1$
for some $j$). Then it is easy to check that $(\A,\d)$ is trivial,
because $\partial c_{11;j}^1 = 1$.
Indeed, it is the case that any
such $\Lambda$ is Legendrian isotopic to its own double stabilization
(i.e., the result of stabilizing $\Lambda$ once positively and once
negatively); see Figure~\ref{fig:stab}.

We can repeat this argument to
conclude that $\Lambda$ is Legendrian isotopic to arbitrarily high
double stabilizations of itself. It follows that the Legendrian
isotopy class of such a link is determined by its formal Legendrian
isotopy class (topological class and rotation number).

A Legendrian link in $\#^k(S^1\times S^2)$ passing through some handle
once could be considered an
imprecise analogue of a loose Legendrian knot in an overtwisted
contact $3$-manifold, i.e., a Legendrian knot whose complement is
overtwisted (see \cite{bib:EF}), in the sense that both of these are
infinitely destabilizable. It is also reminiscent of a loose
Legendrian embedding in higher dimensions \cite{bib:Murphy}.
\end{rmk}

\section{Calculations and applications}
\label{sec:calc}

In this section, we present a number of calculations of Legendrian contact
homology in connected sums of $S^1\times S^2$, as well as some applications, notably a new proof of the existence of exotic Stein structures on $\R^8$.

\subsection{The cotangent bundle of $T^2$}
\label{ssec:T2ex}

Consider the Legendrian knot $\Lambda_1 \subset \#^2(S^1\times S^2)$ from
Section~\ref{ssec:ex}. As shown in \cite{bib:Gompf}, handle
attachment along this knot yields a Stein structure on the
$D^2$-bundle over $T^2$ with Euler number $0$, that is, $DT^*T^2$;
see also Proposition~\ref{prop:cotangent}. We
will explicitly calculate the Legendrian contact homology in this
case.

The DGA for $\Lambda_1$ was computed in
Section~\ref{ssec:ex}. Recall from there that it has a differential subalgebra, the internal DGA, generated by
internal Reeb
chords $c_{ij}^p,\tilde{c}_{ij}^p$.

\begin{proposition}
The differential graded algebra for $\Lambda_1$ is stable tame
isomorphic to the internal DGA along with one additional generator,
\label{prop:DGAT2}
$a$, of degree $1$, whose differential is
\[
\d(a) = c_{12}^0 \tilde{c}_{12}^0 + t\tilde{c}_{12}^0 c_{12}^0.
\]
\end{proposition}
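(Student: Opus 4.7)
The plan is to exhibit an explicit stable tame isomorphism from the DGA $(\A,\d)$ of Section~\ref{ssec:ex} to the internal DGA augmented by a single degree-$1$ generator $a$ with the prescribed differential. I will proceed by a sequence of tame changes of variables, each chosen so that a pair of generators from $\{a_1,\ldots,a_9\}$ becomes a stabilization pair and can be eliminated.

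The first, easy reductions use the relations $\d(a_8) = -c_{12}^0 + a_3$ and $\d(a_9) = -\tilde{c}_{12}^0 - a_2$. I substitute $a_3 \mapsto a_3 + c_{12}^0$ (elementary with respect to any ordering in which $c_{12}^0$ precedes $a_3$) and $a_2 \mapsto a_2 - \tilde{c}_{12}^0$, followed by corrections $a_1 \mapsto a_1 + a_2 a_8$ and $a_1 \mapsto a_1 - a_9 c_{12}^0$, chosen so that $a_3$ and $a_2$ no longer appear outside $\d(a_8)$ and $\d(a_9)$. The pairs $\{a_8,a_3\}$ and $\{a_9,a_2\}$ then form acyclic summands and can be discarded via the stabilization step of stable tame isomorphism. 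A short computation shows that this leaves surviving generators $\hat a_1, a_4, a_5, a_6, a_7$ with $\d(\hat a_1) = \tilde{c}_{12}^0 c_{12}^0 + t^{-1} c_{12}^0 a_5$, $\d(a_4) = 1 + a_5 a_7$, $\d(a_6) = 1 + \tilde{c}_{12}^0 a_7$, and $\d(a_5)=\d(a_7)=0$.

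The main obstacle is the second step, which must handle the four external generators $a_4, a_5, a_6, a_7$. Because $\d(a_4)$ and $\d(a_6)$ begin with $1$, neither pair $(a_4,\cdot)$ nor $(a_6,\cdot)$ cancels naively: the ``$f$'' in each potential canceling pair $\d e = f$ is not a single generator but the quadratic expression $1+a_5 a_7$ or $1+\tilde{c}_{12}^0 a_7$. The idea is to use the homotopy-invertibility of the internal degree-$0$ generators furnished by the internal DGA itself: the relations $\d(\tilde{c}_{11}^1) = 1 - \tilde{c}_{12}^0 \tilde{c}_{21}^1$ and $\d(\tilde{c}_{22}^1) = 1 - \tilde{c}_{21}^1 \tilde{c}_{12}^0$ say that $\tilde{c}_{21}^1$ is a chain-level left and right inverse for $\tilde{c}_{12}^0$. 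Performing the substitutions $a_7 \mapsto a_7 - \tilde{c}_{21}^1$, $a_5 \mapsto a_5 - \tilde{c}_{12}^0$, $a_6 \mapsto a_6 - \tilde{c}_{11}^1$, and $a_4 \mapsto a_4 - \tilde{c}_{11}^1$, together with further corrections to $\hat a_1$, produces differentials of the form $\d(a_6) = \tilde{c}_{12}^0 \tilde a_7$ and $\d(a_4 - a_6) = \tilde a_5\, \tilde a_7$ whose right-hand sides have zero differential. Each of these, together with a complementary substitution, presents two of the four generators as a stabilization pair. After all eliminations, exactly one generator from the external list survives; writing $a$ for $t$ times its final form and tracking how the correction terms combine gives $\d(a) = c_{12}^0 \tilde{c}_{12}^0 + t \tilde{c}_{12}^0 c_{12}^0$, as desired.

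The last issue is to check that the composition of all these tame substitutions really defines a stable tame isomorphism in the sense of Section~\ref{ssec:sti}: each substitution must be elementary with respect to an appropriate ordering of the (countably many) generators, and the correction steps must not reintroduce eliminated generators. This is a filtration-bookkeeping exercise in the spirit of Proposition~\ref{prop:elemaut}, and is the place where the extension of the classical notion of stable tame isomorphism to the countably generated setting is essential, since the internal DGA contributes an infinite family of generators $c_{ij}^p, \tilde{c}_{ij}^p$ that are inert under the above substitutions but against which the substitutions must be compatibly ordered.
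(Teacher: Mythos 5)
Your first two stages track the paper's proof closely: the eliminations of $\{a_2,a_9\}$ and $\{a_3,a_8\}$, and the subsequent substitutions using $\tilde{c}_{11}^1$, $\tilde{c}_{12}^0$, $\tilde{c}_{21}^1$ to arrive at $\d(a_1)=c_{12}^0\tilde{c}_{12}^0+t\tilde{c}_{12}^0c_{12}^0$ together with residual differentials of the shape $\d(a_6)=\tilde{c}_{12}^0 a_7$ and $\d(a_4)=-a_5\tilde{c}_{21}^1$, are essentially what the paper does. The gap is in your final sentence of the main argument: the claim that each relation of the form $\d(a_6)=\tilde{c}_{12}^0\,\tilde a_7$, ``together with a complementary substitution, presents two of the four generators as a stabilization pair.'' A stabilization pair requires $\d(e)=f$ with $f$ a single generator, and no elementary (hence no tame) automorphism can convert $\tilde{c}_{12}^0\,\tilde a_7$ into a generator: elementary automorphisms send a generator to a unit of $\Z[t,t^{-1}]$ times itself plus lower-order terms, and $\tilde{c}_{12}^0$ is not a unit of the coefficient ring --- it is invertible only up to chain homotopy, via $\d(\tilde{c}_{11}^1)=1-\tilde{c}_{12}^0\tilde{c}_{21}^1$. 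Any attempt to correct $a_6$ by adding, say, a multiple of $\tilde{c}_{11}^1\tilde a_7$ replaces $\tilde{c}_{12}^0\tilde a_7$ by $\tilde{c}_{12}^0\tilde{c}_{21}^1\tilde a_7$ up to a genuine generator, and iterating produces an infinite regress rather than a termination.

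This is exactly what Lemma~\ref{lem:eliminate} of the paper is for, and its proof is the real content of the elimination: one stabilizes, trades $(a,b)$ for a new pair in degree two higher, and iterates countably many times, so that the end result is a tame isomorphism between two \emph{countably infinite} stabilizations. Your closing remark locates the need for the countably generated version of stable tame isomorphism only in the presence of the infinitely many internal generators $c_{ij}^p,\tilde{c}_{ij}^p$; in fact the countable stabilization is forced by the elimination of $a_4,a_5,a_6,a_7$ itself. As written, your proof does not establish the proposition; it needs Lemma~\ref{lem:eliminate} (or an equivalent infinite-stabilization argument) to finish.
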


\begin{proof}
Beginning with the differential graded algebra for $\Lambda_1$ from
Section~\ref{ssec:ex}, we successively apply the tame automorphisms
$a_2 \mapsto a_2 - \tilde{c}_{12}^0$, $a_3 \mapsto a_3 + c_{12}^0$,
$a_1 \mapsto
a_1+a_9a_3+\tilde{c}_{12}^0a_8+a_9c_{12}^0$; the
resulting differential has
\begin{align*}
\d(a_1) &= \tilde{c}_{12}^0 c_{12}^0 + t^{-1}c_{12}^0 a_5 \\
\d(a_4) &= 1+a_5a_7 \\
\d(a_6) &= 1+\tilde{c}_{12}^0 a_7 \\
\d(a_8) &= a_3 \\
\d(a_9) &= -a_2.
\end{align*}
Destabilize to eliminate the generators
$a_2,a_3,a_8,a_9$, and then successively apply the
tame automorphisms $a_4 \mapsto a_4 +
a_5(\tilde{c}_{22}^1a_7+\tilde{c}_{21}^1a_6)$,
$a_1 \mapsto t^{-1}a_1 + t^{-1}c_{12}^0 (-a_4\tilde{c}_{12}^0+a_5 \tilde{c}_{22}^1)$,
$a_4 \mapsto a_4+\tilde{c}_{11}^1$, $a_5 \mapsto a_5+\tilde{c}_{12}^0$, $a_6 \mapsto
a_6+\tilde{c}_{11}^1$, $a_7 \mapsto a_7-\tilde{c}_{21}^1$. This gives
\begin{align*}
\d(a_1) &= c_{12}^0 \tilde{c}_{12}^0 + t\tilde{c}_{12}^0 c_{12}^0\\
\d(a_4) &= -a_5 \tilde{c}_{21}^1 \\
\d(a_6) &= \tilde{c}_{12}^0 a_7.
\end{align*}
To complete the proof, we need to eliminate $a_4,a_5,a_6,a_7$; this is
done in the lemma that follows, with $(v,w,x,y) = (\tilde{c}_{11}^1,\tilde{c}_{22}^1,\tilde{c}_{12}^0,\tilde{c}_{21}^1)$ allowing us to eliminate $(a,b) = (a_4,a_5)$ and $(a,b) = (a_6,a_7)$ in turn.
\end{proof}

\begin{lemma}
Let $(\A,\d)$ be a differential graded algebra whose generators
include $v,w,x,y$ with $|v|=|w|=1$, $|x|=|y|=0$,
\[
\d(x)=\d(y)=0, ~~~~~
\d(v)=1-xy, ~~~~~\d(w)=1-yx.
\]
Let $(\A',\d)$ be the differential graded
algebra given by appending two generators $a,b$ to the generators of
$\A$, with $|a|=|b|+1$ and differential given by the differential on
$\A$, along with $\d(b)=0$ and $\d(a)$ equal to one of the following:
\[
\d(a) = \pm xb, \pm bx, \pm yb, \pm by.
\]
Then $(\A',\d)$ is stable tame isomorphic to $(\A,\d)$.
\label{lem:eliminate}
\end{lemma}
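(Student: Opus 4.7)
The plan is to construct explicit stable tame isomorphisms for each of the six variants of $\partial a$; these follow by parallel arguments using the symmetry between $v,w$ and $x,y$ and tracking Leibniz signs, so I focus on the representative case $\partial a = xb$. The central algebraic input is the identity
\[
\partial(ya + wb) = y(xb) + (1-yx)b = b,
\]
which exhibits $b$ as a boundary in $\A'$ and captures the fact that $x$ is a cohomology unit with homotopy inverse $y$, witnessed by $v$ and $w$.

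To convert this identity into a stable tame isomorphism, I would first stabilize $(\A', \partial)$ by a canceling pair $(c,d)$ with $\partial c = d$, $|c|=|a|$, $|d|=|b|$, and apply the elementary automorphisms $c \mapsto c + ya + wb$ followed by $d \mapsto d + b$. A direct Leibniz-rule calculation using the identity above shows that these replace $(c,d)$ by a canceling pair whose differential ``absorbs'' the generator $b$. The strategy is then to continue with further substitutions, together with appropriately chosen stabilizations of $(\A,\partial)$, until both DGAs are brought into a common form that destabilizes to $(\A, \partial)$.

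The main obstacle is that $x$ is not a unit in the coefficient ring $R$, so no finite sequence of elementary automorphisms (whose diagonal coefficients must be units) can convert $\partial a = xb$ into $\partial a = b$ directly; concretely, the naive chain-map condition for a tame isomorphism with a single stabilization pair on each side demands that $1-x$ be a boundary in $\A$, which it is not. Overcoming this almost certainly requires invoking the generalized notion of stable tame isomorphism from Section~\ref{ssec:sti}, which allows countably many stabilization generators and infinite compositions of elementary automorphisms compatible with a filtration. I expect the proof to set up a countable collection of stabilization pairs $(c_i, d_i)$ and iteratively apply the boundary identity for $b$ to telescope the $x$-twist across these pairs, with a natural filtration coming from the powers of $x$ (or $xy$) appearing in successive corrections. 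Verifying convergence in this generalized sense and checking that the same construction (with the roles of $v,w,x,y$ interchanged and signs tracked via Leibniz) handles all six variants of $\partial a$ constitutes the bulk of the remaining technical work.
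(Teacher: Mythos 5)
Your proposal follows essentially the same route as the paper's proof: the identity $\partial(ya+wb)=b$ is exactly the paper's opening move (stabilize by a pair $(e_{11},e_{12})$ and apply $e_{12}\mapsto e_{12}-ya-wb$, $e_{11}\mapsto e_{11}+we_{12}$, $a\mapsto a+xe_{12}$), and the paper likewise resolves the non-invertibility of $x$ by iterating this absorption through a countably infinite sequence of stabilization pairs, with the infinite composition filtered by degree --- each round pushes the leftover pair up by $2$ in grading --- rather than by powers of $x$. The only cosmetic difference is that the paper's iteration alternates between the $xb$- and $ya$-type differentials (round one trades $\d(a)=xb$ for $\d(e_{11})=-ya$, round two trades that for $\d(e_{21})=xe_{11}$), which is also why all the variants listed in the statement are needed in a single run.
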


\begin{proof}
We will prove the lemma when $\d(a)=xb$; the other cases are clearly similar. Stabilize $(\A',\d)$ once by adding $e_{11},e_{12}$ with
$|e_{11}|=|e_{12}|+1=|a|+1$ and $\d(e_{11})=e_{12}, \d(e_{12})=0$. Applying the
successive elementary automorphisms $e_{12} \mapsto e_{12}-ya-wb$, $e_{11}
\mapsto e_{11} + we_{12}$, $a \mapsto a+xe_{12}$ yields
\[
\d(e_{11}) = -ya,~
\d(e_{12})=b,~
\d(a)=\d(b)=0.
\]
Destabilize once by removing $e_{12},b$, and stabilize once by adding
$e_{21},e_{22}$ with $|e_{21}|=|e_{22}|+1=|a|+2$ and
$\d(e_{21})=e_{22}, \d(e_{22})=0$. Applying the successive elementary
automorphisms
$e_{22} \mapsto e_{22}+xe_{11}-va$, $e_{21}\mapsto e_{21}+ve_{22}$,
$e_{11} \mapsto e_{11}-ye_{22}$ yields
\[
\d(e_{21}) = xe_{11},~
\d(e_{22}) = a,~
\d(a)=\d(e_{11}) = 0.
\]
Finally, destabilize once by removing $e_{22},a$ to obtain an algebra
generated by the generators of $\A$ along with $e_{21},e_{11}$ with
$|e_{21}|=|e_{11}|+1=|a|+2$ and $\d(e_{21})=xe_{11},\d(e_{11})=0$.

This procedure shows that $\A'$ is stable tame isomorphic to the same algebra but with the gradings of $a,b$ both increased by $2$, and if we omit $a,b$, then the resulting algebra is $\A$. We can then iterate the procedure, adding generators to $\A'$ in successively higher grading, to conclude the following. Let $(S(\A'),\d)$ be the stabilization of $(\A',\d)$ obtained by adding $e_{i1},e_{i2}$ for all $i\geq 1$, with $|e_{i1}|=|e_{i2}|+1=|a|+i$ and $\d(e_{i1}) = e_{i2}$, $\d(e_{i2}) = 0$. Then $(S(\A'),\d)$ is tamely isomorphic to $(S(\A'),\d')$, where $\d'$ is the same differential as $\d$ except
\begin{align*}
\d'(e_{12}) &= b, & \d'(b) &= 0, &&\\
\d'(e_{22}) &= a, & \d'(a) &= 0, &&\\
\d'(e_{i+2,2}) &= e_{i1}, & \d'(e_{i1}) &= 0, & i&\geq 1.
\end{align*}
But $(S(\A'),\d')$ is a stabilization of $(\A,\d)$, and the lemma is proven.
\end{proof}

From Proposition~\ref{prop:DGAT2}, we can calculate the Legendrian contact homology of $\Lambda_1$ in degree $0$, which is $H_0(\A,\d)$ where $(\A,\d)$ is the DGA for $\Lambda_1$.
In particular, we have the following result.

\begin{proposition}
If we set $t=-1$, then
\label{prop:LCHT2}
the Legendrian contact homology of $\Lambda_1$ in degree $0$ is
\[
\Z[x_1^{\pm 1},x_2^{\pm 1}] \cong \Z[\pi_1(T^2)].
\]
\end{proposition}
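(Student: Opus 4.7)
The plan is to invoke Proposition~\ref{prop:DGAT2}, which replaces the DGA of $\Lambda_1$ by a much smaller stable tame isomorphic DGA consisting of the two internal DGAs together with one extra generator $a$ of degree $1$ satisfying $\d a = c_{12}^0\tilde c_{12}^0 + t\tilde c_{12}^0 c_{12}^0$. Since stable tame isomorphism preserves homology and commutes with the ring-level specialization $t\mapsto -1$, it suffices to compute $H_0$ of this small DGA with coefficients in $\Z$.

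First I would enumerate all generators of degrees $0$ and $1$. The grading formula $|c_{ij}^p| = 2p-1+m(i)-m(j)$ from Section~\ref{ssec:intdga}, together with the Maslov potential data $m(1)-m(2) = 1$ recorded in Section~\ref{ssec:ex}, yields exactly the four degree-$0$ generators $c_{12}^0,\tilde c_{12}^0, c_{21}^1, \tilde c_{21}^1$ and the five degree-$1$ generators $c_{11}^1, c_{22}^1, \tilde c_{11}^1, \tilde c_{22}^1, a$. Because there are no negative-degree generators, $\d$ vanishes identically on $\A_0$, and hence $H_0$ equals $\A_0$ modulo the two-sided $\A_0$-ideal generated by $\d$ applied to these five degree-$1$ generators; the reduction to just the five generators uses the Leibniz rule together with $\d|_{\A_0}=0$.

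The four internal relations $\d c_{11}^1 = 1 - c_{12}^0 c_{21}^1$, $\d c_{22}^1 = 1 - c_{21}^1 c_{12}^0$ and their tilded analogues force $c_{21}^1 = (c_{12}^0)^{-1}$ and $\tilde c_{21}^1 = (\tilde c_{12}^0)^{-1}$, so $c_{12}^0$ and $\tilde c_{12}^0$ become two-sided units; the fifth relation $\d a = 0$ with $t=-1$ is exactly the commutation $c_{12}^0\tilde c_{12}^0 = \tilde c_{12}^0 c_{12}^0$. Setting $x_1 := c_{12}^0$ and $x_2 := \tilde c_{12}^0$, the quotient is the commutative Laurent ring $\Z[x_1^{\pm 1}, x_2^{\pm 1}]$, which is identified with $\Z[\pi_1(T^2)]$ via $\pi_1(T^2)\cong\Z^2$. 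The main (mild) obstacle is verifying completeness of the degree-$1$ generator list, but this reduces to solving $2p-1+m(i)-m(j) = 1$ under $m(1)-m(2)=1$ and $p \geq 0$, which forces $p = 1$ and $i = j$; with completeness in hand, no further relations arise and the presentation collapses to the asserted commutative Laurent algebra.
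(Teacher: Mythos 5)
Your proposal is correct and follows essentially the same route as the paper's proof: reduce via Proposition~\ref{prop:DGAT2}, observe that every degree-$0$ element is a cycle since the algebra is supported in nonnegative degrees, and quotient by the two-sided ideal generated by the differentials of the five degree-$1$ generators, which make $c_{21}^1,\tilde c_{21}^1$ inverses of $c_{12}^0,\tilde c_{12}^0$ and force the two units to commute. Your extra bookkeeping (verifying completeness of the degree-$0$ and degree-$1$ generator lists from the grading formula, and justifying that boundaries in degree $0$ come only from degree-$1$ generators via the Leibniz rule) is a sound elaboration of steps the paper leaves implicit.
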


\begin{proof}
By Proposition~\ref{prop:DGAT2}, we want to calculate $H_0(\A,\d)$ where $(\A,\d)$ is the internal DGA along with $a$. Since $\A$ is supported in nonnegative degree, the subalgebra in degree $0$, which is generated by $c_{12}^0,c_{21}^1,\tilde{c}_{12}^0,\tilde{c}_{21}^1$, consists entirely of cycles. The boundaries in degree $0$ are generated by the differentials of generators of degree $1$:
$\d(a) = c_{12}^0 \tilde{c}_{12}^0 - \tilde{c}_{12}^0 c_{12}^0$, $\d(c_{11}^1) = 1-c_{12}^0c_{21}^1$, $\d(c_{22}^1) = 1-c_{21}^1c_{12}^0$, $\d(\tilde{c}_{11}^1) = 1-\tilde{c}_{12}^0\tilde{c}_{21}^1$, $\d(\tilde{c}_{22}^1) = 1-\tilde{c}_{21}^1\tilde{c}_{12}^0$. Thus in homology, $c_{21}^1$ and $\tilde{c}_{21}^1$ are the multiplicative inverses of $c_{12}^0$ and $\tilde{c}_{12}^0$ respectively. If we write $c_{12}^0 = x_1$ and $\tilde{c}_{12}^0 = x_2$, then in homology, $\d(a)$ causes $x_1$ and $x_2$ to commute, and thus $H_0(\A,\d) \cong \Z[x_1^{\pm 1},x_2^{\pm 1}]$, as desired.
\end{proof}

\begin{remark}\label{rmk:wrapped}
It can be shown that the entire Legendrian contact homology of $\Lambda_1$, $H_*(\A,\d)$, is supported in degree $0$, but we omit the proof here.

From the point of view of the symplectic homology of $T^{\ast} T^{2}$, the Legendrian DGA is isomorphic to the (twisted) linearized contact homology of the co-core disk of the surgery, see \cite[Section 5.4]{bib:BEE}. This linearized contact homology is isomorphic to the wrapped Floer homology of the co-core disk (i.e.~the fiber in $T^{\ast} T^{2}$), see e.g.~\cite[Proof of Theorem 7.2]{EHK}. The wrapped homology of the fiber is in turn is isomorphic to the homology of the based loop space of $T^{2}$, see \cite{bib:APS, bib:A}, which is $\Z[\pi_1(T^{2})]$ in agreement with our calculation.

We also point out that the fact that we need to take $t=-1$ in Proposition \ref{prop:LCHT2} corresponds to changing  our choice of the Lie group spin structure on the knot in defining the signs to the bounding spin structure which extends over the core disk of the handle as is required in the construction of the surgery isomorphism.
\end{remark}


\subsection{The cotangent bundle of $\Sigma_g$}

This is a generalization of the previous example. Let $\Lambda_1$ be
the Legendrian knot in $\#^2 (S^1\times S^2)$ given in the previous
section, and let $\Lambda_2$ be the knot in $\#^4 (S^1\times S^2)$ drawn
in Figure~\ref{fig:genus2b}. This construction generalizes
to $\Lambda_g \subset \#^{2g} (S^1\times S^2)$ for any $g \geq 1$.
It can readily be calculated that $\Lambda_g$ is nullhomologous and
has Thurston--Bennequin number $2g-1$ and rotation number $0$.

\begin{figure}
\centerline{
\includegraphics[width=\textwidth]{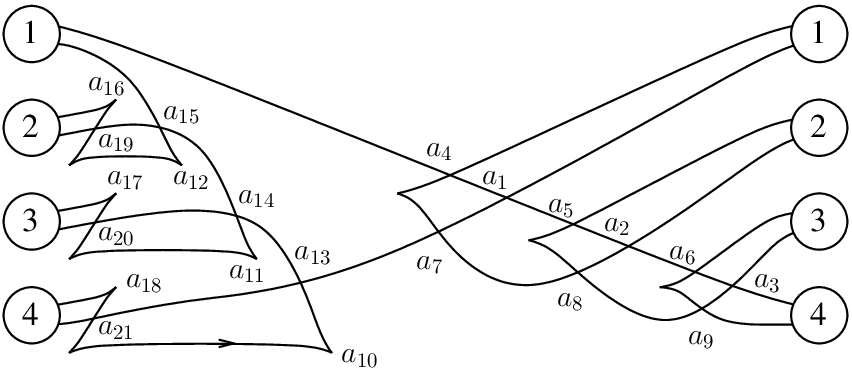}
}
\caption{The Legendrian knot $\Lambda_2 \subset \#^4 (S^1\times S^2)$.}
\label{fig:genus2b}
\end{figure}

The significance of $\Lambda_g$ is contained in the following result.

\begin{proposition}
Handle attachment along $\Lambda_g$ gives a Stein structure on
$DT^*\Sigma_g$, the disk cotangent bundle of the Riemann surface of
genus $g$.
\label{prop:cotangent}
\end{proposition}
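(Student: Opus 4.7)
The plan is to combine two standard ingredients: a well-known handle decomposition of $DT^*\Sigma_g$ coming from the $4g$-gon polygonal model of $\Sigma_g$, and Gompf's theorem that Legendrian surgery on a Stein filling produces a Stein filling. The $2$-handle attachment along $\Lambda_g$ happens to be exactly the Legendrian surgery realizing the standard Kirby-Stein diagram of $DT^*\Sigma_g$, so once the topological identification is in place the Stein statement is automatic.

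First I would recall (or rederive directly from the $4g$-gon) that $DT^*\Sigma_g$ admits a handle decomposition consisting of one $0$-handle, $2g$ $1$-handles dual to standard generators $a_1,b_1,\ldots,a_g,b_g$ of $\pi_1(\Sigma_g)$, and a single $2$-handle attached along the boundary of the top cell of $\Sigma_g$, which represents the product of commutators $\prod_{i=1}^g [a_i,b_i]$ in $\pi_1\bigl(\natural^{2g}(S^1\times D^3)\bigr) = F_{2g}$, with framing equal to the Euler number of $T^*\Sigma_g$, namely $-\chi(\Sigma_g) = 2g-2$. Next, from Figure~\ref{fig:genus2b} for $g=2$ and its evident extension, I would verify that, as a smooth knot in $\#^{2g}(S^1\times S^2)$, $\Lambda_g$ traces out precisely this commutator word as one follows it through successive pairs of $1$-handles (each pair of handles being dual to a pair $(a_i,b_i)$). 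One also needs to check that the topological blackboard framing induced by $\Lambda_g$, given by the Stein surgery rule as $tb(\Lambda_g)-1$, equals $2g-2$; this is immediate from the recorded value $tb(\Lambda_g)=2g-1$.

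Finally I would invoke Gompf's theorem \cite{bib:Gompf}: attaching a $4$-dimensional $2$-handle to a Stein $4$-manifold along a Legendrian knot in its boundary with framing $tb-1$ yields a new Stein $4$-manifold. Since $\natural^{2g}(S^1\times D^3)$ is the subcritical Stein filling of $\#^{2g}(S^1\times S^2)$ obtained by attaching $2g$ Weinstein $1$-handles to $B^4$, Legendrian surgery along $\Lambda_g$ gives a Stein structure on the smooth $4$-manifold just identified with $DT^*\Sigma_g$. The main obstacle is really the bookkeeping in the middle step: one must carefully verify, from the routing of the strands through the $1$-handles in the front for $\Lambda_g$, that the underlying Kirby diagram represents the standard relator $\prod[a_i,b_i]$ with the correct framing, rather than some other word that merely has the right abelianization. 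Once this combinatorial identification is made, the Stein conclusion follows immediately.
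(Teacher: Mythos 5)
Your proposal is correct and follows essentially the same route as the paper: identify the smooth $4$-manifold obtained by the handle attachment as a disk bundle over $\Sigma_g$, pin down which bundle via the Euler number $tb(\Lambda_g)-1=2g-2$, and rely on Gompf's Legendrian-surgery theorem for the Stein structure. The only cosmetic difference is that the paper carries out the topological identification by isotoping a known (reflected-spheres) Kirby diagram of the disk bundle into the form of Figure~\ref{fig:genus2b}, rather than by reading off the commutator relator from the $4g$-gon model as you propose.
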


\begin{figure}
\labellist
\small\hair 2pt
\pinlabel {1} at 50 144
\pinlabel {2} at 15 109
\pinlabel {3} at 15 55
\pinlabel {4} at 50 19
\pinlabel {1} at 105 19
\pinlabel {2} at 140 55
\pinlabel {3} at 140 109
\pinlabel {4} at 105 144
\pinlabel {1} at 253 149
\pinlabel {2} at 253 104
\pinlabel {3} at 253 59
\pinlabel {4} at 253 14
\pinlabel {1} at 424 149
\pinlabel {2} at 424 104
\pinlabel {3} at 424 59
\pinlabel {4} at 424 14
\endlabellist
\centerline{
\includegraphics[width=4in]{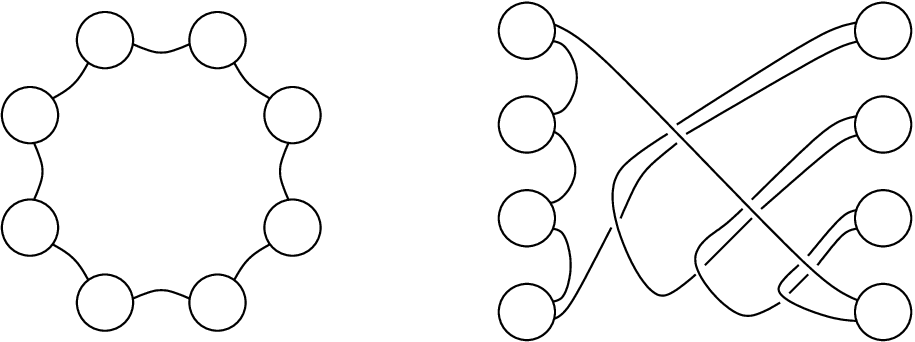}
}
\caption{Obtaining a disk bundle over $\Sigma_2$ by handle attachment.}
\label{fig:genus2a}
\end{figure}

\begin{proof}
We restrict ourselves to the case $g=2$ (the general case is similar), and
follow \cite{bib:Gompf}. A (slightly unorthodox)
handle decomposition of a disk bundle
over $\Sigma_2$ is given by the left diagram in
Figure~\ref{fig:genus2a}. Here the circles represent spheres,
identified in pairs via reflection, and the arcs represent a knot in
$\#^4(S^1\times S^2)$ along which a $2$-handle is attached. Via
isotopy, we may draw this handle decomposition in more standard form
as the right diagram in Figure~\ref{fig:genus2b}, where the spheres
are now identified pairwise through reflection in a vertical
plane. The Legendrian knot $\Lambda_2$ is simply a Legendrian form of
the knot in Figure~\ref{fig:genus2b}; note that it wraps around itself
near the left spheres labeled $2,3,4$, but this does not change the
isotopy class of the knot.

The particular disk bundle over $\Sigma_2$ determined by handle
attachment along $\Lambda_2$ has Euler number given by
$tb(\Lambda_2)-1 = 2g-2$. This agrees with the Euler number of
$DT^*\Sigma_2$, and the proposition follows.
\end{proof}

We now calculate the Legendrian contact homology of $\Lambda_g$. As in
the previous section, the
differential graded algebra for $\Lambda_g$ has an internal subalgebra
generated by $2g$ copies of $(\A_2,\d_2)$.

\begin{proposition}
The differential graded algebra for $\Lambda_g$ is stable tame
isomorphic to the internal subalgebra with one additional generator
$a$ of degree $1$ whose differential is
\[
\d(a) = c_{12;2g}^0c_{12;2g-1}^0\cdots c_{12;2}^0c_{12;1}^0 +t c_{12;1}^0c_{12;2}^0\cdots c_{12;2g-1}^0c_{12;2g}^0.
\]
\end{proposition}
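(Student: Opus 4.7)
The plan is to proceed in parallel with the proof of Proposition~\ref{prop:DGAT2}, now applied to the resolution of $\Lambda_g$. First I would draw the resolved $xy$-diagram of $\Lambda_g$, which is a straightforward generalization of the right-hand side of Figure~\ref{fig:torus-example2}: near each of the $2g$ 1-handles sits a local block containing the half-twist crossing (the generator $c_{12;\ell}^0$) together with auxiliary crossings playing exactly the role of $a_2,a_3,a_5,a_7,a_8,a_9$ in the $g=1$ picture, and these local blocks are strung together along a single ``main loop''. Accordingly, the non-internal generators of $\A$ are one distinguished crossing $a$ sitting on the main loop (near the base point $\ast$), together with six auxiliary generators $a_{i,\ell}$ for each $\ell=1,\dots,2g$.

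For each handle $\ell$ the differentials of the auxiliary generators take the same form as in the $g=1$ case, so the same sequence of tame automorphisms used in the proof of Proposition~\ref{prop:DGAT2} can be applied handle-by-handle. Namely, one substitutes away $a_{8,\ell}$ and $a_{9,\ell}$ by exchanging them for $\pm c_{12;\ell}^0$; then destabilizes the cancelling pairs $(a_{4,\ell},a_{5,\ell})$ and $(a_{6,\ell},a_{7,\ell})$ made visible by $\d a_{4,\ell}=1+a_{5,\ell}a_{7,\ell}$ and $\d a_{6,\ell}=1+c_{12;\ell}^0 a_{7,\ell}$; and finally applies Lemma~\ref{lem:eliminate} to clear out the two leftover ``residue'' generators at each handle. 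The crucial point is that these reductions are local: each auxiliary generator $a_{i,\ell}$ appears in the differential only of some other $a_{j,\ell}$ with the same $\ell$ or of the global generator $a$, so the reductions at distinct values of $\ell$ commute and can be carried out independently.

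After the $2g$ block reductions, $(\A,\d)$ has been reduced by stable tame isomorphism to the internal DGA together with the single remaining generator $a$ of degree $1$. Its differential is computed from the two global immersed disks with positive corner at $a$ in the resolved diagram: one disk sweeps around the main loop in the orientation direction, picking up negative corners at the handles in the order $1,2,\dots,2g$ and crossing $\ast$ once (so contributing a factor of $t$); the other sweeps around in the opposite direction, picking up corners in reverse order $2g,2g-1,\dots,1$ and missing $\ast$. After the handle-block substitutions are propagated into $\d(a)$, each visit to handle $\ell$ contributes a factor of $c_{12;\ell}^0$, yielding
\[
\d(a) \;=\; c_{12;2g}^0 c_{12;2g-1}^0\cdots c_{12;2}^0 c_{12;1}^0 \;+\; t\,c_{12;1}^0 c_{12;2}^0\cdots c_{12;2g-1}^0 c_{12;2g}^0,
\]
as claimed.

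The main obstacle is not any single step but rather the combinatorial bookkeeping: one must verify that the prescribed tame automorphisms, when composed in an order that simplifies $\d$ block by block, do not reintroduce auxiliary generators into differentials that have already been simplified, and that the signs and the single power of $t$ in $\d(a)$ come out correctly after all cumulative substitutions. The locality observation above takes care of the interference question; the final sign calculation is determined by the orientation-sign rules of Section~\ref{ssec:dga} applied to the two global disks and reduces, block by block, to the corresponding calculation already performed in the $g=1$ case of Proposition~\ref{prop:DGAT2}.
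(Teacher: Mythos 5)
Your overall strategy (reduce by tame automorphisms in parallel with the $g=1$ case, then clear residual generators with Lemma~\ref{lem:eliminate}) is the same as the paper's, but the structural claim on which your bookkeeping rests --- that each auxiliary generator appears only in differentials of generators attached to the \emph{same} handle or of the global generator $a$, so that the $2g$ block reductions are local and commute --- is false for the actual resolved diagram, and this is a genuine gap. Already for $g=2$ the non-internal differential contains $\d(a_2)=-a_4-a_5a_8$ and $\d(a_3)=-a_5-a_6a_9$, where $a_4,a_7$ belong to handle $1$, $a_5,a_8$ to handle $2$, and $a_6,a_9$ to handles $3$ and $4$ (via $\d(d_\ell)=\pm a_{?}-c_{12;\ell}^0$); the left-cusp generators chain all the handles together, and eliminating them forces a cascade of substitutions such as $a_4\mapsto -a_4-a_5a_8$, $a_5\mapsto -a_5-a_6a_9$ whose effect on $\d(a_1)$ accumulates cross-handle products. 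The paper's proof handles this with a single nested automorphism of $a_1$ (involving terms like $c_{12;4}^0c_{12;3}^0c_{12;2}^0d_1$) and auxiliary elements $g_j,h_j$ mixing generators from several handles; none of this is recoverable from independent per-handle reductions.

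Relatedly, your description of $\d(a)$ as coming from two global disks each with $2g$ negative corners at the handles does not match the diagram: the actual differential is $\d(a_1)=-a_4a_7+t^{-1}c_{12;1}^0a_{15}a_{14}a_{13}$, so one of the two disks has only \emph{two} negative corners (at crossings adjacent to the cusps), and the monomial $c_{12;2g}^0\cdots c_{12;1}^0$ only emerges after the full cascade of substitutions unfolds $a_4$ and $a_7$ into products of handle generators. The conclusion is correct and the target normal form is right, but to close the gap you would need to exhibit the explicit ordered, cross-handle sequence of tame automorphisms (as the paper does) rather than appeal to locality.
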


\begin{proof}
We will assume $g=2$; the general case is similar.
Label crossings in the resolution of $\Lambda_2$ as follows:
the crossings corresponding to front crossings and right cusps are
labeled in Figure~\ref{fig:genus2b}; there are four additional
crossings $d_1,d_2,d_3,d_4$, corresponding to the half-twists at the
right of the diagram for handles $1$, $2$, $3$, $4$.
Pick a base point along the strand of $\Lambda_2$ connecting crossing $a_4$ leftwards to handle $1$. The
non-internal differential is given by
\begin{align*}
\d(a_1) &= -a_4a_7+t^{-1}c_{12;1}^0a_{15}a_{14}a_{13} &
\d(a_{16}) &= 1+c_{12;2}^0a_{19} \\
\d(a_2) &= -a_4-a_5a_8 & \d(a_{17}) &= 1+c_{12;3}^0a_{20} \\
\d(a_3) &= -a_5-a_6a_9 & \d(a_{18}) &= 1+c_{12;4}^0a_{21} \\
\d(a_{10}) &= 1+a_{13}a_{21} & \d(d_1) &= a_7-c_{12;1}^0 \\
\d(a_{11}) &= 1+a_{14}a_{20} & \d(d_2) &= a_8-c_{12;2}^0 \\
\d(a_{12}) &= 1+a_{15}a_{19} & \d(d_3) &= a_9-c_{12;3}^0 \\
&& \d(d_4) &= -a_6-c_{12;4}^0
\end{align*}
and the differential of all other non-internal generators is $0$.

Applying the tame automorphism
\begin{align*}
a_1 &\mapsto a_1 + a_2a_7-a_3a_8a_7 + d_{4}a_9a_8a_7 + c_{12;4}^0d_{3}a_8a_7 +
c_{12;4}^0c_{12;3}^0d_{2}a_7 \\
& \qquad +
c_{12;4}^0c_{12;3}^0c_{12;2}^0d_1
\end{align*}
yields $\d(a_1) = c_{12;4}^0c_{12;3}^0c_{12;2}^0c_{12;1}^0 +
t^{-1}c_{12;1}^0a_{15}a_{14}a_{13}$.
Next define the auxiliary quantities $g_2,g_3,g_4,h_2,h_3,h_4$ by
\begin{align*}
g_2 &= c_{21;2}^1 a_{16} + c_{22;2}^1 a_{19} &
h_2 &= -a_{12}c_{12;2}^0+a_{15}g_2c_{12;2}^0+a_{15}c_{22;2}^1\\
g_3 &= c_{21;3}^1 a_{17} + c_{22;3}^1 a_{20} &
h_3 &= -a_{11}c_{12;3}^0+a_{14}g_3c_{12;3}^0+a_{14}c_{22;3}^1 \\
g_4 &= c_{21;4}^1 a_{18} + c_{22;4}^1 a_{21} &
h_4 &= -a_{10}c_{12;4}^0+a_{13}g_4c_{12;4}^0+a_{13}c_{22;4}^1,
\end{align*}
and note that $\d(g_2) = a_{19}+c_{21;2}^1$,
$\d(g_3) = a_{20}+c_{21;3}^1$, $\d(g_4) = a_{21}+c_{21;4}^1$, $\d(h_2) = a_{15}-c_{12;2}^0$, $\d(h_3) = a_{14}-c_{12;3}^0$, $\d(h_4) = a_{13}-c_{12;4}^0$. Thus applying the tame automorphism
\[
a_1 \mapsto t^{-1}(a_1 + c_{12;1}^0 h_2a_{14}a_{13} +
c_{12;1}^0c_{12;2}^0h_3a_{13}+
c_{12;1}^0c_{12;2}^0c_{12;3}^0h_4)
\]
now gives $\d(a_1) = tc_{12;4}^0c_{12;3}^0c_{12;2}^0c_{12;1}^0 +c_{12;1}^0c_{12;2}^0c_{12;3}^0c_{12;4}^0$.

Next, the succession of tame automorphisms $a_4 \mapsto -a_4-a_5a_8$, $a_5 \mapsto -a_5-a_6a_9$, $a_6\mapsto a_6-c_{12;4}^0$, $a_7\mapsto a_7+c_{12;1}^0$, $a_8\mapsto a_8+c_{12;2}^0$, $a_9 \mapsto a_9+c_{12;3}^0$ gives
\[
\d(a_2) = a_4, ~\d(a_3) = a_5, ~\d(d_{1}) = a_7, ~\d(d_{2}) = a_8, ~
\d(d_{3}) = a_9, ~ \d(d_{4}) = a_6,
\]
and we can destabilize to eliminate the generators $a_2,\ldots,a_9,d_{1},d_{2},d_{3},d_{4}$.
Finally, apply the successive tame automorphisms
$a_{10} \mapsto a_{10}+a_{13}g_4+c_{11;4}^1$,
$a_{11} \mapsto a_{11}+a_{14}g_3+c_{11;3}^1$,
$a_{12} \mapsto a_{12}+a_{15}g_2+c_{11;2}^1$,
$a_{16} \mapsto a_{16}+c_{11;2}^1$,
$a_{17} \mapsto a_{17}+c_{11;3}^1$,
$a_{18} \mapsto a_{18}+c_{11;4}^1$,
$a_{13} \mapsto a_{13}+c_{12;4}^0$,
$a_{14} \mapsto a_{14}+c_{12;3}^0$,
$a_{15} \mapsto a_{15}+c_{12;2}^0$,
$a_{19} \mapsto a_{19}-c_{21;2}^1$,
$a_{20} \mapsto a_{20}-c_{21;3}^1$,
$a_{21} \mapsto a_{21}-c_{21;4}^1$ to get
\begin{align*}
\d(a_{10}) &= a_{13}c_{21;4}^1 & \d(a_{16}) &= c_{12;2}^0 a_{19} \\
\d(a_{11}) &= a_{14}c_{21;3}^1 & \d(a_{17}) &= c_{12;3}^0 a_{20} \\
\d(a_{12}) &= a_{15}c_{21;2}^1 & \d(a_{18}) &= c_{12;4}^0 a_{21},
\end{align*}
and we can eliminate $a_{10},\ldots,a_{21}$ by Lemma~\ref{lem:eliminate}. What remains is the internal subalgebra and $a_1$.
\end{proof}

\begin{proposition}
If we set $t=-1$, then the Legendrian contact homology of $\Lambda_g$ in degree $0$ is
\[
\Z \langle x_1^{\pm 1},x_2^{\pm 1},\ldots,x_{2g}^{\pm 1}\rangle \, / \, (x_1x_2\cdots x_{2g}-x_{2g}\cdots x_2x_1) \cong  \Z[\pi_1(\Sigma_g)].
\]
\end{proposition}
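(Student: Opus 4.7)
The plan is to start from the preceding proposition, which reduces the DGA of $\Lambda_g$ up to stable tame isomorphism to the internal DGA, consisting of $2g$ independent copies of $(\A_2, \d_2)$, together with a single additional generator $a$ in degree $1$ whose differential is specified. Since stable tame isomorphism induces an isomorphism on homology, it suffices to compute the degree $0$ homology of this simplified DGA, working over $\Z$ after setting $t=-1$.

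First I would handle the degree $0$ part of each copy of the internal subalgebra. With the Maslov potentials on the two strands through a given $1$-handle differing by $1$, the only degree $0$ generators are $c_{12;\ell}^0$ and $c_{21;\ell}^1$, both of which are cycles (there are no generators in degree $-1$). The degree $1$ generators $c_{11;\ell}^1$ and $c_{22;\ell}^1$ produce the boundaries $1 - c_{12;\ell}^0 c_{21;\ell}^1$ and $1 - c_{21;\ell}^1 c_{12;\ell}^0$ respectively, which force $c_{12;\ell}^0$ and $c_{21;\ell}^1$ to be mutually two-sided inverses in $H_0$. Setting $x_\ell := [c_{12;\ell}^0]$, the degree $0$ homology contributed by the $2g$ internal subalgebras is therefore the free associative algebra
\[
\Z\langle x_1^{\pm 1}, \ldots, x_{2g}^{\pm 1}\rangle.
\]

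Next, the added generator $a$ has degree $1$, so it contributes only to boundaries in degree $0$. Its differential $\d(a) = x_{2g} x_{2g-1}\cdots x_1 + t\, x_1 x_2\cdots x_{2g}$ becomes $\d(a) = x_{2g}\cdots x_1 - x_1\cdots x_{2g}$ upon setting $t=-1$, which introduces the single relation $x_1 x_2\cdots x_{2g} = x_{2g}\cdots x_2 x_1$. Thus the degree $0$ Legendrian contact homology is
\[
\Z\langle x_1^{\pm 1}, \ldots, x_{2g}^{\pm 1}\rangle \, / \, (x_1\cdots x_{2g} - x_{2g}\cdots x_1).
\]

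The remaining and main difficulty is to identify this as $\Z[\pi_1(\Sigma_g)]$, i.e., to recognize the one-relator group $G := \langle x_1,\ldots,x_{2g} \mid x_1 x_2\cdots x_{2g}\, x_1^{-1} x_2^{-1}\cdots x_{2g}^{-1}\rangle$ as a surface group. For $g=1$ the relator is literally the commutator $[x_1,x_2]$, recovering Proposition~\ref{prop:LCHT2}. For $g\geq 2$ the relator is not in the standard Fuchsian form, and I would argue geometrically: the presentation $2$-complex $X$ has one $0$-cell, $2g$ $1$-cells, and one $2$-cell of boundary length $4g$, hence Euler characteristic $2-2g$. Tracing the corners of the attaching $2$-cell in the link of the unique vertex, I would verify that these $4g$ edges close up into a single cycle, so that the link of the vertex is a single circle. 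This implies $X$ is a closed oriented surface, necessarily $\Sigma_g$, and Seifert--van Kampen then gives $G \cong \pi_1(\Sigma_g)$. (Equivalently one may verify this by an explicit Tietze transformation rewriting the relator as a product of commutators, but the link/Euler-characteristic argument is cleaner.) The result for degree $0$ Legendrian contact homology follows.
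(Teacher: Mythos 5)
Your proof is correct and follows essentially the same route as the paper's: reduce to the internal DGA plus $a$ via the preceding proposition, note that the degree-$0$ boundaries are generated by $\d(c_{11;\ell}^1)$, $\d(c_{22;\ell}^1)$ (which make $c_{21;\ell}^1$ a two-sided inverse of $c_{12;\ell}^0$) and by $\d(a)$ (which gives the relation $x_1\cdots x_{2g}=x_{2g}\cdots x_2x_1$ at $t=-1$). The one genuine addition is your link-of-the-vertex/Euler-characteristic argument identifying $\langle x_1,\ldots,x_{2g}\mid x_1\cdots x_{2g}x_1^{-1}\cdots x_{2g}^{-1}\rangle$ with $\pi_1(\Sigma_g)$ --- a fact the paper asserts without justification --- and that argument is sound, since for the word $x_1\cdots x_{2g}x_1^{-1}\cdots x_{2g}^{-1}$ the $4g$ polygon vertices do indeed close up into a single identification cycle.
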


\begin{proof}
Nearly identical to the proof of Proposition~\ref{prop:LCHT2}. In this case, the identification is given by $c_{12;\ell}^0 \mapsto x_\ell$,
$c_{21;\ell}^1 \mapsto x_\ell^{-1}$ for $1\leq \ell \leq 2g$, and the differential $\d(a)$ gives the desired relation $x_1x_2\cdots x_{2g}=x_{2g}\cdots x_2x_1$ in homology.
\end{proof}

\begin{remark}
As was the case for $\Lambda_1$,
it can be shown that the entire Legendrian contact homology of $\Lambda_g$ is supported in degree $0$. Also, as there our calculation can be interpreted as a calculation of the wrapped Floer homology of a fiber in $T^{\ast}\Sigma_{g}$ which is isomorphic to the homology of the based loop space, i.e.~$\Z[\pi_1(\Sigma_g)]$.
\end{remark}


\subsection{Exotic Stein structure on $\R^8$}

Let $\Lambda$ be the Legendrian knot in $S^1\times S^2$ depicted in Figure~\ref{fig:exotic-example}. Note that $\Lambda$ generates $H_1(S^1\times S^2)$, i.e., it winds algebraically once around $S^1$. This means in particular that the Weinstein $4$-manifold $W$ that results from adding a $2$-handle along $\Lambda$ is contractible (although its fundamental group at infinity is nontrivial).

We will prove the following result.

\begin{figure}
\centerline{
\includegraphics[width=4in]{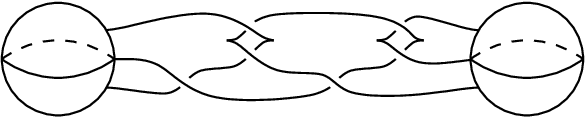}
}
\caption{The Legendrian knot $\Lambda \subset S^1\times S^2$.}
\label{fig:exotic-example}
\end{figure}

\begin{proposition}
The Legendrian contact homology of $\Lambda$, i.e., the homology of $(\A(\Lambda),\d(\Lambda))$, is nonzero.
\label{prop:nonzeroLCH}
\end{proposition}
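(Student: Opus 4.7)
The plan is to prove Proposition~\ref{prop:nonzeroLCH} by exhibiting a graded augmentation $\epsilon\colon (\A(\Lambda),\d(\Lambda))\to \Z/2$ with $\epsilon(t)=1$. Any such augmentation forces $H_{0}(\A,\d)\neq 0$: if $\d x=1$ for some $x\in\A$, then $0=\epsilon(\d x)=\epsilon(1)=1$, a contradiction, so the class of $1$ survives in $H_{0}$. This is a two-line consequence of the definition of augmentation, closely related to Corollary~\ref{cor:augmentation}.

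To construct the augmentation, I would first resolve the front of Figure~\ref{fig:exotic-example} following Section~\ref{ssec:resolution}, obtaining a tangle in $xy$-normal form with $n$ strands through the unique $1$-handle. Since $\Lambda$ generates $H_{1}(S^{1}\times S^{2})$, its algebraic intersection with a cocore disk is $\pm 1$, so $n$ is odd; by Remark~\ref{rmk:loose}, a nontrivial DGA moreover requires $n\geq 3$. I would also verify that $r(\Lambda)=0$, which is necessary for $|t|=0$ and hence for a graded $\Z/2$-augmentation to even be a well-defined map. I would then label the finitely many external crossings $a_{1},\ldots,a_{N}$, fix a Maslov potential on the handle strands, compute all gradings, and write down each $\d a_{i}$ by enumerating immersed disks with convex corners as in Section~\ref{ssec:dga}; the internal differentials are already given in closed form in Section~\ref{ssec:intdga}.

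On the internal DGA a natural ansatz is to set $\epsilon(c_{i,i+1}^{0})=\epsilon(c_{i+1,i}^{1})=1$ for each adjacent pair of handle strands for which these generators lie in degree $0$ (i.e., for which $m(i)-m(i+1)=1$, which forces the two strands to be oriented oppositely through the handle), and $\epsilon=0$ on all other internal generators. Using the formulas of Section~\ref{ssec:intdga} one checks by a short calculation that $\epsilon(\d c_{jj}^{1})=1+1=0$ and $\epsilon(\d c_{ij}^{p})=0$ for all $p\geq 2$, so this really is a $\Z/2$-augmentation of the internal DGA, provided one has chosen the resolution so that consecutively-labelled handle strands alternate direction. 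Extending $\epsilon$ to the external crossings then amounts to solving the finite system of $\Z/2$-polynomial equations $\epsilon(\d a_{i})=0$ in the unknowns $\epsilon(a_{i})\in\{0,1\}$, with $\epsilon(a_{i})=0$ automatic whenever $|a_{i}|\neq 0$.

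The main obstacle is the combinatorial bookkeeping: enumerating all disks contributing to each $\d a_{i}$ (including those with negative corners at $c_{j_{1}j_{2}}^{0}$ generators on either side of the handle), discarding the signs and $t$-powers that collapse modulo $2$, and then verifying consistency of the resulting finite $\Z/2$-system. If the initial choice of Maslov potential or of adjacency pattern does not permit the ansatz above, I would treat the internal values $\epsilon(c_{ij}^{p})$ as further unknowns and enlarge the system accordingly. Because $\Lambda$ was designed so that the associated Weinstein $4$-manifold is a contractible but symplectically nontrivial candidate for an exotic $\R^{8}$, I expect the system to be consistent, and an explicit solution then completes the proof.
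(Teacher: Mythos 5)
There is a fatal gap: the DGA of this $\Lambda$ admits \emph{no} $\Z/2$-augmentation, graded or otherwise, so no amount of bookkeeping will produce one. Since $\Lambda$ generates $H_1(S^1\times S^2)$, it passes through the $1$-handle an odd number of times $n$ (here $n=3$), as you yourself observe. The internal DGA is a differential subalgebra, so any augmentation of $(\A,\d)$ restricts to one of $(\A_n,\d_n)$. But from
\[
\d_n(c_{jj}^1)=1+\sum_{m>j}\pm\, c_{jm}^0c_{mj}^1+\sum_{m<j}\pm\, c_{jm}^1 c_{mj}^0,
\]
summing the $n$ equations $\epsilon(\d_n c_{jj}^1)=0$ over $j=1,\dots,n$ gives, modulo $2$, $n+2\sum_{j<m}\epsilon(c_{jm}^0)\epsilon(c_{mj}^1)\equiv n$, which must vanish; this is impossible for odd $n$. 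Your own ansatz already breaks for $n=3$: with $\epsilon(c_{12}^0)=\epsilon(c_{21}^1)=\epsilon(c_{23}^0)=\epsilon(c_{32}^1)=1$ you get $\epsilon(\d c_{22}^1)=1+1+1=1\neq 0$, and your fallback of enlarging the system of unknowns cannot succeed because of the parity obstruction just described. The claimed check ``$\epsilon(\d c_{jj}^1)=1+1=0$'' is only valid for a strand adjacent to exactly one other paired strand, which cannot hold for every $j$ when $n$ is odd.

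This is exactly why the paper's proof takes a different route. It sets $t=1$, reduces mod $2$, and constructs a surjective algebra map $\phi$ from $\A$ onto the quotient $\A''=(\Z/2)\langle a,b,c,d\rangle/(1+ac+db,\,1+ca,\,1+bd,\,ba,\,cd)$ that is a chain map to $(\A'',0)$; hence $H(\A,\d)$ surjects onto $\A''$, and $\A''\neq 0$ by \cite{bib:SVV}. Note that $\A''$ itself has no characters: $\epsilon(ca)=\epsilon(bd)=1$ would force $\epsilon(a)=\epsilon(b)=\epsilon(c)=\epsilon(d)=1$, contradicting $\epsilon(ba)=0$. In other words, the nontriviality here is only visible through a representation of dimension greater than one, not through an augmentation, and any correct proof must go beyond the augmentation criterion of Corollary~\ref{cor:augmentation}.
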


Before proving Proposition~\ref{prop:nonzeroLCH}, we deduce a result about exotic Stein structures on $\R^8$. Let $W$ be the Weinstein $4$-manifold given by attaching a $2$-handle to $S^1\times D^3$ along $\Lambda$. Then the product $W\times W$ inherits a Stein structure from $W$.

\begin{proposition}
$W\times W$ is diffeomorphic to $\R^8$
\label{prop:exoticStein}
but the Stein structure on $W\times W$ is distinct from the standard Stein structure on $\R^8$.
\end{proposition}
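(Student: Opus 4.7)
The plan is to establish the smooth and symplectic statements separately.

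\emph{Smooth step.} Since $\Lambda$ represents the generator of $H_{1}(S^{1}\times S^{2})\cong\Z$, its underlying smooth knot meets a belt sphere $\{\mathrm{pt}\}\times S^{2}$ of the $1$-handle of $S^{1}\times D^{3}$ algebraically once; after a smooth ambient isotopy we may arrange this intersection to be geometrically transverse and single. By the smooth handle-cancellation lemma, the $2$-handle attached along $\Lambda$ then cancels the $1$-handle (independently of the Legendrian isotopy class or framing of $\Lambda$), so $W$ is diffeomorphic to $B^{4}$. Taking products and smoothing corners, $W\times W$ is diffeomorphic to $B^{8}$, whose interior is $\R^{8}$.

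\emph{Symplectic step.} To distinguish the induced Stein structure on $W\times W$ from the standard one on $\R^{8}$, we use symplectic homology $SH$, which is a Stein (Weinstein) deformation invariant. The standard structure has $SH(\R^{8}_{\mathrm{std}})=0$, so it suffices to prove $SH(W\times W)\neq 0$. By the K\"unneth formula for symplectic homology (Oancea), $SH(W\times W)\cong SH(W)\otimes SH(W)$, which reduces the problem to showing $SH(W)\neq 0$. Applying the BEE surgery formula of \cite{bib:BEE} to the single critical handle of $W$ identifies $SH(W)$ with the Hochschild homology of the Legendrian DGA $(\A(\Lambda),\partial(\Lambda))$. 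By Proposition~\ref{prop:nonzeroLCH}, $H_{*}(\A(\Lambda),\partial(\Lambda))\neq 0$; since the DGA is unital, the class $[1]$ is necessarily nonzero in homology (if $\partial x=1$, then every cycle $y$ would equal $\pm\partial(xy)$ and so be a boundary, forcing the homology to vanish), and its image in $HH_{0}$ is nonzero as well. Hence $SH(W)\neq 0$, and the symplectic step is complete.

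The main technical content, and thus the principal obstacle, is Proposition~\ref{prop:nonzeroLCH}, whose verification requires an explicit combinatorial computation with the DGA of $\Lambda$ via the machinery of Section~\ref{sec:comb}. Everything else in the proof is formal: the smooth identification follows from handle cancellation, and the reduction $SH(W\times W)\neq 0 \Leftarrow SH(W)\neq 0 \Leftarrow H_{*}(\A(\Lambda),\partial(\Lambda))\neq 0$ is a formal consequence of K\"unneth for $SH$, the BEE surgery formula, and the observation above about the unit in a unital DGA with nonzero homology.
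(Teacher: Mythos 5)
Your symplectic step is essentially the paper's own argument (nonvanishing of $H_*(\A(\Lambda),\d(\Lambda))$ forces $1\notin\img\d$, hence nonzero Hochschild homology, hence $SH(W)\neq 0$ by the surgery formula of \cite{bib:BEE}, and then the K\"unneth formula of \cite{bib:Oancea_Kunneth} finishes). The smooth step, however, contains a genuine error. Handle cancellation requires the attaching circle of the $2$-handle to meet the belt sphere of the $1$-handle transversely in a single point \emph{geometrically}, and an algebraic intersection number of $1$ cannot in general be promoted to geometric intersection number $1$ by an isotopy inside the $3$-manifold $S^1\times S^2$: the Whitney trick is not available there, and Mazur-type manifolds (built exactly as $S^1\times D^3$ plus a $2$-handle along a winding-number-one knot) are the standard counterexamples. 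For the particular $\Lambda$ at hand your conclusion $W\cong B^4$ would force $\pa W\cong S^3$, contradicting the fact, noted in the paper, that $W$ is contractible but has nontrivial fundamental group at infinity. So $W$ is not a ball, and your smooth step fails at the sentence ``after a smooth ambient isotopy we may arrange this intersection to be geometrically transverse and single.''

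This is precisely why the statement is about $W\times W$ rather than $W$. The paper's route is: every loop in $\pa(W\times W)=(W\times\pa W)\cup(\pa W\times W)$ can be homotoped into $\pa W\times\pa W$ and is then null-homotopic because $W$ is contractible; hence $\pa(W\times W)$ is a homotopy $7$-sphere bounding the contractible $8$-manifold $W\times W$, and the high-dimensional $h$-cobordism theorem gives $W\times W\cong\R^8$. In other words, taking the product is what kills the fundamental group at infinity; it is not a consequence of $W$ itself being standard. You should replace your handle-cancellation argument with this product argument (or an equivalent one); the rest of your proof then stands.
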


\begin{proof}
First note that $W$ is contractible. Consider the boundary of $W\times W$ as the result of joining the bundles $W\times\pa W$ and $\pa W\times W$ along their common boundary $\partial W\times\partial W$. Note that any loop in the boundary of $W\times W$ is homotopic to a loop in $\pa W\times\pa W$, which is null-homotopic since $W$ is contractible. Thus we find that $\pa(W\times W)$ is a homotopy $7$-sphere that bounds the contractible manifold $W\times W$. It follows that $W\times W$ is in fact diffeomorphic to $\R^{8}$.

It remains to show that the Stein structure on $W\times W$ is not the standard one on $\R^8$. By Proposition~\ref{prop:nonzeroLCH}, for the Legendrian homology DGA $(\A(\Lambda),\d(\Lambda))$, the unit $1 \in \A(\Lambda)$ is not in the image of the differential $\d(\Lambda)$. It follows
it follows that the homology of the Hochschild complex $A^{\rm Ho}(\Lambda)$ is nonzero, and thus from \cite{bib:BEE} that the symplectic homology $SH(W)$ is nonzero. Now the K\"unneth formula in symplectic homology \cite{bib:Oancea_Kunneth} gives
\[
SH(W\times W)\cong SH(W)\otimes SH(W)\ne 0 =SH(\R^{8}),
\]
and we conclude that $W\times W$ is an exotic Stein $\R^{8}$.
\end{proof}

\begin{remark}
The same result holds for any Legendrian knot $\Lambda \subset S^1\times S^2$ that generates $H_1(S^1\times S^2)$ and has nonzero Legendrian contact homology. A ``simpler'' example of such a $\Lambda$ is given in Remark~\ref{rmk:simpleStein}; see Figure~\ref{fig:exotic-example1}.
\end{remark}

\begin{figure}
\centerline{
\includegraphics[width=\textwidth]{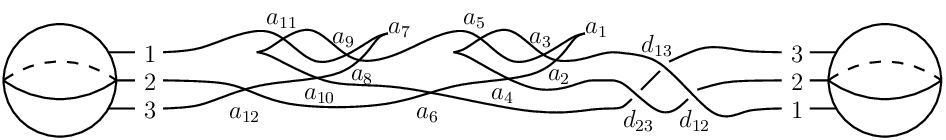}
}
\caption{A knot Legendrian isotopic to $\Lambda$, with half-twist
  added and $xy$-projection crossings labeled.}
\label{fig:exotic-label2}
\end{figure}

\begin{proof}[Proof of Proposition~\ref{prop:nonzeroLCH}]
For the purposes of this result, it suffices to work over the ring
$\Z/2$ instead of $\Z[t,t^{-1}]$ by setting $t=1$ and reducing mod $2$.
We will show that the homology for the DGA over this ring has a certain quotient that has (somewhat remarkably) previously appeared in the literature on Legendrian contact homology in a rather different context \cite{bib:SVV}.

For computational convenience, change $\Lambda$ by a Legendrian isotopy by pulling out two of its cusps, to give the knot in Figure~\ref{fig:exotic-label2}; this causes all holomorphic disks (besides those in the internal differential) to be embedded rather than just immersed.
The non-internal differential for the knot shown in
Figure~\ref{fig:exotic-label2} is
\begin{align*}
\d(a_1) &= 1+a_5a_4+(1+a_9a_{11})a_6+a_9c_{12}^0 \\
\d(a_2) &= (1+a_3a_5)a_4+a_3(1+a_9a_{11})a_6+a_3a_9c_{12}^0 \\
\d(a_7) &= 1+a_{11}a_{10}+c_{12}^0a_{12}+c_{13}^0 \\
\d(a_8) &= (1+a_9a_{11})a_{10}+a_9c_{12}^0a_{12}+a_9c_{13}^0 \\
\d(a_{12}) &= c_{23}^0 \\
\d(d_{12}) &= c_{12}^0+1+a_3a_5 \\
\d(d_{13}) &= c_{13}^0+d_{12}c_{23}^0+(1+a_3a_5)d_{23}+a_3(1+a_9a_{11}),
\end{align*}
with zero differential for all other non-internal generators.

Let $\A' = (\Z/2) \langle a,b,c,d \rangle$ be the tensor algebra on four generators $a,b,c,d$. Define an algebra map $\phi:\thinspace\A\to\A'$ by
\begin{gather*}
\phi(c_{12}^0) = a, ~ \phi(c_{13}^0) = d, ~
\phi(c_{21}^1) = c,  ~ \phi(c_{31}^1) = b,\\
\phi(a_3)=1, ~ \phi(a_4) = 1, ~ \phi(a_5) = a+1, ~ \phi(a_9) = 1, ~
\phi(a_{10}) = 1, ~ \phi(a_{11}) = d+1,
\end{gather*}
and $\phi =0$ for all other generators of $\A$. It is straightforward to check that $\phi\circ\d = 0$ on all non-internal generators of $\A$. On internal generators of $\A$, we have
\begin{align*}
(\phi\circ\d)(c_{32}^1) &= ba, \\
(\phi\circ\d)(c_{11}^1) &= 1+ac+db, \\
(\phi\circ\d)(c_{22}^1) &= 1+ca, \\
(\phi\circ\d)(c_{33}^1) &= 1+bd, \\
(\phi\circ\d)(c_{23}^1) &= cd,
\end{align*}
and $(\phi\circ\d)
(c_{ij}^p) = 0$ for all other internal generators.

It follows that $\phi$ induces a surjective algebra map from $\A$ to the quotient
\[
\A'' = (\Z/2) \langle a,b,c,d \rangle / (1+ac+db,1+ca,1+bd,ba,cd),
\]
and this is a chain map from $(\A,\d)$ to $(\A'',0)$. Thus $\phi$ descends to a surjection from $H(\A,\d)$ to $\A''$.
But it was proven in \cite{bib:SVV} that $\A''$ is nonzero.
\end{proof}

\begin{figure}
\centerline{
\includegraphics[width=.6\linewidth]{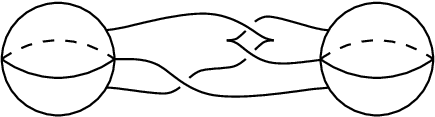}
}
\caption{Another Legendrian knot in $S^1\times S^2$.}
\label{fig:exotic-example1}
\end{figure}

\begin{remark}
It can also be shown that the Legendrian knot shown in Figure~\ref{fig:exotic-example1}, which is slightly simpler than $\Lambda$ but also winds homologically once around $S^1$, also has nonzero Legendrian contact homology.
\label{rmk:simpleStein}
Thus this knot also produces an exotic Stein structure on $\R^8$.
However, the proof that the homology is nonzero in this case appears
to be more complicated than the proof of
Proposition~\ref{prop:nonzeroLCH}; the proof known to us uses
Gr\"obner bases calculated via computational algebra software.
\end{remark}

\section[Geometric Constructions]{Geometric constructions for relating the combinatorial invariant to Legendrian homology}
\label{sec:geom}

\subsection{Main result and overview of Sections~\ref{sec:geom}
  and~\ref{Sec:openclosed}}

As mentioned in Section \ref{Sec:intro}, the Legendrian contact homology (Legendrian homology, for short) of a
Legendrian link $\Lambda$ is a part of SFT and is in particular
defined using moduli spaces of holomorphic disks. We will discuss this
theory in the setting relevant to this paper in Section
\ref{Sec:openclosed} below. Here we just describe its basic structure
for Legendrian links $\Lambda\subset Y_k=\#^{k}(S^{1}\times S^{2})$.
Legendrian homology associates a DGA $\A_{\mathrm{H}}(\Lambda)$ with
differential $\d_{\mathrm{H}}$ to $\Lambda$. The algebra $\A_{\mathrm{H}}(\Lambda)$ is freely generated by the Reeb chords of $\Lambda$ over $\Z[\mathbf{t},\mathbf{t}^{-1}]$ and the
differential $\d_{\mathrm{H}}$ is defined through a count of holomorphic disks in the
symplectization $\R\times Y_k$ with Lagrangian boundary condition
$\R\times\Lambda$. The main result of the paper can then be stated as
follows:

\begin{theorem}\label{thm:main}
The Legendrian homology DGA $\A_{\mathrm{H}}(\Lambda)$ of a link $\Lambda\subset Y_k$ in Gompf normal form is canonically isomorphic to the combinatorially defined algebra $\A$ associated to $\Lambda$ in Section \ref{ssec:dga}. The canonical isomorphism is a one-to-one map that takes Reeb chords to generators of $\A$ and that intertwines the differentials $\d_{\mathrm{H}}$ on $\A_{\mathrm{H}}$ and $\d$ on $\A$.
\end{theorem}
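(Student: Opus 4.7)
The plan is to choose a contact form on $Y_{k}$ adapted to the handle decomposition, match Reeb chords bijectively with the generators of $\A$, and then show via a stretching/decomposition argument that the holomorphic disk count reproduces the combinatorial differential.

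\textbf{Step 1 (Adapted contact form).} First I would pick a contact form $\alpha$ on $Y_{k}$ that equals $dz-y\,dx$ outside neighborhoods of the 1-handles, and has an explicit product-like standard form on each 1-handle neck region, where the Reeb flow lifts a linear rotation. A small perturbation of $\Lambda$ inside the normal form makes it chord-generic; the chords then split into \emph{exterior} chords, lying in the complement of the 1-handles, and \emph{interior} chords, localized inside a single 1-handle.

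\textbf{Step 2 (Generators and grading).} The exterior chords are in bijection with the crossings $a_{i}$ of $\pi_{xy}(\Lambda)$ by the standard correspondence in $\R^{3}$. Inside the $\ell$th 1-handle, each ordered pair of strands $(i,j)$ together with an integer $p\geq 0$ yields a chord matching the algebraic generator $c_{ij;\ell}^{p}$, with the degenerate case $p=0,\,i\geq j$ reflected by the convention $c_{ij;\ell}^{0}=0$ for $i\geq j$. A Conley--Zehnder computation in the 1-handle model recovers $|c_{ij;\ell}^{p}|=2p-1+m(i)-m(j)$; the usual planar argument yields $|a_{i}|=\lfloor 2r(\sigma_{i})\rfloor$, and the class of the disk boundary relative to $\ast_{j}$ gives the $t_{j}$-grading.

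\textbf{Step 3 (Internal disks).} For a rigid holomorphic disk with positive puncture at $c_{ij;\ell}^{p}$, I would use an action/positivity estimate to show the disk must remain in a neighborhood of the $\ell$th 1-handle with only internal negative punctures. The local moduli problem can then be solved explicitly in the 1-handle model by a Riemann--Hilbert argument; the resulting count reproduces the formulas for $\partial_{n}$ in Section~\ref{ssec:intdga}.

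\textbf{Step 4 (Mixed disks and $xy$-decomposition).} For a positive puncture at a crossing $a_{i}$, apply SFT neck stretching along the boundary 2-spheres of the handle regions. Each rigid holomorphic disk then limits to a stable configuration consisting of one outer component in the complement of the 1-handles together with inner components, one in each 1-handle it touches. By standard $\pi_{xy}$-projection and Riemann mapping arguments as in \cite{bib:ENS,E-Morse}, the outer piece corresponds to an immersed disk with convex corners on $\pi_{xy}(\Lambda)$ with positive corner at $a_{i}$ and negative corners either at crossings $a_{j}$ or at points on the boundary of a 1-handle. The inner components are rigid strips inside the 1-handle with a single ``positive'' puncture at some $c^{0}_{j_{1}j_{2};\ell}$ that caps off a corresponding negative boundary corner of the outer piece. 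Transversality, compactness, and an inverse gluing theorem then identify the broken holomorphic configuration with an element of $\Delta(a_{i};b_{1},\dots,b_{r})$ from Section~\ref{ssec:dga}, so that the total count matches $\partial(a_{i})$.

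\textbf{Step 5 (Signs and coefficients).} Finally I would verify that the coherent orientation scheme of Ekholm--Etnyre--Sullivan, specialized to the spin convention recorded in Remark~\ref{rem:signconv}, produces precisely the combinatorial sign $\sgn(\Delta)$, and that the signed intersection with the base points yields the monomial $t_{1}^{-n_{1}(\Delta)}\cdots t_{s}^{-n_{s}(\Delta)}$. Uniqueness of the identification of chords with algebraic generators (up to the choice of base points and capping paths) gives that the isomorphism is canonical.

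\textbf{Main obstacle.} The hardest step is Step~4: establishing the bijective correspondence between rigid holomorphic disks and combinatorial polygons via neck stretching. One must control compactness in the presence of the infinitely many internal Reeb chords (so the filtration by the superscript $p$ on the internal generators becomes essential), rule out spurious multiply-covered or ghost bubbles at the breaking spheres, and prove a gluing theorem that is bijective on rigid strata. Step~3 is tractable once the 1-handle model is fixed, and Steps~1, 2, and~5 are essentially technical.
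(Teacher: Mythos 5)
Your outline captures the local skeleton of the paper's argument (explicit handle model, chord/generator matching, confinement of disks, explicit internal moduli, orientation check), but it has two concrete gaps and takes a genuinely different, and harder, route at the step you yourself identify as the hardest.

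The first gap is that you never address how $\A_{\mathrm{H}}(\Lambda)$ is to be defined globally on the closed manifold $\#^k(S^1\times S^2)$. For any contact form on $Y_k$ there are infinitely many Reeb chords, including ``wandering'' chords that leave the diagram region and return, and since $Y_k$ bounds a subcritical Weinstein domain the differential is a priori a count of \emph{anchored} disks with interior punctures capped in the filling. A substantial part of Sections~\ref{sec:geom} and~\ref{Sec:openclosed} deals with exactly this: the paper closes up $\ol{Y}_k(\delta)$ inside an irrational ellipsoid $E(a)$, uses the Liouville--Roth exponent (Lemma~\ref{lma:irrationalflow}) to bound the action of every exterior chord below by $K\epsilon^{-1/p}$, shows by grading/action estimates that the interior chords generate a sub-DGA, the core algebra (Lemma~\ref{lma:subalgebra}), proves via a sequence of non-symplectization cobordisms $W_k(h)$ that the inclusion of the core algebra is a quasi-isomorphism, and rules out anchors by the computation $\CZ(\gamma^m)=2m$ (Corollary~\ref{cor:noanchor}). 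Without some version of this, your Steps~1--2 do not yet produce a well-defined DGA to compare with $\A$. The second gap is the constant term $\delta_{ij}$ in $\d_n(c_{ij}^1)$: the disk with positive puncture at $c^1_{ii}$ and \emph{no} negative punctures is not produced by the local Riemann--Hilbert analysis of Step~3 (that analysis gives the quadratic terms, i.e., the triangles over the Reeb orbit cylinder, as in Lemma~\ref{lma:handlediff}). The paper pins down this count indirectly: attaching a $2$-handle along the once-through standard strand yields $B^4$, whose symplectic homology vanishes, forcing $|\M(c^1;\varnothing)|=1$ (Lemma~\ref{lma:constterm}); alternatively one uses that such a link is isotopic to its own double stabilization.

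On Step~4, the paper does not neck-stretch along spheres. It instead adds dips near the attaching loci (``split diagrams''), confines any disk with positive puncture at a diagram crossing to the diagram region by the maximum principle applied to the projection to $\C$ (Lemma~\ref{lma:diagramdisks}), and confines interpolating disks to a $1$-jet neighborhood of the standard strand by an action/monotonicity argument, so that every count reduces either to the known planar/$1$-jet count or to the explicit handle model. Your stretching approach would additionally require SFT compactness and a bijective gluing theorem across necks carrying closed Reeb orbits, precisely the difficulties you flag; the confinement arguments actually used avoid them entirely.
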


The proof of Theorem \ref{thm:main} is rather involved and occupies
Sections \ref{sec:geom} and~\ref{Sec:openclosed}. Before we outline its steps we comment on the definition of $\A_{\mathrm{H}}(\Lambda)$. The algebra $\A_{\mathrm{H}}(\Lambda)$ depends on the choice of contact form on $Y_k$, and below we will equip $Y_k$ with contact forms that depend on positive parameters $(\epsilon,\delta)$. The set of Reeb chords of $\Lambda$ will in the present setup (unlike in the case of ambient contact manifold $\R^{3}$) not be finite. In general Legendrian homology algebras come equipped with natural action filtrations and are defined as corresponding direct limits. For the contact form on $Y_k$ that we use below the action is related to the grading of the algebra, which simplifies the situation somewhat. Theorem \ref{thm:main} should be interpreted as follows. For any given degree there exists $(\epsilon_0,\delta_{0})$ such that for $Y_k$ equipped with the contact form corresponding to $(\epsilon,\delta)$ with $\epsilon<\epsilon_0$ and $\delta<\delta_0$, the natural map in the formulation has the properties stated on the part of the algebra of action less than the given degree, and furthermore, the homology of $\A$ is canonically isomorphic to the homology of $\A_{\mathrm{H}}$ defined via the action filtration.

The proof of Theorem \ref{thm:main} involves a study of holomorphic
disks in specific contact models of $Y_k$ and associated contact and
symplectic manifolds. We study aspects of the geometric constructions
and properties of these models in this section, followed by
holomorphic disks in Section~\ref{Sec:openclosed}.

\subsection{Contact and symplectic pairs}\label{Sec:symplmfds}
In this section we consider constructions of pairs $(Y,\Lambda)$ where $Y$ is a contact $3$-manifold and $\Lambda$ is a Legendrian link, as well as pairs $(X,L)$, where $X$ is a Weinstein $4$-manifold and $L$ an exact Lagrangian submanifold in $X$ with cylindrical ends, i.e., outside a compact subset $(X,L)$ looks like a disjoint union of half symplectizations of contact pairs $(Y,\Lambda)$.

\subsubsection{One-handles}\label{sec:1handle}
Consider $\C^{2}$ with coordinates $(z_1,z_2)=(x_1+iy_1,x_2+iy_2)$. For $\delta>0$, consider the region
\[
H_{\delta}=\left\{(z_1,z_2)\colon
-\delta^{2}\le\tfrac12\left(x_{1}^{2}+y_{1}^{2}\right)+2x^{2}_{2}-y_{2}^{2}\le\delta^{2}
\right\}
\]
with boundary given by the hypersurfaces
\[
V_{\pm\delta}=\left\{(z_1,z_2)\colon
\tfrac12\left(x_{1}^{2}+y_{1}^{2}\right)+2x^{2}_{2}-y_{2}^{2}=\pm\delta^{2}
\right\}.
\]
Topologically, $V_{-\delta}\approx_{\rm top} \R^{3}\times S^{0}$ and $V_{\delta}\approx_{\rm top} S^{2}\times \R$. The normal vector field
\begin{equation}\label{eq:Liouvillevf}
Z=
\tfrac12
\left(x_1\,\pa_{x_1}+y_1\,\pa_{y_1}\right)
+2x_2\,\pa_{x_2}-y_2\,\pa_{y_2}
\end{equation}
of $V_{\pm\delta}$ is a Liouville vector field of the standard symplectic form $\omega_{\rm st}$ on $\C^{2}$, where
\[
\omega_{\rm st}=dx_1\wedge dy_1+dx_2\wedge dy_2.
\]
That is, if $L$ denotes the Lie-derivative then
\[
L_{\!Z}\,\omega_{\rm st}= d(\omega(Z,\cdot))=
d\left(\tfrac12(x_1\,dy_1-y_1\,dx_1)+2x_2\,dy_2+y_2\,dx_2\right)=\omega_{\rm st}.
\]
Note that $Z$ points out of $H_{\delta}$ along $V_{\delta}$ and into $H_{\delta}$ along $V_{-\delta}$.

The Liouville vector field $Z$ determines the contact forms $\alpha_{\pm\delta}$ along $V_{\pm\delta}$ where
\[
\alpha_{\pm\delta}=\omega_{\rm st}(Z,\cdot)|_{V_{\pm\delta}}=\left.\left(\tfrac12(x_1\,dy_1-y_1\,dx_1)+2x_2\,dy_2+y_2\,dx_2\right)\right|_{V_{\pm\delta}}.
\]
Trivializations of the contact structures $\krn(\alpha_{\pm\delta})$ are given by $(v,iv)$, where
\begin{equation}\label{eq:contacttrivhandle}
v= 2x_2\,\pa_{x_1}+y_2\,\pa_{y_1}-\tfrac12(x_1\,\pa_{x_2}-y_1\,\pa_{y_2})
\end{equation}
and $i$ denotes the complex structure on $\C^{2}$. The Reeb vector field $R$ of $\alpha_{\pm\delta}$ satisfies $R=N\tilde R$, where
\begin{equation}\label{eq:unnorReebhandle}
\tilde R= \left.\left(\tfrac12(x_1\,\pa_{y_1}-y_1\,\pa_{x_1})+2x_2\,\pa_{y_1}+y_2\,\pa_{x_2}\right)\right|_{V_{\pm\delta}},
\end{equation}
and where the normalization factor $N$ is given by
\[
N = \left(\tfrac14\left(x_1^{2}+y_1^{2}\right)+4x_2^{2}+y_2^{2}\right)^{-1}.
\]
The flow lines of the Reeb flow are thus the solution curves of the following system of ordinary differential equations
\[
\begin{cases}
\dot x_1 = -\tfrac12 y_1,\\
\dot y_1 = \tfrac12 x_1,\\
\dot x_2 = y_2,\\
\dot y_2 = 2x_2,
\end{cases}
\]
given by
\begin{align}\label{eq:solReebx1}
x_1(t) &= x_1(0)\,\cos\left(\tfrac12 t\right) + y_1(0)\,\sin\left(\tfrac12 t\right),\\\label{eq:solReeby1}
y_1(t) &= -x_1(0)\,\sin\left(\tfrac12 t\right)+y_1(0)\,\cos\left(\tfrac12 t\right),\\\label{eq:solReebx2}
x_2(t) &= x_2(0)\,\cosh\left(\sqrt{2}\,t\right)+\tfrac{1}{\sqrt{2}}y_2(0)\,\sinh\left(\sqrt{2}\, t\right),\\\label{eq:solReeby2}
y_2(t) &= \sqrt{2}x_2(0)\,\sinh\left(\sqrt{2}\,t\right)+y_2(0)\,\cosh\left(\sqrt{2}\,t\right),
\end{align}
where $(x_1(0)+iy_1(0),x_2(0)+iy_2(0))$ is the initial position.

\begin{lma}\label{lma:orbitsinhandle}
There is exactly one geometric closed Reeb orbit $\gamma$ in $V_{\delta}$. If $\gamma^{m}$ denotes the $m^{\rm th}$ iterate of $\gamma$ then
\[
\CZ(\gamma^{m})=2m,
\]
where $\CZ$ denotes the Conley--Zehnder index measured with respect to the trivialization $(v,iv)$, see \eqref{eq:contacttrivhandle}.
\end{lma}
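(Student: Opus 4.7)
The plan is to read off the unique geometric closed orbit directly from the explicit solutions \eqref{eq:solReebx1}--\eqref{eq:solReeby2} and then compute the Conley--Zehnder index from an explicit description of the linearized Reeb flow in the $(v,iv)$-trivialization.

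\emph{Existence and uniqueness of $\gamma$.} Since the $(x_2,y_2)$-components of the Reeb flow involve $\cosh(\sqrt{2}\,t)$ and $\sinh(\sqrt{2}\,t)$, any bounded (in particular, any closed) orbit must have $x_2(0)=y_2(0)=0$. Such an orbit is then confined to $\{x_2=y_2=0\}\cap V_\delta=\{x_1^2+y_1^2=2\delta^2\}$, on which the $(x_1,y_1)$-motion is pure rotation, so this entire circle is a single simple closed Reeb orbit and gives the unique geometric closed orbit $\gamma$.

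\emph{Linearized Reeb flow on the contact plane.} At a point of $\gamma$, $TV_\delta=\mathbb{R}\tilde{R}\oplus\operatorname{span}(\pa_{x_2},\pa_{y_2})$, so the contact plane is the $(x_2,y_2)$-plane. Because $\pa_{x_2}N=\pa_{y_2}N=0$ along $\gamma$, the linearization of $R=N\tilde{R}$ on this plane equals $N_0=2/\delta^2$ times the linearization of $\tilde{R}$, which in the basis $(\pa_{x_2},\pa_{y_2})$ is $A=\begin{pmatrix}0&1\\2&0\end{pmatrix}$ (read off from \eqref{eq:unnorReebhandle}). Parametrizing $\gamma$ by the $(x_1,y_1)$-angle $\theta$, so that $\theta\in[0,2\pi m]$ traces out $\gamma^m$, the linearized Reeb flow in this basis is $\exp(2\theta A)$. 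A direct evaluation of \eqref{eq:contacttrivhandle} along $\gamma$ yields $(v,iv)=-\tfrac{\sqrt{2}\,\delta}{2}\,\varrho_{-\theta}$ in the basis $(\pa_{x_2},\pa_{y_2})$, where $\varrho_\phi$ denotes counterclockwise rotation of $\mathbb{R}^2$ by $\phi$; in particular, the $(v,iv)$-frame winds by one full turn per loop of $\gamma$. Changing basis gives the symplectic path in the $(v,iv)$-trivialization:
\[
\Psi(\theta)=\varrho_\theta\exp(2\theta A),\qquad \theta\in[0,2\pi m].
\]

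\emph{Computation of $\CZ(\gamma^m)$.} To evaluate $\CZ(\Psi)=2m$, I would use the homotopy $\Psi_s(\theta)=\varrho_\theta\exp(2s\theta A)$ for $s\in(0,1]$: each endpoint $\Psi_s(2\pi m)=\exp(4\pi ms A)$ is positive hyperbolic, so $\Psi_s$ is disjoint from the Maslov cycle at its endpoint and $\CZ(\Psi_s)$ is independent of $s$. As $s\to 0^+$, $\Psi_s$ approaches the $m$-fold rotation loop $\theta\mapsto\varrho_\theta$, perturbed so that its endpoint lies in the positive-hyperbolic component of $Sp(2,\R)$; the standard Conley--Zehnder formula for such perturbed rotations then gives $\CZ=2m$. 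The main technical obstacle is tracking signs --- the direction in which $(v,iv)$ winds, the sign of the eigenvalues of $A$, and into which hyperbolic component the perturbation pushes the endpoint --- but once the orientation on $\ker\alpha_\delta$ coming from \eqref{eq:contacttrivhandle} is fixed, each of these verifications follows from the explicit formulas.
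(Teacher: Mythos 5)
Your proof is correct and takes essentially the same route as the paper: uniqueness is read off from the hyperbolic $(x_2,y_2)$-behavior of the explicit flow, and the index is obtained by combining the index-zero positive-hyperbolic path in the $(\partial_{x_2},\partial_{y_2})$-frame with the one-full-turn-per-iterate relative winding of $(v,iv)$, yielding $2m$. Your homotopy $\Psi_s$ is just an explicit unpacking of the loop/catenation property of the Conley--Zehnder index that the paper invokes implicitly, and the sign bookkeeping you defer does check out with the stated orientation conventions.
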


\begin{pf}
The statement on the uniqueness of the geometric orbit is immediate from the explicit expression for the Reeb flow lines above.
In the trivialization on the contact planes along $\gamma$ given by $(\pa_{x_2},\pa_{y_2})$ the linearized Reeb flow is given by Equations \eqref{eq:solReebx2} and \eqref{eq:solReeby2}. A straightforward calculation shows that the Conley--Zehnder index of the corresponding path of matrices equals $0$. The frame $(\pa_{x_2},\pa_{y_2})$ makes one full turn in the positive direction with respect to the frame $(v,iv)$ per iterate of $\gamma$. It follows that $\CZ(\gamma^{m})=2m$.
\end{pf}

The {\em standard Legendrian strand} in $V_{\delta}$ is the subset
\[
\Lambda_{\rm st}=
\left\{(x_1,y_1,x_2,y_2)\in V_{\delta}\colon y_1=x_2=0 \right\}.
\]

\begin{lma}\label{lem:handleReebtv}
The Reeb chords of $\Lambda_{\rm st}\subset V_{\delta}$ are exactly the Reeb orbits $\gamma^{k}$ and the image under the linearized Reeb flow of the tangent space to $\Lambda_{\rm st}$ at the initial point of the chord is transverse to the tangent space at its end point.
\end{lma}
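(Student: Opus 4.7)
The plan is to read everything off directly from the explicit flow formulas \eqref{eq:solReebx1}--\eqref{eq:solReeby2}. These equations describe the flow of $\tilde R$, but since $R = N\tilde R$ with $N$ positive, the orbits of $R$ and $\tilde R$ coincide as sets and differ only by reparametrization; in particular, the existence and endpoints of Reeb chords, as well as the transversality condition on linearized tangent spaces (which is a question of linear independence in a $2$-plane), are unchanged by this reparametrization.

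First I would classify the Reeb chords. A point $p\in\Lambda_{\rm st}$ has the form $p = (x_1(0),0,0,y_2(0))$ with $\tfrac12 x_1(0)^{2}-y_2(0)^{2}=\delta^{2}$, and for $\tilde\phi_t(p)$ to return to $\Lambda_{\rm st}$ at some $t>0$ one needs $y_1(t)=0$ and $x_2(t)=0$ simultaneously. From \eqref{eq:solReebx2} one has $x_2(t)=\tfrac{1}{\sqrt 2}y_2(0)\sinh(\sqrt{2}\,t)$, which for $t>0$ forces $y_2(0)=0$; the constraint of $V_\delta$ then gives $x_1(0)=\pm\sqrt{2}\,\delta$. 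Thus the initial point must lie in $\gamma\cap\Lambda_{\rm st}$, the whole trajectory is contained in $\gamma$, and the remaining condition $y_1(t)=-x_1(0)\sin(t/2)=0$ selects exactly $t=2\pi k$ with $k\ge 1$. These trajectories are precisely the iterates $\gamma^{m}$, which gives the first claim.

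For the transversality statement I would parametrize the two branches of $\Lambda_{\rm st}$ near their vertices as $x_1=\pm\sqrt{2(\delta^{2}+y_2^{2})}$, so that $dx_1/dy_2=0$ at $y_2=0$ and $T_p\Lambda_{\rm st}=\R\,\pa_{y_2}$ at each chord endpoint. The linearization of $\tilde\phi_t$ splits as the direct sum of the elliptic block on $(x_1,y_1)$ and the hyperbolic block on $(x_2,y_2)$, so by \eqref{eq:solReebx2}--\eqref{eq:solReeby2} the image of $\pa_{y_2}$ at time $t=2\pi k$ is
\[
\tfrac{1}{\sqrt 2}\sinh(2\sqrt{2}\,\pi k)\,\pa_{x_2} + \cosh(2\sqrt{2}\,\pi k)\,\pa_{y_2}.
\]
For $k\ge 1$ the $\pa_{x_2}$-component is nonzero, so together with $T_q\Lambda_{\rm st}=\R\,\pa_{y_2}$ the image spans the full contact plane $\Spa(\pa_{x_2},\pa_{y_2})$ at the endpoint, giving transversality.

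No serious obstacle is expected: the whole argument reduces to linear algebra on the two decoupled blocks, with the elliptic block producing the closed orbit and the hyperbolic block encoding the transverse direction. The only point requiring a line of commentary is the distinction between the flows of $R$ and $\tilde R$, which amounts only to a positive rescaling of the linearized flow along $\gamma$ and therefore cannot destroy linear independence.
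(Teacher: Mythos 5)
Your proposal is correct and follows exactly the route the paper intends: the paper's own proof is the one-line remark ``Immediate from the expression for the Reeb flow lines above,'' and your argument simply spells out that computation (the hyperbolic $(x_2,y_2)$-block forces $x_2(0)=y_2(0)=0$ so chords lie on $\gamma$ at times $2\pi k$, and the $\sinh$ term gives transversality of the linearized image of $\R\,\pa_{y_2}$). Your added remark on the reparametrization $R=N\tilde R$ is a sound extra precaution that the paper leaves implicit.
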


\begin{pf}
Immediate from the expression for the Reeb flow lines above.
\end{pf}

\subsubsection{Standard contact $\R^{3}$ and standard contact balls}\label{ssec:stcntctball}
The {\em standard contact form} on $\R^{3}$ is the $1$-form
\[
\alpha_{\rm st}=dz-y\,dx,
\]
where $(x,y,z)$ are coordinates on $\R^{3}=J^{1}(\R)$ with $x$ a coordinate on $\R$, $z$ the coordinate of function values in $J^{0}(\R)$, and $y$ the fiber coordinate in $T^{\ast}\R$.

We define the {\em standard contact ball} of radius $\rho>0$ as the subset
\begin{align*}
B_{\rho}&=\left\{(u,v,z)\in\R^{3}\colon u^2+v^2+z^2\le \rho^2\right\}\\
&=\left\{(r,\theta,z)\in\R_{+}\times S^{1}\times\R\colon r^2+z^2\le \rho^2\right\},
\end{align*}
with the contact form
\[
\alpha_{\rm b}=dz+\tfrac12(u\,dv-v\,du)=
dz-\tfrac12 r^{2}\, d\theta,
\]
where $(r,\theta)$ are polar coordinates on the $uv$-plane.

Let $p=(x_0,y_0,z_0)\in\R^{3}$ and consider the embedding $F\colon B_{\rho}\to\R^{3}$
\begin{equation}\label{eq:balltospace}
F(u,v,z)= (u+x_0\,,\, v+y_0\,,\, z+z_0+y_0u+\tfrac12 uv).
\end{equation}
Note that $F$ is a contact embedding, i.e., $F^{\ast}\alpha_{\rm st}=\alpha_{\rm b}$. We call $F(B_{\rho})$ the {\em standard contact ball of radius $\rho$ centered at $p$}.

\subsubsection{The manifold $Y_{k}(\delta)$, standard $\R^{3}$ with $k$ 1-handles}\label{ssec:Y_k(delta)}
The contact manifold $Y_k(\delta)$ is topologically $\R^{3}$ with $2k$ balls removed and $k$ 1-handles (i.e.~$[-1,1]\times S^{2}$) attached along the boundaries of the removed balls.

We first describe the attaching loci for the handles. Fix $2k$ points
\[
\left\{\left(0, y^{\ell}, z^{\ell}\right)\,,\,\left(A, \tilde y^{\ell}, \tilde z^{\ell}\right)\right\}_{1\le \ell\le k}
\]
in $\R^{3}$ as in Section \ref{ssec:stdform}. Let $\sigma$ denote the minimal distance between two of these points and fix standard contact balls, see Section \ref{ssec:stcntctball}, of radii $\rho\ll \sigma$ centered at these points. These balls will be the attaching locus. Let $\R^{3}_{\rho}(\circ_k)$ denote the complement of the standard contact balls of radius $\rho/2$ centered at these points.

We then consider identifications of regions near the boundary of the standard contact balls centered at  two corresponding points
$$
p^{-}=\left(0, y^{\ell}, z^{\ell}\right)\quad\text{and}\quad p^{+}=\left(A, \tilde{y}^{\ell}, \tilde{z}^{\ell}\right)
$$
with two regions in the standard handle, see Section \ref{sec:1handle}, that will be used for the handle attachment. Fix $\delta\ll \rho$ and consider the two regions
\begin{align}\label{eq:attachhandle1}
\begin{split}
A_\rho^{+}(\delta) &=\left\{(x_1,y_1,x_2,y_2)\in V_{-\delta}\colon x_2=0, y_{2}>0, x_1^{2}+y_1^{2}\le \rho^{2}\right\},\\
A_\rho^{-}(\delta) &=\left\{(x_1,y_1,x_2,y_2)\in V_{-\delta}\colon x_2=0, y_{2}<0, x_1^{2}+y_1^{2}\le \rho^{2}\right\},
\end{split}
\end{align}
see Figure \ref{fig:Arho}, and the map $G\colon A_\rho^{\pm}(\delta)\to B_\rho$, where $B_{\rho}$ is the standard contact ball:
\begin{equation}\label{eq:attachhandle2}
G(x_1,y_1,0,y_2)=(x_1,y_1,0).
\end{equation}
\begin{figure}
\centering
\includegraphics[width=.4\linewidth]{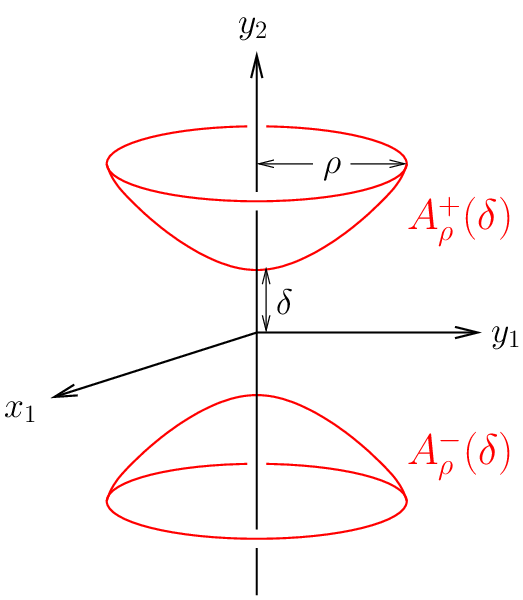}
\caption{The regions $A_{\rho}^{\pm}(\delta)\subset V_{-\delta}$.}
\label{fig:Arho}
\end{figure}
Then
\[
G^{\ast}\left(dz+\tfrac12(u\,dv-v\,du)\right)=\tfrac12(x_1\,dy_1-y_1\,dx_1).
\]
Thus $G^{\ast}\alpha_{\rm b}=\alpha_{-\delta}|_{A_\rho^{\pm}(\delta)}$. The Reeb vector fields of $\alpha_{-\delta}$ and $\alpha_{\rm b}$ are transverse to $A_{\rho}^{\pm}(\delta)$ respectively $G(A_{\rho}^{\pm}(\delta))$ and we use their flows to construct a contactomorphism from a neighborhood of $A_\rho^{\pm}(\delta)$ to $B_\rho$. We use the notation $B_\rho^{\pm}(\delta)\subset V_{-\delta}$ for this neighborhood of $A_{\rho}^{\pm}(\delta)$, see Figure \ref{fig:Brho}. Thus $B^{\pm}_{\rho}(\delta)$ is identified with a neighborhood of $p^{\pm}$.

\begin{figure}
\centering
\includegraphics[width=.7\linewidth]{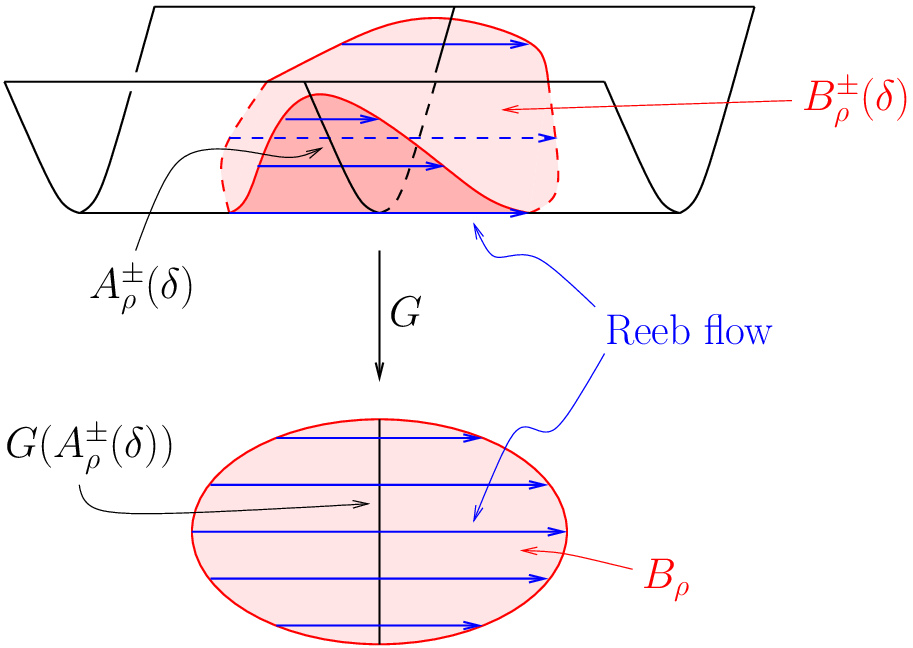}
\caption{The regions $B_{\rho}^{\pm}(\delta)\subset V_{-\delta}$.}
\label{fig:Brho}
\end{figure}

We use the flow of the Liouville vector field $Z$, see \eqref{eq:Liouvillevf}, to identify $B_{\rho}^{\pm}(\delta)-B_{\rho/2}^{\pm}(\delta)\subset V_{-\delta}$ with a region in $V_{\delta}$. More precisely, let $\Phi^{t}_{Z}\colon\R^{4}\to\R^{4}$ denote the time $t$ flow generated by the vector field $Z$. Then for each $p\in B_{\rho}^{\pm}(\delta)-B_{\rho/2}^{\pm}(\delta)$ there is a unique time $T(p)$ such that $\Phi_{Z}^{T(p)}(p)\in V_{\delta}$ and we define the map
\[
\phi\colon \bigl(B_{\rho}^+(\delta)-B_{\rho/2}^+(\delta)\bigr)
\cup \bigl(B_{\rho}^-(\delta)-B_{\rho/2}^-(\delta)\bigr)
\to V_{\delta},\quad \phi(p)=\Phi_{Z}^{T(p)}(p).
\]
Then $\phi^{\ast}\alpha_{\delta}=e^{T}\alpha_{-\delta}$. See Figure \ref{fig:liouv}.
\begin{figure}
\centering
\includegraphics[width=.8\linewidth]{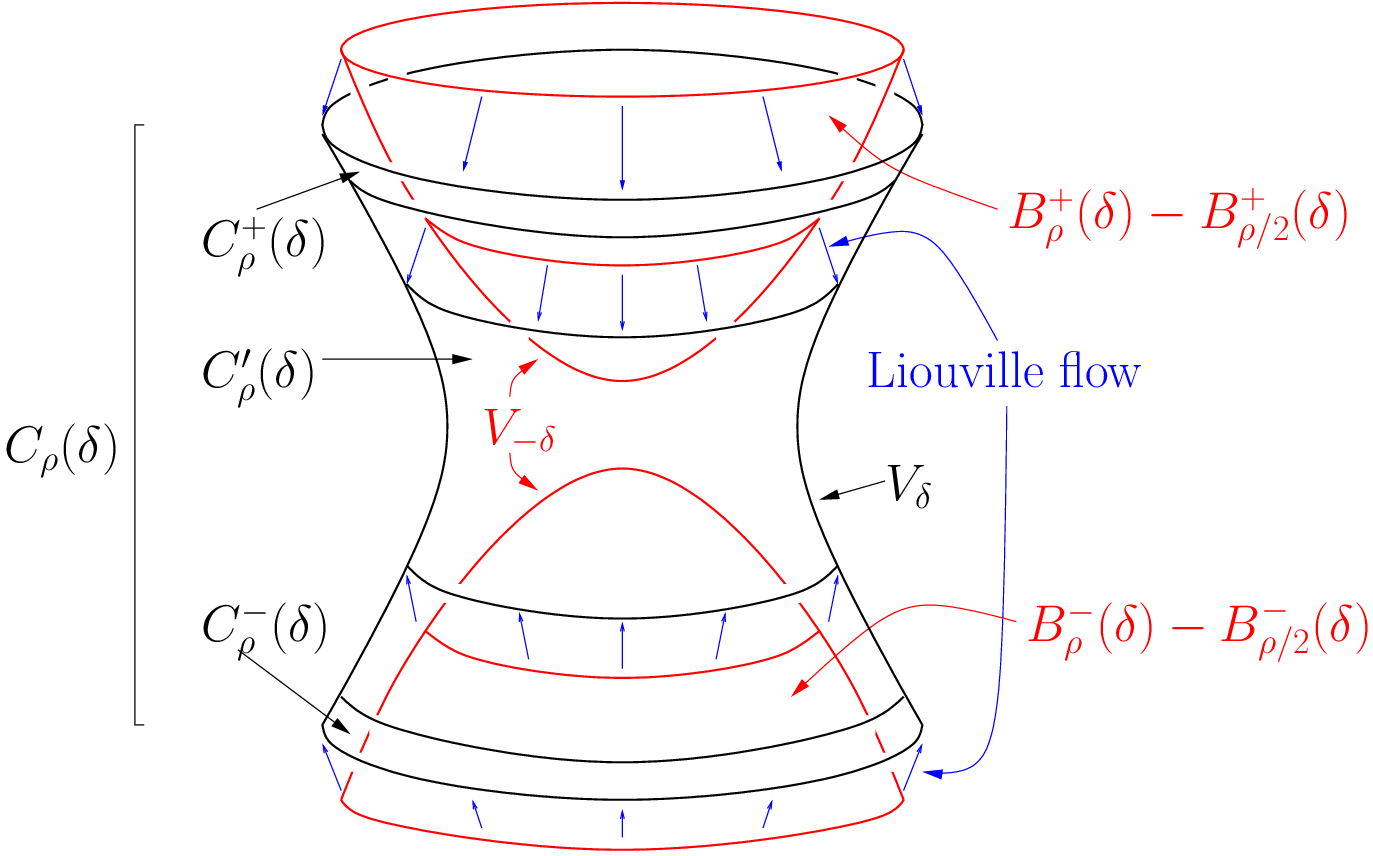}
\caption{The map $\phi$ identifies regions in $V_{\pm\delta}$ via the Liouville flow.}
\label{fig:liouv}
\end{figure}

We next estimate the function $T$ and its derivative. The time $t$ flow of $Z$ with initial condition $p=(x_1(0),y_1(0),x_2(0),y_2(0))$ is given by
\[
\Phi_{Z}^{t}(p)=
\left(
e^{\tfrac12 t}x_1(0)\,,\,e^{\tfrac12 t}y_1(0)\,,\,e^{2 t}x_2(0)\,,\,e^{-t}y_2(0)
\right).
\]
Thus the function $T=T(p)$ solves the equation
\[
\frac12e^{T}\left(x_1(0)^{2}+y_1(0)^{2}\right)+e^{4T}2x_2(0)^{2}-e^{-2T}y_2(0)^{2}=\delta^{2}.
\]
Using the fact that the initial value lies in
$B_{\rho}(\delta)-B_{\rho/2}(\delta)\subset V_{-\delta}$, we can
rewrite this as
\begin{align}
\begin{split}
\delta^2 &=
\frac12\left(e^{T}-e^{-2T}\right)\left(x_1(0)^{2}+y_1(0)^{2}\right)
+2\left(e^{4T}-e^{-2T}\right)x_2(0)^{2}
-\delta^{2}e^{-2T} \\
&= \frac{e^{T}}{2}\left(x_1(0)^{2}+y_1(0)^{2}\right)+2e^{4T}x_2(0)^{2}
-e^{-2T}\left(\delta^{2}+\frac12\left(x_1(0)^{2}+y_1(0)^{2}\right)
+2x_2(0)^{2}\right).
\\\label{eq:defneqnT}
\end{split}
\end{align}
Noting that all coefficient functions (functions that depend on $T$) in the final line of \eqref{eq:defneqnT} are increasing in $T$, we deduce first that
\[
T=\Ordo(\delta^{2}/\rho^{2})
\]
and second that the solution $T$ decreases monotonically with the quantity $\left(x_1(0)^{2}+y_1(0)^2+x_2(0)^{2}\right)$ and hence that the differential of $T$ satisfies
\[
|dT|=\Ordo(\delta^{2}/\rho^{3}).
\]

Let $C_{\rho}(\delta)\subset V_{\delta}$ be the region with boundary
$\phi(\pa B_\rho^{+}(\delta)\cup \pa B_{\rho}^{-}(\delta))$ which
contains
$\phi\left(B_{\rho}^{\pm}(\delta)-B_{\rho/2}^{\pm}(\delta)\right)$,
let $C_{\rho}'(\delta)\subset V_{\delta}$ be the region with boundary
$\phi\left(\pa B_{\rho/2}^{+}(\delta)\cup B_{\rho/2}^{-}(\delta)\right)$
which does not contain
$\phi\left(B_{\rho}^{\pm}(\delta)-B_{\rho/2}^{\pm}(\delta)\right)$,
and let
$C^{\pm}_{\rho}(\delta)=\phi\left(B_{\rho}^{\pm}(\delta)-B_{7\rho/8}^{\pm}(\delta)\right)$. See Figure~\ref{fig:liouv}.

Let $\alpha_{\rho;\delta}$ denote the contact form on $C_{\rho}(\delta)$ given by
\[
\alpha_{\rho;\delta}=e^{f}\alpha_{\delta},
\]
where $f\colon C_{\rho}(\delta)\to\R$ has the following properties:
\begin{itemize}
\item $f=-T\circ\phi^{-1}$ on $C^{\pm}_{\rho}(\delta)$;
\item $f=0$ on $C_{\rho}'(\delta)$;
\item $|df|=\Ordo(\delta^{2}/\rho^{3})$.
\end{itemize}
The above estimates on $T$ show that such a function $f$ exists. Then
\[
\phi\colon (B_{\rho}^{\pm}(\delta)-B_{7\rho/8}^{\pm}(\delta),\alpha_{-\delta})\to (C_\rho^{\pm}(\delta),\alpha_{\rho;\delta})
\]
is a contactomorphism.

Finally, as discussed above, we consider $B^{\pm}_{\rho}(\delta)$ as neighborhoods of $p^{\pm}$ in $\R^{3}$, and then define $Y_{k}(\delta)$ as the contact manifold
\[
Y_{k}(\delta)=\R^{3}_{\rho}(\circ_{k})\,\,\,\bigcup_{\phi}\,\,\,\left(\sqcup_{\ell=1}^{k} C_{\rho}^\ell(\delta)\right),
\]
where $C^\ell_{\rho}(\delta)$ is a copy of $C_{\rho}(\delta)$ attached via the map $\phi$ at the pair of points $\left(0,y^{\ell},z^{\ell}\right)$ and $\left(A,\tilde{y}^{\ell},\tilde{z}^{\ell}\right)$. We denote the contact form on $Y_k(\delta)$ by $\alpha_{k;\delta}$.

\begin{lma}\label{lma:orbitsinY_k}
For all sufficiently small $\delta>0$, there are exactly $k$ distinct geometric Reeb orbits $\gamma_1,\dots,\gamma_k$ in $Y_k(\delta)$, one in each handle. Furthermore if $\gamma_j^{m}$ denotes the $m^{\rm th}$ iterate of $\gamma_j$ then
\[
\CZ(\gamma^{m}_j)=2m,
\]
where $\CZ$ denotes the Conley--Zehnder index measured with respect to the trivialization $(v,iv)$ as in \eqref{eq:contacttrivhandle}.
\end{lma}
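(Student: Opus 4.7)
The plan is to reduce to Lemma \ref{lma:orbitsinhandle} by showing that every closed Reeb orbit of $\alpha_{k;\delta}$ on $Y_{k}(\delta)$ is confined to a single handle and coincides with an iterate of the unique orbit produced there.

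First I would observe that on $\R^{3}_{\rho}(\circ_{k})$ the Reeb vector field of $\alpha_{\rm st}=dz-y\,dx$ is $\pa_{z}$, whose integral curves increase $z$ monotonically and escape to infinity. After placing the attaching points in general position (a small planar perturbation of the normal form of Section~\ref{ssec:stdform} suffices), a vertical trajectory issuing from a handle exit meets no other attaching ball, so any Reeb trajectory that enters $\R^{3}_{\rho}(\circ_{k})$ cannot return. It follows that every closed Reeb orbit lies in a single handle $C_{\rho}^{\ell}(\delta)\subset V_{\delta}$.

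Next I would analyze the Reeb dynamics within one handle. The collar $C_{\rho}^{\pm}(\delta)$ is contactomorphic via $\phi^{-1}$ to a piece of $(V_{-\delta},\alpha_{-\delta})$; the explicit Reeb flow \eqref{eq:solReebx1}--\eqref{eq:solReeby2} shows that $V_{-\delta}$ carries no closed orbit at all, because on $V_{-\delta}$ one has $|y_{2}|\ge\delta$ and the quantity $2x_{2}^{2}-y_{2}^{2}$ is preserved along the flow, forcing the $(x_{2},y_{2})$-component of any orbit to be unbounded. Thus no closed orbit can be confined to the collar. On the inner piece $C_{\rho}'(\delta)$ one has $f\equiv 0$, hence $\alpha_{\rho;\delta}=\alpha_{\delta}$, and Lemma \ref{lma:orbitsinhandle} furnishes the unique closed orbit $\gamma=\{x_{2}=y_{2}=0\}\cap V_{\delta}$; this orbit is contained in $C_{\rho}'(\delta)$ because the bounding surfaces $\phi(\pa B_{\rho/2}^{\pm}(\delta))$ lie in $\{y_{2}\ne 0\}$. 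It remains to exclude closed orbits that visit both $C_{\rho}'(\delta)$ and $C_{\rho}^{\pm}(\delta)$; here I would use that the conserved quantities $\tfrac12(x_{1}^{2}+y_{1}^{2})$ and $2x_{2}^{2}-y_{2}^{2}$ of the unperturbed flow on $V_{\delta}$ are disturbed by an amount controlled by the perturbation $|df|=\Ordo(\delta^{2}/\rho^{3})$, while the hyperbolic expansion rate in $(x_{2},y_{2})$ equals $\sqrt{2}$. For $\delta$ small enough relative to $\rho$, any trajectory that leaves the submanifold $\{x_{2}=y_{2}=0\}$ still has its $(x_{2},y_{2})$-component expanding exponentially, and is thus swept out of the handle into $\R^{3}_{\rho}(\circ_{k})$ in finite time without closing up.

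Finally, the Conley--Zehnder index of $\gamma_{\ell}^{m}$ is computed exactly as in Lemma \ref{lma:orbitsinhandle}: since $f\equiv 0$ on a neighborhood of $\gamma_{\ell}$, the linearized Reeb flow of $\alpha_{\rho;\delta}$ along the orbit coincides with that of $\alpha_{\delta}$ on $V_{\delta}$, and the trivialization $(v,iv)$ of \eqref{eq:contacttrivhandle} is defined on all of $V_{\delta}$ and in particular throughout the handle, so one obtains $\CZ(\gamma_{\ell}^{m})=2m$ directly. I expect the main obstacle to be the quantitative uniqueness step on the collar, where one must rule out spurious short closed orbits created by the conformal rescaling near the attaching region. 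The essential input is the mismatch of scales $1\gg \Ordo(\delta^{2}/\rho^{3})$ between the hyperbolic expansion rate and the size of the perturbation, which preserves the unperturbed exit behavior and forces $\gamma_{\ell}$ to be the sole closed Reeb orbit in each handle.
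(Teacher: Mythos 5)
Your proposal is correct and follows essentially the same route as the paper: the paper's (two-line) proof also reduces to Lemma~\ref{lma:orbitsinhandle} by observing that $\alpha_{\rho;\delta}$ is $C^{1}$-close to $\alpha_{\delta}$ on $C_{\rho}(\delta)$, so that the hyperbolic transverse dynamics of the model orbit persist. Your version usefully makes explicit two points the paper leaves implicit — the confinement of closed orbits to a single handle via the $\pa_z$ dynamics on $\R^{3}_{\rho}(\circ_k)$, and the quantitative exclusion of orbits straddling the interpolation region (which can be cleanly finished by noting that $x_2y_2$ is strictly increasing along the perturbed flow wherever $(x_2,y_2)\neq(0,0)$, since there $y_2^2+2x_2^2\gtrsim\rho^2$ dominates the $\Ordo(\delta^{2}/\rho^{3})$ perturbation).
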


\begin{pf}
Since the $C^{1}$-distance between the contact forms $\alpha_{\delta}$ and $\alpha_{\rho;\delta}$ on $C_\rho(\delta)$ is controlled by $\delta$, the lemma is an immediate consequence of Lemma \ref{lma:orbitsinhandle}.
\end{pf}

Choose $R>0$ such that $B_{R/2}$ contains all the balls $B_{\rho}$ where the $1$-handles of $Y_k(\delta)$ are attached. (The factor of $2$ is not used here but will be used in Section~\ref{sssec:tildeY_k}.) Then $\R^{3}-B_{R}\subset Y_k(\delta)$, and we write
\begin{equation}\label{eq:olY_k}
\ol{Y}_k(\delta)=Y_k(\delta)-\inr(\R^{3}-B_R).
\end{equation}
Then the contact form $\alpha_{k;\delta}$ on $\ol{Y}_k(\delta)$ agrees with $\alpha_{\rm b}$ in a neighborhood of $\pa\ol{Y}_k(\delta)$ which is identified with a neighborhood of $\pa B_R$ in $B_R$.

\subsubsection{Legendrian links in $Y_k(\delta)$}\label{sec:leginY_k}
Let $\Lambda\subset Y_k(\delta)$ be any Legendrian link. Then there exists a contact isotopy which moves $\Lambda$ to a link in normal form, see Section \ref{ssec:resolution}. Below, all our links will be assumed to be in normal form. With notation as in Section \ref{ssec:dga} we have the following result.

\begin{lma}\label{lma:stchord}
Any Reeb chord of a link $\Lambda$ in normal form is either entirely contained in the handle and then of the form $c^{p}_{ij}$ or it lies completely in the complement of all handles.
\end{lma}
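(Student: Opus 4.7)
The plan is to analyze the Reeb vector field of $\alpha_{k;\delta}$ on $Y_k(\delta)$ region by region and combine the explicit ODE solutions of Section~\ref{sec:1handle} with the conservation laws of the Reeb flow on $V_\delta$. First I will record the structure of the Reeb field: on $\R^3_\rho(\circ_k)$ the contact form is $dz-y\,dx$, so the Reeb field is $\partial_z$; on the interior of $C_\rho(\delta)$, where the function $f$ of Section~\ref{ssec:Y_k(delta)} vanishes, it coincides with the Reeb field of $\alpha_\delta$ on $V_\delta$, whose flow lines are given explicitly in~\eqref{eq:solReebx1}--\eqref{eq:solReeby2}; on the collars $C_\rho^\pm(\delta)$ the two descriptions are matched by the contactomorphism $\phi\circ G^{-1}$, so Reeb trajectories extend as smooth curves across the gluing.

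Next I extract two first integrals of the Reeb flow on $V_\delta$. A direct computation from~\eqref{eq:unnorReebhandle} yields $\tilde R(x_1^2+y_1^2)=0$ and $\tilde R(y_2^2-2x_2^2)=0$, so $r_1^2:=x_1^2+y_1^2$ and $c:=y_2^2-2x_2^2$ are preserved by the flow, and the defining equation of $V_\delta$ forces $r_1^2=2(\delta^2+c)$. The key consequence, read off from~\eqref{eq:solReebx2}, is that when $c>0$ the equation $x_2(t)=0$ has at most one solution $t\in\R$, namely the one satisfying $\tanh(\sqrt 2\,t)=-\sqrt 2\,x_2(0)/y_2(0)$, and $y_2$ keeps a definite sign along the entire trajectory. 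In particular any trajectory on $V_\delta$ that starts with $x_2(0)=0$ and $c>0$ never returns to $\{x_2=0\}$ in positive time.

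The two ``pure'' cases are then quick. A chord contained in $\R^3_\rho(\circ_k)-\bigcup B_\rho$ is a $\partial_z$-segment joining two points of $\Lambda$ with identical $(x,y)$, i.e.\ a crossing chord of the $xy$-projection. A chord contained in the handle joins two strands of $\Lambda$, which by the normal-form convention are parallel copies of the standard strand $\Lambda_{\rm st}\subset V_\delta$; Lemma~\ref{lem:handleReebtv} combined with the uniqueness of $x_2(t)=0$ just noted forces any such trajectory onto the level set $\{c=0\}$ containing $\gamma$, and so it is an iterate $\gamma^p$ running from strand $i$ to strand $j$, i.e.\ a generator $c^p_{ij;\ell}$.

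It remains to rule out mixed chords. Suppose the trajectory has endpoints $p\in\Lambda$ in the $\R^3$-complement and $q\in\Lambda$ in the handle; then it must cross some collar $C_\rho^\pm(\delta)$. Using the Liouville-time bounds $T=\Ordo(\delta^2/\rho^2)$ and $|dT|=\Ordo(\delta^2/\rho^3)$ from Section~\ref{ssec:Y_k(delta)}, one verifies that throughout $C_\rho^+(\delta)$ one has $|x_2|=\Ordo(\rho\delta^3)$ and $y_2\ge\delta/2$ (and symmetrically on $C_\rho^-$), so the conserved value of $c$ along the handle portion of the trajectory is at least $\delta^2/2>0$. The identification $\phi\circ G^{-1}$ further shows that the sign of $x_2$ at a point of the collar equals the sign of the height relative to the central disk $\{z=0\}\subset B_\rho$ of the attaching ball, so the unique crossing of $\{x_2=0\}$ guaranteed by the analysis of Step~2 occurs exactly where the vertical trajectory meets that central disk, a point lying inside the collar itself. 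Since $q$ must lie on a strand approximating $\Lambda_{\rm st}\subset\{x_2=0\}$ inside the handle interior, a second crossing of $\{x_2=0\}$ would be required, contradicting uniqueness. Hence no mixed chord exists and the dichotomy of the lemma follows. The hard part of this plan is the last step: carefully tracking the sign of $x_2$ across the collar and pinning down the unique $\{x_2=0\}$-crossing to the interior of the collar, using the smallness of $\delta$ together with the conservation of $c$.
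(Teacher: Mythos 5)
Your proposal is correct and follows essentially the same route as the paper, whose proof of Lemma~\ref{lma:stchord} simply asserts that the absence of mixed chords is ``straightforward to check'' and derives the handle-chord classification from the $1$-jet graph description together with Lemma~\ref{lem:handleReebtv}; your conserved quantity $c=y_2^2-2x_2^2$ and the resulting unique zero of $x_2$ along trajectories with $c>0$ is precisely the content of that unstated check. One small remark: the bound $|x_2|=\Ordo(\rho\delta^3)$ on the collar is not really justified as stated, but it is also unnecessary, since $c=\delta^2+\tfrac12(x_1^2+y_1^2)\ge\delta^2$ holds identically on $V_{-\delta}$ and the collar is the image of a piece of $V_{-\delta}$ under the short-time Liouville flow, which perturbs $c$ only by $\Ordo(\delta^2/\rho^2)\cdot c$.
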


\begin{pf}
It is straightforward to check that no Reeb chord can connect a point on $\Lambda$ inside a handle to a point outside the handle (compare Section \ref{ssec:handledisks}). The last statement follows from the fact that inside the handles $\Lambda$ is the graph of the differential of a function on the standard strand in a small $1$-jet neighborhood of that strand in combination with Lemma \ref{lem:handleReebtv}.
\end{pf}

We call Reeb chords of the first type mentioned in Lemma \ref{lma:stchord} {\em handle chords} and those of the second type {\em diagram chords}.

\subsubsection{The closed manifold $\tilde Y_{k}(\epsilon;\delta)$}\label{sssec:tildeY_k}
Consider the hypersurface
\[
E(a):=\left\{(z_1,z_2)\in\C^{2}\colon |z_1|^{2}+a^{-1}|z_2|^{2}=1\right\}.
\]
The vector field
\[
W=\frac12\left(x_1\pa_{x_1}+y_1\pa_{y_1}+x_2\pa_{x_2}+y_2\pa_{y_2}\right)
\]
is a Liouville vector field for $\omega_{\rm st}$. Since $W$ is transverse to $E(a)$ it induces a contact form
\[
\alpha_{a}=\omega(W,\cdot)=
\left(x_1\,dy_1-y_1\,dx_1+x_2\,dy_2-y_2\,dx_2\right)|_{E(a)}.
\]
The corresponding contact structure is isotopic to the standard contact structure on $S^{3}=E(1)$. The Reeb vector field on $E(a)$ is
\[
R_{a}=(x_1\pa_{y_1}-y_1\pa_{x_1})+a^{-1}(x_2\pa_{y_2}-y_2\pa_{x_2}).
\]

\begin{figure}
\labellist
\small\hair 2pt
\pinlabel $E(a)$ [tr] at 57 37
\pinlabel $F$ [l] at 329 93
\pinlabel $B_\epsilon(a)$ [b] at 220 182
\pinlabel $z_2=0$ [r] at 31 134
\pinlabel $z_1=0$ [b] at 180 208
\pinlabel $B_d$ [r] at 259 31
\endlabellist
\includegraphics[width=0.8\textwidth]{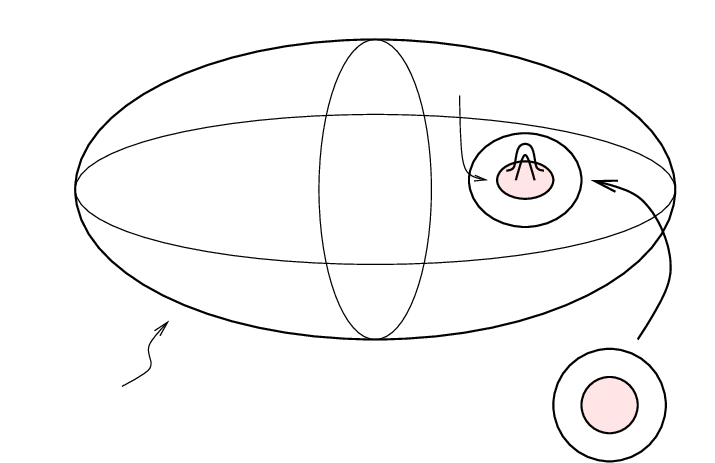}
\caption{Constructing $\tilde{Y}_k(\epsilon;\delta)$; $1$-handles are attached in a small ball that does not intersect the two closed Reeb orbits.}
\label{fig:Yk}
\end{figure}

Let $F\colon B_d\to E(a)$ be a contact embedding of a standard ball of
radius $d$ such that $F(B_{d})$ does not intersect any complex
coordinate plane. For $\epsilon>0$ sufficiently small, let
$B_{\epsilon}(a)\subset E(a)$ denote the image under $F$ of the
standard ball of radius $\epsilon$ centered at $0\in B_d$.
See Figure~\ref{fig:Yk}.

\begin{lma}\label{lma:irrationalflow}
If $a$ is irrational then the closed Reeb orbits in $E(a)$ are exactly the multiples of the two circles $E(a)\cap (\C\times\{0\})$ and $E(a)\cap (\{0\}\times\C)$. Furthermore, if the Liouville--Roth exponent of $a$ equals $p>2$ then there is a constant $K>0$ such that the following holds for all sufficiently small $\epsilon>0$. If $\gamma(t)$ is a Reeb trajectory which leaves $B_{\epsilon}$ at $t=0$ and if $T>0$ is such that $\gamma(T)\in B_{\epsilon}$, then $T\ge K\epsilon^{-\frac{1}{p}}$.
\end{lma}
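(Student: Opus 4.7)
The plan is to integrate the Reeb equations explicitly and reduce both assertions to diophantine properties of $a$. A direct computation shows that the flow of $R_{a}$ on $\C^{2}$ is
\[
\phi_{t}(z_{1},z_{2})=(e^{it}z_{1},\, e^{it/a}z_{2}),
\]
and this flow preserves $E(a)$. A closed orbit of period $T$ through $(z_{1},z_{2})\in E(a)$ therefore requires $e^{iT}z_{1}=z_{1}$ and $e^{iT/a}z_{2}=z_{2}$. If both coordinates are nonzero, this forces $T\in 2\pi\Z$ and $T/a\in 2\pi\Z$ simultaneously, which is impossible when $a$ is irrational. Consequently, every closed Reeb orbit is contained in one of the coordinate planes $\{z_{1}=0\}$ or $\{z_{2}=0\}$; the two intersection circles $E(a)\cap(\C\times\{0\})$ and $E(a)\cap(\{0\}\times\C)$ are themselves the simple geometric Reeb orbits, and all remaining closed orbits arise as their iterates.

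For the quantitative statement write $p^{\ast}=F(0)=(p_{1}^{\ast},p_{2}^{\ast})$; by hypothesis $F(B_{d})\cap\{z_{1}z_{2}=0\}=\emptyset$, so $r:=\min(|p_{1}^{\ast}|,|p_{2}^{\ast}|)>0$. For $\epsilon$ smaller than a threshold depending only on $F$, every $w=(w_{1},w_{2})\in B_{\epsilon}$ satisfies $|w_{j}-p_{j}^{\ast}|\leq C_{0}\epsilon$ and $|w_{j}|\geq r/2$. If $\gamma(t)=\phi_{t}(w)$ and $\gamma(T)\in B_{\epsilon}$, then $\gamma(T)$ lies within $2C_{0}\epsilon$ of $w$, whence
\[
|e^{iT}w_{1}-w_{1}|\leq 2C_{0}\epsilon,\qquad |e^{iT/a}w_{2}-w_{2}|\leq 2C_{0}\epsilon.
\]
Dividing by $|w_{j}|\geq r/2$ and using the elementary estimate $|e^{is}-1|\geq(2/\pi)\operatorname{dist}(s,2\pi\Z)$ for $|s|\leq\pi$, we obtain integers $n,m\in\Z$ satisfying $|T-2\pi n|\leq C_{1}\epsilon$ and $|T-2\pi ma|\leq C_{1}\epsilon$. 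Subtraction yields
\[
|n-ma|\leq C_{2}\epsilon.
\]
The degenerate possibility $m=0$ (which would force $n=0$ and $T=\Ordo(\epsilon)$) is ruled out by the hypothesis that $\gamma$ leaves $B_{\epsilon}$ at $t=0$: the Reeb vector field is approximately constant on the scale of $B_{\epsilon}$, so on time scales of order $\epsilon$ the trajectory moves essentially along the outgoing straight line from $\gamma(0)$ and cannot reenter $B_{\epsilon}$. Hence $m\neq 0$, and dividing by $|m|$ gives $|a-n/m|\leq C_{2}\epsilon/|m|$.

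The final step is the diophantine input. By definition of the Liouville--Roth exponent, for every $\mu>p$ there is a constant $c_{\mu}>0$ with $|a-n/m|\geq c_{\mu}/|m|^{\mu}$ for all $(n,m)\in\Z\times\Z_{>0}$. Applying this with $\mu=p+1$ and combining with the previous bound yields $c_{p+1}/|m|^{p+1}\leq C_{2}\epsilon/|m|$, i.e.\ $|m|\geq(c_{p+1}/C_{2})^{1/p}\epsilon^{-1/p}$. Since $T\geq 2\pi|a||m|-|a|C_{1}\epsilon$, the desired inequality $T\geq K\epsilon^{-1/p}$ follows for a suitable $K=K(a)>0$ and all sufficiently small $\epsilon>0$. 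The main technical point is the passage from the geometric near-return condition in $B_{\epsilon}$ to the diophantine inequality $|n-ma|\leq C_{2}\epsilon$, together with the careful exclusion of spurious small-$T$ solutions using the exit hypothesis; the remainder is a routine application of the irrationality measure.
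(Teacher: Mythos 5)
Your proof is correct and follows essentially the same route as the paper: a near-return to $B_\epsilon$ forces the rotation angles in the two complex coordinates to be simultaneously close to multiples of $2\pi$, producing a rational approximation $|a-n/m|\lesssim \epsilon/|m|$, after which the Liouville--Roth hypothesis bounds $|m|$ (hence $T$) from below by a multiple of $\epsilon^{-1/p}$. The only soft spot is your heuristic exclusion of $m=0$ (an "approximately straight" trajectory can graze a ball tangentially and re-enter); this is made airtight by noting that $F$ is a contact embedding, so the pulled-back Reeb field on $B_d$ is exactly the Reeb field $\partial_z$ of $\alpha_{\rm b}$, whence a trajectory meets the round ball $B_\epsilon$ in a single interval as long as it stays in $F(B_d)$.
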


\begin{pf}
The lemma is a consequence of the fact that outside the periodic orbits the Reeb flow is an irrational rotation on a torus with slope $a$: an orbit which returns to $B_{\epsilon}$ gives a rational approximation of $a$ of the form $\left|a-\frac{n}{m}\right|<\epsilon$. But then, by definition of the Liouville--Roth exponent, $\epsilon>m^{-p}$, for all sufficiently small $\epsilon$, and the action of $\gamma$ is bounded below by $Km$ for some $K$.
\end{pf}

Consider the map $\psi_{\epsilon}\colon B_R\to B_\epsilon$ defined by
\[
\psi_{\epsilon}(u,v,z)=\left(\tfrac{\epsilon}{R}\, u,\tfrac{\epsilon}{R}\,v,\left(\tfrac{\epsilon}{R}\right)^{2}z\right)
\]
and note that $\psi_{\epsilon}^{\ast}\alpha_{{\rm b}}=\left(\frac{\epsilon}{R}\right)^{2}\alpha_{\rm b}$. Since $\ol{Y}_k(\delta)$ agrees with $B_R$ near its boundary, see \eqref{eq:olY_k}, we use the map $F\circ \psi_\epsilon$ in a neighborhood of the boundary to attach $\left(\ol{Y}_k(\delta),\left(\frac{\epsilon}{R}\right)^{2}\alpha_{k;\delta}\right)$ to $E(a)-B_\epsilon$. We denote the resulting contact manifold $\tilde Y_{k}(\epsilon;\delta)$ and its contact form $\tilde\alpha_{\epsilon;\delta}$.

Let $\Lambda\subset \ol{Y}_k(\delta)$  be a Legendrian link. Using the inclusion $\left(\ol{Y}_k(\delta),\left(\frac{\epsilon}{R}\right)^{2}\alpha_{k;\delta}\right)\to\left(\tilde Y_{k}(\epsilon;\delta),\tilde\alpha_{\epsilon;\delta}\right)$, we consider $\Lambda$ as a Legendrian link in $\tilde Y_{k}(\epsilon;\delta)$ and as such denote it $\Lambda(\epsilon)$.
\begin{cor}\label{cor:ReebflowtildeY}
If the Liouville--Roth exponent of $a\notin\Q$ equals $p>2$ then there is a constant $K>0$ such that the following holds for all sufficiently small $\epsilon>0$. If $c$ is a Reeb chord of $\Lambda(\epsilon)\subset\tilde Y_{k}(\epsilon;\delta)$ which is not entirely contained in $\ol{Y}_k(\delta)\subset\tilde Y_{k}(\epsilon)$ or if it is a Reeb orbit which is not a Reeb orbit in $E(a)$ then
\[
\int_{c}\tilde\alpha_{\epsilon;\delta}\ge K\epsilon^{-\frac{1}{p}}.
\]
\end{cor}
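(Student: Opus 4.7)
The plan is to reduce the action lower bound to Lemma~\ref{lma:irrationalflow}. The key observation is that for any Reeb trajectory of $\tilde\alpha_{\epsilon;\delta}$ parametrized by its Reeb flow variable, $\tilde\alpha_{\epsilon;\delta}(\dot c)\equiv 1$, so the action $\int_c\tilde\alpha_{\epsilon;\delta}$ equals the elapsed flow time of $c$; it therefore suffices to bound below the time that $c$ spends in the region identified with $E(a)-B_\epsilon\subset\tilde Y_k(\epsilon;\delta)$. Unpacking the construction, $\tilde Y_k(\epsilon;\delta)$ is obtained by gluing $\ol Y_k(\delta)$ equipped with $(\epsilon/R)^{2}\alpha_{k;\delta}$ to $E(a)-B_\epsilon$ equipped with $\alpha_a$ along $F\circ\psi_\epsilon$. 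The identity $\psi_\epsilon^{\ast}\alpha_{\mathrm b}=(\epsilon/R)^{2}\alpha_{\mathrm b}$ together with the fact that $F$ is a contact embedding shows that the two forms glue to a contact form on $\tilde Y_k(\epsilon;\delta)$ whose restriction to the $E(a)$ piece has Reeb vector field $R_a$.

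For a Reeb chord $c\colon[0,T]\to\tilde Y_k(\epsilon;\delta)$ with endpoints on $\Lambda(\epsilon)\subset\ol Y_k(\delta)$ that is not entirely contained in $\ol Y_k(\delta)$, let $s_{-}\in[0,T]$ be the first time $c(s_{-})$ lies on the gluing hypersurface $\partial B_\epsilon$ and $s_{+}>s_{-}$ the first subsequent time with $c(s_{+})\in\partial B_\epsilon$. Then $c|_{[s_{-},s_{+}]}$ is a Reeb trajectory of $\alpha_a$ in $E(a)$ that leaves $B_\epsilon$ at time $s_{-}$ and returns at time $s_{+}$, so Lemma~\ref{lma:irrationalflow} yields a constant $K>0$ depending only on $a$ with $s_{+}-s_{-}\ge K\epsilon^{-1/p}$ for all sufficiently small $\epsilon$. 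In particular $T\ge K\epsilon^{-1/p}$, which gives the desired action bound. The Reeb orbit case proceeds identically: an orbit that is not a closed Reeb orbit of $\alpha_a$ contained in $E(a)$ must enter and leave $E(a)-B_\epsilon$, and the same application of Lemma~\ref{lma:irrationalflow} to a maximal arc of the orbit lying in $E(a)-B_\epsilon$ supplies the lower bound.

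The principal obstacle is bookkeeping rather than analysis: one must verify that the gluing genuinely presents the boundary of the handle region as the contact hypersurface $\partial B_\epsilon\subset E(a)$, so that trajectories crossing it translate into Reeb trajectories of $\alpha_a$ leaving and returning to $B_\epsilon$. Once this identification is in place the Liouville--Roth estimate of Lemma~\ref{lma:irrationalflow} is the only essential ingredient, and the only remaining care is that the hypothesis ``not a Reeb orbit in $E(a)$'' should be interpreted so as to exclude the handle orbits $\gamma_j^m$ of Lemma~\ref{lma:orbitsinY_k} (whose action, being $O(\epsilon^{2})$ under the rescaling by $(\epsilon/R)^{2}$, is not controlled by the asserted lower bound).
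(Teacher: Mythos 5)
Your proposal is correct and follows the same route as the paper, whose proof of this corollary is simply the observation that it is immediate from Lemma~\ref{lma:irrationalflow}: a chord (or orbit) not contained in $\ol{Y}_k(\delta)$ must leave the ball $B_\epsilon\subset E(a)$ and return, so the Liouville--Roth estimate bounds its flow time, hence its action. Your closing remark about reading the hypothesis so as to exclude the handle orbits $\gamma_j^m$ is a fair and correct clarification of the statement's intent, consistent with how the corollary is used later (e.g.\ in Lemma~\ref{lma:subalgebra}).
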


\begin{pf}
Immediate from Lemma \ref{lma:irrationalflow}.
\end{pf}

\subsubsection{Exact cobordisms $(W_k(h;\delta),L(h))$}\label{sec:excob}
Let $\Lambda\subset \ol{Y}_k(\delta)$ be a Legendrian link. As above we consider $\Lambda$ as a link in $\tilde Y_{k}(\epsilon;\delta)$ and as such we denote it $\Lambda(\epsilon)$. For suitable positive functions $h\colon\R\to\R$ which are constantly equal to $\epsilon_{\pm}$ in a neighborhood of $\pm\infty$ we construct a symplectic cobordism $(W_k(h;\delta),L_k(h))$ with positive and negative ends $\left(\tilde Y_k(\epsilon_+;\delta),\Lambda(\epsilon_+)\right)$ and $\left(\tilde Y_k(\epsilon_-;\delta),\Lambda(\epsilon_-)\right)$, respectively. Topologically these cobordisms will simply be products $\R\times \tilde Y_k(\epsilon;\delta)$, but the symplectic form will not be the symplectization of a contact form. We construct them as follows.

Consider the standard contact ball $B_d$ of radius $d$ embedded into $E(a)$ and let $0<\epsilon_\pm\ll d$. We use strictly positive smooth functions $h\colon\R\to\R$ which satisfy the following conditions:
\begin{itemize}
\item There are $T_-<0< T_+$ such that
\begin{equation}\label{eq:hinftycond}
h(t)=
\begin{cases}
\epsilon_- &\text{if }t\in(-\infty,T_-],\\
\epsilon_+ &\text{if }t\in[T_+,\infty).
\end{cases}
\end{equation}
\item The derivative of $h$ satisfies
\begin{equation}\label{eq:hC1cond}
2h'(t)+h(t)>0.
\end{equation}
\end{itemize}

Let $B_d(h)$ denote the manifold
\[
B_d(h)=\left\{(t,p)\in \R\times B_d\colon |p|\ge \frac12 h(t)\right\}\approx_{\rm top}
\R\times(B_d-B_{\epsilon_-}).
\]
Endow $B_d(h)$ with the exact symplectic form $d\left(e^{t}\alpha_{\rm b}\right)$. Recall that we consider $B_d$ as embedded in $E(a)$ and define $\tilde B_d(h)\subset\R\times E(a)$ as follows:
\[
\tilde B_d(h)= (\R\times (E(a)-B_d)) \,\,\cup\,\, B_d(h).
\]
Then the primitive $e^{t}\alpha_{\rm b}$ of the symplectic form on $B_{d}(h)$ extends as $e^{t}\alpha_a$ to $\R\times (E(a)-B_d)$. Using this extension we consider also $\tilde B_{d}(h)$ as an exact symplectic manifold and we denote the primitive of its symplectic form $e^{t}\tilde \alpha$.

Consider the region $A$ in $\ol{Y}_k(\delta)$ outside the boundary of the standard contact ball of radius $\frac{R}{2}$:
\[
A=B_R-B_{R/2}\subset \ol{Y}_k(\delta),
\]
see \eqref{eq:olY_k}. The map
\[
\Phi\colon \R\times A\to B_d(h),\quad \Phi(t,q)=\left(t,h(t)u,h(t)v,h^{2}(t)z\right)
\]
is a level preserving embedding such that
\[
\Phi^{\ast}\left(e^{t}\alpha_{\rm b}\right)=\left(e^{t}h^{2}(t)\right)\alpha_{k;\delta}.
\]
(Note that $h$ is increasing so that $|h(t)|\ll d$ for all $t\in\R$ and the image of $\Phi$ lies inside $B_{d}$.)
We define the exact symplectic cobordism $W_k(h)$ as follows:
\begin{equation}\label{eq:defW(h)}
W_k(h)= \left(\tilde B_d(h),e^{t}\alpha\right)\,\,\cup_\Phi\,\, \left(\R\times \ol{Y}_k(\delta),\left(e^{t}h^{2}(t)\right)\alpha_{k;\delta}\right),
\end{equation}
where $\Phi$ is the gluing map. We must check that the form $d\left(e^{t}h^{2}(t)\alpha_{k;\delta}\right)$ on $\R\times \ol{Y}_k(\delta)$ is symplectic. We have
\begin{equation}\label{eq:cobsymplform}
d\left(e^{t}h^{2}(t)\alpha_{k;\delta}\right)=e^{t}h^{2}(t)\, d\alpha_{k;\delta} + e^{t}h(t)\left(2h'(t)+h(t)\right)\,dt\wedge \alpha_{k;\delta}
\end{equation}
and \eqref{eq:hC1cond} implies that this is indeed symplectic.

Inside this symplectic cobordism we also have an exact Lagrangian cobordism $L(h)$ interpolating between the Legendrian submanifolds $\Lambda(\epsilon_+)$ and $\Lambda(\epsilon_-)$:
\[
L(h)=\R\times\Lambda\subset \R\times\ol{Y}_k\subset W_k(h).
\]
To see that $L(h)$ is Lagrangian, note that its tangent space is spanned by the tangent vector of $\Lambda$ and $\pa_t$. The vanishing of the restriction of the symplectic form then follows immediately from \eqref{eq:cobsymplform} in combination with $\Lambda$ being Legendrian.

Finally, we note that in the regions near $t=-\infty$ and $t=+\infty$ in \eqref{eq:hinftycond} where $h(t)$ is constant, $\left(W_k(h),L(h)\right)$ is symplectomorphic to the symplectizations of $\left(\tilde Y_k(\epsilon_-;\delta),\Lambda(\epsilon_-)\right)$ and $\left(\tilde Y_k(\epsilon_+;\delta),\Lambda(\epsilon_+)\right)$, respectively.

\section{Legendrian homology in closed and open manifolds}\label{Sec:openclosed}
In this section we study the analytical aspects of Theorem \ref{thm:main}, and in particular obtain rather explicit descriptions in Section~\ref{sec:holodisks} of the moduli spaces of holomorphic disks involved. In Section~\ref{Sec:compute} we then study the actual solution spaces, and in Section \ref{sec:orientaton} their orientation. However, before going into the detailed aspects of this study, we give in Section \ref{sec:generalLCH} a more general overview of Legendrian (contact) homology in order to provide a wider context of the more technical study that follows.

\subsection{Legendrian homology in the ideal boundary of a  Weinstein manifold}\label{sec:generalLCH}
Our discussion in this section follows \cite{bib:BEE}.
Let $Y$ be a contact manifold which is the ideal boundary of a Weinstein manifold $X$ with vanishing first Chern class $c_1(X)=0$. Pick an almost complex structure $J$ on $X$ which is compatible with the symplectic form and which in the end $[0,\infty)\times Y$ of $X$ splits as a complex structure in the contact planes of $Y$ and pairs the $\R$-direction $\partial_t$ with the Reeb vector field $R$ of the contact form on $Y$, $J\partial_t=R$. In this setup holomorphic curves satisfy SFT-compactness \cite{BEHWZ}: any finite energy holomorphic curve in $X$ is punctured and is asymptotic to a Reeb orbit cylinder $\R\times\gamma\subset \R\times Y$ at infinity. Furthermore, any sequence of holomorphic curves converges to a several-level holomorphic building with one level in $X$ and several levels in $\R\times Y$, where levels are joined at Reeb orbits.

Let $\Lambda\subset Y$ be a Legendrian submanifold. The Legendrian homology algebra $\A_{\mathrm{H}}(Y,\Lambda)$ is the algebra freely generated over $\Z[H_{2}(X,\Lambda)]$ by the set of Reeb chords of $\Lambda$ graded by a Maslov index, see \cite[Section 2.1]{bib:BEE}. The differential in $\A_{\mathrm{H}}(Y,\Lambda)$ satisfies the Leibniz rule and is defined through a count of so-called anchored holomorphic disks. These are two-level holomorphic buildings of the following form. The top level is a map $u\colon (D,\partial D)\to (\R\times Y,\R\times \Lambda)$, where $D$ is a disk with the following punctures: one positive boundary puncture near which $u$ is asymptotic to $\R\times a$ for some Reeb chord $a$ of $\Lambda$, several negative boundary punctures where $u$ is asymptotic to $\R\times  b$ for some Reeb chord $b$ of $\Lambda$ (possibly different for different punctures), and interior negative punctures where $u$ is asymptotic to $\R\times \gamma$ for some Reeb orbit $\gamma$ (again possibly different for different punctures). The lower level consists of holomorphic spheres with positive punctures at the Reeb orbits of all interior negative punctures of $u$.
We write $\M_A^{(\R\times Y;X)}$
for the moduli space of such buildings, where $A$ denotes the homology class of the building. The differential acting on a Reeb chord $a$ is now defined as follows:
\[
\partial_{\mathrm{H}}a=\sum_{\dim(\M_{A}^{(\R\times Y;X)}(a;\mathbf{b}))=1}|\M_{A}^{(\R\times Y;X)}(a;\mathbf{b})|\,A\mathbf{b},
\]
where $|\M_{A}^{(\R\times Y;X)}|$ denotes the number of $\R$-components in the moduli space (recall that $J$ is $\R$-invariant in the end $\R\times Y$). Here the moduli spaces $\M_{A}^{(\R\times Y;X)}(a;\mathbf{b})$ are oriented manifolds and the number of $\R$-components refers to a signed count of components.

\begin{rmk}\label{rmk:d^2=0}
The fact that $\partial_{\mathrm{H}}^{2}=0$ follows by identifying configurations that contribute to $\partial_{\mathrm{H}}^{2} a$ with the broken curves at the boundary of the $1$-manifold of anchored holomorphic disks of dimension $1$ with positive puncture at $a$; this $1$-manifold is the quotient of the corresponding 2-dimensional moduli space by the $\R$-action.
\end{rmk}

\begin{rmk}\label{rmk:coeffs}
When considering Legendrian homology of $\Lambda\subset Y$ without
using the filling $X$ it is natural to use coefficients for the DGA in
$\Z[H_{2}(Y,\Lambda)]$, rather than in $\Z[H_{2}(X,\Lambda)]$. However, in the combinatorial section of this paper, we have used a third coefficient ring, $\Z[H_1(\Lambda)] \cong \Z[\mathbf{t},\mathbf{t}^{-1}]$. Here we explain why this suffices in our situation.

In the
case studied in this paper, $Y = \#^{k}(S^{1}\times S^{2})$ and $X$ is obtained by attaching $k$ $1$-handles to the $4$-ball. Thus $H_2(X,\Lambda) \cong \ker(H_1(\Lambda) \to H_1(X))$, while there is an exact sequence
\[
0 \to H_2(Y) \to H_2(Y,\Lambda) \to H_1(\Lambda) \to H_1(Y).
\]
Using the explicit form given below for the holomorphic disks contributing to the differential, it is easy to check that one can choose caps for the Reeb chords of $\Lambda$ so that the $H_2(Y)$ portion of the homology class of any such disk is trivial. It follows that coefficients in either $H_2(X,\Lambda)$ or $H_2(Y,\Lambda)$ reduce to coefficients in $\ker(H_1(\Lambda) \to H_1(X))$.

Note that $\ker(H_1(\Lambda) \to H_1(X))$ may in general be smaller than $H_1(\Lambda)$. One might as well use coefficients in $\Z[H_1(\Lambda)]$, as we do, rather than in $\Z[\ker(H_1(\Lambda) \to H_1(X))]$, since this clearly does not lose information. We can interpret the fact that the coefficients reduce from $H_1(\Lambda)$ to $\ker(H_1(\Lambda) \to H_1(X))$ as follows: if $\Lambda_j$ is a component of $\Lambda$ such that $[\Lambda_j] \ne  0 \in H_1(X)$, then one can choose capping paths such that $t_j^{\pm 1}$ does not appear in the differential. Indeed, this can be shown directly combinatorially: if $\Lambda_j$ passes algebraically a nonzero number of times through one of the $1$-handles, then replace the single base point $\ast_j$ on $\Lambda_j$ by multiple base points, one on each strand passing through that $1$-handle. (For the relation between the DGAs for single and multiple base points, see \cite[section 2.6]{bib:NR}.) Any holomorphic disk whose boundary passes through these base points must then pass through them in canceling pairs, and so $t_j^{\pm 1}$ does not appear in the differential.
\end{rmk}

We stay in the general setting in order to explain the functorial properties of Legendrian homology.
Consider a Weinstein $4$-manifold $W$ with an exact Lagrangian submanifold $L\subset W$. Assume that outside a compact set, $(W,L)$ consists of two ends: a negative end symplectomorphic to the negative half $((-\infty,0]\times Y_-,(-\infty,0]\times \Lambda_-)$ of a symplectization of a pair $(Y_-,\Lambda_-)$, where $\Lambda_-$ is a Legendrian submanifold of contact $Y_-$,
and a positive end symplectomorphic to the positive half $([0,\infty)\times Y_+,[0,\infty)\times \Lambda_+)$ of a symplectization of a pair $(Y_+,\Lambda_+)$. Assume that $Y_-$ is the ideal boundary of a Weinstein manifold $X$. Then $Y_+$ is the ideal boundary of the Weinstein manifold $X\circ W$ obtained by gluing $W$ to $X$ along $Y_-$. If $a$ is a Reeb chord of $\Lambda_{+}$ and $\mathbf{b}$ is a word of Reeb chords of $\Lambda_-$, we let $\M_{A}^{(W;X)}(a;\mathbf{b})$ denote the moduli space of holomorphic disks in $(W,L)$ anchored in $X$, with positive puncture at $a$ and negative punctures according to $\mathbf{b}$; the definition of $\M_{A}^{(W;X)}(a;\mathbf{b})$ precisely generalizes the previous definition of $\M_{A}^{(\R\times Y;X)}(a;\mathbf{b})$. Then the algebra map $\Phi\colon \A_{\mathrm{H}}(Y_+,\Lambda_+)\to\A_{\mathrm{H}}(Y_-,\Lambda_-)$ defined on generators as
\[
\Phi(a)=
\sum_{\dim(\M_{A}^{(W;X)}(a;\mathbf{b}))=0}|\M_{A}^{(W;X)}(a;\mathbf{b})|\,A\mathbf{b}
\]
is a chain map (i.e.~a  morphism of DGAs).
\begin{rmk}\label{rmk:chmap}
The proof of the chain map equation $\Phi\circ\partial_+-\partial_-\circ\Phi=0$ is analogous to the proof of $\partial_{\mathrm{H}}^{2} =0$: configurations contributing to $\Phi\circ\partial_+-\partial_-\circ\Phi$  are in oriented one-to-one correspondence with the broken curves at the boundary of the $1$-manifold of anchored holomorphic disks in $(W,L)$ of dimension $1$ with one positive boundary puncture.
\end{rmk}
Furthermore, a similar but more involved argument shows that a $1$-parameter family of exact symplectic cobordisms $(W_{t},L_t)$, $t\in[0,1]$ gives a chain homotopy between the chain maps $\Phi_\tau$ induced by $(W_{\tau},L_{\tau})$, $\tau=0,1$, see Appendix \ref{app:aninv} for a more detailed discussion.

In order to make sense of these definitions of differentials and chain maps, one needs the moduli spaces to be transversely cut out. For the disks in the upper level of the holomorphic buildings involved this is relatively easy: the argument in \cite[Lemma 4.5(1)]{EES-PxR} shows that it is possible to achieve transversality by varying $J$ in the contact planes near the Reeb chord end points. For the lower level, achieving transversality is more involved: because of multiple covers it is not sufficient to perturb only $J$, and a more elaborate perturbation scheme is needed, e.g.~using the polyfold framework of \cite{bib:HWZ} or Kuranishi structures as in \cite{bib:FOOO}. For an argument adapted to the case just discussed (i.e.~disks anchored in a Weinstein filling), see \cite[Section 2h]{bib:AS}. In the case under study in this paper we will show that these more elaborate perturbation schemes are not needed, see Corollary \ref{cor:noanchor} and Lemma \ref{lem:noanchor2}, and hence they will not be further discussed.

For purposes of computing Legendrian homology, the open manifold $Y_k(\delta)$ is simpler than the closed $\tilde Y_k(\epsilon;\delta)$ because of the absence of ``wandering'' chords that leave the region where the Legendrian link lies and then come back. One of the main results of this section shows that there is no input in Legendrian homology from these wandering chords. More precisely, by inclusion, a Legendrian link $\Lambda\subset \ol{Y}_k(\delta)$ can be viewed as a link in $\tilde{Y}_k(\epsilon;\delta)$ or in $Y_k(\delta)$, and we show that if $\epsilon$ and $\delta$ are sufficiently small then there is a canonical isomorphism between the Legendrian homology DGAs $\A_{\mathrm{H}}(\tilde Y_k(\epsilon;\delta),\Lambda(\epsilon))$ and $\A_{\mathrm{H}}(Y_k(\delta);\Lambda)$ below a given grading.

In Section \ref{sec:holodisks} we use the cobordisms in Section
\ref{sec:excob} interpolating between $\tilde Y_k(\epsilon;\delta)$ for different parameter values and an argument inspired by arguments of Bourgeois--van Koert \cite{bib:BvK} and Hutchings \cite{bib:H}, to show that the Legendrian homology of a link $\Lambda\subset \ol{Y}_k(\delta)$, considered as a subset of $\tilde Y_k(\epsilon;\delta)$, is isomorphic to the Legendrian homology of $\Lambda\subset \ol{Y}_k(\delta)$, considered as a subset of $Y_k(\delta)$. Finally in Section \ref{Sec:compute} we show that for certain regular complex structures on $\R\times Y_k(\delta)$, the Legendrian homology differential of a link $\Lambda$ in standard position is given by the combinatorial formula from Section \ref{ssec:dga}.

\subsection{Holomorphic disks}\label{sec:holodisks}
Recall that the manifold $Y_{k}(\delta)$ was built by attaching small 1-handles to $\R^{3}$. We use notation as in Section \ref{ssec:Y_k(delta)}.
\subsubsection{Almost complex structures}
Consider the regions of the form $B_{\rho}-B_{\rho/2}$ where the handles are attached to $\R^{3}$. We will use an almost complex structure on $\R\times Y_k(\delta)$ which is induced from the standard complex structure on the plane outside the $B_{3\rho/4}$ and which interpolates in the region $B_{3\rho/4}-B_{\rho/2}$ to an almost complex structure on the contact planes in the handle that agrees with the standard complex structure in a $1$-jet neighborhood of the standard Legendrian strand, inside of which the part of the link that runs through the handle lies. For later convenience, we assume that the size of the $1$-jet neighborhood where the complex structure is standard equals $\delta_0\ll\delta$ and that the strands of the link in the handle lie within distance $\delta_1\ll\delta_0$ from the standard strand through the handle.

As we choose the Legendrian to lie close to straight line segments near the attaching regions, the almost complex structure outside the handles and the almost complex structure inside the neighborhood of a standard strand agree and the interpolation will be chosen trivial here. This means in particular that the almost complex structure agrees with the standard $1$-jet structure all along a neighborhood of the extended strand which goes through the handle and continues out in $\R^{3}$.

After scaling by $\epsilon$ we consider $Y_k(\delta)$ as a subset of $\tilde{Y}_{k}(\epsilon;\delta)$ concentrated in a small ball in $E(a)$. We extend the almost complex structure over the contact planes over the rest of this manifold in some fixed way.

\subsubsection{Trivial anchoring}
As discussed in Section \ref{sec:generalLCH}, Legendrian homology for boundaries of Weinstein domains is defined by counting anchored holomorphic disks, i.e.~disks with additional interior negative punctures that are filled by rigid holomorphic planes in the Weinstein manifold. Here we show that no such extra interior negative punctures are needed in the cases of $Y_{k}(\delta)$ and $\tilde Y_{k}(\epsilon;\delta)$, which we consider as  the boundaries of their natural Weinstein fillings: a half space in $\C^{2}$ and the ball in $\C^{2}$, respectively, with $k$ 1-handles attached. In fact, a similar result holds for boundaries of subcritical Weinstein manifolds in any dimension but we restrict attention to the case of dimension $3$ since that is all we need here.

\begin{lma}
If the formal dimension of an anchored holomorphic disk mapping to $(\R\times Y_{k}(\epsilon;\delta),\R\times\Lambda(\epsilon))$ (or to $(\R\times Y_{k}(\delta),\R\times \Lambda)$) is $\le 1$ then the disk has only one level and no interior punctures.
\end{lma}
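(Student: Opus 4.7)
The proof strategy is to exploit the subcritical structure of the Weinstein filling $X$ (a $4$-ball with $k$ subcritical $1$-handles attached), together with the explicit form of the Reeb dynamics already established.

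\emph{First, identify the relevant Reeb orbits.} By Lemmas~\ref{lma:orbitsinhandle}--\ref{lma:orbitsinY_k} and Corollary~\ref{cor:ReebflowtildeY}, the only Reeb orbits that can appear as asymptotes of a bounded-dimension disk are the iterates $\gamma_j^m$ of the simple handle orbits (and, in the closed model $\tilde Y_k(\epsilon,\delta)$, the short iterates of the two simple orbits in $E(a)$); any ``wandering'' orbit has action $\ge K\epsilon^{-1/p}$ and so, by the standard action--dimension bound, does not arise once $\epsilon$ is small enough. Moreover, $\CZ(\gamma_j^m)=2m$, so the formal dimension of a plane in $\hat X$ asymptotic to $\gamma_j^m$ is $(n-3)+2m=2m-1\ge 1$ (using $n=2$ and $c_1(X)=0$, $H_2(X)=0$), and in particular never zero. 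Thus no rigid holomorphic plane exists in $\hat X$.

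\emph{Second, use the subcritical splitting.} I would invoke the subcritical decomposition $X\simeq X'\times\C$, with $X'$ a Weinstein surface (see \cite[Section 14.4]{bib:CE}), and choose an almost complex structure $J$ that is split, $J=J'\oplus i$, on a suitable neighborhood of the skeleton. For such a $J$ any holomorphic plane $v=(v',v'')\colon\C\to X$ has $v''\colon\C\to\C$ holomorphic; the asymptotic condition at $\gamma_j^m$ (which sits in a slice $X'\times\{\mathrm{pt}\}$) forces $v''$ to be bounded, hence constant, and translation in the $\C$-factor then exhibits a real $2$-parameter family of planes through any given one. Combined with the already odd formal dimension $2m-1$, this shows that planes in $\hat X$ asymptotic to handle orbits always come in moduli of real dimension $\ge 2$, and in particular cannot be rigid.

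\emph{Third, assemble the index obstruction and argue single-leveledness.} Writing the reduced formal dimension of an anchored disk as $(\dim u - 1)+\sum_i\dim v_i$, the $r\ge 1$ capping planes contribute at least $2r$ by the previous step, so an anchored configuration with any interior punctures has formal dimension $\ge 2$, contradicting the hypothesis. Hence $r=0$ and the disk lies purely in $\R\times Y$. Its formal dimension $\le 1$ then makes the reduced moduli (after quotienting by the $\R$-action of the symplectization) $\le 0$-dimensional, i.e.~rigid, and SFT compactness rules out multi-level breakings, giving the one-level assertion.

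\emph{Main obstacle.} The delicate step is the splitting argument of the second paragraph: the prescribed almost complex structure in our concrete model for $Y_k(\delta)$ is only natural near the handles and will not globally agree with a product structure on $X\simeq X'\times\C$. One must argue either by deforming $J$ through a path of regular almost complex structures (using that emptiness of the anchored moduli space for the split structure is a cobordism invariant in the relevant degrees), or, alternatively, verify directly in the local model $H_\delta$ that holomorphic planes asymptotic to $\gamma_j^m$ form a $2$-parameter Morse--Bott family generated by translation in the $z_2$-direction, which suffices for the same conclusion. Either route requires a careful transversality discussion, but both lead to the same dimension obstruction.
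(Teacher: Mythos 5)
Your first paragraph is essentially the paper's entire argument: every Reeb orbit that can occur at an interior puncture has grading $|\gamma|=\CZ(\gamma)+n-3=2m-1\ge 1$ (orbits outside the handles having even larger grading), so each anchoring plane has formal dimension at least $1$. The paper then just adds Fredholm indices: if $r\ge 1$ planes are attached to a building of total formal dimension $\le 1$, the top-level disk has formal dimension $\le 1-r\le 0$, and by transversality for somewhere-injective disks with one positive boundary puncture (achieved by perturbing $J$ near the Reeb chord endpoints, cf.\ \cite[Lemma 4.5(1)]{EES-PxR}) the only such disks in the symplectization are trivial strips, which carry no interior punctures. Your paragraphs one and three, suitably corrected, recover exactly this.

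The genuine problem is your second paragraph, together with the fact that your third paragraph leans on it (``contribute at least $2r$ by the previous step''). First, the translation argument fails: if $v=(v',v'')$ has $v''\equiv c$ constant, then translating in the $\C$-factor changes the asymptotic Reeb orbit rather than producing a $2$-parameter family inside the \emph{same} moduli space $\M^{X}(\gamma_j^m)$ --- the handle orbits here are isolated and nondegenerate, not a Morse--Bott family parametrized by $\C$. Indeed, Lemma~\ref{lem:noanchor2} shows that $\M^{X}(\gamma)$ is a transversely cut out, nonempty $1$-manifold, directly contradicting your claimed lower bound of $2$. Second, even where an actual moduli space happens to be larger than expected for a special $J$, that is an actual dimension, not a Fredholm index, and only indices are additive in the formal-dimension formula for a building; so it cannot be fed into that formula. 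The repair is to delete the paragraph and use the correct index bound $\dim v_i=2m_i-1\ge 1$, which already forces $\dim u\le 0$; but you must then state explicitly that it is transversality of the \emph{top-level} somewhere-injective disk (not ``SFT compactness'') that excludes such a $u$ --- this is the decisive step, and your write-up leaves it implicit.
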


\begin{proof}
It is well known that the Conley--Zehnder index in $E(a)$ is proportional to action. It follows that if $\gamma$ is a Reeb orbit in $\tilde Y_{k}(\epsilon;\delta)$ that is not contained in one of the handles, then $|\gamma|>1$. Thus the minimal grading $|\gamma|$ of an orbit in $Y_{k}(\epsilon;\delta)$ is attained at the central Reeb orbits in the handles and satisfies $|\gamma|=1$, and the same holds in $Y_{k}(\delta)$. This implies that if the formal dimension of a holomorphic building with interior negative punctures at some $\gamma$ equals $1$, then the disk in the symplectization must have dimension $<1$ and hence does not exist by transversality for disks with one positive boundary puncture, as the only such disks are trivial strips.
\end{proof}

\begin{cor}\label{cor:noanchor}
The differential of the DGA of a Legendrian link in $\tilde Y_{k}(\epsilon;\delta)$ or in $Y_{k}(\delta)$ is defined through disks without interior punctures.\qed
\end{cor}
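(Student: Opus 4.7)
My plan is to derive the corollary directly from the preceding lemma. I will begin by recalling, as in Section~\ref{sec:generalLCH}, that for each Reeb chord $a$ the differential $\d_{\mathrm{H}}$ sums, over all words of Reeb chords $\mathbf{b}$ and all homology classes $A$, the signed count of $\R$-equivalence classes in the moduli space $\M_A^{(\R\times Y;X)}(a;\mathbf{b})$ of anchored holomorphic buildings of formal dimension $1$. These are precisely the moduli spaces to which the preceding lemma applies, so in principle all that is left is to translate its conclusion into a statement about $\d_{\mathrm{H}}$.

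Applying the lemma to each such moduli space, every contributing building of formal dimension at most $1$ consists of a single level in the symplectization with no interior negative punctures at Reeb orbits, and hence no holomorphic planes in the Weinstein filling $X$ as anchors. Consequently $\d_{\mathrm{H}}$ is defined purely through honest holomorphic disks in the symplectization with boundary punctures at Reeb chords of $\Lambda$, the positive one at $a$ and the negative ones at $\mathbf{b}$. This is exactly what the corollary asserts.

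I anticipate no substantive obstacle here: the statement is essentially a direct translation of the lemma into the language of the Legendrian homology differential. The only small bookkeeping is to identify the moduli spaces of formal dimension $1$ addressed by the lemma with the rigid moduli spaces counted by $\d_{\mathrm{H}}$ after the quotient by the $\R$-translation of the symplectization; this identification is standard and follows from the index formula for anchored holomorphic buildings. An additional benefit worth noting is that, since no genuinely anchored configurations appear, the elaborate perturbation schemes (polyfolds or Kuranishi structures) needed in general to handle transversality of the holomorphic planes used as anchors are not required in the present setup, and standard transversality for disks with one positive puncture suffices.
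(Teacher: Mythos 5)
Your argument is exactly the paper's: the corollary is stated with a \qed precisely because it is the immediate translation of the preceding lemma (anchored disks of formal dimension $\le 1$ have one level and no interior punctures) into the definition of $\d_{\mathrm{H}}$ as a count of formal-dimension-$1$ moduli spaces. Correct, and the same route.
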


\subsubsection{The core algebra}\label{sec:corealgebra}
Consider a Legendrian link $\Lambda(\epsilon)\subset\tilde Y_k(\epsilon;\delta,)$ using the notation above. The Reeb chords of $\Lambda(\epsilon)$ fall into two classes, {\em interior} and {\em exterior}, where the interior chords are entirely contained in $\ol{Y}_k(\delta)\subset\tilde Y_k(\epsilon;\delta)$ and the exterior chords are not. As in Section \ref{sec:leginY_k} we further subdivide the interior chords into diagram chords and handle chords.

Let $A(\Lambda)$ denote the algebra generated by all interior chords and let $A_{\text{1-h}}(\Lambda)$ denote the algebra generated by all handle chords. Since the contact form on $\ol Y_k(\delta)\subset \tilde Y_k(\epsilon;\delta)$ depends on $\epsilon$ only through scaling, it is clear that $A(\Lambda)$ is independent of $\epsilon$.

\begin{lma}\label{lma:subalgebra}
If $\epsilon>0$ and $\delta>0$  are sufficiently small then $A(\Lambda)$ and $A_{\text{1-h}}(\Lambda)$ are sub-DGAs of $\A_{\mathrm{H}}(\tilde Y_{k}(\epsilon;\delta),\Lambda(\epsilon))$. Furthermore, the differentials on $A(\Lambda)$ and $A_{\text{1-h}}(\Lambda)$ agree with the differentials on these algebras obtained by considering $\Lambda$ as a Legendrian link in $Y_{k}(\delta)$, i.e.~using the differential on $\A_{\mathrm{H}}(Y_{k}(\delta),\Lambda)$.
\end{lma}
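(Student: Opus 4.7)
The plan is to combine an action estimate coming from Stokes' theorem with a pseudoconvexity argument for the handle boundaries. First I would establish the sub-DGA property of $A(\Lambda)$. By Stokes, any holomorphic disk in $\R\times\tilde Y_k(\epsilon;\delta)$ with positive puncture at a Reeb chord $a$ and negative punctures at $b_1,\dots,b_m$ satisfies
\[
\sum_{i=1}^{m} \int_{b_i}\tilde\alpha_{\epsilon;\delta} \;\le\; \int_{a}\tilde\alpha_{\epsilon;\delta}.
\]
Each interior Reeb chord of $\Lambda(\epsilon)$ has action bounded by $C_0(\epsilon/R)^{2}$, where $C_0$ depends only on $\delta$ and on $\Lambda\subset\ol Y_k(\delta)$, because $\tilde\alpha_{\epsilon;\delta}$ equals $(\epsilon/R)^{2}\alpha_{k;\delta}$ on $\ol Y_k(\delta)\subset \tilde Y_k(\epsilon;\delta)$. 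By Corollary~\ref{cor:ReebflowtildeY}, every exterior Reeb chord has action at least $K\epsilon^{-1/p}$. Since $K\epsilon^{-1/p}>C_0(\epsilon/R)^{2}$ for all sufficiently small $\epsilon$, no disk with positive puncture at an interior chord can have a negative puncture at an exterior chord, and $A(\Lambda)$ is closed under $\partial_{\mathrm H}$.

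To prove that $A_{\text{1-h}}(\Lambda)$ is a sub-DGA I would, in addition, have to exclude diagram chords from appearing as negative punctures of disks with positive puncture on a handle chord. Here the relevant point is that on each handle $C^{\ell}_{\rho}(\delta)$ the contact form and the chosen almost complex structure come from the explicit Stein model of Section~\ref{sec:1handle}, in which the Liouville vector field $Z$ points into the handle along $V_{-\delta}\cap C^{\ell}_{\rho}(\delta)$, so the hypersurface $\partial C^{\ell}_{\rho}(\delta)$ is $J$-convex from the outside. A maximum principle argument in the spirit of \cite{bib:BvK} and \cite{bib:H} then confines any holomorphic disk with positive puncture at a handle chord in $C^{\ell}_{\rho}(\delta)$ to $\R\times C^{\ell}_{\rho}(\delta)$, forcing all its negative asymptotes to be handle chords of the same handle.

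For the second statement I would show that the moduli spaces defining the differentials on $A(\Lambda)$ and $A_{\text{1-h}}(\Lambda)$ inside $\R\times \tilde Y_k(\epsilon;\delta)$ and inside $\R\times Y_k(\delta)$ coincide. The action bound already yields that any holomorphic disk contributing to the differential on $A(\Lambda)$ in $\R\times \tilde Y_k(\epsilon;\delta)$ has all asymptotes in $\ol Y_k(\delta)$; combined with $J$-convexity of $\partial \ol Y_k(\delta)$ for the chosen almost complex structure (which, by construction, agrees with the standard model near this hypersurface), a further maximum principle argument forces the image of any such disk to lie inside $\R\times \ol Y_k(\delta)$. Rescaling via $\psi_{\epsilon}^{-1}$ and identifying with $Y_k(\delta)$ then gives a bijection of moduli spaces preserving signed counts, so the two differentials agree.

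The main obstacle will be making the pseudoconvexity arguments rigorous inside the symplectization with a non-$\R$-invariant almost complex structure: the interpolation region between the standard handle model and the generic almost complex structure outside the handles has to be set up carefully so that the defining plurisubharmonic functions remain compatible with the $J$ actually used to count disks, and so that transversality-producing perturbations can be chosen in a manner that preserves this compatibility.
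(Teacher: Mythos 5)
Your argument for the sub-DGA property breaks down at the handle chords, and that is precisely the case requiring the most care. The claim that every interior Reeb chord of $\Lambda(\epsilon)$ has action bounded by $C_0(\epsilon/R)^2$ is false: the handle chords $c_{ij}^p$ have action approximately $(n_a+\theta)\delta\epsilon^{2}$ with $n_a$ the wrapping number, which is unbounded as $p\to\infty$. For any fixed $\epsilon$ there are handle chords whose action exceeds $K\epsilon^{-1/p}$, so the Stokes/action comparison alone does not exclude exterior chords (nor diagram chords, nor base-point contributions) from the negative end of a disk whose positive puncture is a long handle chord. Your proposed repair --- a maximum principle confining such disks to $\R\times C^{\ell}_{\rho}(\delta)$ --- is not justified: $\R\times\partial C^{\ell}_{\rho}(\delta)$ is not presented as a level set of a plurisubharmonic function for the almost complex structures actually used, and the Lagrangian boundary condition $\R\times\Lambda$ crosses this hypersurface transversally (the link runs from the diagram into the handle), so even a pseudoconvex hypersurface would not yield a boundary maximum principle; the interpolating disks of Section~\ref{sec:interpolatingdisks} do in fact cross it.

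What the paper does for handle chords is an action--index argument rather than geometric confinement. Stokes' theorem gives $\mathfrak{a}(a) > \sum_j\mathfrak{a}(b_j)+m\ell_0\epsilon^2+rc_0\epsilon^2$; the grading of the positive puncture satisfies $|a|>2n_a-1$ with $n_a$ comparable to $\mathfrak{a}(a)/(\delta\epsilon^{2})$, while the total grading at the negative end is bounded above by $k_{\rm ext}\sum_j\mathfrak{a}(b_j)+mi_0+ri_1$ with constants independent of $\delta$. For $\delta$ small the factor $1/\delta$ forces $|a|$ to exceed the negative-end grading by more than $1$ whenever an exterior chord, a diagram chord, or a base-point crossing is present, so no rigid disk of that type exists. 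This degree obstruction is what closes both $A(\Lambda)$ and $A_{\text{1-h}}(\Lambda)$ under the differential. Your treatment of the diagram chords and of the final comparison with $Y_k(\delta)$ is essentially the paper's, except that the paper replaces your unproven $J$-convexity of $\partial\ol{Y}_k(\delta)$ by a monotonicity estimate based on the $\Ordo(\epsilon^{2})$ area bound for the contributing disks.
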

\begin{pf}
We first show that the differential acting on an interior chord is a sum of monomials of interior chords. We start with diagram chords. Let $a$ denote a diagram chord. By definition of the contact form on $\tilde Y_k(\epsilon;\delta)$ there exists a  constant $\ell$ such that the action of any diagram chord is bounded by $\ell\epsilon^{2}$. Since by Lemma \ref{lma:orbitsinY_k} the action of any exterior chord is bounded below by $K\epsilon^{-\frac{1}{p}}$ for some constant $K$ it follows that no holomorphic disk with one positive puncture at $a$ can have any negative punctures mapping to exterior chords.

We consider second the case of a handle chord. Let $a$ denote a handle chord and let $n_a$ denote the number of times it intersects the subset $\{(x_1,y_1,x_2,y_2)\in V_{\delta}\colon y_1=0\}$. Note that there exists $\ell_0$ such that the action of any diagram chord is bounded below by $\ell_0\epsilon^{2}$ and that the action of $a$ equals $(n_a+\theta)\delta\epsilon^{2}$ for some $-1<\theta< 1$. Let $u$ be a holomorphic disk with positive puncture at $a$, $m$ negative punctures at diagram chords, and $t$ negative punctures at exterior chords $b_1,\dots,b_t$, and with boundary representing the homology class $\sum_{j=1}^{s}r_j[\Lambda_j]$. Since the disk has nonnegative $d(e^{t}\tilde\alpha)$-area, we find by Stokes' Theorem and monotonicity near base points in the diagram part of the link that
\[
\mathfrak{a}(a)>
\sum_{j=1}^{t}{\mathfrak a}(b_j) + m\ell_0\epsilon^{2} + rc_0\epsilon^{2},
\]
where ${\mathfrak a}(c)$ denotes the action of a Reeb chord $c$, and where $r=\sum_{j=1}^{s}|r_j|$. Here we use the fact that the projection of any disk passing a base point covers a small half disk near this base point.

The sum of gradings of the chords at the negative end is then bounded above by
\[
I=k_{\rm ext}\sum_{j=1}^{t}{\mathfrak a}(b_j) + mi_0 +ri_1,
\]
where $k_{\rm ext}$ is related to the constant of proportionality between Conley--Zehnder index and action in $E(a)$ and where $i_0$ and $i_1$ are grading bounds for diagram chords and for the homology classes $[\Lambda_j]$.
The area inequality then gives
\[
(n_{a}+\theta)\delta\epsilon^{2}>
\sum_{j=1}^{t}{\mathfrak a}(b_j) + m\ell_0\epsilon^{2} + rc_0\epsilon^{2},
\]
and consequently,
\[
n_a>\frac{1}{\delta}(\epsilon^{-2}\sum_{j=1}^{t}{\mathfrak a}(b_j) + m\ell_0 + rc_0)-\theta.
\]
Thus, if $M$ is the maximal difference in Maslov potential for two strands passing through a handle then the grading of $a$ satisfies
\[
|a| \geq 2n_a-1- M >\frac{2}{\delta}(\epsilon^{-2}\sum_{j=1}^{t}{\mathfrak a}(b_j) + m\ell_0 + rc_0)-2\theta-1-M>I+1
\]
provided $\delta$ is small enough and at least one of the terms in the
expression for $I$ is nonzero. Since $I$ is an upper bound for the sum
of the gradings at the negative end, it follows that the moduli space
containing the holomorphic disk $u$ is not counted in the differential.
We conclude that both $A(\Lambda)$ and $A_{\text{1-h}}(\Lambda)$ are subalgebras.

Finally, it is easy to see that the area of any disk which contributes to the differential on $A(\Lambda)$ is $\Ordo(\epsilon^{2})$. A straightforward monotonicity argument then shows that no such disk can leave $\R\times\ol{Y}_k(\delta)$ and it follows that the differential on $A(\Lambda)$ agrees with that induced by considering $\ol{Y}_k(\delta)$ as a subset of $Y_k(\delta)$.
\end{pf}

\begin{rmk}
Note that the argument in Lemma \ref{lma:subalgebra} also shows that the chords in one 1-handle generate a subalgebra on their own. By monotonicity, a disk with positive puncture in one 1-handle and a negative puncture in another has area at least $c_0\epsilon^{2}$ for some $c_0>0$. The argument now shows that such a disk cannot be rigid for grading reasons.
\end{rmk}

\begin{rmk}\label{rmk:actionleveliso}
As the exterior chords of $\Lambda(\epsilon)$ have action bounded below by $K\epsilon^{-\frac{1}{p}}$, we find that the inclusion of $A(\Lambda)$ into $\A_{\mathrm{H}}(\tilde{Y}_{k}(\epsilon;\delta),\Lambda(\epsilon))$ gives a canonical isomorphism on the parts of the DGAs below this action bound. Note that the natural action on $A(\Lambda)$ is scaled by $\epsilon^{2}$ under the inclusion so that the map is a canonical isomorphism below  action $K'\epsilon^{-(2+\frac{1}{p})}$.
\end{rmk}

Remark \ref{rmk:actionleveliso} shows that Lemma \ref{lma:subalgebra} is a rather strong result. However, since Legendrian homology algebras are defined through action filtrations with respect to fixed generic contact forms, it does not a priori give information above the action level.
As we shall see, it in fact does and the homology of $A(\Lambda)$ equals the homology of
$\A_{\mathrm{H}}(\tilde Y_k(\epsilon;\delta),\Lambda(\epsilon))$. For that reason we call $A(\Lambda)\subset \A_{\mathrm{H}}(\tilde Y_k(\epsilon;\delta),\Lambda(\epsilon))$ the {\em core algebra} of $\Lambda$.

\subsubsection{Cobordism maps}
Consider the cobordism $W(h)$ with positive end $\tilde Y(\epsilon_+;\delta)$ and negative end $\tilde Y(\epsilon_-;\delta)$, see Section \ref{sec:excob}. Such a cobordism induces a chain map of DGAs:
\[
\Phi_h\colon \A_{\mathrm{H}}(\tilde Y(\epsilon_+;\delta),\Lambda(\epsilon_+))\to
\A_{\mathrm{H}}(\tilde Y(\epsilon_-;\delta),\Lambda(\epsilon_-)).
\]
Let $W(h')$ be a cobordism with positive end $\tilde Y(\epsilon_-;\delta)$ and negative end $\tilde Y(\epsilon_+;\delta)$. Joining $W(h)$ to $W(h')$, we get a cobordism connecting $\tilde Y(\epsilon_+;\delta)$ to itself which can be deformed to a symplectization. As mentioned above (see Section \ref{app:aninv} for more detail) such a deformation induces a chain homotopy. In particular,
\[
\Phi_h\circ \Phi_{h'} = \id
\]
on the homology of $\A_{\mathrm{H}}(\tilde Y(\epsilon_+,\delta),\Lambda(\epsilon_+))$. Similarly we find that
\[
\Phi_{h''}\circ\Phi_{h} =\id
\]
on $\A_{\mathrm{H}}(\tilde Y(\epsilon_-,\delta),\Lambda(\epsilon_-))$ for suitable $h''$ and we conclude that $\Phi_h$ is an isomorphism on homology.

\begin{lma}\label{lma:idonA}
For any $\epsilon>0$, there exists $\delta_0$ and $\frac12<\theta<1$, independent of $\epsilon$, such that if $\epsilon_+=\epsilon$ and $\epsilon_-=\theta\epsilon$ then for all $\delta<\delta_0$ there exists a positive function $h\colon \R\to\R$ which satisfies \eqref{eq:hinftycond} and \eqref{eq:hC1cond} such that $\Phi_h(A(\Lambda))=A(\Lambda)$ and $\Phi_h|_{A(\Lambda)}=\id$.
\end{lma}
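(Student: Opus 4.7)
The plan is to exploit the large separation in action between interior Reeb chords of $\Lambda$ (whose actions scale as $\epsilon^{2}$) and exterior Reeb chords or ambient Reeb orbits (with actions bounded below by $K\epsilon^{-1/p}$), and then to run a $C^{0}$-small deformation of the cobordism to a symplectization in order to force $\Phi_h$ to act as the identity on $A(\Lambda)$. First, I would fix $h\colon\R\to\R$ monotone with $h(t)\equiv\epsilon_+$ for $t\geq T_+$ and $h(t)\equiv\theta\epsilon_+$ for $t\leq T_-$, with $h'>0$ pointwise small on $[T_-,T_+]$; such an $h$ exists for $\theta\in(1/2,1)$ close enough to $1$, and \eqref{eq:hC1cond} is then automatic. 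Pick a compatible almost complex structure $J$ on $W_k(h)$ that is cylindrical and $\R$-invariant on the region $\R\times\ol{Y}_k(\delta)$ containing the Lagrangian $L(h)=\R\times\Lambda$, that preserves the contact planes of $\alpha_{k;\delta}$, and that pairs $\pa_t$ with the normalized Reeb vector field; by \eqref{eq:hC1cond} the cobordism symplectic form tames such a $J$ on this region.

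To show $\Phi_h(A(\Lambda))\subset A(\Lambda)$, let $a_+$ be an interior chord of $\Lambda(\epsilon_+)$ and let $u$ be a rigid holomorphic disk in $W_k(h)$ with positive puncture at $a_+$. By a dimension argument analogous to the one behind Corollary \ref{cor:noanchor}, $u$ carries no interior punctures and has only Reeb chord negative asymptotics $b_1,\dots,b_r$. Stokes' Theorem applied to the primitive $e^{t}h^{2}(t)\alpha_{k;\delta}$ of the cobordism symplectic form gives $\mathfrak{a}_+(a_+)\geq\sum_j\mathfrak{a}_-(b_j)$, while $\mathfrak{a}_+(a_+)=O(\epsilon_+^{2})$ and exterior chords have action at least $K\epsilon^{-1/p}$ by Corollary \ref{cor:ReebflowtildeY}, so every $b_j$ is an interior chord for $\epsilon$ sufficiently small. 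A monotonicity estimate relative to the fixed distance separating $\ol{Y}_k(\delta)$ from the boundary of the ball removed from $E(a)$ then confines the image of $u$ to $\R\times\ol{Y}_k(\delta)$, and $\Phi_h$ maps $A(\Lambda)$ into itself.

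To prove $\Phi_h|_{A(\Lambda)}=\id$ I would run a deformation argument through $h_s=(1-s)h+s\epsilon_+$, $s\in[0,1]$; note that only the negative-end scaling of the cobordism varies with $s$, while the Lagrangian, the almost complex structure, the contact form $\alpha_{k;\delta}$, and the Reeb chords themselves in $\ol{Y}_k(\delta)$ are all unchanged. At $s=1$ the cobordism degenerates to the symplectization $\R\times\tilde Y_k(\epsilon_+;\delta)$, whose induced chain map is the identity on $A(\Lambda)$ after a small $\R$-breaking perturbation of $J$ in the contact planes near each interior chord: as in \cite[Lemma 4.5(1)]{EES-PxR}, this perturbation makes the trivial strips $\R\times a$ transversely cut out and reduces the $\R$-family to a single transverse rigid disk contributing $+a$ to $\Phi(a)$. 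For every $s\in[0,1]$ the action bound and image confinement from the previous paragraph apply uniformly, and because $J$-holomorphicity does not depend on the symplectic form, the $0$-dimensional moduli spaces of rigid disks with positive puncture at $a_+$ and negative asymptotics in $A(\Lambda)$ are independent of $s$; hence $\Phi_h$ equals the $s=1$ map, namely the identity.

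The main obstacle will be the transversality and sign analysis behind the identification of the $s=1$ chain map with the identity, namely verifying that after the minimal $\R$-breaking perturbation of $J$ the continuous $\R$-family of trivial strips $\R\times a$ collapses to a single transversely cut out disk counted with sign $+1$, and that no additional rigid disks with positive puncture at $a_+$ are produced in $\R\times\ol{Y}_k(\delta)$. Performing such a perturbation simultaneously for all interior chords in a fixed grading range, in a manner compatible with the deformation $h_s$, is a standard but technical SFT construction; once it is in place the action filtration of Corollary \ref{cor:ReebflowtildeY}, monotonicity, and the $s$-invariance of $J$-holomorphicity combine to yield the lemma.
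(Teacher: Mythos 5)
Your proposal reaches the right conclusion and shares the paper's first step (the action/Stokes estimate plus monotonicity confining all relevant disks to $\R\times\ol{Y}_k(\delta)$), but the argument for $\Phi_h|_{A(\Lambda)}=\id$ is genuinely different. The paper splits into two cases: for diagram chords it uses the finite number of generators and the uniform action bound together with $C^2$-closeness of $(W_k(h),L(h))$ to the trivial cobordism for $\theta$ near $1$, and for handle chords (infinitely many, unbounded action) it uses the grading--action estimate to rule out $(-1)$-disks, so that each moduli space $\M(c;c)$ is cobordant to the single trivial strip and all other contributing moduli spaces are cobordant to the empty set. You instead rigidify the problem by choosing $J$ to be $\R$-invariant on all of $\R\times\ol{Y}_k(\delta)$ (which you correctly check is tamed by $d(e^th^2\alpha_{k;\delta})$ thanks to \eqref{eq:hC1cond}), so that the confined moduli spaces are literally the symplectization moduli spaces for every $h$ and every $s$; genericity among $\R$-invariant structures then forces every rigid confined disk to be a trivial strip, handling diagram and handle chords uniformly and making the deformation through $h_s$ essentially decorative. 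This buys a cleaner, case-free argument in which neither the restriction to $\theta$ near $1$ nor the handle-chord grading computation is needed; what it costs is a dependence on a special, non-generic choice of $J$, and one technical point you gloss over: the contact forms at the two ends are $(\epsilon_\pm/R)^2\alpha_{k;\delta}$, whose Reeb fields differ by a constant factor, so a single $\R$-invariant $J$ with $J\pa_t=R_{\alpha_{k;\delta}}$ is not literally the adjusted structure at either end. This is harmless --- $J\pa_t=cR$ for constant $c$ is intertwined with the adjusted structure by the linear reparametrization $t\mapsto ct$, which preserves asymptotics and SFT compactness --- but it should be said, since otherwise the claim that "the almost complex structure is unchanged" while both ends remain cylindrical does not parse. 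With that caveat supplied, and with the observation that transversality for the confined moduli spaces is exactly the symplectization transversality of \cite[Lemma 4.5(1)]{EES-PxR} already needed to define the differential on $A(\Lambda)$, your argument is a valid alternative to the paper's.
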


\begin{pf}
Repeating the proof of Lemma \ref{lma:subalgebra} we find first that $\Phi_h(A(\Lambda))\subset A(\Lambda)$ and that $\Phi_h(A_{\text{1-h}}(\Lambda))\subset A_{\text{1-h}}(\Lambda)$, and then that any disk contributing to $\Phi_h$ is contained inside $\R\times\ol{Y}(\epsilon;\delta)$. Finally, note that the $C^{2}$-norm of $h/\epsilon$ controls the distance from the cobordism $(W_k(h),L(h))$ to the trivial cobordism. Since the map induced by the trivial cobordism is the identity and since there is a uniform action bound on diagram chords it follows that for $\theta$ sufficiently close to $1$ the map equals the identity when acting on diagram chords.

Furthermore, the map $\Phi_h$ takes handle chords to sums of monomials of handle chords and it is straightforward to check that if $c$ is any handle chord, if $b_1\cdots b_k$ is a monomial of handle chords, if $\theta$ is sufficiently close to $1$, and if ${\mathfrak a}(c)\ge \sum_{j=1}^{k}{\mathfrak a}(b_j)$, then $|c|>\sum_{j=1}^{k}|b_j|$ unless $k=1$ and $b_1=c$. This implies that there are no disks of formal dimension $-1$ in the handle. It follows that for any fixed handle chord $c$ the moduli space of disks with positive puncture and negative puncture at $c$ is cobordant to the corresponding moduli space defined by the trivial cobordism and that other moduli spaces that could contribute to $\Phi_h(c)$ are empty (see also Lemma \ref{lem:noanchor2}). We conclude that $\Phi_h$ is the identity also when acting on handle chords and consequently $\Phi_h|_{A(\Lambda)}=\id$.
\end{pf}

\subsubsection{Legendrian homology and the homology of $A(\Lambda)$}
Lemma \ref{lma:subalgebra} implies that the inclusion map
\begin{equation}\label{eq:inclA}
\iota\colon A(\Lambda)\to \A_{\mathrm{H}}(\tilde Y_k(\epsilon;\delta),\Lambda(\epsilon))
\end{equation}
is a chain map for $\epsilon,\delta>0$ small enough. Furthermore, it implies that the homology of $A(\Lambda)$ is canonically isomorphic to that of  $\A_{\mathrm{H}}(Y_k(\delta);\Lambda)$.

\begin{lma}
For $\epsilon,\delta>0$ sufficiently small,
the inclusion map $\iota$ in \eqref{eq:inclA} induces an isomorphism in homology. In particular, it follows that the corresponding DGAs are quasi-isomorphic:
\[
\A_{\mathrm{H}}(\tilde Y_k(\epsilon;\delta),\Lambda(\epsilon)) \ \cong_{\mathrm{quasi}} \
\A_{\mathrm{H}}(Y_k(\delta),\Lambda).
\]
\end{lma}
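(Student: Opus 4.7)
The plan is to combine iteration of the cobordism maps from Lemma~\ref{lma:idonA} with an action-filtration argument, reducing the claim to an eventual exhaustion of homology by the core subalgebra. Applying Lemma~\ref{lma:idonA} inductively, with $\epsilon_n = \theta^n\epsilon_0$ for a fixed $\theta \in (1/2,1)$ and $\epsilon_0$ small, yields quasi-isomorphisms
\[
\Phi^{(n)}\colon \A_{\mathrm{H}}(\tilde Y_k(\epsilon_0;\delta), \Lambda(\epsilon_0)) \longrightarrow \A_{\mathrm{H}}(\tilde Y_k(\epsilon_n;\delta), \Lambda(\epsilon_n))
\]
each of which restricts to the identity on the common core subalgebra $A(\Lambda)$. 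Writing $\iota_n$ for the inclusion of $A(\Lambda)$ into the right-hand side, the $\iota_n$ assemble into a commuting ladder whose horizontal arrows are quasi-isos, so $\iota_n$ is a quasi-iso for some $n$ iff for all $n$; it therefore suffices to prove the statement for $n$ arbitrarily large.

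Next I would invoke the action filtration. Action defines a sub-DGA filtration of $\A_{\mathrm{H}}$ because its differential counts holomorphic disks of nonnegative $d(e^t\tilde\alpha)$-area and therefore strictly decreases the total action at punctures. By Corollary~\ref{cor:ReebflowtildeY}, exterior chords of $\Lambda(\epsilon_n)$ have action at least $K\epsilon_n^{-1/p}\to\infty$, while interior chords have action of order $\epsilon_n^2\to 0$; hence by the Stokes estimate underlying Lemma~\ref{lma:subalgebra}, for $L$ in the nonempty range $(c\epsilon_n^2,\,K\epsilon_n^{-1/p})$ one obtains the exact equality of sub-DGAs $\A_{\mathrm{H}}^{<L}(\tilde Y_k(\epsilon_n;\delta),\Lambda(\epsilon_n)) = A(\Lambda)$. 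Since homology commutes with directed colimits over action levels, proving that every degree-$d$ homology class in $\A_{\mathrm{H}}(\tilde Y_k(\epsilon_n;\delta),\Lambda(\epsilon_n))$ admits a representative of action bounded by $K\epsilon_n^{-1/p}$ for all sufficiently large $n$ completes the quasi-isomorphism statement. The second assertion of the lemma then follows from the intrinsic identification of $A(\Lambda)$'s differential with that induced by $Y_k(\delta)$ already given in Lemma~\ref{lma:subalgebra}.

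The hard part is precisely this uniform bound on representative actions. In any fixed degree only finitely many exterior chords exist, since their Conley--Zehnder indices grow linearly in winding number around $E(a)$; nevertheless, a degree-$d$ monomial may pair a single exterior chord with arbitrarily long $A(\Lambda)$-words, so that the exterior-chord contributions to the quotient complex $\A_{\mathrm{H}}(\tilde Y_k(\epsilon_n;\delta),\Lambda(\epsilon_n))/A(\Lambda)$ are not obviously action-bounded. My plan is to filter this quotient by the number of exterior-chord factors in each monomial and analyze the resulting spectral sequence: on the $E^1$-page, the differential on strings of exterior chords inherits the Reeb-flow structure on $E(a)$, whose two closed orbits (Lemma~\ref{lma:irrationalflow}) furnish a natural pairing of short and long chords in the spirit of the orbit cancellations of Bourgeois--van Koert \cite{bib:BvK} and Hutchings \cite{bib:H}. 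Showing that this pairing renders the $E^1$-page acyclic in each bounded degree range once $n$ is large enough yields the desired vanishing of the relative homology, and hence the quasi-isomorphism $\iota_0$.
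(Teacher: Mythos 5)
Your setup is the right one---iterating the cobordism maps of Lemma~\ref{lma:idonA}, which are isomorphisms on homology restricting to the identity on $A(\Lambda)$, and comparing with the action window in which the sub-DGA of action $<L$ is exactly $A(\Lambda)$---but the argument stalls at exactly the point you flag as ``the hard part.'' The uniform action bound on representatives does not require any analysis of the exterior chords, and in particular does not require the spectral sequence you propose. Given a fixed cycle $\gamma$ in $\A_{\mathrm{H}}(\tilde Y_k(\epsilon;\delta),\Lambda(\epsilon))$, positivity of the $d(e^{t}\tilde\alpha)$-energy of the disks counted by the cobordism maps (the same Stokes estimate used in the proof of Lemma~\ref{lma:subalgebra}) shows that the chords appearing in $\Phi^{(N)}(\gamma)=\Phi_{h_N}\circ\cdots\circ\Phi_{h_1}(\gamma)$ have action controlled by that of the chords of $\gamma$; since by Corollary~\ref{cor:ReebflowtildeY} the exterior chords at the level-$N$ negative end have action at least $K(\theta^{N}\epsilon)^{-1/p}\to\infty$, for $N$ large (depending on $\gamma$) no exterior chord can occur, so $\Phi^{(N)}(\gamma)\in A(\Lambda)$. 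Surjectivity then follows because $\Phi^{(N)}$ is an isomorphism on homology and the identity on $A(\Lambda)$: writing $w=\Phi^{(N)}(\gamma)\in A(\Lambda)$ one has $\Phi^{(N)}(\gamma-w)=0$, hence $[\gamma]=[w]$.

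Two further problems. First, injectivity is not addressed by your plan of producing low-action representatives of classes: you must also show that a cycle $w\in A(\Lambda)$ that bounds in the big algebra bounds in $A(\Lambda)$, and this is done by applying the same push-down to a primitive, giving $d\Phi^{(N)}(\gamma)=\Phi^{(N)}(d\gamma)=\Phi^{(N)}(w)=w$ once $N$ is large enough that $\Phi^{(N)}(\gamma)\in A(\Lambda)$. (Your alternative of proving the quotient acyclic would cover both, but see the next point.) Second, the proposed $E^{1}$-cancellation of exterior chords is unsubstantiated: the differentials of the wandering chords in $E(a)$ are computed nowhere, there is no reason they pair off degreewise, and the Bourgeois--van Koert and Hutchings arguments cited in the paper are the inspiration for the cobordism-iteration scheme itself, not for an orbit-cancellation on a filtration by the number of exterior factors. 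As written, the proof has a genuine gap at its final and essential step, even though every ingredient needed to close it is already in your first paragraph.
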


\begin{pf}
For simpler notation we write
$H\A_{\mathrm{H}}$ for the homology of $\A_{\mathrm{H}}(\tilde Y_k(\epsilon;\delta),\Lambda(\epsilon))$,
and $H A$ for the homology of $A(\Lambda)$.

To see that $\iota$ is injective on homology, consider a sum of monomials $w\in A(\Lambda)$ which represents $0\in H \A_{\mathrm{H}}$. This means that there exists a sum of monomials $\gamma$ of chords of $\Lambda$ such that $d\gamma=w$. Consider now a concatenation of $N$ cobordisms $W(h_j)$, $j=1,\dots, N$ which all satisfy Lemma \ref{lma:idonA} and such that the contact manifold at the negative end $\tilde Y_k(\theta^{N}\epsilon)$, $0<\theta<1$, has the property that the action of any exterior chord which is bounded below by $K\theta^{-N/p}\epsilon^{-1/p}$ is larger than any chord appearing in a monomial in $\gamma$. Then, writing
\[
\Phi=\Phi_{h_N}\circ\dots\circ\Phi_{h_1},
\]
we must have $\Phi(\gamma)\in A(\Lambda)$ and by Lemma \ref{lma:idonA}
\[
d\Phi(\gamma)=\Phi(d\gamma)=\Phi(w)=w,
\]
and we find that $w$ represents $0\in H A$ as well and $\iota$ is injective on homology.

To see that $\iota$ is surjective on homology, for any class in $H A_{\mathrm{H}}$ represented by $\gamma$ we find as above that there is $N$ such that
$\Phi=\Phi_{h_N}\circ\dots\circ\Phi_{h_1}$ satisfies
$\Phi(\gamma)\in A(\Lambda)$. Since $\Phi$ is the identity on $A(\Lambda)$, there exist $w\in A(\Lambda)$ such that $\Phi(\gamma-w)=0$. But $\Phi$ is an isomorphism on homology and thus the class represented by $\gamma$ is also represented by the cycle $w\in A(\Lambda)$. It follows that $\iota$ is surjective on homology as well and thus a quasi-isomorphism.
\end{pf}

\subsection{Explicit descriptions of rigid holomorphic disks}\label{Sec:compute}
In this section we describe all moduli spaces necessary for computing the differential on $A(\Lambda)$, after a slight isotopy, giving $\Lambda$ what we call a split diagram, that we describe first.

\subsubsection{Split diagrams}\label{sec:split}
Consider the front resolution of a Legendrian link in $Y_k$ as described in Section \ref{ssec:resolution}. In order to facilitate the proof that the combinatorial formula in Section \ref{ssec:dga} indeed gives a description of the Legendrian algebra, we will make additional modifications to the front resolution by adding additional ``dips'' in a neighborhood of each attaching ball. (Note that these dips do not change the DGA up to stable tame isomorphism by the analytic or combinatorial invariance proofs, see Appendices~\ref{app:aninv} and~\ref{app:combinv}.)

Consider the part of the Lagrangian diagram of a Legendrian link in a small region around the attaching $S^0$ of a one-handle, see Figure \ref{fig:stdform}. Recall that we identified a neighborhood of the core-sphere of the handle with a $\delta_0$-neighborhood of the $0$-section in a $1$-jet space of an interval and that all strands passing through the handle lie in an $\epsilon$-neighborhood of the $0$-section, $\epsilon\ll\delta_0$.

We change the link by a Legendrian isotopy which we will call {\em an
  attaching isotopy} since it is supported near the attaching
regions. This isotopy changes the Lagrangian diagram only near the
attaching locus where it appears as in Figure \ref{fig:lagdip}.
Note that the attaching isotopy induces a set of new Reeb chords $\Nn$ of the link.

\begin{figure}
\centering
\includegraphics[width=.9\linewidth]{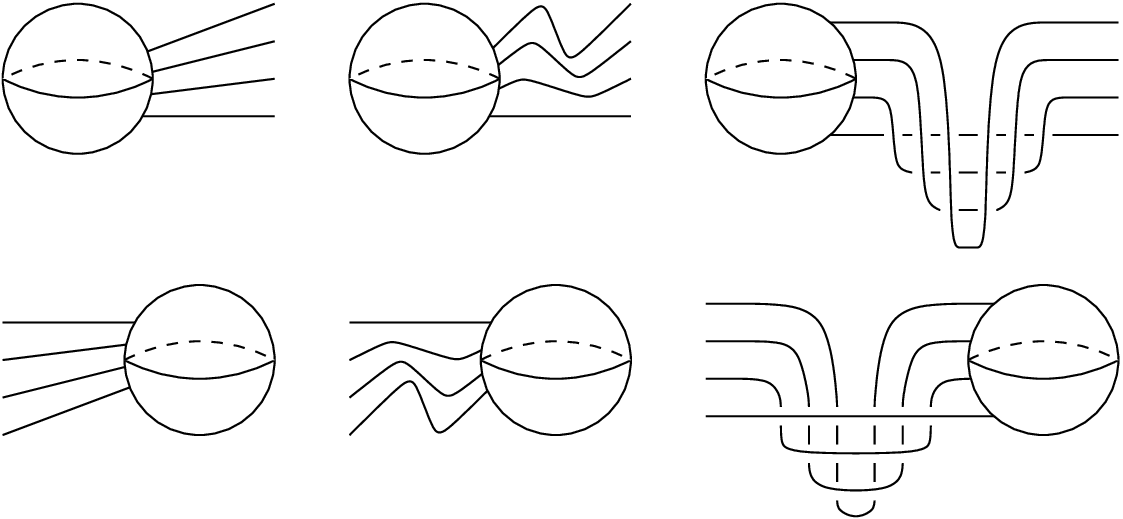}
\caption{Perturbing the $xz$ projection of a Legendrian link near the
  attaching locus (left) to obtain the $xz$ projection of a link
  (middle) whose $xy$
  projection (right) contains a dip. This produces new Reeb chords
  near the attaching locus.}
\label{fig:lagdip}
\end{figure}

\begin{lma}
There exists an isotopy such that for any Reeb chord $c\in\Nn$, the action $\act(c)$ of $c$ satisfies $\act(c)<2\delta_1$.
\end{lma}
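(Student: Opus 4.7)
The plan is to prove this by a direct height-difference calculation in the $1$-jet neighborhood. In the identification of a neighborhood of the core sphere of the handle with the $\delta_0$-neighborhood of the zero section in $J^1([0,A])$ with contact form $\alpha = dz - y\,dx$, the Reeb vector field is $\partial_z$. Hence, for any Reeb chord $c$ supported in this neighborhood, one has
\[
\act(c) \;=\; \int_c \alpha \;=\; z(c^+) - z(c^-),
\]
where $c^\pm$ denote the upper and lower endpoints of $c$ on $\Lambda$. Before the attaching isotopy, each strand of $\Lambda$ through the handle is the $1$-jet of some function $f_i \colon [0,A] \to \R$ with $|f_i|, |f_i'| < \delta_1$, and in particular has $|z| < \delta_1$ along the strand.

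The first step is to construct the attaching isotopy so as to realize the $xy$-pattern of Figure \ref{fig:lagdip} while keeping strict control on $z$. Concretely, I would take the isotopy to be the $1$-jet lift of a perturbation $f_i \mapsto f_i + \eta_i$, where each $\eta_i$ is a compactly supported bump function with prescribed small $C^1$-norm, chosen so that the new $xy$-projections $\{(x, f_i'(x) + \eta_i'(x))\}$ produce the required dip and so that all resulting crossings of the $xy$-projection are transverse (with the associated Reeb chords non-degenerate). Since we may choose $\|\eta_i\|_{C^0}$ and $\|\eta_i\|_{C^1}$ as small as we wish, we may arrange that every strand of the isotoped link still satisfies $|z| < \delta_1$ throughout the attaching region (this is the only place genericity has to be balanced against the height bound, but it is easily achieved because the dip pattern only requires a bump of amplitude much smaller than $\delta_1$).

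The second step is to identify the new Reeb chord set $\Nn$. Each element of $\Nn$ is a vertical segment (in the $\partial_z$ direction) whose two endpoints lie on strands of the isotoped link inside the attaching region. By the height control above, both endpoints have $z$-coordinate of absolute value less than $\delta_1$, so
\[
\act(c) \;=\; z(c^+) - z(c^-) \;<\; \delta_1 - (-\delta_1) \;=\; 2\delta_1,
\]
which is the desired bound.

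The only potentially subtle point is checking that the isotopy can simultaneously be (i) $C^1$-small enough that the bump stays inside the $\delta_1$-neighborhood, and (ii) generic enough to realize the prescribed dip with only transverse $xy$-crossings and nondegenerate Reeb chords. This is standard, because the dip pattern is a stable local model that is achieved by any sufficiently generic perturbation supported in a single $xy$-rectangle; one simply rescales the bump to make its amplitude as small as required by the bound $2\delta_1$.
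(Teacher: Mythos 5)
Your proposal is correct and is essentially the paper's argument: the paper's proof simply points to Figure~\ref{fig:frontdip}, which shows that before and after the attaching isotopy the strands stay within a vertical band of height $<2\delta_1$, so the new Reeb chords (whose action is the $z$-height difference of their endpoints, since the Reeb field is $\partial_z$) have action $<2\delta_1$. You have just written out explicitly the height computation and the $C^1$-smallness of the perturbation that the paper leaves to the picture.
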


\begin{pf}
Figure \ref{fig:frontdip} shows the front of the link before and after the isotopy. The lemma follows.
\end{pf}

\begin{figure}
\centering
\includegraphics[width=.7\linewidth]{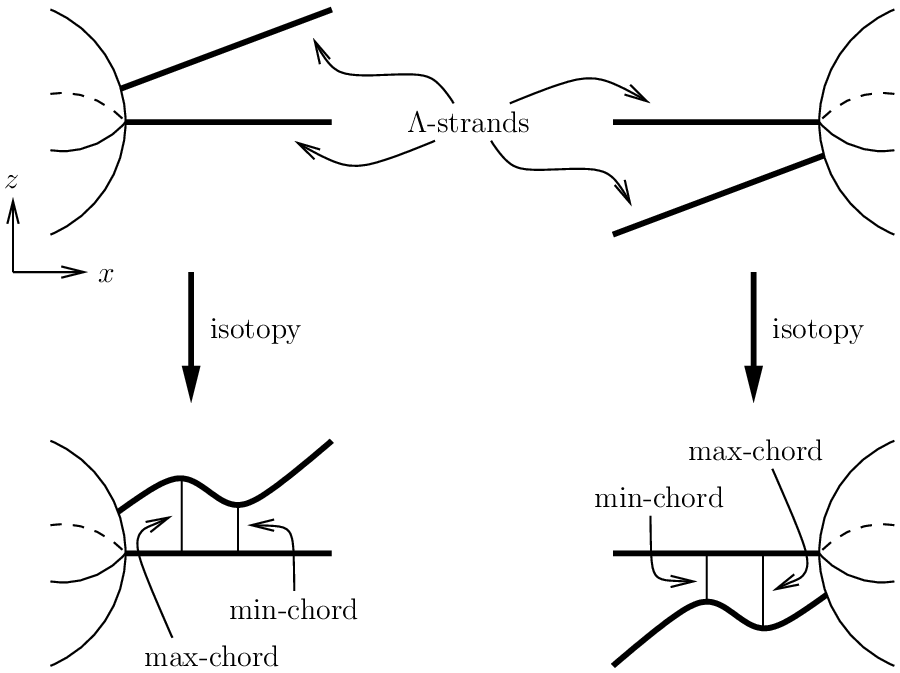}
\caption{Short chords introduced by the isotopy. In each diagram, the
  height of the vertical direction is less than $\epsilon$.}
\label{fig:frontdip}
\end{figure}

We call a Legendrian link obtained by front resolution followed by an attaching isotopy introducing short chords a Legendrian link with {\em split diagram}. We show below that the holomorphic disks which need to be considered for calculation of the differential of the Legendrian homology a Legendrian link with split diagram are of three types: diagram disks, handle disks, and interpolating disks.

\subsubsection{Diagram disks}
Let $\Lambda$ be a Legendrian link with split diagram. Let $R(r_1)\subset R(r_2)\subset \R^{2}$, where we think of $\R^{2}$ as the $xy$-plane, be regions such that $R(r_2)$ contains all of the diagram of $\Lambda$ except standard strands entering the attaching locus and such that $R(r_2)-R(r_1)$ contains all the diagram double points corresponding to the short maximum Reeb chords near the attaching locus, see Figure \ref{fig:frontdip}.

\begin{lma}\label{lma:diagramdisks}
Let $u\colon D\to\R\times Y_k(\delta)$ be a holomorphic disk with one positive puncture and boundary on $\R\times\Lambda$. Assume that the Reeb chord of the positive puncture is contained in $R(r_1)\times\R\subset Y_k(\delta)$. Then $u(D)$ is contained in $\R\times(R(r_1)\times\R)$.
\end{lma}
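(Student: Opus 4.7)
The plan is to project $u$ to the $xy$-plane and analyze the resulting holomorphic map with Lagrangian boundary. Because the almost complex structure on $\R\times Y_k(\delta)$ is chosen so that outside the $1$-handles and their attaching balls it restricts to the standard complex structure on the contact planes of $\R^3$, the composition $\pi_{xy}\circ u$ is holomorphic into $\R^2$ wherever defined, with boundary on $\pi_{xy}(\Lambda)$, a positive convex corner at the crossing $a\subset R(r_1)$, and negative convex corners at the crossings corresponding to the negative punctures of $u$. Let $\Sigma\subset D$ denote the connected component of $u^{-1}(R(r_2)\times\R)$ containing the positive puncture. It will suffice to show $\pi_{xy}(u(\Sigma))\subset R(r_1)$, since any nonempty component of the complement of $\Sigma$ in $D$ must either map out of $R(r_2)\times\R$ across $\partial R(r_2)\times\R$ (handled by essentially the same argument in reverse) or enter an attaching ball, and a disk piece with no positive puncture and positive area cannot exist in this exact symplectic setup.

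Assume for contradiction that $\pi_{xy}(u(\Sigma))$ enters the annular region $R(r_2)\setminus R(r_1)$. Then some boundary arc of $\pi_{xy}\circ u|_\Sigma$ must travel along one of the dipped strands constructed in Section~\ref{sec:split}, and every corner in $R(r_2)\setminus R(r_1)$ must be at one of the short Reeb chord crossings of a dip. I would then carry out a local analysis at each such crossing: using the Reeb sign convention of Figure~\ref{fig:signs} together with the geometry of the dip shown in Figure~\ref{fig:lagdip}, one checks that the quadrants at a short chord opening back toward $R(r_1)$ are positive quadrants, so a \emph{negative} corner at a short chord must instead open toward the attaching ball. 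Thus the boundary, once it has entered the dip region, is forced to continue toward the attaching locus rather than fold back into $R(r_1)$, and the disk must eventually enter a $1$-handle.

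The remaining step is to rule out disks that enter a $1$-handle, for which I would apply a monotonicity argument analogous to the one used in Lemma~\ref{lma:subalgebra}: any holomorphic curve piece lying nontrivially inside the handle region $C_\rho^\ell(\delta)$ has area bounded below by a constant times $\delta^{2}$, while the total $\omega$-area of $u$ equals $\act(a)-\sum_i\act(b_i)$, which for a disk with positive puncture in $R(r_1)$ is controlled by the diagram data and can be made smaller than that lower bound by choosing $\delta$ small (as in the setup of Section~\ref{sec:corealgebra}). This yields the desired contradiction. The main obstacle is the local Reeb sign check at short chord crossings: verifying that no arrangement of negative corners at these chords permits the boundary to turn back toward $R(r_1)$ requires carefully tracking the orientations of the dipped strands and the associated Reeb quadrants prescribed by the dip construction.
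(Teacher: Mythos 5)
Your opening moves coincide with the paper's: the projection $\pi\circ u$ to the plane is holomorphic for the chosen almost complex structure, and one studies $E=u^{-1}(\R\times(R(r_2)\times\R))$. But from there the paper finishes with a short maximum principle argument and nothing else: if no boundary component of $E$ enters the annulus $R(r_2)-R(r_1)$, the maximum principle (bounded $y$-coordinate, finite area) already forces $u(D)\subset\R\times(R(r_1)\times\R)$; if some boundary component does enter the annulus, one looks at the outermost strand before the dip and notes that the maximum principle forbids the disk from lying on its outside, hence it lies on the inside, and then, having no further corners available, its boundary would be carried around the dip to the outside of that strand --- contradiction. No sign analysis at the dip crossings and no area estimates are needed.

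Your route has two genuine gaps. First, the ``local Reeb sign check'' at the short dip chords, which you yourself flag as the main obstacle, is not carried out, and as framed it risks circularity: which quadrants are admissible as negative corners at the $c^0$-type crossings is precisely part of the combinatorial model that Section~\ref{Sec:compute} is in the business of \emph{deriving} from the geometry, so it cannot be invoked as an input. (The paper replaces this entire step by the barrier role of the outermost dipped strand under the maximum principle.) Second, and more seriously, the concluding monotonicity step is backwards. Monotonicity bounds the area of a curve passing through a handle region from below by a constant times $\delta^{2}$ (or $\rho^{2}$), and this bound \emph{shrinks} as $\delta\to 0$; meanwhile the symplectic area $\act(a)-\sum_i\act(b_i)$ of a disk with positive puncture at a diagram chord is fixed by the diagram and of order one, independent of $\delta$. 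So the inequality you need --- total area smaller than the monotonicity bound --- fails, and no contradiction results. The analogy with Lemma~\ref{lma:subalgebra} is in the opposite regime: there, disks of area $\Ordo(\epsilon^{2})$ are prevented from traversing a region of definite size; here you would need an order-one disk to be excluded from a region of size $\delta$, which monotonicity cannot do. A smaller issue: your dismissal of the components of $D\setminus\Sigma$ without a positive puncture is unjustified, since those pieces carry extra boundary on the hypersurface $\R\times(\pa R(r_2)\times\R)$ and Stokes' theorem does not reduce their area to a difference of chord actions.
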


\begin{pf}
Recall that we use the $\R$-invariant almost complex structure pulled back from $\C$ in $\R\times(R(r_1)\times\R)$. Let $E=u^{-1} (\R\times (R(r_2)\times\R))$. Then for generic $r_2$, $E$ is a Riemann surface with boundary $\pa E$ mapping to
\[
(\R\times \Lambda)\,\,\cup\,\, (\R\times(\pa R(r_2)\times\R)),
\]
and if $\pi\colon \R\times(R(r_2)\times\R)\to R(r_2)$ is the projection then $\pi\circ u$ is holomorphic with respect to the standard complex structure on $\C$. If $u$ does not map any boundary component into $\R\times(R(r_2)-R(r_1))\times\R$ then it is an easy consequence of the maximum principle that $u(D)\subset R(r_1)$: if not $u|_E$ would not have bounded $y$-coordinate and its area would be infinite.

Assume thus that some boundary component of $D$ maps into this region. Consider the outermost strand before the dip. As above, the maximum principle implies that the disk cannot lie on the outside of this strand. Hence it lies on the inside. If the disk has no more corners then it eventually ends up on the outside and the maximum principle again is violated. It follows that the disk stays in $\R\times (R(r_1)\times\R)$.
\end{pf}

We call disks in Lemma \ref{lma:diagramdisks} {\em diagram disks}.

\subsubsection{Handle disks}\label{ssec:handledisks}
Consider a standard Legendrian strand $\Lambda$ running through a one-handle, see Section \ref{sec:1handle}. Recall that the Reeb chords of $\Lambda$ are then $c^{m}$, $m=1,2,\dots$ where $m$ is the number of times the chord wraps around the central Reeb orbit and that although all chords are orbits, they are nondegenerate in the sense that the linearized Reeb flow takes the tangent space of $\Lambda$ at the start point to a Lagrangian subspace in the contact plane in the tangent space at the endpoint which is transverse to the tangent space of $\Lambda$ at the end point. Consider a small perturbation of the standard strand so that no Reeb chord is also an orbit. Then the Reeb chords are in natural one-to-one correspondence with the Reeb chords of the unperturbed strand and we then have the following.

\begin{lma}\label{lma:constterm}
The moduli space $\M^{\R\times Y_{k}(\delta)}(c^{1};\varnothing)$ of once punctured holomorphic disks with boundary on $\Lambda\times\R$ contains algebraically one $\R$-family (i.e., one disk, rigid up to translation).
\end{lma}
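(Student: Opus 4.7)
The plan is to establish the lemma in three steps: a Fredholm dimension count, an explicit construction of one disk, and a uniqueness/sign argument.

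First, I would verify that the moduli space has the expected dimension $1$, so that after quotienting by $\R$-translation we get a rigid disk. By Lemma~\ref{lma:orbitsinhandle} we have $\CZ(\gamma)=2$, and the standard index formula combined with the nondegeneracy established in Lemma~\ref{lem:handleReebtv} gives $|c^1|=1$ (consistent with $|c_{ii}^{1}|=2\cdot 1-1=1$ in Section~\ref{ssec:intdga}). Thus the formal dimension of $\M^{\R\times Y_{k}(\delta)}(c^{1};\varnothing)$ is $|c^{1}|=1$, which after quotienting by $\R$-translation yields isolated disks.

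Second, I would construct one explicit holomorphic disk. The key observation is that $\Lambda_{\rm st}$ lies inside the Lagrangian plane $L_{0}=\{y_{1}=x_{2}=0\}\subset\C^{2}$, whose intersection $L_{0}\cap H_{\delta}$ is a topological disk with boundary $\Lambda_{\rm st}\cup(V_{-\delta}\cap L_{0})$; the endpoints $p_{\pm}=(\pm\sqrt{2}\,\delta,0,0,0)$ of $c^{1}$ both lie in $L_{0}\cap\{y_{2}=0\}$. Since the almost complex structure is standard in a $1$-jet neighborhood of $\Lambda_{\rm st}$, in local coordinates we may work with genuinely holomorphic maps. Using the punctured upper half-disk with boundary puncture at the origin, I would write $u(w)=(f(w),z_{1}(w),z_{2}(w))$ with $z_{1},z_{2}$ holomorphic, $z_{1}$ real on the boundary, $z_{2}$ purely imaginary on the boundary, and $z_{1}$ arranged so that it wraps once around $0$ as $w$ traverses the boundary; the symplectization coordinate $f$ is then the harmonic conjugate of the log-distance in the model, automatically giving the prescribed positive asymptotics at $c^{1}$.

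Third, I would establish uniqueness and compute the sign. Any disk in the moduli space has $d\alpha$-area equal to $\act(c^{1})=\Ordo(\delta)$. Combined with a standard monotonicity estimate for Lagrangian boundary conditions, this confines every disk to a neighborhood of $L_{0}\cap H_{\delta}$ where the complex structure is integrable, reducing the classification to a Riemann--Hilbert problem on the half-disk with totally real boundary condition $L_{0}$ and one boundary puncture asymptotic to $c^{1}$. The explicit totally-real index theory for this model produces precisely one $\R$-orbit of solutions, and automatic transversality is available because $c^{1}$ is nondegenerate and we are in the integrable regime. The orientation sign of the unique rigid disk is $+1$ by the convention of Remark~\ref{rem:signconv}. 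The main obstacle I anticipate is this uniqueness step: one must rule out ``wandering'' or multiply-wrapping disks using the action bound and monotonicity to pin them to the integrable local model, and only then can the Riemann--Hilbert classification be invoked.
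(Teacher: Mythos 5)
Your first step (the index count giving a one--dimensional moduli space, hence rigid disks modulo translation) is fine, but the construction and uniqueness steps contain a genuine gap, and it is precisely the gap that the paper's proof is designed to avoid. The moduli space $\M^{\R\times Y_{k}(\delta)}(c^{1};\varnothing)$ consists of disks in the \emph{symplectization} $\R\times Y_{k}(\delta)$ with the cylindrical almost complex structure, not of disks in $\C^{2}$. The obvious holomorphic object bounded by the central orbit $\gamma$ --- the flat disk in the plane $\{z_{2}=0\}$ --- lives in the filling (the handle $H_{\delta}$ viewed as part of the Weinstein domain) and is the curve appearing in the moduli space $\M^{X}(\gamma)$ of Lemma~\ref{lem:noanchor2}; it is not an element of $\M^{\R\times Y_{k}(\delta)}(c^{1};\varnothing)$. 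Your ansatz $u(w)=(f(w),z_{1}(w),z_{2}(w))$ with $z_{1},z_{2}$ holomorphic and boundary on $L_{0}$ describes a curve of that first kind. The disk the lemma is actually about cannot be written down this way: by Stokes' theorem its $d\alpha$-area equals $\act(c^{1})>0$, so it is not contained in the orbit cylinder $\R\times\gamma$ either, and no explicit formula for it is available in the paper's setup.

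The uniqueness step fails for a related reason. The almost complex structure is only arranged to be standard (pulled back from the $1$-jet structure) in a $\delta_{0}$-neighborhood of $\Lambda_{\rm st}$ with $\delta_{0}\ll\delta$, whereas any disk asymptotic to $c^{1}$ must approach the entire chord, which traverses the orbit $\gamma$ of diameter comparable to $\delta$ and therefore leaves that neighborhood. The action bound $\act(c^{1})=\Ordo(\delta^{2})$ plus monotonicity confines the disk to an $\Ordo(\delta)$-neighborhood of the handle, but not to the region where $J$ is integrable, so there is no Riemann--Hilbert classification to invoke; and even granting an integrable model, the assertion that the algebraic count is exactly $\pm 1$ rather than $0$ or $\pm 2$ is the entire content of the lemma and cannot simply be asserted. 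The paper sidesteps all of this with an indirect argument: attaching a $2$-handle along the once-through knot yields $B^{4}$, whose symplectic homology vanishes; by the surgery formula of \cite{bib:BEE} the associated complex $A^{\rm Ho}(\Lambda)$ is acyclic, and a degree count forces $d_{A^{\rm Ho}}\check{c}^{1}=\tau$, which is exactly the statement that $\M^{\R\times Y_{k}(\delta)}(c^{1};\varnothing)$ contains algebraically one $\R$-family (equivalently, by Remark~\ref{rmk:loose}, the knot is isotopic to its own stabilization, so $1$ must be a boundary in its DGA). A direct proof along your lines would require producing and counting the disk by other means, e.g.\ a stretching/gluing argument relating $\M^{X}(\gamma)$ to $\M^{\R\times Y_{k}(\delta)}(c^{1};\varnothing)$, which is substantially more work than the proposal acknowledges.
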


\begin{pf}
Consider the Weinstein domain obtained by attaching one $1$-handle to the ball. Let $\Lambda$ denote the standard Legendrian knot which runs through the handle once. Then the Weinstein manifold which results from attaching a $2$-handle along $\Lambda$ is the $4$-ball. According to \cite[Corollary 5.7]{bib:BEE} there is a quasi-isomorphism between the symplectic homology of the ball and the homology of a complex $A^{\rm Ho}(\Lambda)$ associated to $A(\Lambda)$ that has two generators for each monomial $w$ in $A(\Lambda)$, $\hat w$ and $\check w$:
\[
SH(B^{4})\to A^{\rm Ho}(\Lambda).
\]
Now the complex on the left hand side is acyclic and the complex of the right hand side has no generators in negative degrees, one generator $\tau$ in degree $0$, and one generator $\check{c}^{1}$ in degree $1$. Since the homology must vanish we conclude that
\[
d_{A^{\rm Ho}}\check{c}^{1}=\tau.
\]
On the other hand $\la d_{A^{\rm Ho}}\check{c}^{1},\tau\ra$ can be computed in terms of moduli spaces of holomorphic disks:
\[
\la d_{A^{\rm Ho}}\check{c}^{1},\tau\ra=|\M^{Y_{k}(\delta)}(c^{1};\varnothing)|,
\]
where the right hand side denotes the number of $\R$-components in the moduli space. The lemma follows.
\end{pf}

\begin{rmk}
Alternatively, one can prove Lemma \ref{lma:constterm} using Remark \ref{rmk:loose} as follows. Since the closed up standard strand is isotopic to a stabilization it is straightforward to show that its DGA must be trivial. This implies that $1$ is a boundary which gives the desired result as explained above.
\end{rmk}

We next construct a somewhat nongeneric situation where many strands pass through the handle but where all the relevant moduli spaces admit concrete descriptions. Fix a $1$-jet neighborhood of $\Lambda_{\rm st}$ and fix a very small $\epsilon>0$. Consider $n$ strands $\Lambda_1',\dots,\Lambda_n'$ parallel to $\Lambda_{\rm st}$. More precisely, let
\[
\Lambda_{j}'=\Phi^{j\epsilon}_{R}(\Lambda_{\rm st}),
\]
where $\Phi_{R}^{t}$ is the time $t$ Reeb flow. Then $\Lambda_j'$ is Legendrian for $j=1,\dots,n$.
\begin{lma}
The Reeb chords connecting $\Lambda_j'$ to $\Lambda_k'$ are as follows:
\begin{itemize}
\item[{\rm (a)}] If $j<k$ then: there is one Reeb chord of length $(k-j)\epsilon$ starting at any point on $\Lambda_j'$, for each integer $w>0$ there is a Reeb chord along the unique Reeb orbit in the handle of length $2\pi w\delta+(k-j)\epsilon$, and there are no other Reeb chords.
\item[{\rm (b)}] If $j\geq k$  then: for each integer $w>0$ there is a Reeb chord along the unique Reeb orbit in the handle of length $2\pi w\delta-(j-k)\epsilon$, and there are no other Reeb chords.
\end{itemize}
\end{lma}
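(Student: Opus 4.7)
The plan is to reduce the enumeration of Reeb chords between $\Lambda_j'$ and $\Lambda_k'$ to the enumeration of Reeb self-trajectories of the standard strand $\Lambda_{\rm st}$, and then to invoke Lemma \ref{lem:handleReebtv}. The crucial observation is that by construction $\Lambda_k' = \Phi_R^{(k-j)\epsilon}(\Lambda_j')$, where $\Phi_R^t$ denotes the time-$t$ flow of the Reeb vector field of $\alpha_\delta$. Since the Reeb flow is a one-parameter group of contactomorphisms, a Reeb chord of length $T > 0$ from $\Lambda_j'$ to $\Lambda_k'$ starting at $p \in \Lambda_j'$ corresponds bijectively, via $p \mapsto q = \Phi_R^{-j\epsilon}(p)$, to a point $q \in \Lambda_{\rm st}$ for which $\Phi_R^S(q) \in \Lambda_{\rm st}$, where $S := T - (k-j)\epsilon$. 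Thus the problem reduces to classifying pairs $(q,S) \in \Lambda_{\rm st} \times \R$ with $\Phi_R^S(q) \in \Lambda_{\rm st}$, subject only to the positivity constraint $T > 0$.

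Next I would analyze these self-trajectory pairs by the sign of $S$. For $S = 0$, every $q \in \Lambda_{\rm st}$ is a trivial solution, producing a one-parameter family. For $S > 0$, Lemma \ref{lem:handleReebtv} identifies the nontrivial Reeb self-trajectories of $\Lambda_{\rm st}$ with the iterates $\gamma^w$, $w \geq 1$, of the unique geometric Reeb orbit in $V_\delta$ supplied by Lemma \ref{lma:orbitsinhandle}; the corresponding action is $2\pi w \delta$, computed directly from the explicit Reeb flow formulas \eqref{eq:solReebx1}--\eqref{eq:solReeby2}. For $S < 0$, time-reversal of the Reeb flow yields the symmetric list, with $|S| = 2\pi w \delta$ for $w \geq 1$.

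Finally I would assemble the two cases. For $j < k$ the constraint $T > 0$ translates to $S > -(k-j)\epsilon$. The $S=0$ solutions contribute a continuous family of chords of length exactly $(k-j)\epsilon$ parametrized by $\Lambda_j'$; the positive solutions $S = 2\pi w \delta$ contribute iterated chords of length $2\pi w \delta + (k-j)\epsilon$; and negative solutions would require $2\pi w \delta < (k-j)\epsilon$, which is excluded because $\epsilon \ll \delta$ (as specified in the surrounding construction, where the strands lie in a $1$-jet neighborhood of size $\delta_0 \ll \delta$ of $\Lambda_{\rm st}$ and the spacing $\epsilon$ is much smaller still). For $j \geq k$, the constraint $T > 0$ forces $S > (j-k)\epsilon \geq 0$, so both $S = 0$ and $S < 0$ are excluded, and only the positive iterates $S = 2\pi w \delta$ survive, giving chords of length $2\pi w \delta - (j-k)\epsilon$ for each $w \geq 1$. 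The only step requiring real care is verifying that the scale separation $\epsilon \ll \delta$ rules out the would-be negative-$S$ chords in Case (a); the rest is a direct unwinding of the definitions together with the two cited lemmas.
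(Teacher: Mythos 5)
Your proposal is correct and takes essentially the same approach as the paper: the paper also reduces to self-chords of a single strand by adding or subtracting the short chord of length $(k-j)\epsilon$ (which is exactly your conjugation by $\Phi_R^{\pm(k-j)\epsilon}$) and then invokes Lemma \ref{lem:handleReebtv}. Your version is merely more explicit, in particular in spelling out the sign analysis of $S$ and using $\epsilon\ll\delta$ to exclude the would-be negative-time chords, a point the paper's two-line proof leaves implicit.
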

\begin{pf}
Consider ${\rm (a)}$. Note that any Reeb chord starting on $\Lambda_j'$ and ending on $\Lambda_k'$ gives rise to a Reeb chord starting and ending on $\Lambda_k'$ by subtracting one of the short Reeb chords mentioned. Similarly, in case ${\rm (b)}$, any Reeb chord stating on $\Lambda_j$ and ending on $\Lambda_k'$ gives rise to a Reeb chord starting and ending on $\Lambda_{j}'$ by adding one of the short chords.

Since $\Lambda_j'=\Phi^{j\epsilon}_R(\Lambda_{\rm st})$ the lemma follows from Lemma \ref{lem:handleReebtv}.
\end{pf}

In order to get to a situation where the moduli spaces can be described we perturb $\Lambda_j'$ further as follows. In the $1$-jet neighborhood $\Lambda_j'$ corresponds to the graph of the constant function $f(q)=j\epsilon$. We denote the perturbation of $\Lambda_j'$ by $\Lambda_j$ and define it as the $1$-jet graph of the function $f(q)=j\epsilon + j\kappa q^{2}$, where $\kappa\ll\epsilon$ and where $q$ is a coordinate along the standard strand. (Note that $q=0$ corresponds to the central Reeb orbit.)

\begin{lma}
The Reeb chords connecting $\Lambda_j$ to $\Lambda_k$ are as follows:
\begin{itemize}
\item[{\rm (a)}] If $j<k$ then for each integer $w\geq 0$ there is a Reeb chord along the unique Reeb orbit in the handle of length $2\pi w\delta+(k-j)\epsilon$, and there are no other Reeb chords.
\item[{\rm (b)}] If $j\geq k$ then for each integer $w>0$ there is a Reeb chord along the unique Reeb orbit in the handle of length $2\pi w\delta-(j-k)\epsilon$, and there are no other Reeb chords.
\end{itemize}
\end{lma}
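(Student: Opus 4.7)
The plan is to work in the $1$-jet neighborhood of $\Lambda_{\rm st}$ where, by construction, $\Lambda_j$ is the $1$-jet prolongation of $f_j(q)=j\epsilon+j\kappa q^{2}$ and the contact form is the standard $dz-p\,dq$, so the Reeb flow is translation in the $z$-direction. A Reeb chord from $\Lambda_j$ to $\Lambda_k$ that stays entirely inside this $1$-jet neighborhood and does not wrap around the handle must occur at a parameter $q$ with $f_j'(q)=f_k'(q)$, and its length is then $f_k(q)-f_j(q)$. Since $f_j'(q)-f_k'(q)=2(j-k)\kappa q$, the only solution for $j\ne k$ is $q=0$, with length $(k-j)\epsilon$, which is positive precisely when $j<k$. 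This produces the $w=0$ chord in case (a) and no such chord in case (b).

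Next I would account for chords that wrap around the central Reeb orbit $\gamma$. By the previous lemma the Reeb chords of the unperturbed strands $\Lambda_j'$ split into the Morse--Bott family of $w=0$ chords (for $j<k$) parametrized by $\Lambda_j'$ together with an isolated chord along $\gamma$ for each winding $w\geq 1$. Since $\Lambda_j$ differs from $\Lambda_j'$ only by the addition of $j\kappa q^{2}$ with $\kappa\ll\epsilon$, I would apply a standard Morse--Bott perturbation argument: the restriction of the action functional to the $w=0$ family reduces to the critical-point problem for $(f_k-f_j)(q)=(k-j)\epsilon+(k-j)\kappa q^{2}$, which has the unique nondegenerate critical point $q=0$; this yields exactly one perturbed chord per family, of length $(k-j)\epsilon+O(\kappa)$. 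For $w\geq 1$ the chords along $\gamma$ are already isolated, and their persistence follows from the transversality statement in Lemma \ref{lem:handleReebtv}; the new chords sit at $q=0$ (by the same Morse argument on the return map), with lengths $2\pi w\delta\pm(k-j)\epsilon+O(\kappa)$, which matches the statement after absorbing $O(\kappa)$ corrections.

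Finally, I would rule out any additional Reeb chords. Any Reeb trajectory from $\Lambda_j$ to $\Lambda_k$ whose length is bounded must remain close (for the time it runs) to a Reeb trajectory of $\Lambda_{\rm st}$, and by Lemma \ref{lma:orbitsinhandle} together with the explicit solutions \eqref{eq:solReebx1}--\eqref{eq:solReeby2}, any trajectory initiated near $\Lambda_{\rm st}$ that leaves a small neighborhood of $\gamma$ in the $(x_2,y_2)$-plane is pushed away hyperbolically and cannot return to the $1$-jet neighborhood, hence cannot terminate on $\Lambda_k$. Thus every chord from $\Lambda_j$ to $\Lambda_k$ is among those already described.

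The main obstacle is the persistence step for $w\geq 1$: one must verify that the perturbation $j\kappa q^{2}$ moves each isolated $\Lambda_j'$-to-$\Lambda_k'$ chord on $\gamma$ to a unique nearby chord of $\Lambda_j$ to $\Lambda_k$, and confirm that the chord's flow time depends on $\kappa$ only through an $O(\kappa)$ correction. The transversality in Lemma \ref{lem:handleReebtv} gives the implicit function theorem input needed, but one must be careful to set up the perturbation problem for the $w$-th return map (rather than for a single passage) so that the nondegeneracy is uniform in $w$ up to the grading threshold considered in Section \ref{sec:corealgebra}.
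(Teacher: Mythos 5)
Your treatment of the short ($w=0$) chords is exactly the paper's: inside the $1$-jet neighborhood a chord from $\Lambda_j$ to $\Lambda_k$ corresponds to a critical point of $f_k-f_j$, and a function of the form $c_0+c_2q^{2}$ has a unique critical point, at $q=0$, with critical value $(k-j)\epsilon$; this is positive only when $j<k$. Your final step ruling out trajectories that stray from the central orbit (hyperbolicity in the $(x_2,y_2)$-plane) is also sound and is the same mechanism that underlies Lemma~\ref{lma:stchord}.

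For the long chords ($w\geq 1$), however, you take a perturbative route that is both heavier than necessary and, as you yourself flag, not closed: setting this up as a Morse--Bott/implicit-function-theorem persistence problem for the $w$-th return map gives, for each $w$, a threshold $\kappa_w$ below which the chord persists, and since infinitely many $w$ occur you would need a uniformity statement that you do not establish. You also introduce $O(\kappa)$ corrections to the chord lengths, whereas the lemma asserts the exact values $2\pi w\delta\pm(k-j)\epsilon$. The observation that dissolves both issues --- and is the entire content of the paper's proof in this case --- is that the perturbing term $j\kappa q^{2}$ vanishes to second order at $q=0$, which is precisely where the strands meet the central Reeb orbit: the $1$-jet of $f_j$ at $q=0$ agrees with that of the constant $j\epsilon$, so no strand is moved at its intersection with $\gamma$. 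Consequently the long Reeb chords of $\Lambda_1,\dots,\Lambda_n$ are \emph{literally the same} trajectories as those of the unperturbed strands $\Lambda_1',\dots,\Lambda_n'$ from the preceding lemma, their lengths are exactly as stated with no error term, and their nondegeneracy for every $w$ is already recorded in Lemma~\ref{lem:handleReebtv}. No persistence or return-map argument is needed at all.
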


\begin{pf}
To control short Reeb chords one may use the $1$-jet neighborhood. Uniqueness of short chords is then an immediate consequence of the fact that functions of the form $f(q)=c_0+c_2 q^{2}$, $c_0,c_2$ constants, have only one critical point at the origin. For the long chords simply note that no strand was perturbed at its intersection with the central Reeb orbit and that for all long chords, the linearized Reeb flow mapped the tangent line at the initial point to a tangent line transverse to the tangent line in the contact plane at the final point already for $\Lambda_0',\dots,\Lambda_n'$.
\end{pf}

We will use the following notation for Reeb chords between the strands $\Lambda_1,\dots,\Lambda_n$: let $c_{ij}^{p}$ be the Reeb chord connecting $\Lambda_{j}$ to $\Lambda_{i}$ of length closest to $2\pi\delta p$. Using the definition of Reeb chord grading from \cite[Section 2.1]{bib:BEE}, it is a straightforward consequence of Lemma \ref{lma:orbitsinhandle} that the grading of $c^{p}_{ij}$ is
\[
|c^{p}_{ij}|=2p-1 + m(i)-m(j),
\]
where $m(i)$ denotes the integer valued Maslov potential of the strand $\Lambda_{i}$, see Section \ref{ssec:intdga}.

\begin{lma}\label{lma:handlediff}
For sufficiently small perturbations $\Lambda_{1},\dots,\Lambda_n$ of $\Lambda_{\rm st}$ as described above the following holds.  If $i,j,l\in\{1,\dots,n\}$ and if $m,p,q\ge 0$ with $m=p+q$, then the moduli space $\M(c^{m}_{ij};c^{p}_{lj},c^{q}_{il})$ is transversely cut out and  $C^{1}$-diffeomorphic to $\R$. Furthermore, moduli spaces of holomorphic disks with all punctures inside the handle and with more than two negative punctures have formal dimension $>1$.
\end{lma}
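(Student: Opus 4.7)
The plan is to confine all disks in $\M(c^m_{ij};c^p_{lj},c^q_{il})$ to a small tubular neighborhood of $\Lambda_{\rm st}$ where the almost complex structure is standard, to classify them explicitly via the Lagrangian projection as holomorphic triangles in $\C$, and to conclude transversality from the fact that the problem is effectively in real dimension two.

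For the confinement step I would adapt the maximum-principle-plus-monotonicity argument of Lemma~\ref{lma:diagramdisks}. Choosing the perturbation parameters in the hierarchy $\delta_1\ll\kappa\delta_0^2\ll\delta_0\ll\delta$, the asymptotic chords and the Lagrangian boundary of any disk $u$ lie inside the $\delta_0$-tube around $\Lambda_{\rm st}$ in $V_\delta$, and the total $d\alpha$-area of $u$ is controlled by the sum of actions of the three chords, which is of order $m\delta$. Applying the maximum principle to the projections of $u$ to the $(x_1,y_1)$- and $(x_2,y_2)$-planes—the first projection lives in a model where the complex structure is standard, and in the second the Legendrian boundary is $\epsilon$-close to $\{x_2=0\}$—forces $u$ itself to remain in the $\delta_0$-tube. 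The main obstacle will be the hyperbolic dynamics in the $(x_2,y_2)$-plane, which a priori could allow disks to escape via the direction where $|y_2|$ is largest on $V_\delta$; this is exactly where the parameter hierarchy is used, so that the monotonicity penalty for leaving the tube exceeds the available area $m\delta$.

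Inside the tube, the standard $1$-jet identification lets me project $u$ to a holomorphic map into the contact plane $\C\cong T^\ast I$, on whose boundary the projection of $\Lambda_k$ is the straight line $\{p=2k\kappa q\}$ through the origin. Passing to the covering of a neighborhood of the closed Reeb orbit $\gamma$ that unwraps the Reeb direction, the projected boundary of $u$ becomes a triangle in $\C$ whose three sides lie on translates of the three lines of slopes $2i\kappa$, $2l\kappa$, $2j\kappa$ and whose corners lift the three chords $c^m_{ij}$, $c^q_{il}$, $c^p_{lj}$; the equality $m=p+q$ is exactly the matching of wrapping numbers that closes the three boundary arcs into a triangular region. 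The Riemann mapping theorem identifies such a triangle uniquely up to conformal reparametrization, and the only remaining freedom in reconstructing $u$ from this Lagrangian projection is the overall translation in the $\R$-direction of the symplectization, yielding a $C^1$-bijection $\M(c^m_{ij};c^p_{lj},c^q_{il})\cong\R$. Transversality is then automatic in this two-real-dimensional model: the linearized Cauchy--Riemann operator on a disk with totally real straight-line boundary conditions in $\C$ has all partial indices nonnegative, hence is surjective with one-dimensional kernel spanned by the infinitesimal $\R$-translation, so the moduli space is transversely cut out.
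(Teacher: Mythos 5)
Your strategy---confine the disks, classify them explicitly, then get transversality for free in low dimensions---has the same general shape as the paper's argument, but each step goes wrong in a way that matters. The central miscalculation is the area: by Stokes' theorem the $d\alpha$-area of a disk in $\M(c^{m}_{ij};c^{p}_{lj},c^{q}_{il})$ is the action of the positive chord \emph{minus} the sum of the actions of the negative chords, not their sum. Since the chord lengths are $2\pi w\delta+(\,\cdot\,)\epsilon$ with the $\epsilon$-offsets telescoping and $m=p+q$, this difference is exactly $0$. Your bound ``of order $m\delta$'' would in any case be useless for confinement when $m$ is large (the monotonicity penalty for leaving a $\delta_0$-tube is $\Ordo(\delta_0^{2})\ll m\delta$, so your parameter hierarchy does not close the argument), but the correct value $0$ gives something far stronger and is the heart of the paper's proof: a holomorphic disk with vanishing $d\alpha$-area must be contained in $\R\times(\text{a Reeb trajectory})$, hence in the orbit cylinder $\R\times\gamma$ over the central orbit. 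That single observation replaces your confinement step and simultaneously delivers uniqueness.

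The explicit model is then not the one you describe. A disk contained in $\R\times\gamma$ projects to a \emph{point} of the contact plane, because along $\gamma$ the contact plane is the $(x_2,y_2)$-plane (see \eqref{eq:contacttrivhandle}) and is exactly the normal bundle of the disk. There is no nondegenerate triangle with sides on ``translates of'' the lines $p=2k\kappa q$: those three lines are concurrent at the origin, all chords sit over $q=0$, unwrapping the Reeb ($z$-)direction does not translate anything in the $(q,p)$-projection, and for $m\ge 1$ the disk does not even lie in the $1$-jet neighborhood of $\Lambda_{\rm st}$ (the chord has length $\approx 2\pi m\delta\gg\delta_0$). The correct picture, as in Figure~\ref{fig:handledisk1}, is a strip with a slit mapping to the cylinder $\R\times\gamma$ with boundary on the vertical lines $\R\times(\gamma\cap\Lambda_k)$, the slit endpoint providing the $\R$-modulus. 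Finally, your transversality claim addresses only this tangential $\C$-valued problem; the content is in the normal direction, where the boundary condition consists of the small rotations produced by the slopes $2k\kappa$ and, closed up at the corners, has Maslov index $-1$, so the normal operator has index $0$ and is an isomorphism. ``All partial indices nonnegative'' is therefore false for the full $\C^{2}$-valued problem, and it is precisely the \emph{negativity} of the normal index that kills normal deformations and makes $\M$ one-dimensional rather than larger.
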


\begin{pf}
To see the last statement note that if the positive puncture maps to $c^{p}_{ij}$ and the negative punctures map to chords $c^{q_l}_{ij}$ then $p\ge \sum q_{l}$ for action reasons and the grading formula shows that the dimension of the moduli space is $>1$ if there are more than two negative punctures.

Now consider the $1$-parameter family (including $\R$-translation) of disks shown in Figure \ref{fig:handledisk1}. We claim first that it is transversely cut out. To this end we use the trivialization $x_2+iy_2$ for the normal bundle of the disk. We trivialize the bundle over the Reeb chords which leads to a Lagrangian boundary condition given by small rotations which when closed up by negative rotations at corners has Maslov index equal to $-1$. The operator is Fredholm of index $0$ with values in a $1$-dimensional bundle. It follows that it is an isomorphism.

We next show that this is the only disk (up to $\R$-translation) in the moduli space. To see this we note that if there were another disk in the moduli space then it could not be contained in the Reeb orbit cylinder over $c^{m}$. This however contradicts its action being $0$ and we conclude there is exactly one disk.
\end{pf}

\begin{figure}
\centering
\includegraphics[width=.4\linewidth]{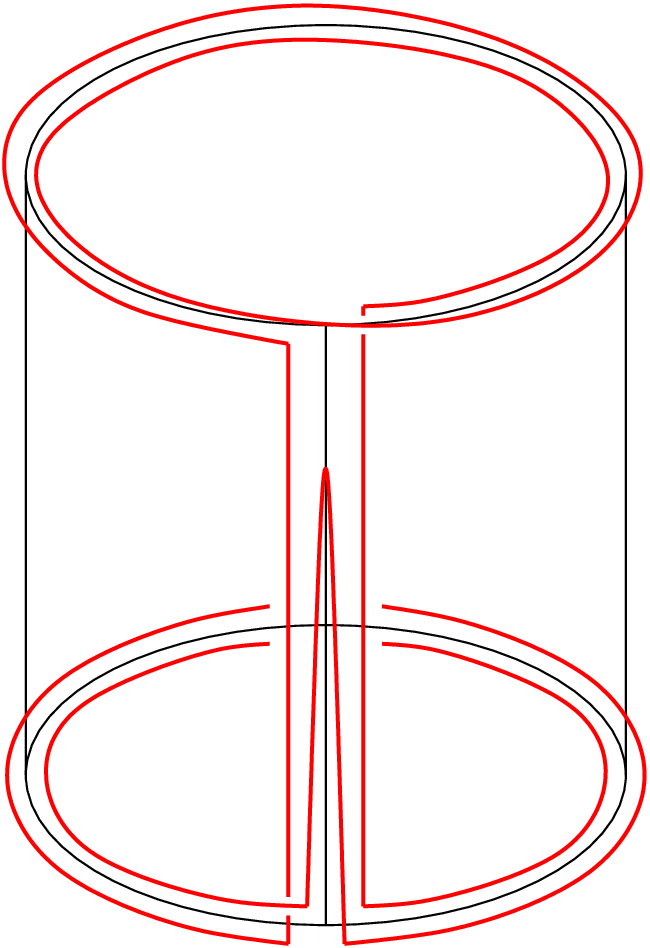}
\caption{A rigid disk in a handle covering a Reeb orbit cylinder.}
\label{fig:handledisk1}
\end{figure}

By Lemma \ref{lma:subalgebra}, disks that contribute to the differential and that start inside the handle cannot have negative punctures outside the handle.

\subsubsection{Interpolating curves}\label{sec:interpolatingdisks}
Finally, we consider curves that start outside the handle and go inside. An action argument shows that they cannot leave a small neighborhood of the standard strand. It follows that they lie entirely in the $1$-jet neighborhood and hence are given by a count of holomorphic polygons on the diagram in the cotangent bundle in a neighborhood of the $0$-section, just like in the $\R^{3}$-case identifying the $0$-section with the $x$-axis and the cotangent fibers with lines in $y$-direction.

\subsection{Orientations and signs}\label{sec:orientaton}
In this section we present an orientation scheme and compute the corresponding signs of the rigid disks described in Section \ref{Sec:compute}. The material presented here is a  generalization of \cite[Section 4.5]{EES-orientation} and we will refer there for many details.

\subsubsection{A trivialization of the bundle of contact planes in $Y_{k}(\delta)$}
Consider first the standard contact $\R^{3}$ with contact form $\alpha_{\rm st}=dz-y\,dx$. As above we use the complex structure on the contact planes induced by the standard complex structure on $T^{\ast}\R=\C$ via the Lagrangian projection $\Pi\colon \R^{3}=J^{1}(\R)\to T^{\ast}\R$. Furthermore we trivialize the contact plane field using the frame $\pa_{y}, J\pa_{y}$ where $J$ is the complex structure on the contact plane field. We next transport this trivializing vector field to the standard contact balls. Recall the contactomorphism in \eqref{eq:balltospace}. Its inverse is
\[
F^{-1}(x,y,z)=(x-x_0\,,\,y-y_0\,,\,z-z_0-\tfrac12(x-x_0)(y+y_0)).
\]
In particular, the vector field corresponding to $\pa_y$ is then
\begin{equation}\label{eq:trivball}
d F^{-1}\,\pa_y=\pa_v-\tfrac12u\,\pa_z.
\end{equation}

We next study the behavior of trivializations in the handles of $Y_k(\delta)$. Recall the trivialization of the contact plane field along $V_{-\delta}$ given by $v,i\cdot v$, where $i$ is the complex structure of $\C^{2}$ and where $v$ is given by \eqref{eq:contacttrivhandle}. In our construction of $Y_{k}(\delta)$ we attached the handles using standard contact balls identified with balls in $V_{-\delta}$. The identifications were constructed using the map of the disks in \eqref{eq:attachhandle1} and \eqref{eq:attachhandle2}. In order to determine the vector field in the standard ball that corresponds to $v$, we first express $v$ as
\[
v=w_0+w_R,
\]
where $w\in TA_\rho$ and where $w_R$ is parallel to the Reeb vector field. Using notation as in \eqref{eq:unnorReebhandle}, we find
\[
w_0=y_2\pa_{y_1}+\tfrac12 y_1\pa_{y_2},\qquad w_R=-\tfrac{1}{2}x_1y_2^{-1}\,\tilde R.
\]
The vector field $v_{\rm b}$ in the standard contact ball that corresponds to $v$ is thus the sum of the image of $w_0$ under $dG$, see \eqref{eq:attachhandle2}, and a suitable multiple of the Reeb field. A straightforward computation gives
\[
v_{\rm b}=-\tfrac12u\left(\delta^{2}+\tfrac14(u^{2}+v^{2})\right)\pa_{z}
+\left(\delta^{2}+\tfrac34 u^{2}+\tfrac12 v^{2}\right)\pa_{v}+\tfrac12 uv\,\pa_{u}.
\]
Continuously killing all quadratic terms, we find a homotopy to the trivialization given by
\[
-\tfrac12 u\,\pa_{z}+\pa_{v},
\]
which in turn corresponds to the vector field $\pa_{y}$ in $1$-jet coordinates. In conclusion, we have continued the trivialization of the contact plane field given by $(\pa_{y}, J\pa_{y})$ over $\R^{3}(\circ_k)$ to all of $Y_{k}(\delta)$ in such a way that inside the handles this trivialization is given by $(v,i\cdot v)$.

\subsubsection{Capping operators and the symplectization direction}
In \cite{EES-orientation} the contact manifolds under consideration are of the form $P\times\R$, where $P$ is an exact symplectic manifold, with contact form $dz-\beta$, where $\beta$ is the primitive of the symplectic form on $P$. (In particular, taking $P=T^{\ast}\R$ and $\beta=y\,dx$, we get standard contact $\R^{3}$ as a special case.) If we use an almost complex structure on the symplectization of such a manifold given by the product of the standard complex structure on $\C\approx\R\times\R$ and the pullback to the contact planes of an almost complex structure on $P$, holomorphic curves in the symplectization can be described entirely in terms of holomorphic curves in $P$, and it was moduli spaces of holomorphic curves in $P$ that were oriented in \cite{EES-orientation}. In the present setup we do not have any projection and hence we must study holomorphic curves in the symplectization. As a starting point we will revisit the construction in \cite{EES-orientation}, in a sense adding the symplectization direction back.

\subsubsection{A brief description of the orientation scheme}
We give the description as close as possible to the application we have in mind. The starting point is a number of properties of $\bar\pa$-operators with Lagrangian boundary conditions on the disk that we discuss next. These properties originate from \cite{bib:FOOO}; we will however refer to \cite{EES-orientation} since the treatment there is closer to the case under study here. Let $D$ denote the unit disk in $\C$ and let $L\colon \pa D\to\Lag(\C^{2})$ denote a smooth map where $\Lag(\C^{2})$ denotes the space of Lagrangian planes in $\C^{2}$. Consider the Riemann--Hilbert problem with trivialized Lagrangian boundary condition: let
$W_2(L)$ denote the space of functions $u\colon D\to\C^{2}$ with two derivatives in $L^{2}$ such that
\[
u(e^{i\theta})\in L(e^{i\theta}),\quad\quad L\bar\pa_{J} u|_{\pa D}=0,
\]
and let $W_1$ denote the space of $J$-complex-antilinear maps $TD\to T\C^{2}$ with one derivative in $L^{2}$ that vanish when restricted to the boundary, or equivalently, after fixing a trivialization of $TD$, the corresponding space of maps $D\to\C^{2}$. Then the linearized operator $L\bar\pa_{J}\colon W_2(L)\to W_1$ is a Fredholm operator of index
\[
2+\mu(L),
\]
where $\mu(L)$ denotes the Maslov index of $\Lambda$. If $Y'$ denotes the space of Fredholm operators as just described, then the determinant bundle over $Y'$ with fiber over $L\in Y$ given by \[
\Lambda^{\rm max}\left(\krn(L\bar\pa_{J})\right)\otimes\Lambda^{\rm max}\left(\cokrn(L\bar\pa_{J})\right)
\]
is nonorientable and remains nonorientable if $Y'$ is replaced by the space of oriented Lagrangian boundary conditions. However, adding the requirement that the bundle of Lagrangian subspaces over $\pa D$ is trivialized leads to an orientable bundle, see \cite[Lemma 3.8]{EES-orientation}. Note that the space of trivialized Lagrangian boundary conditions is homotopy equivalent to $\Omega(U(2))$, the loop space of the unitary group of $\C^{2}$: if $A\colon\pa D\to U(2)$ then $A\cdot \R^{2}$ is a trivialized Lagrangian boundary condition and an orthonormal frame in a Lagrangian plane gives an element in $U(2)$. Furthermore, as explained in \cite[Lemma 3.8]{EES-orientation}, orientations on $\R^{2}\subset \C^{2}$ and on $\C$ induce an orientation on the determinant bundle over $Y$. We use this orientation of the determinant bundle over $Y$ to orient all moduli spaces.

Let $\Lambda\subset Y_{k}(\delta)$ be a Legendrian link and consider a moduli space\linebreak $\M^{\R\times Y_{k}(\delta)}(a;b_1,\dots,b_k)$. In order to orient it we first fix a trivialization $\tau$ of the tangent bundle of $\Lambda$. Then $(\pa_t, \tau)$ gives a trivialization of the tangent bundle of $\R\times\Lambda\subset \R\times Y_k(\delta)$, where $\pa_{t}$ is the vector field along the $\R$-factor of $\R\times Y_{k}(\delta)$. Note that if $u\colon D_{k+1}\to \R\times Y_{k}(\delta)$ is a holomorphic map of a $(k+1)$-punctured disk with boundary on $\R\times\Lambda$, then $u^{\ast}T(\R\times Y_{k}(\delta))$ splits as $u^{\ast}(\krn(\alpha))\oplus\C$, and the Lagrangian boundary condition splits as $u^{\ast}(T\Lambda)\oplus\R$. Second we fix positive and negative oriented capping operators at all Reeb chords. These are $1$-punctured disks with trivialized Lagrangian boundary conditions, which when glued to the punctures give a disk with trivialized boundary condition.

In \cite[Section 3.2]{EES-orientation}, (linear) gluing analysis for joining operators as just described is carried out. In particular, if $k+2$ operators are glued into an operator then orientations on the determinants of all operators except one induce an orientation on the last one. (The actual rule for how to induce such an orientation depends on some choices that are discussed in detail in \cite{EES-orientation}.) In particular, capping operators allow us to orient the kernel/cokernel of the linearized operator at $u$, which together with a chosen fixed orientation on the space of automorphisms/conformal structure on $D_{k+1}$ gives an orientation of the moduli space.

\subsubsection{Signs of diagram disks}
The main objective of this section is to reduce the computation of the sign of a diagram disk to the computation of similar disks carried out in \cite[Section 4.5]{EES-orientation}. Consider the four quadrants at a crossing of the link diagram as in the right diagram in Figure \ref{fig:signs}. In the terminology of \cite{EES-orientation}, quadrants where the sign is $-1$ are called $A$-shaded. We then have the following:
\begin{lma}\label{lma:signdiagramdisk}
Let $T\Lambda$ be equipped with the Lie group spin structure. Then there is a choice of orientation of $\C$ and $\R$ such that the sign of a rigid diagram disk in $\M(a;b_1,\dots,b_k)$ is given by $(-1)^{s(u)}$ where $s(u)$ is the number of $A$-shaded corners of $u$.
\end{lma}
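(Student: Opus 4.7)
The plan is to reduce the sign computation for diagram disks to the one carried out for Legendrian links in standard contact $\R^{3}$ in \cite[Section 4.5]{EES-orientation}. First I would invoke Lemma \ref{lma:diagramdisks}: any diagram disk $u$ has its image contained in $\R\times(R(r_1)\times\R)$, which is an open subset of $\R\times Y_k(\delta)$ in which the contact form, the almost complex structure on the contact planes (pulled back via the Lagrangian projection from the standard $J$ on $T^{\ast}\R$), and the trivializing frame $(\pa_y, J\pa_y)$ of the contact plane field all agree on the nose with the data used in the standard $\R^3$ set-up of \cite{EES-orientation}. In particular no ingredient of the orientation scheme sees the presence of the $1$-handles for such a disk.

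Next I would decompose the linearized $\bar\pa$-problem. Along $\R\times\Lambda$ one has the canonical splitting
\[
T(\R\times Y_k(\delta))|_{\R\times\Lambda}=\krn(\alpha)\oplus\C,
\]
where the $\C$-summand is framed by $(\pa_t, R=J\pa_t)$, and the Lagrangian boundary condition splits compatibly as $T\Lambda\oplus\R\pa_t$. This splitting is preserved by the linearized operator of $u$, and the $(\C,\R\pa_t)$-summand contributes a trivial, canonically oriented Riemann--Hilbert problem of index $1$ (corresponding to the free $\R$-action by translation, whose quotient gives the moduli space). Consequently, the orientation of $\M^{\R\times Y_k(\delta)}(a;b_1,\dots,b_k)$, viewed as the quotient by translation, is induced from the orientation of the contact-plane factor alone, exactly as in \cite{EES-orientation}.

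Having arranged this, I would fix positive and negative capping operators at each Reeb chord of $\Lambda$ by the same prescription as in \cite[Section 3.2]{EES-orientation}, using the trivialization $(\pa_y, J\pa_y)$ of $\krn(\alpha)$; equip each component of $T\Lambda$ with its Lie group spin structure, which determines the trivialization of the boundary condition entering the orientation scheme; and fix orientations of $\C$ (the target of the contact-plane problem) and $\R$ (the boundary condition) as in the reference. With all choices matched, the sign of a rigid diagram disk in the contact-plane component coincides term by term with the sign computed in \cite[Section 4.5]{EES-orientation}, namely $(-1)^{s(u)}$ with $s(u)$ the number of $A$-shaded corners at the crossings, and the statement follows.

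The only potential difficulty is to verify that the $(\pa_y,J\pa_y)$-trivialization constructed globally on $Y_k(\delta)$ agrees, along $\Lambda\cap(R(r_1)\times\R)$, with the $\R^3$-trivialization used by EES at both the positive puncture $a$ and each negative puncture $b_j$; but this is immediate from the explicit formulas in \eqref{eq:trivball} and the construction of the extension, which is by design the identity in the $\R^3$-region. Hence no corrective signs are introduced and the reduction to \cite[Section 4.5]{EES-orientation} is direct.
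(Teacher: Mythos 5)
Your proposal follows essentially the same route as the paper: split the linearized problem and its Lagrangian boundary condition as $u^{\ast}(\krn(\alpha))\oplus\C$ and $u^{\ast}(T\Lambda)\oplus\R\pa_t$, choose split capping operators matching those of \cite[Section 4.5]{EES-orientation} in the contact planes, observe that diagram disks live entirely in the region where the data is pulled back from standard $\R^{3}$, and reduce to the sign computation of \cite{EES-orientation}. The only difference is one of detail: where you assert that the symplectization-direction summand is ``canonically oriented,'' the paper pins this down by writing out the linear gluing exact sequences for the $\R$-direction capping operators (with negative exponential weights) and checking that the induced orientation on $\ker(\bar\pa_{\C})$ is the one coming from the $\R$-factor, so that no extra sign enters.
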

We prove Lemma \ref{lma:signdiagramdisk} after discussing capping operators. Here we use the observation above that the boundary condition of the linearized operator splits and define split capping operators. For the component of the boundary condition at a Reeb chord in the contact plane we use exactly the same capping operators as in \cite[Section 4.5]{EES-orientation}. (As there, this requires stabilization in order to have room to rotate the trivialization so that it satisfies compatibility conditions at Reeb chord endpoints.) In the symplectization direction, the tangent space to the configuration space of maps around a solution is a Sobolev space with small positive weight near each puncture, augmented by a $1$-dimensional space of explicit solutions near each puncture corresponding to translations in the $\R$-direction. The kernel and cokernel of the corresponding linearized problem are canonically identified with the kernel and cokernel of the same (linearized) operator acting on the Sobolev space with small negative exponential weight and without auxiliary space of solutions. We thus use as capping operators in the symplectization directions the once-punctured disk with constant boundary condition $\R$ and with small negative exponential weight. This operator has index $1$ and has $1$-dimensional kernel. The gluing sequence for the capping disks with positive (respectively, negative) punctures, which glue to the constant boundary condition on the closed disk, again of index $1$ with $1$-dimensional kernel, is:
\[
\begin{CD}
0 @>>> \ker(\bar\pa_{D}) @>>> \ker(\bar\pa_{D_+})\oplus\ker(\bar\pa_{D_-}) @>>> \R @>>> 0.
\end{CD}
\]
Here $\bar\pa_{D_{+}}$ and $\bar\pa_{D_-}$ denote the capping operators in the $\R$-direction with positive (respectively, negative) puncture, $\bar\pa_D$ denotes the operator on the closed disk with constant $\R$ boundary condition, and the last $\R$ is identified with the cokernel of the $\bar\pa$-operator on the strip with $\R$-boundary conditions and positive exponential weights at both ends. (This cokernel consists of constant functions with values in $i\R$ and the operator is the operator on the middle part of the glued disk as the gluing parameter goes to $\infty$.) The maps in the sequence are given by the matrices
\[
\left(
\begin{matrix}
1\\
1
\end{matrix}
\right)
\quad\text{ and }\quad
\left(
\begin{matrix}
-1 & 1
\end{matrix}
\right).
\]
Consider next the gluing sequence for the $\C$-component of a linearized boundary condition. Note that the result of gluing capping operators to the linearized boundary condition is again a closed disk with constant boundary condition $\R$ and $1$-dimensional kernel, and the gluing sequence is
\[
\begin{CD}
0 @>>> \ker(\bar\pa_{D}) @>>> \ker(\bar\pa_{D_+})\oplus_{j=1}^{m}\ker(\bar\pa_{D_-;j})\oplus\ker(\bar\pa_{\C}) @>>>\\
{ }@>>>\oplus_{j=1}^{m} \R @>>> 0,
\end{CD}
\]
where the summands are gluing parameters near the punctures. Thus the orientation induced on $\ker(\bar\pa_{\C})$ is simply the orientation induced by the $\R$-factor.

\begin{pf}[Proof of Lemma \ref{lma:signdiagramdisk}]
By our choice of capping operators and the linear gluing that this gives rise to, as discussed above, it follows that the orientation of the reduced moduli space is determined entirely in terms of the operator in the contact planes which is pulled back from $\C$ for diagram disks. The lemma is then an immediate corollary of \cite[Theorem 4.32]{EES-orientation}.
\end{pf}
\subsubsection{Signs of interpolating disks}
After the signs of diagram disks have been computed, the sign of interpolating disks can be determined by a simple homotopy argument. More precisely, we have the following:
\begin{lma}\label{lma:signinterpoldisk}
Let $T\Lambda$ be equipped with the Lie group spin structure. Then there is a choice of orientation of $\C$ and $\R$ such that the sign of a rigid interpolating disk in $\M(a;b_1,\dots,b_k)$ is given by $(-1)^{s(u)}$ where $s(u)$ is the number of $A$-shaded corners of $u$. Here the diagram disk should be drawn in a $1$-jet neighborhood of $\Lambda_{\rm st}$.
\end{lma}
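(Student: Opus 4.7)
The plan is to reduce directly to Lemma \ref{lma:signdiagramdisk} by a localization and homotopy argument. The key observation, already used in Section~\ref{sec:interpolatingdisks}, is that an interpolating disk $u$ is entirely contained in a small $1$-jet neighborhood of the extended standard strand passing through the handle, and in this neighborhood both the almost complex structure on the contact planes and the trivialization constructed in Section~\ref{sec:orientaton} are, by design, the standard $J$-structure pulled back from $T^{\ast}\R=\C$ via the Lagrangian projection, together with the frame $(\partial_y,J\partial_y)$ (continued across the handle so as to agree inside with $(v,i\cdot v)$). So on the level of the linearized $\bar\partial$-operator with split boundary condition, the data seen by $u$ is indistinguishable from that seen by a diagram disk with the same combinatorial corner structure.

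Concretely, I would first use the splitting of the linearized boundary condition into its contact-plane part and its symplectization $(\R\times\langle R\rangle)\cong\C$ part. The two long exact sequences of kernels/cokernels for capping-off, recalled in the discussion preceding Lemma~\ref{lma:signdiagramdisk}, show that the $\C$-component contributes a canonical factor which is the same for diagram disks and interpolating disks (it depends only on the number of positive versus negative punctures, not on whether the disk enters a handle). Hence only the contact-plane component carries nontrivial orientation information, and on this component the problem is pulled back, via the Lagrangian projection in the $1$-jet neighborhood, to a standard Riemann--Hilbert problem on $\C$ with boundary on the link diagram.

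Second, I would run the homotopy argument. Since the trivializing frame has been continued from the exterior of the handle across the attaching region without ever leaving the homotopy class of $(\partial_y, J\partial_y)$ in the $1$-jet neighborhood, the capping operators chosen at handle Reeb chord endpoints are homotopic through Fredholm operators with trivialized Lagrangian boundary conditions to the capping operators used in \cite{EES-orientation} for crossings of a planar diagram. Linear gluing then identifies the induced orientation on $\det(L\bar\partial_J)$ for $u$ with the orientation on the determinant of the corresponding purely planar Riemann--Hilbert problem, and Lemma~\ref{lma:signdiagramdisk} (i.e.\ \cite[Theorem 4.32]{EES-orientation}) gives the claimed sign $(-1)^{s(u)}$. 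The Lie group spin structure plays the same role as in the diagram case because it is characterized locally along $\Lambda$ and the homotopy of trivializations is supported away from $T\Lambda$.

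The main technical obstacle will be the second step: verifying that the capping operators chosen at Reeb chords inside the handles (which, unlike Reeb chords of a knot in $\R^3$, project under $\Pi$ to a single point and correspond to iterates of a Reeb orbit) are honestly homotopic, through admissible trivialized boundary conditions, to the planar capping operators used in \cite{EES-orientation}. This reduces to a computation in $U(2)$ that the loop of Lagrangian frames at such a Reeb chord, transported by the linearized Reeb flow and then closed up by the chosen capping rotation, is null-homotopic after the extension of the trivialization described above; given the explicit form \eqref{eq:solReebx2}--\eqref{eq:solReeby2} of the Reeb flow and the relation between the frames $(v,i\cdot v)$ and $(\partial_{x_2},\partial_{y_2})$ in Lemma~\ref{lma:orbitsinhandle}, this is a direct check.
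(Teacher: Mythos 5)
Your argument is essentially the paper's: the paper likewise confines a rigid interpolating disk to an arbitrarily small neighborhood of $\Lambda_{\rm st}$ (by monotonicity/action), homotopes it through disks with convex corners to a small planar disk, and then invokes Lemma~\ref{lma:signdiagramdisk}. The ``main technical obstacle'' you flag is dissolved in the paper by simply \emph{choosing} the capping operators at the short Reeb chords in the handle to be the planar ones from \cite{EES-orientation} at this stage (the negative-puncture caps then being fixed by the gluing-to-canonical condition), so no homotopy of pre-existing capping operators needs to be verified.
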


\begin{pf}
By monotonicity any rigid holomorphic connecting disk lies in an arbitrarily small neighborhood of $\Lambda_{\rm st}$ and its Lagrangian projection has convex corners. Furthermore, it can be homotoped through disks with convex corner to a small such disk in $\R^{2}$. The result then follows from Lemma \ref{lma:signdiagramdisk} once we choose capping operators as there for the short Reeb chords in the handle.
\end{pf}

\subsubsection{Signs of handle disks}
We now turn to determining the signs of rigid handle disks. In order to determine the signs of interpolating disks, we made choices for the orientations of the capping disks with positive puncture at the short Reeb chords in the middle of the handle. We choose here also the capping disks with negative puncture as for the diagram disks, and the orientation on the determinants of the corresponding operators are then fixed by the condition that these two orientations should glue to the canonical orientation. Note next that the linearized boundary condition of a rigid handle disk with positive puncture at a short Reeb chord in the middle of the handle can be deformed to that of a small rigid triangle lying in the $1$-jet neighborhood of $\Lambda_{\rm st}$. It follows from Lemma \ref{lma:signinterpoldisk} that the sign of any such rigid disk in $\M(c^{0}_{ij}; c^{0}_{il}, c^{0}_{lj})$ is
\[
(-1)^{|c^{0}_{il}|+1}.
\]

Before we complete the discussion of combinatorial signs we need to recall an orientation gluing result: with capping operators fixed and oriented in such a way that orientations on positive and corresponding negative caps glue to the canonical orientation on the determinant of the operator on the closed disk, the following holds. If $I\subset\hat\M(a;b_1,\dots,b_m)$ is a component of a reduced moduli space of dimension $1$ with nonempty boundary which is oriented as above, and if $u_0\# v_0$ and $u_1\# v_1$ are the broken disks at its two boundary points, then
\begin{equation}\label{eq:boundaryorient}
\sigma(u_1)\epsilon(u_1,v_1)\sigma(v_1)+\sigma(u_0)\epsilon(u_0,v_0)\sigma(v_0)=0,
\end{equation}
where $\sigma(u)$ is the sign of the rigid disk $u$ and where
\[
\epsilon(u,v)=(-1)^{\sum_{j=1}^{l}|b_j|},
\]
if $u\in\M(a;b_1,\dots,b_l,b_{l+1},\dots, b_m)$ and $v\in\M(b_{l+1};c_1,\dots,c_k)$. This follows from \cite[Lemma 4.11]{EES-orientation} in combination with the above discussion of capping operators in the symplectization direction. Using this property we can establish a combinatorial sign rule inductively.

\begin{lma}\label{lma:signhandledisk}
There is a choice of orientations for capping operators at the Reeb chords inside the handle for which the sign of the rigid disk in $\M(c^{p}_{ij};c^{q}_{il},c^{r}_{lj})$ equals
\[
(-1)^{|c^{q}_{il}|+1}.
\]
\end{lma}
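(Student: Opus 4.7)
The plan is to prove the lemma by induction on $p$, choosing the orientations of the positive capping operators at $c^{p}_{ij}$ (with the negative operators then fixed by the gluing compatibility with the positive ones) so that the signs of all rigid handle disks match the claimed formula. The base case $p=0$ is the content of the paragraph immediately preceding the lemma: every rigid disk in $\M(c^{0}_{ij};c^{0}_{il},c^{0}_{lj})$ deforms, inside the $1$-jet neighborhood of $\Lambda_{\rm st}$, to a small rigid triangle of the sort treated in Lemma \ref{lma:signinterpoldisk}, and therefore has sign $(-1)^{|c^{0}_{il}|+1}$. For the inductive step, assume that the formula holds for all rigid disks with positive puncture $c^{a}_{\cdot\cdot}$ for $a<p$. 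For each pair $(i,j)$ the orientation of the positive capping operator at $c^{p}_{ij}$ is a single $\pm 1$ choice, which we fix so that one convenient reference rigid disk in $\M(c^{p}_{ij};c^{q}_{il},c^{p-q}_{lj})$ has the desired sign $(-1)^{|c^{q}_{il}|+1}$.

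We deduce the signs of the remaining rigid disks with positive puncture $c^{p}_{ij}$ by applying the orientation gluing identity \eqref{eq:boundaryorient} to the reduced $1$-dimensional moduli space
\[
\hat\M\bigl(c^{p}_{ij};\,c^{q_{1}}_{ik},\,c^{q_{2}}_{kl},\,c^{q_{3}}_{lj}\bigr),\qquad q_{1}+q_{2}+q_{3}=p.
\]
Its two codimension-one breakings have the form $u_{a}\#v_{a}$ with $u_{a}\in\M(c^{p}_{ij};c^{q_{1}+q_{2}}_{il},c^{q_{3}}_{lj})$, $v_{a}\in\M(c^{q_{1}+q_{2}}_{il};c^{q_{1}}_{ik},c^{q_{2}}_{kl})$, and $u_{b}\#v_{b}$ with $u_{b}\in\M(c^{p}_{ij};c^{q_{1}}_{ik},c^{q_{2}+q_{3}}_{kj})$, $v_{b}\in\M(c^{q_{2}+q_{3}}_{kj};c^{q_{2}}_{kl},c^{q_{3}}_{lj})$. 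Restricting to $q_{1},q_{3}\ge 1$ forces $q_{1}+q_{2}<p$ and $q_{2}+q_{3}<p$, so the signs of $v_{a}$ and $v_{b}$ are determined by the inductive hypothesis, and \eqref{eq:boundaryorient} becomes a linear relation between $\sigma(u_{a})$ and $\sigma(u_{b})$ which then propagates the reference sign across all choices of $k,l,q_{1},q_{2},q_{3}$.

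Consistency of this procedure is verified by a direct computation with the claimed values $\sigma(u_{a})=\sigma_{i}\sigma_{l}$, $\sigma(v_{a})=\sigma_{i}\sigma_{k}$, $\sigma(u_{b})=\sigma_{i}\sigma_{k}$, $\sigma(v_{b})=\sigma_{k}\sigma_{l}$, together with the sign factors $\epsilon_{a}=1$ and $\epsilon_{b}=(-1)^{|c^{q_{1}}_{ik}|}=-\sigma_{i}\sigma_{k}$ coming from the definition of $\epsilon(u,v)$ given after \eqref{eq:boundaryorient}; these satisfy
\[
\sigma(u_{a})\,\epsilon_{a}\,\sigma(v_{a})+\sigma(u_{b})\,\epsilon_{b}\,\sigma(v_{b})=\sigma_{k}\sigma_{l}-\sigma_{k}\sigma_{l}=0,
\]
so all relations produced by different triples $(q_{1},q_{2},q_{3})$ and intermediate indices $(k,l)$ are mutually compatible with the claimed formula. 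The main obstacle is the ``degenerate'' breakings with $q_{1}=0$ or $q_{3}=0$, in which one auxiliary disk $v_{a}$ or $v_{b}$ has positive puncture at level $p$ rather than at a strictly lower level; these are handled by making the $\pm 1$ choice of positive capping orientation at $c^{p}_{ij}$ simultaneously for all pairs $(i,j)$ before invoking \eqref{eq:boundaryorient}, so that the level-$p$ relations form a closed system whose consistency is again guaranteed by the calculation above applied with all four disks replaced by their claimed signs. This completes the inductive step and the proof.
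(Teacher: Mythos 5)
Your overall strategy is the paper's: fix the base case via the deformation to interpolating triangles, then determine the capping orientations inductively and propagate the sign from one rigid triangle to the others using the boundary of the reduced $1$-dimensional moduli space of two-slit strips together with \eqref{eq:boundaryorient}. Your explicit verification that the claimed signs satisfy \eqref{eq:boundaryorient} is correct and is essentially the computation the paper performs in the form $(-1)^{|b_1|+1}\bigl(\sigma(u_0)(-1)^{|b_2|}+\sigma(u_1)\bigr)=0$.

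However, there is a genuine gap in your inductive step, and it comes from your choice of induction parameter. Inducting on the wrapping number $p$ forces you to exclude $q_1=0$ and $q_3=0$, but the excluded breakings are precisely the ones needed to reach the disks in $\M(c^{p}_{ij};c^{0}_{il},c^{p}_{lj})$ and $\M(c^{p}_{ij};c^{p}_{il},c^{0}_{lj})$: every relation you retain has both negative corners of $u_a$ and $u_b$ at level $\ge 1$, so disks with a $c^0$ corner are never connected to your reference disk. The degenerate relations that do reach them have an auxiliary disk $v_b$ with positive puncture at a \emph{different} level-$p$ chord $c^{p}_{kj}$, so the $\pm 1$ choices at the various $c^{p}_{ij}$ become coupled (flipping the capping at $c^{p}_{lj}$ changes the sign of $\M(c^{p}_{ij};c^{0}_{il},c^{p}_{lj})$). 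Your final paragraph asserts that this coupled system can be solved ``simultaneously,'' but checking that the \emph{claimed} signs satisfy \eqref{eq:boundaryorient} only shows the target assignment is consistent with those relations; it does not show that the \emph{actual} signs differ from the claimed ones by something realizable by capping flips, which is what existence of the choice requires. The paper sidesteps all of this by inducting on the \emph{action} of the positive puncture: since every Reeb chord, including each $c^{0}_{il}$, has strictly positive action, the auxiliary disk in every breaking has positive puncture of strictly smaller action, the inductive hypothesis always applies, and the capping orientation at each chord can be chosen independently, one chord at a time, with no same-level coupling. Replacing your induction on $p$ by induction on action repairs the argument.
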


\begin{pf}
After the above discussion of short chords, we may assume that the sign rule holds for all rigid triangles with positive puncture below a given action. Consider the next Reeb chord $a$ in the action ordering. We claim that if we choose the orientation on the capping operator of $a$ as a positive puncture so that the sign rule holds for some triangle with positive puncture at $a$, then it holds for all such triangles. To see this, we represent the link as $\Lambda_1\cup\dots\cup\Lambda_n$ inside the handle exactly as in Lemma \ref{lma:handlediff}. Then any rigid triangle lies inside the trivial holomorphic cylinder over the central Reeb chord and looks like a Reeb chord strip with a slit, see Figure \ref{fig:handledisk1}. For any two such triangles we get a transversely cut out reduced $1$-dimensional moduli space by considering the corresponding strip with two slits. Assume that this moduli space is $\M(a;b_1,b_2,b_3)$. Then its boundary points correspond to
\begin{align*}
(u_0,v_0)&\in\M(a;b_1,b_{23})\times\M(b_{23};b_2,b_3),\\
(u_1,v_1)&\in\M(a;b_{12},b_{3})\times\M(b_{12};b_1,b_2),
\end{align*}
and our inductive assumption implies that
\[
\sigma(v_0)=(-1)^{|b_2|+1}\quad\text{ and }\quad
\sigma(v_1)=(-1)^{|b_1|+1}.
\]
Equation \eqref{eq:boundaryorient} then gives
\[
(-1)^{|b_1|+1}\left(\sigma(u_0)(-1)^{|b_2|}+\sigma(u_1)\right)=0.
\]
Since $|b_{12}|=|b_1|+|b_2|+1$, it follows that $\sigma(u_0)=(-1)^{|b_1|+1}$ if and only if $\sigma(u_1)=(-1)^{|b_{12}|+1}$. By induction, we conclude that the lemma holds.
\end{pf}

\appendix
\section{Invariance from an analytical perspective}\label{app:aninv}
In this section we adapt the standard SFT proof of invariance to show
that the homotopy type of the DGA $A(\Lambda)$ for a Legendrian link
$\Lambda\subset Y_k(\delta)$ depends only on the isotopy class of
$\Lambda$. As mentioned in the Introduction, this is a proof strategy
rather than a proof since it depends on a perturbation scheme for
M-polyfolds that has not yet been fully worked out.

Consider an isotopy $\Lambda_t$, $t\in[0,1]$, between Legendrian links $\Lambda_{0}$ and $\Lambda_1$. The isotopy then gives an exact Lagrangian $L_{01}\subset Y_{k}(\delta)$ with positive end $\Lambda_{0}$ and negative end $\Lambda_{1}$. As mentioned in Section~\ref{sec:generalLCH}, this cobordism induces a DGA-morphism $\Phi_{01}\colon A(\Lambda_0)\to A(\Lambda_1)$. On the other hand, reversing the isotopy we get a corresponding map $\Phi_{10}\colon  A(\Lambda_{1})\to A(\Lambda_0)$ and by shortening the composite isotopy we get a  $1$-parameter family of cobordisms $L_{00}^{s}$, $s\in [0,1]$, with induced map at $s=0$ given by $\Phi_{10}\circ\Phi_{01}$ and at $s=1$ given by the identity. Thus, once we prove that a $1$-parameter family of cobordisms induces a chain homotopy, we can conclude that $\Phi_{10}\circ\Phi_{01}$ is chain homotopic to the identity. Similarly, $\Phi_{01}\circ\Phi_{10}$ is chain homotopic to identity and invariance will follow.

Below we will discuss some of the steps in this program, following \cite{E-ratSFT}.

\subsection{Anchored disks}
Consider a  generic $1$-parameter family of cobordisms $L_s\subset Y(\delta)\times\R$, $s\in[0,1]$. In order to define the chain homotopy $K$ induced by $L_s$ we will consider the boundary of $1$-dimensional parameterized moduli spaces. That in turn involves looking at holomorphic disks of formal dimension $-1$. Using the argument from \cite[Lemma 4.5(1)]{EES-PxR} as in Section \ref{sec:generalLCH}, it is straightforward to show that disks with one positive boundary puncture and several negative boundary and interior punctures are transversely cut out after a small perturbation of the almost complex structure $J$. Recall that the Conley--Zehnder indices of the Reeb orbits in $Y_{k}(\delta)$ are $2m$, $m>0$. Thus, possible dimensions of spaces of holomorphic disks are $2m-1$. Let $\gamma$ be a simple central Reeb orbit in the middle of a handle, and let $\M^{X}(\gamma)$ denote the  moduli space of holomorphic disks with positive puncture at $\gamma$.

\begin{lma}\label{lem:noanchor2}
For generic $J$, the space $\M^{X}(\gamma)$ is a transversely cut out $1$-manifold with boundary points corresponding to $\R$-families in the moduli space $\M^{\R\times Y_k(\delta)}(\gamma)$.
\end{lma}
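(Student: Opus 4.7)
My approach has three ingredients: a formal-dimension count, a transversality argument that exploits the simplicity of $\gamma$, and an SFT compactness analysis whose dimensional obstructions rule out all nontrivial breakings and leave only the ``boundary at infinity'' claimed in the statement.

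For the dimension count, $\M^{X}(\gamma)$ parameterizes pseudoholomorphic planes $u\colon \C\to X$ with positive puncture asymptotic to $\gamma$. Since $X$ is a Weinstein $4$-manifold with $c_1(TX)=0$, and $\CZ(\gamma)=2$ by Lemma~\ref{lma:orbitsinhandle}, the formal dimension of such a once-punctured sphere is $(n-3)\chi(\dot\Sigma) + \CZ(\gamma) = -1 + 2 = 1$. Transversality then follows from the simplicity of $\gamma$: the asymptotic winding at the positive puncture is $1$, so every $u \in \M^{X}(\gamma)$ is asymptotically embedded near $\infty$ and in particular somewhere injective. The usual somewhere-injective perturbation argument, compare \cite[Lemma 4.5(1)]{EES-PxR}, then yields a residual set of $J$ on $X$ for which $\M^{X}(\gamma)$ is a smooth $1$-manifold.

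For compactness, SFT compactness shows that any non-convergent sequence in $\M^{X}(\gamma)$ limits to a multi-level building with a top level in $\R\times Y_k(\delta)$ and possibly further levels in $X$; sphere bubbles are excluded because $\omega$ is exact. I would show that the top level consists of a single plane with only the $+$ puncture at $\gamma$, which identifies the boundary with $\R$-families in $\M^{\R\times Y_k(\delta)}(\gamma)$. The component carrying the distinguished positive puncture, with $l\geq 0$ negative asymptotes at iterates $\gamma^{m_i}$ of the unique simple orbit in the same handle (forced by Lemma~\ref{lma:orbitsinY_k} together with the action constraints of Corollary~\ref{cor:ReebflowtildeY}), has, after quotienting by $\R$-translation, formal dimension
\[
(n-3)(1-l) + \CZ(\gamma) - \sum_{i=1}^l \CZ(\gamma^{m_i}) - 1 \ = \ l - 2\sum_{i=1}^l m_i \ \leq \ -l,
\]
which is strictly negative whenever $l\geq 1$ (since each $m_i\geq 1$). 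Generic transversality then forces $l=0$, and an analogous count handles any disconnected top-level components. Thus the only surviving degeneration is a plane in $\R\times Y_k(\delta)$ with positive puncture at $\gamma$, i.e.\ an $\R$-family in $\M^{\R\times Y_k(\delta)}(\gamma)$, as required.

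The main obstacle is the transversality step: in SFT, moduli spaces of curves with interior punctures are generally plagued by multiple-cover phenomena that obstruct perturbative transversality and force the use of abstract perturbation frameworks. The simplicity of $\gamma$ is the crucial input that avoids this difficulty in the present setting, guaranteeing somewhere-injectivity of the top component and sharpening the dimension estimate enough to exclude every other degeneration.
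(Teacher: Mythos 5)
Your proposal is correct, and its transversality step is exactly the paper's: a curve with a single positive puncture asymptotic to the \emph{simple} orbit $\gamma$ cannot be multiply covered, hence is somewhere injective, and the standard perturbation of $J$ applies. Where you genuinely diverge is in identifying the boundary. The paper disposes of all breaking in one line by an \emph{action} argument: $\gamma$ is the orbit of minimal action in $Y_k(\delta)$, so a top-level curve in $\R\times Y_k(\delta)$ with positive asymptote $\gamma$ has no room for negative punctures (nonnegativity of the $d\alpha$-energy would force a negative asymptote of action $\le \act(\gamma)$, i.e.\ a trivial-cylinder level, which SFT compactness excludes); hence no bubbling and the limit is a single plane in the symplectization. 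You instead rule out negative punctures by a Fredholm index count, $l-2\sum m_i\le -l<0$ for $l\ge 1$, combined with generic transversality of the limit configurations. Both work here, but note what each buys: the action argument needs no transversality whatsoever for the degenerate configurations, whereas your index argument does, and is rescued only because the component carrying the puncture at $\gamma$ is itself somewhere injective (your worry about ``disconnected top-level components'' is in fact vacuous --- the topmost level has exactly one positive puncture, so it is connected, and once it has no negative punctures there are no lower symplectization levels to analyze). Your route also silently uses that all closed orbits are iterates of the handle orbits with $\CZ(\gamma_j^m)=2m$ (Lemma~\ref{lma:orbitsinY_k}); the action constraints from Corollary~\ref{cor:ReebflowtildeY} you cite are not needed for this. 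In short: same transversality input, but the paper's energy/action argument is the more economical and more robust way to close off the boundary, while your index computation gives the same answer with a little more machinery.
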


\begin{proof}
Since the Reeb orbit is simple the moduli space cannot contain multiple covers. Hence any curve in it is somewhere injective and the transversality follows from a standard argument. Since the Reeb orbit has minimal action there can be no bubbling and the boundary is as claimed.
\end{proof}

\begin{rmk}
It follows from the Bott--Morse description of symplectic homology that the moduli spaces $\M^{X}(\gamma)$ in fact are nonempty, since curves in them constrained by the unstable manifold of the critical point in the handle are needed for the symplectic homology differential to hit the corresponding Morse generator, which must be hit since the symplectic homology of a subcritical Weinstein manifold vanishes \cite{Kai}.
\end{rmk}

\subsection{Producing chain homotopies}
Consider a $1$-parameter family of exact Lagrangian cobordisms $(Y_{k}(\delta),L_s)$, $s\in[0,1]$, together with a $1$-parameter family of almost complex structures $J_{s}$. We assume that $J_s$ is such that parameterized moduli spaces of disks with one positive boundary puncture are transversely cut out, and that this holds also for the moduli spaces at the endpoints $s=0,1$. Write $\Lambda_\pm$ for the Legendrian submanifolds at the positive and negative ends of $L_s$. Then $L_0$ and $L_1$ determine chain maps
\[
\Phi_{0},\Phi_1\colon A(\Lambda_+)\to A(\Lambda_-).
\]

\begin{lemma}\label{lem:chainhomotopy}
The DGA maps $\Phi_{0}$ and $\Phi_{1}$ are chain homotopic, i.e., there exists a finite sequence of chain maps $\Phi^{j}\colon A(\Lambda_+)\to A(\Lambda_-)$, $j=1,\dots, m$, with $\Phi^{1}=\Phi_0$ and $\Phi^{m}=\Phi_{1}$, and degree $+1$ maps $K^{j}\colon A(\Lambda_{+})\to A(\Lambda_-)$, $j=1,\dots,m-1$, such that 
\begin{equation}\label{eq:chhomtpy}
\Phi^{j+1}-\Phi^{j} =\Omega_{K^{j}}\circ\pa_+ + \pa_-\circ\Omega_{K^{j}},
\end{equation}
where $\pa_\pm$ is the differential on $A(\Lambda_{\pm})$ and where $\Omega_{K^{j}}$ is linear and is defined as follows on monomials:
\[
\Omega_{K^{j}}(c_1\dots c_r)=\sum_{l=1}^{r}(-1)^{|c_1|+\cdots+|c_{l-1}|}\Phi^{j+1}(c_1\cdots c_{l-1})K^{j}(c_l)\Phi^{j}(c_{l+1}\cdots c_r).
\]
\end{lemma}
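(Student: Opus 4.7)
The plan is to carry out a standard bifurcation analysis of the 1-parameter family $(L_s, J_s)$ in the style of SFT invariance proofs. First, I would argue that after a generic perturbation (keeping the endpoints $s=0,1$ fixed), the following hold: (i) at all parameter values except a finite set $0<s_1<\cdots<s_{m-1}<1$, the moduli spaces of disks of formal dimension $\le 0$ with one positive boundary puncture are transversely cut out and there are no rigid disks of formal dimension $-1$; (ii) at each critical value $s_j$ there is precisely one rigid disk of formal dimension $-1$ (for some positive puncture and negative word); (iii) the parameterized moduli spaces of disks of formal dimension $\le 1$ are transversely cut out. By Corollary~\ref{cor:noanchor} and Lemma~\ref{lem:noanchor2}, no interior (anchor) punctures are required in these formal dimensions, so only holomorphic disks without interior punctures need to be considered. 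At each regular parameter value $s$, counting rigid formal dimension $0$ disks defines a chain map $\Phi^{s}\colon A(\Lambda_+)\to A(\Lambda_-)$. A standard cobordism argument applied to the parameterized formal dimension $0$ moduli spaces over any subinterval $(s_{j-1},s_j)$ free of critical values shows that $\Phi^{s}$ is locally constant there; I define $\Phi^{j}$ to be its common value on $(s_{j-1},s_j)$, with $s_0=0$, $s_m=1$, so that $\Phi^{1}=\Phi_{0}$ and $\Phi^{m}=\Phi_{1}$.

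Next, at each critical value $s_j$, define $K^{j}\colon A(\Lambda_+)\to A(\Lambda_-)$ on a generator $c$ by
\[
K^{j}(c)\;=\;\sum_{\mathbf{b}}\;\sum_{u\in \M^{-1}_{s_j}(c;\mathbf{b})}\sigma(u)\,A(u)\,\mathbf{b},
\]
where the inner sum runs over rigid disks of virtual dimension $-1$ in the cobordism at parameter $s_j$, $\sigma(u)\in\{\pm 1\}$ is the orientation sign, and $A(u)$ is the homology class. Then $K^{j}$ has degree $+1$, and $\Omega_{K^{j}}$ is its ``bimodule'' extension to the whole algebra by the formula in the statement. The chain homotopy equation \eqref{eq:chhomtpy} will be extracted from the boundary of the $1$-dimensional parameterized moduli space
\[
\M^{\mathrm{par}}_{[s_j-\epsilon,\,s_j+\epsilon]}(c;\mathbf{b})\;=\;\bigsqcup_{s\in[s_j-\epsilon,\,s_j+\epsilon]}\M^{s}(c;\mathbf{b})\times\{s\}
\]
of formal dimension $0$ disks with positive puncture $c$ and negatives $\mathbf{b}$, where $\epsilon$ is small enough that $s_j$ is the only critical value in the interval. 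By SFT compactness and the genericity assumptions, this is a compact $1$-manifold with boundary consisting of (a) the endpoints at $s=s_j\pm\epsilon$, contributing to $\Phi^{j+1}(c)-\Phi^{j}(c)$, and (b) broken configurations.

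The broken configurations split into two symmetric types: a rigid formal dimension $1$ disk $v$ in the symplectization $\R\times\Lambda_+$ with positive puncture $c$ and negatives $b_1',\dots,b_r'$ at the top, glued to cobordism disks at each $b_l'$, exactly one of which is the rigid formal dimension $-1$ disk at $s_j$ (contributing $K^{j}(b_l')$) and the rest rigid formal dimension $0$ disks at $s_j$; and the mirror configurations where $v$ lives in $\R\times\Lambda_-$ at the bottom. Summing the top-type contributions over $(v,l,\mathbf{b})$ and applying the definition of $\partial_+$ (which counts precisely the disks $v$) yields the term $\Omega_{K^{j}}(\partial_+ c)$, while the bottom-type contributions yield $\partial_-\Omega_{K^{j}}(c)$. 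The main obstacle is the careful sign bookkeeping that produces the bimodule asymmetry in $\Omega_{K^{j}}$: the appearance of $\Phi^{j+1}$ on the left of $K^{j}$ and $\Phi^{j}$ on the right is dictated by the canonical linear order along the boundary of the broken disk combined with the orientation of the parameter interval, via the orientation scheme of Section~\ref{sec:orientaton} (in particular the analogue of \eqref{eq:boundaryorient} extended to the parameterized setting). Once these signs are verified, equating the total boundary of $\M^{\mathrm{par}}_{[s_j-\epsilon,s_j+\epsilon]}(c;\mathbf{b})$ to zero and summing over $\mathbf{b}$ gives \eqref{eq:chhomtpy} for the generator $c$; the general case follows by the Leibniz rule and linearity, completing the proof.
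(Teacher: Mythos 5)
Your overall strategy is the same as the paper's: a bifurcation analysis of the generic family $(L_s,J_s)$, with $K^j$ defined by counting rigid formal-dimension $-1$ disks at the critical value $s_j$ and the homotopy formula read off from the boundary of the $1$-dimensional parameterized moduli spaces. However, there is a genuine gap at the heart of your boundary analysis. You assert that the broken configurations contributing to the boundary consist of a $(1)$-disk $v$ in the symplectization with \emph{exactly one} negative puncture capped by the $(-1)$-disk and the rest by $(0)$-disks. But since the $(-1)$-disk exists precisely at the single parameter value $s_j$, nothing prevents a disk $v$ of formal dimension $2$ (or higher) in the upper symplectization from having two or more negative punctures simultaneously capped by copies of the $(-1)$-disk at $s=s_j$; such a building has total index $0$ but sweeps out a stratum of the same dimension as the interior of the parameterized moduli space, so it is not a transversely cut out codimension-one boundary stratum. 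This is exactly the transversality failure the paper's proof is organized around: it is resolved by an abstract perturbation scheme, built energy level by energy level, that separates the negative punctures of each upper-level disk in the symplectization time so that only one puncture at a time can be attached to the $(-1)$-disk. Without this (or some substitute), your list of boundary strata is unjustified and \eqref{eq:chhomtpy} does not follow.

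Two consequences of this omission propagate into your argument. First, the paper emphasizes that turning on the ordering perturbation can \emph{create new} $(-1)$-disks, so $K^j$ must be defined as the count of $(-1)$-disks \emph{after} the perturbation is switched on; your $K^j$, which counts only the unperturbed $(-1)$-disks, is in general not the right map. Second, the bimodule asymmetry in $\Omega_{K^j}$ --- $\Phi^{j+1}$ to the left of $K^j$ and $\Phi^{j}$ to the right --- is not merely a sign issue to be checked via the orientation scheme, as you suggest; it is produced by the time-ordering of the negative punctures, which forces the capping disks on one side of the distinguished puncture to be counted at parameters just after $s_j$ and those on the other side just before. You should also note, as the paper does, that the whole argument must be run below a fixed action level (the algebra has infinitely many generators and is a direct limit over the action filtration), and that even with the ordering perturbation the argument rests on a polyfold-type perturbation framework, so it is a proof strategy rather than a complete proof.
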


\begin{rmk}
The lemma follows from (a slightly extended version of) \cite[Lemma B.15]{E-ratSFT} (which involves orientations on the moduli spaces), which is stated in somewhat different terminology. In the proof below, we will adapt the terminology used there to the current setup. Also, it should be mentioned that \cite[Lemma B.15]{E-ratSFT} depends on a perturbation scheme for so-called M-polyfolds (the most basic level of polyfolds), the details of which were not yet worked out, and hence it should be viewed as a proof strategy rather than a proof in the strict sense.
\end{rmk}

\begin{proof}
Write $\M^{s}(a;\mathbf{b})$ for the moduli space of (anchored) holomorphic disks in $Y_k(\delta)\times\R$ with boundary on $L_s$, positive puncture at the Reeb chord $a$, and negative punctures at the Reeb chords in the word $\mathbf{b}$.
Consider a parameterized moduli space
\[
\M(a;\mathbf{b})=\bigsqcup_{s\in[0,1]}\M^{s}(a;\mathbf{b}),
\]
where the $(L_s,J_s)$, $s\in[0,1]$, is generic.

We say that a holomorphic disk of formal dimension $d$ is a $(d)$-disk. Recalling that the Legendrian homology algebra is defined as a direct limit using the action filtration, we work below a fixed energy level. Since $(L_s,J_s)$ is generic, the $(-1)$-disks form a transversely cut-out $0$-manifold in the parameterized moduli space that is generic with respect to the projection to the parameter space. In particular this implies that $(-1)$-disks occur only at a finite number of isolated instances $0<s_1<s_2<\dots<s_m<1$ at which the cobordism contains exactly one $(-1)$-disk, and that the projection from the parameterized $1$-dimensional moduli space is a Morse function. It then follows that the chain maps $\Phi_s$ remain unchanged except possibly when $s$  crosses a $(-1)$-disk moment. We consider crossing such a $(-1)$-disk moment. For simpler notation we still take $s\in[0,1]$ and assume that there is a single $(-1)$-disk moment in the interval.

We first note that if the $(-1)$-disk has an interior puncture and gives an anchored $(0)$-disk, then it follows from Lemma \ref{lem:noanchor2} that $\Phi_{s}$ does not change.

In order to express the change in the chain map at other $(-1)$-disk moments, we use the $(-1)$-disk to construct a chain homotopy. To do so one must however overcome a transversality problem that arises when several copies of the $(-1)$-disk are attached to a disk in the symplectization of the upper end, resulting in a non-transverse broken disk. To solve this problem we invoke so-called abstract perturbations. More precisely, a perturbation is constructed that orders the negative punctures of any disk in the upper end in time so that only one negative puncture can be attached to the $(-1)$-disk at the time. Note that since we start the perturbation from the degenerate situation where all negative punctures lie at the same time, new $(-1)$-disks might arise when the perturbation is turned on, where perturbations of a moduli space $\M^{\Lambda_+\times\R}(a;\mathbf{b})$ in the positive end might give new $(-1)$-disks with positive puncture at $a$.

The actual perturbation scheme is organized energy level by energy level in such a way that the
the size of the time separation is determined by the action of the Reeb chord at the positive puncture. In particular the time distances between the positive punctures in the newly created $(-1)$-disk are of the size of this time separation. As we move to the next energy level, the time separation is a magnitude larger, so that one of the negative punctures of a disk on the new energy level passes all the positive punctures of the $(-1)$-disks created on lower energy levels before the puncture following it enters the region where $(-1)$-disks may be attached to it.

Consider now the parameterized $1$-dimensional moduli space $\M(a;\mathbf{b})$ of $(0)$-disks defined using the perturbation scheme just described. The boundary of $\M(a;\mathbf{b})$ then consists of the $0$-manifolds $\M^{0}(a,\mathbf{b})$ and $\M^{1}(a;\mathbf{b})$ as well as broken disks that consist of one $(-1)$-disk at $s$ and several $(0)$-disks with a $(1)$-disk in the upper or lower end attached. Thus, if for a Reeb chord $c$, $K(c)$ denotes the count of $(-1)$-disks \emph{after} the ordering perturbation scheme is turned on:
\[
K(c)=\sum_{|c|-|\mathbf{b}|=-1}\left|\M(c,\mathbf{b})\right|\mathbf{b},
\]
then, by counting the boundary points of oriented $1$-manifolds, we conclude that \eqref{eq:chhomtpy} holds.
\end{proof}

\begin{rmk}
The proof of invariance just given is in a sense less involved than the combinatorial proof given in Section \ref{app:combinv}. In fact this comparison is a little misleading, the combinatorial proof should be compared to computing the chain homotopies associated to all relevant $(-1)$-moments rather than just proving existence, see \cite{EHK}. For example, Gompf move 6 below involves an infinite sequence of birth-deaths of Reeb chords and the corresponding stable tame isomorphism involves a countably infinite stabilization, as does the chain map of the corresponding cobordism defined as a direct limit with respect to action.
\end{rmk}

\section{Invariance from a combinatorial perspective}\label{app:combinv}

In this section, we present proofs of various combinatorial results from Section~\ref{sec:comb} that have been deferred until now. We begin with the short proofs
of Proposition~\ref{prop:resolution} (about resolutions of fronts) and Proposition~\ref{prop:d2} ($\d^2=0$)
in Appendix~\ref{app:d2proof}. The bulk of this section, Appendix~\ref{sec:combpf}, consists of a proof of the main combinatorial invariance result, Theorem~\ref{thm:invariance}.

\subsection{Proofs of Propositions~\ref{prop:resolution} and~\ref{prop:d2}}
\label{app:d2proof}

\begin{figure}
\centerline{\includegraphics[width=3.5in]{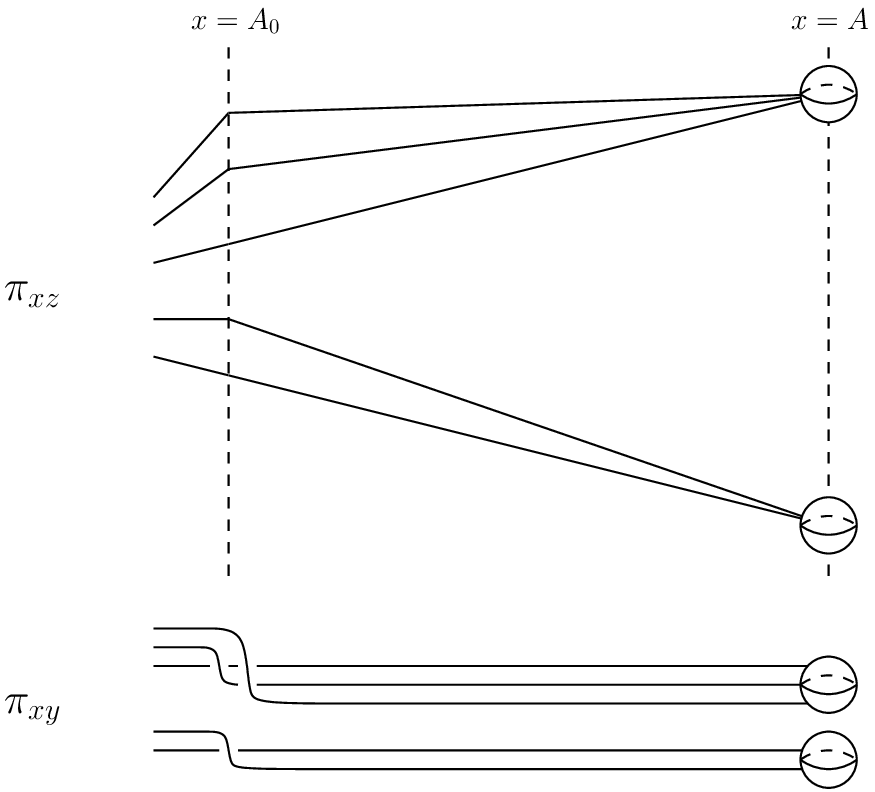}}
\caption{
Setting up the rightmost portion of a front ($\pi_{xz}$) projection so that the corresponding $xy$ projection is the resolution of the front.
}
\label{fig:resolve-right}
\end{figure}

\begin{proof}[Proof of Proposition~\ref{prop:resolution}]
This follows the proof of Proposition~2.2 in \cite{bib:NgCLI}. Perturb a front in Gompf standard form to be the $xz$ projection of a tangle in normal form, as in Figure~\ref{fig:gompf}. Now beginning from the left, change the front by planar isotopy so that the corresponding $xy$ projection is given by the resolution of the front, as in \cite{bib:NgCLI}. The only novel feature in the current setup is that the right end of the front has to be arranged to pass into the $1$-handles (that is, strands passing into the same $1$-handle must become arbitrarily close in $z$ coordinate) while remaining in normal form.

Thus, suppose that we have constructed the front from left to right, including all crossings and cusps, and the only remaining task is to pass into the $1$-handles. Then the right-hand end of the front, where the front intersects say $x=A_0$ for some $A_0$, consists of a number of straight line segments whose slopes are increasing from bottom to top. Now for each $1$-handle, choose one strand of the Legendrian link that passes through that $1$-handle, and extend the corresponding straight line segment (with slope unchanged) rightward from $x=A_0$ to $x=A$ for some $A \gg A_0$. Then connect the $x=A_0$ endpoints of all strands passing through that $1$-handle to the endpoint of the chosen strand at $x=A$. See Figure~\ref{fig:resolve-right} for an illustration.

For $A$ sufficiently large, the slopes of the line segments between $x=A_0$ to $x=A$ are, from bottom to top, increasing from handle to handle, and decreasing within the subset of strands passing into a single handle. Thus the corresponding $xy$ projection for $x\geq A_0$ looks precisely as desired: at $x=A_0$ there is a half twist involving the strands passing into each $1$-handle, and otherwise the $xy$ projection consists of horizontal lines. (To make the projection of the half twist generic, just perturb the $x=A_0$ endpoints slightly in the $x$ direction.)
This completes the proof.
\end{proof}

\begin{figure}
\centerline{
\includegraphics[width=\textwidth]{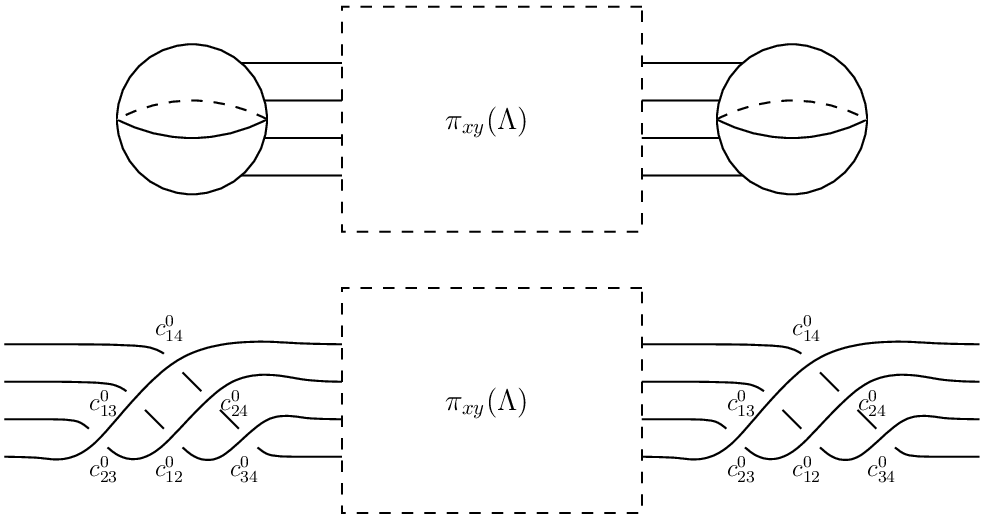}
}
\caption{
Turning a Legendrian link $\Lambda$ in $S^1\times S^2$ into a Legendrian tangle $\tilde{\Lambda}$ in $\R^3$.
}
\label{fig:d2}
\end{figure}

\begin{proof}[Proof of Proposition~\ref{prop:d2}]
The fact that this is true for generators of the internal DGA was noted in Section~\ref{ssec:intdga}. Now suppose that $a_i$ is a crossing of $\pi_{xy}(\Lambda)$. We wish to show that $\d^2(a_i)=0$ and that $|\d(a_i)| = |a_i|-1$. We will assume for notational reasons that $\Lambda \subset S^1\times S^2$, but the same proof works for any number of $1$-handles.

Replace the $1$-handle in the $xy$ projection of $\Lambda$ by
half-twists on the left and right, as in Figure~\ref{fig:d2}. The
result is the $xy$ projection of a Legendrian tangle $\tilde{\Lambda}$
in $\R^3$. To see this, add ``dips'' (cf.\ \cite{bib:Sabloff}) to
$\Lambda$ on the left and right, and then the half-twists are half of
each dip; see Figure~\ref{fig:lagdip} from Section~\ref{sec:split}. Let $n$ be the number of strands of $\Lambda$ passing through the $1$-handle. Label the crossings in the new half-twists in the tangle by $c_{ij}^0$, $1\leq i<j\leq n$, as shown in Figure~\ref{fig:d2} (note that these labels are repeated on left and right).

To the $xy$ projection of the Legendrian tangle $\tilde{\Lambda}$, we can associate a differential graded algebra over $\Z[\mathbf{t},\mathbf{t}^{-1}]$ just as in the standard Chekanov setup \cite{bib:Chekanov,bib:ENS}, where the differential counts bounded disks (no unbounded regions) with boundary in $\pi_{xy}(\tilde{\Lambda})$. Here crossings are graded as usual by rotation numbers of paths from overcrossings to undercrossings, where the $2n$ ends of the tangle are identified pairwise; note in particular that this agrees with the grading of $c_{ij}^0$ from Section~\ref{ssec:dga}. For definiteness of orientation signs, at each of the crossings labeled $c_{ij}^0$, if $|c_{ij}^0|$ is even, then we take the south and west corners to be the $-1$ corners for orientation signs.

With these conventions, the Chekanov differential on the algebra for $\tilde{\Lambda}$ agrees with the differential $\d$ on both the crossings $a_i$ in $\pi_{xy}(\Lambda)$ and on the $c_{ij}^0$. (In particular, this shows that $\d(a_i)$ is a finite sum of terms.) The usual proof that $\d^2=0$ and $\d$ lowers degree by $1$ from \cite{bib:Chekanov,bib:ENS} now gives the desired result.
\end{proof}

\subsection{Combinatorial proof of invariance}
\label{sec:combpf}

\subsubsection{Outline of the proof}
\label{ssec:outlinepf}
In this section, we prove the main combinatorial invariance result, Theorem~\ref{thm:invariance}. The proof relies on establishing invariance under several elementary moves.
\begin{figure}
\centerline{
\includegraphics[width=\textwidth]{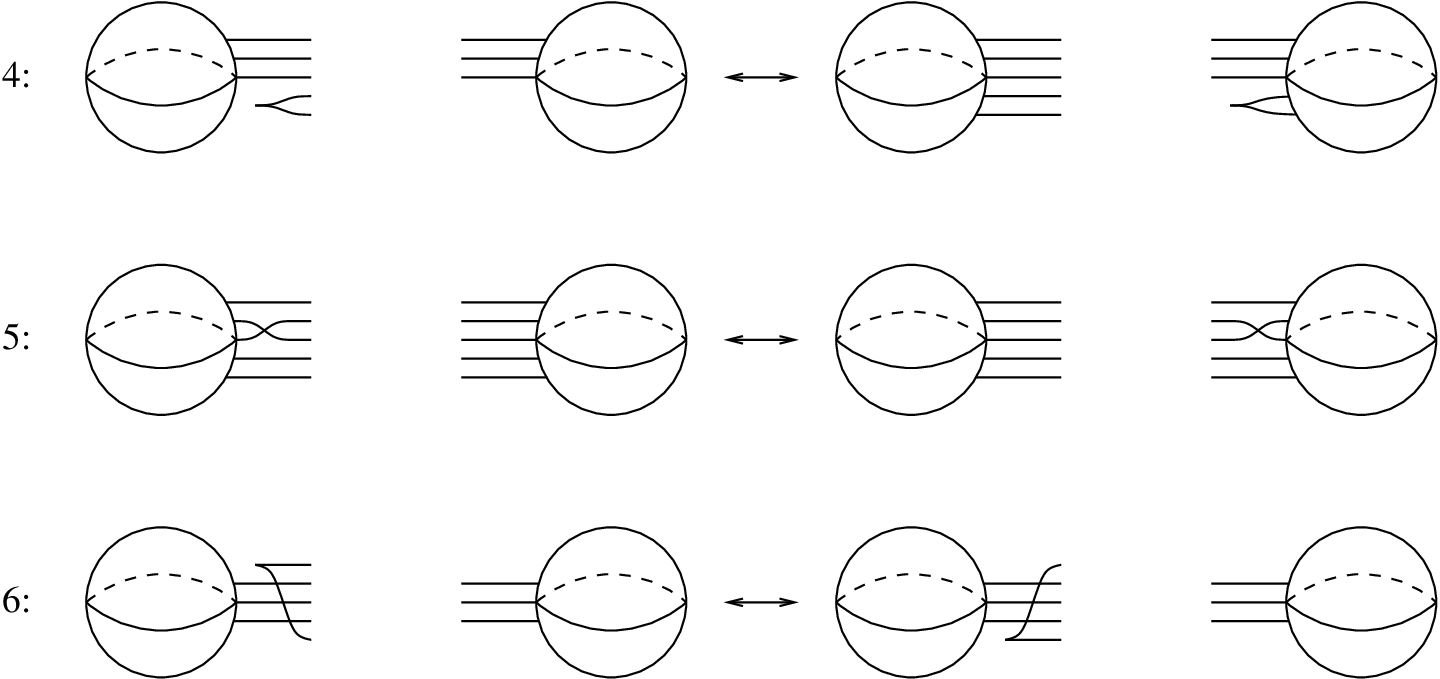}
}
\caption{
Gompf moves 4, 5, and 6.
}
\label{fig:gompfmoves}
\end{figure}
More precisely, by Gompf
\cite{bib:Gompf}, any Legendrian link in $\#^k(S^1\times
S^2)$ can be represented by a front
diagram in Gompf standard form (see Section~\ref{ssec:stdform}). Two
such front diagrams represent links that are Legendrian isotopic if
and only if they are related by a sequence of
Legendrian isotopy of the front tangle inside the box $[0,A] \times
[-M,M]$ (which can in turn be decomposed into a sequence of Legendrian
Reidemeister moves as in $\R^3$), and three other moves, which we call
Gompf moves 4, 5, and 6. See Figure~\ref{fig:gompfmoves}. As mentioned
in Section~\ref{ssec:resolution}, we can perturb any diagram in Gompf
standard form to a diagram in normal form, and the Gompf moves can be
similarly perturbed to involve diagrams in normal form. We will
henceforth assume that all front diagrams are in normal form.

We will prove invariance under each of Gompf's moves; this is the
content of Propositions~\ref{prop:isotopyinv}, \ref{prop:gompf4inv},
\ref{prop:gompf5inv}, and \ref{prop:gompf6inv} below. These results are then combined to prove Theorem~\ref{thm:invariance} in Section~\ref{sssec:combpf}.

\subsubsection{Reducing the number of possible moves}
Before turning to the actual study of the DGA under moves, we consider two technical
simplifications. First, Gompf's version of move 4 in
\cite{bib:Gompf} has the cusped strands above the others, rather than
below. The version we use, as shown in Figure~\ref{fig:gompfmoves}, is
equivalent to Gompf's version, either by symmetry or by additional
application of Gompf move 5.

Second, Gompf moves 4 and 6 have variants obtained from our diagrams
by a $180^\circ$ rotation, which also need to be considered
for a general Legendrian isotopy. However, we do not need
to separately consider these alternate moves, because the action of
$180^\circ$ rotation affects the DGA in a simple way, which we describe next.

\begin{definition}[cf.\ \cite{bib:Ngtransverse}]
Let $(\A,\d)$ be a semifree DGA over $R$ with generators $\{a_i\}$. There is
another differential $\d_{\op}$ on $\A$ defined on generators by
reversing the order of every word in $\d(a_i)$ and introducing
appropriate signs. More precisely, define an $R$-module involution
$\op :\thinspace \A\to\A$ by
\[
\op(a_{i_1}a_{i_2}\cdots a_{i_n}) = (-1)^{\sum_{j<k}
  |a_{i_j}||a_{i_k}|}
a_{i_n}\cdots a_{i_2}a_{i_1},
\]
and define $\d_{\op} = \op \circ \d \circ \op$. Then $(\A,\d_{\op})$
is another semifree DGA satisfying the Leibniz rule.
\label{def:op}
\end{definition}

\begin{lemma}
Let $\Lambda$ be a Legendrian link in $\#^k(S^1\times S^2)$ in
normal form, with front projection $\pi_{xz}(\Lambda)$. Then the
result of rotating $\pi_{xz}(\Lambda)$ by $180^\circ$ is the
front projection of a Legendrian link $\Lambda'$ in normal form, and
if $(\A,\d)$, $(\A',\d')$ are the DGAs associated to
$\pi_{xy}(\Lambda)$ and $\pi_{xy}(\Lambda')$, then $(\A',\d')$ is
(tamely) isomorphic to $(\A,\d_{\op})$.
\label{lem:rotation}
\end{lemma}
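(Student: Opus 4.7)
The plan is to construct an explicit algebra isomorphism $\phi\colon (\A, \d_{\op}) \to (\A', \d')$ generator by generator, check that it preserves gradings, and verify that it intertwines the differentials, with the main subtlety being the accounting of signs arising from orientation conventions at corners of holomorphic disks.

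First I would confirm that the $180^\circ$ rotation of the $xz$-plane lifts to the diffeomorphism $(x,y,z)\mapsto(-x,y,-z)$ of $\R^3$, which preserves $dz - y\,dx$ (both terms negate), so the rotated tangle is Legendrian; after translating the box $[-A,0]\times[-M,M]$ back to $[0,A]\times[-M,M]$ and checking the ordering conditions, the resulting link $\Lambda'$ is in normal form. The crucial observation is that on the $xy$-projection this transformation is the reflection $(x,y)\mapsto(-x,y)$, which is orientation-reversing; the handles retain their top-to-bottom order, but within each handle the strand labels get reversed via $i\mapsto n_\ell - i + 1$. I would then define $\phi$ on generators by $\phi(t_j) = t_j^{-1}$, by $\phi(a_i) = \pm a'_i$ for the reflected crossing $a'_i \in \pi_{xy}(\Lambda')$, and by $\phi(c_{ij;\ell}^p) = \pm c_{n_\ell - j + 1,\,n_\ell - i + 1;\,\ell}^p$. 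Grading invariance is routine: reflection negates rotation numbers, so $|t_j^{-1}| = -|t_j|$ and similarly for the rotation-number path $\sigma_i$; and the internal gradings $2p-1+m(i)-m(j)$ are preserved by the index reversal once Maslov potentials on $\Lambda'$ are chosen compatibly.

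The heart of the proof is matching the differentials. The reflection induces a bijection $\Delta \leftrightarrow \Delta'$ between immersed disks in $\pi_{xy}(\Lambda)$ and $\pi_{xy}(\Lambda')$, but because reflection reverses the planar orientation, the counterclockwise cyclic order of corners of $\Delta'$ is the reverse of that of $\Delta$. Thus a contribution $\sgn(\Delta)\, t_1^{-n_1(\Delta)}\cdots t_s^{-n_s(\Delta)}\, b_1\cdots b_r$ to $\d(a_i)$ corresponds to a contribution proportional to $t_1^{n_1(\Delta)}\cdots t_s^{n_s(\Delta)}\, \phi(b_r)\cdots \phi(b_1)$ in $\d'(\phi(a_i))$ --- exactly the pattern produced by the $\op$ involution, which reverses words while introducing the sign $(-1)^{\sum_{j<k}|b_j||b_k|}$. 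The exponent flip on the $t_j$ matches $\phi(t_j)=t_j^{-1}$ because reflection reverses the boundary orientation used to count base points. For the internal DGA the formulas $\d_n(c_{ij}^p) = \sum_{\ell,m}\sigma_i\sigma_m c_{im}^\ell c_{mj}^{p-\ell}$ are manifestly symmetric under combined index-reversal and word-reversal, so they already intertwine with $\op$ on the internal subalgebra.

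The main obstacle I anticipate is the sign bookkeeping. Under reflection, an orientation-sign-$+1$ corner at $c_{j_1 j_2;\ell}^0$ reaching a handle from the right maps to a corner reaching from the left, whose prescribed sign is $(-1)^{|c_{j_1 j_2;\ell}^0|+1}$, and at an even-degree crossing the south/east versus north/west choice of $-1$ corners gets exchanged. I expect these discrepancies, together with the $\op$ sign and the signs from $\phi(t_j)=t_j^{-1}$, to combine cleanly once tracked through each disk boundary, possibly after composing $\phi$ with an algebra automorphism of $\A'$ multiplying certain even-degree generators by $-1$ (harmless, since the two even-corner conventions are related by precisely such an automorphism, as remarked in Section~\ref{ssec:dga}). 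Once these signs are verified, $\phi$ is manifestly a tame isomorphism, completing the proof.
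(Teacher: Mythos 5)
Your proposal is correct and follows essentially the same route as the paper's (very brief) proof: realize the rotation by an explicit contactomorphism, observe that the $xy$ projections are then related by an orientation-reversing reflection so that disk boundaries are traversed in reverse order (producing exactly the $\op$ word reversal), and reduce the remaining work to sign bookkeeping, which the paper itself leaves as an exercise. Your additional care with $\phi(t_j)=t_j^{-1}$ (forced both by the grading $|t_j|=-2r(\Lambda_j)$ with $r(\Lambda_j')=-r(\Lambda_j)$ and by the flip of the base-point counts $n_j(\Delta')=-n_j(\Delta)$) is a genuine refinement of a point the paper glosses over; just note that the map $(x,y,z)\mapsto(-x,y,-z)$ negates $dz-y\,dx$ rather than preserving it (it preserves only the contact plane field, which suffices).
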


\begin{proof}
It is clear that the front of a normal-form link $\Lambda$, rotated $180^\circ$,
is again the front of a normal-form link $\Lambda'$.
Furthermore, the map $(x,y,z) \mapsto (A-x,y,-z)$ is a
contactomorphism of $\R^3$ sending $\Lambda$ to $\Lambda'$. The
resulting $xy$ projections $\pi_{xy}(\Lambda)$ and
$\pi_{xy}(\Lambda')$ are related by reflection in the vertical ($y$)
axis. There is an obvious one-to-one correspondence between generators
of $\A$ and generators of $\A'$, and it is easily checked that this
correspondence preserves grading. Since reflection reverses the
orientation of $\R^2$, terms appearing in the differential $\d$ have
analogous terms in $\d'$ but with the order of letters reversed. We
leave it as an exercise to check that the signs in $\d'$ and the signs
in $\d$ are related as in Definition~\ref{def:op}.
\end{proof}

\subsubsection{Invariance under isotopy within $[0,A] \times [-M,M]$}
\label{ssec:invpf1}


\begin{proposition}
If $\Lambda$ and $\Lambda'$ are Legendrian links in $\#^k(S^1\times
S^2)$ in normal form, isotopic through Legendrian links in normal
form, then the DGAs associated to $\pi_{xy}(\Lambda)$ and
$\pi_{xy}(\Lambda')$ are stable tame isomorphic.
\label{prop:isotopyinv}
\end{proposition}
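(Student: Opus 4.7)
The plan is to reduce the statement to the classical Chekanov invariance for Legendrian tangles in $\R^3$, applied to the complement of the $1$-handles. A Legendrian isotopy $\Lambda_t$ through normal-form links restricts, outside a neighborhood of $\{x=0\} \cup \{x=A\}$, to a Legendrian isotopy of tangles in $\R^3$ with fixed endpoints, and near $\{x=0\}$, $\{x=A\}$ we may assume (after an arbitrarily small perturbation supported away from the interior) that the isotopy is constant and the tangle is straight. Thus by general position and genericity one can decompose the isotopy into a finite sequence of the following moves: (i) planar isotopies of $\pi_{xy}(\Lambda_t)$ that do not change the combinatorial data (crossing order, relative positions of crossings and base points, etc.), and (ii) local modifications of $\pi_{xy}(\Lambda_t)$ supported in small disks disjoint from the $1$-handle strands, given by the standard Legendrian Reidemeister moves of the Lagrangian projection (triple-point moves, birth/death of a pair of crossings in Reidemeister II, crossing changes permitted by the contact structure).

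Invariance under moves of type (i) is immediate since the DGA $(\A,\d)$ depends only on the combinatorial data of $\pi_{xy}(\Lambda)$ together with the Maslov and rotation-number data, which are preserved. For moves of type (ii), the key observation is that each such move is supported in a small planar disk $D$ that is disjoint from the $1$-handles; consequently the subalgebra $\A_{n_\ell}^\ell$, the grading of every $c_{ij;\ell}^p$, and the differential restricted to internal generators are all unchanged. The only generators affected are the crossings $a_i$ of $\pi_{xy}(\Lambda)$ lying in $D$, and disks contributing to $\d(a_i)$ for such $a_i$ split into two classes: disks entirely contained in a small enlargement of $D$ (the ``local'' disks) and disks that extend into the complement. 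The latter have negative corners in $\{a_j\} \cup \{c_{j_1 j_2;\ell}^0\}$, but for the purposes of the local stable tame isomorphism one may treat each negative corner — whether at a crossing or at a $c_{j_1 j_2;\ell}^0$ — as a formal letter in the algebra to which the argument is completely insensitive.

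With this reduction in place, for each type-(ii) move one writes down exactly the same stable tame isomorphism that handles the corresponding Reidemeister move for Legendrian links in $\R^3$; the proofs are carried out in detail in \cite{bib:Chekanov} for knots, in \cite{bib:ENS} for multi-component links with the $\Z[\mathbf{t},\mathbf{t}^{-1}]$ coefficient ring, and essentially verbatim in \cite{bib:NgCLI} for the resolution convention we are using. In each case the elementary automorphisms and stabilizations involve only the crossing generators $a_i$ supported in $D$, together with auxiliary $e_j,f_j$ pairs if needed; all other generators (crossings outside $D$, half-twist crossings at the $1$-handles, and all $c_{ij;\ell}^p$) are fixed, so the elementary automorphisms respect the filtration of Section~\ref{ssec:sti} (each uses the restriction of an ordering that places the affected crossings last among the $a_i$). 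The required algebraic identities — that applying the automorphism transforms $\d$ into $\d'$ — hold because the local disk counts in $D$ before and after the move agree with those in the $\R^3$ situation, and the ``through'' disks with corners outside $D$ contribute the same monomials on both sides, multiplied by the same local factors.

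The main technical point, and the one that merits the most care, is the sign bookkeeping: the orientation-sign convention at $c_{j_1j_2;\ell}^0$ corners (Section~\ref{ssec:dga}) must be compatible with the Chekanov sign conventions at crossings so that the same elementary automorphism intertwines $\d$ and $\d'$ over $\Z[\mathbf{t},\mathbf{t}^{-1}]$ and not merely over $\Z/2$. This is verified move by move using that the orientation sign at a $c_{j_1j_2;\ell}^0$ corner is determined by the parities $m(j_1),m(j_2)$ exactly as the orientation sign at an even crossing is determined by its corner positions, together with Remark~\ref{rem:signconv}. Combining invariance under each elementary move yields the proposition.
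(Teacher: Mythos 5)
Your overall strategy is the same as the paper's: decompose the isotopy into planar isotopies plus Reidemeister II and III moves of the Lagrangian projection away from the $1$-handles, and import Chekanov's stable tame isomorphisms move by move. However, there is a genuine gap at the step where you assert that the argument is ``completely insensitive'' to the formal letters $c_{j_1j_2;\ell}^0$ appearing as negative corners of through-going disks. Chekanov's proof of invariance under the Reidemeister II move is \emph{not} a purely local formal computation: the tame isomorphism is built by an induction over the crossings ordered by height (action), and this induction is only well-founded because the differential strictly decreases height, i.e.\ $h(a_i) > h(b_1)+\cdots+h(b_r)$ whenever $\Delta(a_i;b_1,\ldots,b_r)\neq\emptyset$. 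In the present setting the $b_j$ may be internal generators $c_{j_1j_2;\ell}^0$, and a ``negative corner'' at such a generator is not a corner at all but a strip entering the $1$-handle, so the Stokes-theorem identity underlying the height estimate must be re-derived. You have not assigned a height to the $c_{j_1j_2;\ell}^0$, nor verified that the estimate survives, so the inductive construction you invoke is not justified as written.

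The paper fills this gap by first isotoping to what it calls \emph{normal-plus form}, in which the strands enter each $1$-handle at evenly spaced $z$-coordinates with spacing $\epsilon$; this lets one set $h(c_{j_1j_2;\ell}^0)=(j_2-j_1)\epsilon$ and prove (Lemma~\ref{lem:action}) that the area estimate still holds, with the jump in $z$ across the handle strip accounting exactly for the height of the internal letter. Choosing $\epsilon$ small enough that every $h(c_{j_1j_2;\ell}^0)$ is below the height of every crossing in every $\pi_{xy}(\Lambda_t)$ places the internal generators at the bottom of the filtration, where they are fixed by the automorphism, and the induction then runs exactly as in $\R^3$. You should add this normalization and estimate; without it the reduction to \cite{bib:Chekanov} is incomplete. (A minor additional point: generic Legendrian isotopy of the Lagrangian projection produces only triple-point and tangency (Reidemeister II) moves; ``crossing changes permitted by the contact structure'' do not occur and should be removed from your list of elementary moves.)
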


Note that the isotopy in Proposition~\ref{prop:isotopyinv} occurs
entirely within the box $[0,A] \times [-M,M]$ in the $xz$ projection,
and does not involve the portions of the Legendrian link in the $1$-handles.

Before presenting the proof of Proposition~\ref{prop:isotopyinv}, we
note an area estimate that is very similar to the corresponding
estimate for Legendrian links in $\R^3$. First, we say that a
Legendrian link $\Lambda$ is in \textit{normal-plus form} if:
\begin{itemize}
\item
$\Lambda$ is in normal form;
\item
the points
$(0,y_i^{\ell},z_i^{\ell})$ and
$(A,\tilde{y}_i^{\ell},\tilde{z}_i^{\ell})$ where $\Lambda$
intersects $x=0$ and $x=A$ satisfy
\begin{align*}
z_i^{\ell} &= z_1^{\ell} + (i-1)\epsilon, \hspace{5ex}
\ell=1,\ldots,k,~
i=1,\ldots,n_{\ell}, \\
\tilde{z}_i^{\ell} &= \tilde{z}_1^{\ell} + (i-1)\epsilon, \hspace{5ex}
\ell=1,\ldots,k,~
i=1,\ldots,n_{\ell},
\end{align*}
for all $t$ and arbitrary fixed small $\epsilon>0$.
\end{itemize}
That is, the points in the $xz$
projection where $\Lambda$ enters the
$1$-handles are evenly
spaced, with $\epsilon$ between the $z$-coordinates of adjacent points.

Now for $\Lambda$ in normal-plus form,
let $a_1,\ldots,a_n$ denote the crossings in $\pi_{xy}(\Lambda)$
(the Reeb chords of the portion of $\Lambda$ outside of the
$1$-handles), and define a height function $h$ on Reeb chords by
$h(a_i) = z_i^+-z_i^-$, where $z_i^{\pm}$ are the $z$-coordinates of
$\Lambda$ at crossing $a_i$, with $z_i^+ > z_i^-$. Extend this
height function to internal Reeb chords $c_{j_1j_2;\ell}^0$ by
$h(c_{j_1j_2;\ell}^0) = (j_2-j_1)\epsilon$; note that this is the difference
in $z$-coordinate between strand $i$ and strand $j$ as they enter the
$\ell^{\rm th}$ $1$-handle.

\begin{lemma}
Suppose that $\Lambda$ is in normal-plus form, $a_i$ is a crossing of
$\pi_{xy}(\Lambda)$, and $b_1,\ldots,b_r$ are each either
a crossing $a_j$ or a internal Reeb chord $c_{j_1j_2;\ell}^0$.
\label{lem:action}
If
$\Delta(a_i;b_1,\ldots,b_r) \neq \emptyset$, then
\[
h(a_i) > h(b_1) + \cdots + h(b_r).
\]
\end{lemma}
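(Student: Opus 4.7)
The plan is to adapt Chekanov's standard area-positivity argument to our setting, where the only new feature is that negative corners can occur at the internal generators $c_{j_1j_2;\ell}^0$ living on the boundaries of the $1$-handles. The hypothesis of normal-plus form is precisely what makes the $z$-jumps at such corners match the values $h(c_{j_1j_2;\ell}^0) = (j_2-j_1)\epsilon$ that we have defined.

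I would start by applying Stokes' theorem. For any immersed disk $\Delta$ with boundary on $\pi_{xy}(\Lambda)$, counted with multiplicity, one has
\[
\area(\Delta) = -\oint_{\partial\Delta} y\,dx,
\]
with the boundary traversed counterclockwise, and $\area(\Delta)>0$ since $\Delta$ has at least one corner. Next, decompose $\partial\Delta$ into finitely many smooth arcs separated by the positive corner at $a_i$ and the negative corners at $b_1,\dots,b_r$. Each smooth arc $\gamma$ lifts uniquely to a path on $\Lambda$, and the Legendrian condition $dz=y\,dx$ gives $\int_\gamma y\,dx = \Delta z|_\gamma$, the net change in the $z$-coordinate of the lift along $\gamma$.

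The key step is the bookkeeping of $z$-jumps at the corners of the lifted boundary. At the positive corner at $a_i$ (counterclockwise), the lift jumps from the lower to the upper strand, yielding a jump of $+h(a_i)$; at a negative corner at a crossing $a_j$ the lift jumps from the upper to the lower strand, yielding $-h(a_j)$. For a negative corner at $c_{j_1j_2;\ell}^0$, the two boundary arcs of $\Delta$ incident to the corner reach the attaching locus of the $\ell$-th handle (either at $x=0$ or at $x=A$) along strands labelled $j_1$ and $j_2$ respectively; by the normal-plus form the corresponding endpoints on $\Lambda$ have $z$-coordinates differing by exactly $(j_2-j_1)\epsilon = h(c_{j_1j_2;\ell}^0)$, and the orientation of the corner relative to the orderings in Definition~\ref{def:stdform} makes this a jump of $-h(c_{j_1j_2;\ell}^0)$. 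Since the lifted boundary (smooth pieces together with corner jumps) traces a closed loop on $\Lambda$, the sum of smooth-arc changes and corner jumps must vanish:
\[
\sum_{\text{arcs}}\Delta z + h(a_i) - \sum_{k=1}^r h(b_k) = 0.
\]
Combining with the area formula gives $\area(\Delta) = h(a_i) - \sum_k h(b_k)$, and positivity of area yields the desired strict inequality.

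The main obstacle is really just the sign/convention check for the internal corners: one must verify that the cyclic order of the two handle-bound arcs at a negative corner at $c_{j_1j_2;\ell}^0$ is such that the lifted $z$-jump is truly $-h(c_{j_1j_2;\ell}^0)$ and not $+h(c_{j_1j_2;\ell}^0)$. This is a direct inspection using the negative-corner pictures in Figure~\ref{fig:corners} together with the fact that in normal-plus form the strands at the handle boundary are spaced by $\epsilon$ in $z$ and ordered consistently on the left and right sides as in Definition~\ref{def:stdform}. Once this sign issue is settled, the Chekanov-style area argument goes through verbatim.
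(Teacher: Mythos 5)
Your proposal is correct and follows essentially the same route as the paper's proof: a Stokes'-theorem/area-positivity argument in the style of Chekanov, with the single new ingredient being that a negative corner at $c_{j_1j_2;\ell}^0$ contributes a $z$-jump of exactly $(j_2-j_1)\epsilon = h(c_{j_1j_2;\ell}^0)$ thanks to the normal-plus spacing. The paper's version is terser but identical in substance, describing the internal corner as a strip bounded by strands $j_1$ and $j_2$ at the handle.
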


\begin{proof}
The argument is the same as for the analogous estimate in
\cite{bib:Chekanov}: by Stokes' Theorem, the difference
$h(a_i)-h(b_1)-\cdots-h(b_r)$ is the area of an immersed disk with
boundary on $\pi_{xy}(\Lambda_t)$ that has positive corner at $a_i$
and negative corners at $b_1,\ldots,b_r$, since they both measure
the integral of $dz = y\,dx$ around the boundary of the disk. The one
new feature is that
a ``negative corner'' at a $c_{j_1j_2;\ell}^0$ is not a corner, but
rather a strip bounded by strands $j_1$ and $j_2$ at handle $\ell$;
see Figure~\ref{fig:corners}. Since the $z$ coordinate jumps by
$(j_2-j_1)\epsilon = h(c_{j_1j_2;\ell}^0)$ at such a corner, the
result follows.
\end{proof}

\begin{proof}[Proof of Proposition~\ref{prop:isotopyinv}]
Suppose that we have an isotopy $\Lambda_t$ of Legendrian tangles in
normal form. By perturbing, we may assume that each $\Lambda_t$ is in
normal-plus form, and that the isotopy has generic
singularities in the $xy$ projection. We can further choose $\epsilon$
in the definition of normal-plus form to be sufficiently small that
each $h(c_{j_1j_2;\ell}^0)$ (which is bounded above by
$(n_\ell-1)\epsilon$)  is smaller than $h(a_i)$ for any crossing $a_i$
in any $\pi_{xy}(\Lambda_t)$.

We now follow Chekanov's proof of invariance of the DGA from the
$\R^3$ case \cite{bib:Chekanov}.
As $t$ goes from $0$ to $1$, the tangle diagram $\pi_{xy}(\Lambda_t)$
changes by planar isotopy and a sequence of Reidemeister moves, each
of which is either a triple-point move (Reidemeister III) or a
double-point move (Reidemeister II).

Invariance under a triple-point
move is precisely as in \cite{bib:Chekanov}. Invariance under a
double-point move also follows the proof from
\cite{bib:Chekanov}. Note in this case that the stable tame
isomorphism between the DGAs for two diagrams related by a
Reidemeister II move leaves all internal generators
$c_{j_1j_2;\ell}^p$ unchanged, and is defined on crossings
$a_i$ inductively, beginning with the crossing of smallest height and
working the (finite) way up to the crossing of largest height. The
fact that we can induct in this manner follows from Lemma~\ref{lem:action}.
\end{proof}

We will implicitly use Proposition~\ref{prop:isotopyinv} in the proofs of invariance under Gompf moves 4, 5, and 6 below. In each case, to simplify the proof, it is convenient to create new crossings in the $xy$ projection that localize certain holomorphic disks. These new crossings are created through Legendrian isotopy contained in $[0,A] \times [-M,M]$ in the $xy$ projection, which does not affect the stable tame isomorphism type of the DGA by Proposition~\ref{prop:isotopyinv}.

\subsubsection{Invariance under Gompf move 4}
\label{ssec:invpf4}


\begin{proposition}
If $\Lambda$ and $\Lambda'$ are Legendrian links in $\#^k(S^1\times
S^2)$ in normal form, related by Gompf move 4, then the DGAs
associated to $\pi_{xy}(\Lambda)$ and $\pi_{xy}(\Lambda')$ are stable
tame isomorphic.
\label{prop:gompf4inv}
\end{proposition}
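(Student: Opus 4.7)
The plan is to build an explicit stable tame isomorphism between $(\A,\partial)$ and $(\A',\partial')$. Gompf move 4, performed in the direction that increases diagram complexity, introduces into $\Lambda$ two new strands passing through some $\ell$-th $1$-handle, joined outside the handle by a single right cusp. Comparing generators, $\A'$ differs from $\A$ by the addition of one cusp generator $a$ of degree $1$ together with the ``extra'' internal generators $c_{ij;\ell}^p$ of the enlarged handle, i.e.\ those $c_{ij;\ell}^p$ involving at least one of the two new strand labels. All other generators and their differentials can be arranged to be the same in $\A$ and $\A'$.

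First I would handle the cusp. As in the $\R^3$ setting, the small bigon near the right cusp forces $\partial' a = 1 + w$, where $w$ is a sum of words in the new internal generators. By working in a sufficiently localized model of the move one may assume that $w$ involves only new generators, and a short computation in the spirit of Lemma~\ref{lem:eliminate} produces an elementary automorphism after which $\partial' a$ becomes a single generator (for instance a new $c^{1}_{i_0+1,i_0;\ell}$ or similar). This allows us to destabilize $a$ together with that one generator.

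Second I would show that the remaining extra internal generators form an ``acyclic tail'' over $\A$: after a suitable tame automorphism of the enlarged internal DGA they split into canceling pairs $\partial e_j = f_j$. This would be proved by induction on the superscript $p$, using the recursive formulas for $\partial_n$ in Section~\ref{ssec:intdga}. The crucial point is that each elementary automorphism introduced at level $p$ can be chosen to affect only generators of superscript $\ge p$, so the infinite composition preserves the filtration by superscript and is legitimate in the sense of Section~\ref{ssec:sti}. Combined with the first step, this identifies $(\A',\partial')$ with a countable stabilization of $(\A,\partial)$, completing the proof.

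The main obstacle will be the second step. The extra internal generators are tightly entangled with the old ones through the products $\sigma_i\sigma_m\, c_{im;\ell}^{\ell'}\, c_{mj;\ell}^{p-\ell'}$ appearing in $\partial_n$, and writing down explicit tame automorphisms that triangularize them while leaving the old differentials invariant is combinatorially delicate. I expect the cleanest organization is to first isolate a purely algebraic statement: inserting two extra indices into $(\A_n,\partial_n)$ yields a DGA which, after a filtration-preserving tame isomorphism, is a stabilization of $(\A_n,\partial_n)$. Once this algebraic fact is in hand, invariance under Gompf move 4 reduces to the cusp cancellation of the first step, together with a standard localization argument to handle possible interaction of the move region with the rest of the diagram.
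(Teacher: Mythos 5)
Your high-level strategy coincides with the paper's: exhibit the larger algebra as a stabilization of the smaller one (pairing the cusp generator $a$ with $c_{n-1,n}^0$, and the remaining new internal generators in canceling pairs $c_{in}^p\leftrightarrow c_{i,n-1}^p$, $c_{n-1,j}^p\leftrightarrow c_{nj}^p$), then produce a tame automorphism intertwining the two differentials. But there is a genuine gap at the point you yourself flag as ``the main obstacle,'' and it begins earlier than you indicate: the assertion that ``all other generators and their differentials can be arranged to be the same in $\A$ and $\A'$'' is false. For an old internal generator $c_{ij}^p$ with $i,j\le n-2$ and $p\ge 1$, the internal differential in the enlarged handle is $\delta_{ij}\delta_{1p}+\sum_{m=1}^{n}\sum_{\ell}\sigma_i\sigma_m c_{im}^\ell c_{mj}^{p-\ell}$, which contains the $m=n-1,n$ terms absent from $\d'$; likewise the external crossings adjacent to the handle (the $a_i$, $b_i$, $b$ of Figure~\ref{fig:gompf4b}) acquire extra terms such as $c_{i,n-1}^0$ and $c_{in}^0$. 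So the new generators are not an acyclic tail sitting over an unchanged copy of $\A'$: the required tame automorphism must modify the \emph{old} generators, both internal and external, and this is where essentially all the work lies.

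The ``purely algebraic statement'' you defer to --- that inserting two extra strand indices into $(\A_n,\d_n)$ yields a stabilization up to filtered tame isomorphism --- is precisely the content of the paper's proof, and it is not routine. The paper resolves it by introducing auxiliary elements indexed by a formal label $n-\half$ (e.g.\ $c_{n-\half,\alpha}^p = c_{n-1,\alpha}^p - a\,c_{n\alpha}^p$ and $c_{\alpha,n-\half}^p=c_{\alpha n}^p - c_{\alpha,n-1}^p a$), defining $\Phi(c_{ij}^p)$ as an alternating sum of words in these elements, and verifying the intertwining identity via a telescoping cancellation (Lemma~\ref{lem:dd'}); a second automorphism $\Psi$ then triangularizes the generators carrying an index in $\{n-1,n\}$, and a third, $\Omega$, corrects the nearby external crossings. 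Your proposed induction on the superscript $p$ also does not obviously respect a filtration: the correction terms for $c_{ij}^p$ involve products $c^{\lambda_0}\cdots c^{\lambda_q}$ with $\lambda_0+\cdots+\lambda_q=p+1$, so one needs the finer ordering by $np+j-i$ used in the paper rather than by $p$ alone. Without an explicit construction of these automorphisms, or an argument replacing them, the proof is incomplete.
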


\begin{figure}
\labellist
\small\hair 2pt
\pinlabel $\Lambda$ at 190 66
\pinlabel $\Lambda'$ at 559 66
\endlabellist
\centerline{
\includegraphics[width=\textwidth]{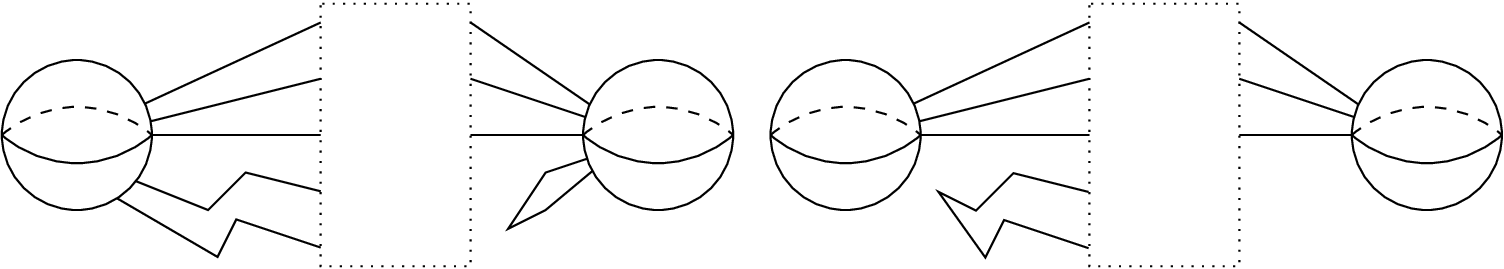}
}
\caption{
Front ($xz$) diagrams for Legendrian links $\Lambda,\Lambda'$ in normal
form related by Gompf
move 4. The corners in these fronts are understood to be smoothed
out.
}
\label{fig:gompf4a}
\end{figure}
\begin{figure}
\centerline{
\includegraphics[width=\textwidth]{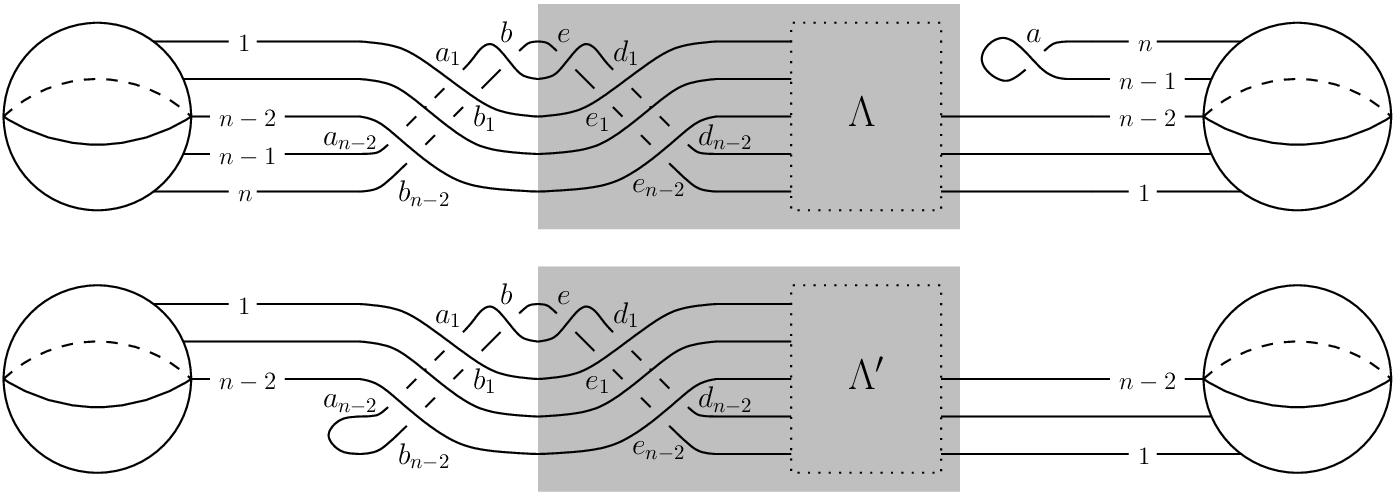}
}
\caption{
The $xy$ projections for
$\Lambda$ and $\Lambda'$ given in
Figure~\ref{fig:gompf4a}, with some crossings labeled. The dotted boxes are identical (and correspond to the dotted boxes in Figure~\ref{fig:gompf4a}), as
are the two shaded regions.
}
\label{fig:gompf4b}
\end{figure}

Let $\Lambda$ and $\Lambda'$ be Legendrian links in $\#^k(S^1\times
S^2)$ related by Gompf move 4. Perturb the fronts of $\Lambda$
and $\Lambda'$ to be in normal form, and then further perturb them to
take the form shown in Figure~\ref{fig:gompf4a}. Then the
corresponding $xy$ projections are
in $xy$-normal form as shown in
Figure~\ref{fig:gompf4b}.

By Proposition~\ref{prop:isotopyinv}, it
suffices to show that the DGAs for $\Lambda$ and $\Lambda'$ associated
to the $xy$ projections shown in Figure~\ref{fig:gompf5b} are stable
tame isomorphic. Denote these DGAs by $(\A,\d)$ and $(\A',\d')$
respectively. Note that we have labeled particular crossings in the
$xy$ projections of $\Lambda$ and $\Lambda'$; then $\A'$ is generated
by
\begin{itemize}
\item
$c_{ij}^p$, $1\leq i,j\leq n-2$, and $p\geq 1$ if $i\geq j$
\item
$b,e,a_1,\ldots,a_{n-2},b_1,\ldots,b_{n-2},d_1,\ldots,d_{n-2},e_1,\ldots,e_{n-2}$
\item
the other ``external'' crossings contained in the dashed box in
Figure~\ref{fig:gompf4b},
\end{itemize}
while $\A$ is generated by the same generators along with
\begin{itemize}
\item
$c_{ij}^p$, $1\leq i,j\leq n$, where at least one of $i,j$ is $n-1$ or
$n$, and $p\geq 1$ if $i\geq j$
\item
$a$.
\end{itemize}
For the purpose of keeping track of signs, let $(m(1),\ldots,m(n))$ be
the Maslov potential associated to strands $1,\ldots,n$ passing
through the handle, and write $\sigma_i = (-1)^{m(i)}$ for each
$i$. Note that $\sigma_{n-1} = -\sigma_n$, and if we write $\sgn(x) =
(-1)^{|x|}$, then $\sgn(a) = -1$ and $\sgn(c_{ij}^p) = -\sigma_i\sigma_j$.

Extend $\d'$ from $\A'$ to $\A$ by setting
\begin{alignat*}{2}
\d'(a) &= 1-c_{n-1,n}^0, & \hspace{6ex} & \\
\d'(c_{n-1,n}^0) &= 0, && \\
\d'(c_{in}^p) &= c_{i,n-1}^p, && 1\leq i\leq n,~p\geq 1,
\text{ if } i\geq n-1, \\
\d'(c_{i,n-1}^p) &= 0, && 1\leq i\leq n,~p\geq 1
\text{ if } i\geq n-1, \\
\d'(c_{n-1,j}^p) &= c_{nj}^p, && 1\leq j\leq n-2,~p\geq 1, \\
\d'(c_{nj}^p) &= 0, && 1\leq j\leq n-2,~p\geq 1;
\end{alignat*}
then $(\A,\d')$ is a stabilization of $(\A',\d')$. It suffices to
construct a tame automorphism of $\A$ intertwining $\d$ and $\d'$.
To this end, we will construct elementary automorphisms
$\Phi,\Psi,\Omega$ of $\A$ and establish that
\begin{equation}
\d \circ \Phi \circ \Psi \circ \Omega(x) = \Phi \circ \Psi \circ
\Omega \circ \d'(x)
\label{eq:gompf4}
\end{equation}
on generators $x$ of $\A$; then $\Phi\circ\Psi\circ\Omega$ is the desired
tame automorphism.

We break down the generators of $\A$ into groups, and establish
\eqref{eq:gompf4} for each group in turn:
\begin{itemize}
\item
Group 1: $a$, $c_{n-1,n}^0$,
$c_{ij}^0$ for $i,j\leq n-2$, and any crossing in the shaded regions in
Figure~\ref{fig:gompf4b} (including $e,e_1,\ldots,e_{n-2},d_1,\ldots,d_{n-2}$)
\item
Group 2: $c_{ij}^p$ for $p\geq 1$ and
$i,j\leq n-2$
\item
Group 3: $c_{i,n-1}^0$ and $c_{in}^0$ where $i\leq n-2$, and
$c_{ij}^p$ where $p\geq 1$ and either $i\geq n-1$ or $j\geq n-1$
\item
Group 4:
$a_1,\ldots,a_{n-2},b_1,\ldots,b_{n-2},b$.
\end{itemize}
We will also present the definitions of $\Phi,\Psi,\Omega$ in turn as
they are used. For now, we note that $\Phi,\Psi,\Omega$ will act as the
identity on all generators of $\A$ except:
\begin{itemize}
\item
for $\Phi$: Group 2
\item
for $\Psi$: Group 3
\item
for $\Omega$: Group 4.
\end{itemize}

If $x$ is in Group 1, then $\d(x) = \d'(x)$; note in particular that
this holds for $x=a$ and $x=c_{n-1,n}^0$ by the construction of $\d'$
and the fact (calculable from Figure~\ref{fig:gompf4b}) that $\d(a) =
1-c_{n-1,n}^0$. In addition, any generator
appearing in this differential is also in Group 1. It follows that
$\Phi\circ\Psi\circ\Omega$ acts as the identity on both $x$ and
$\d'(x)$, and thus that \eqref{eq:gompf4} holds for $x$ in Group 1.

We next construct $\Phi$ so that \eqref{eq:gompf4} holds for $x$ in
Group 2. First note that if $i,j\leq n-2$, then
\begin{align*}
\d(c_{ij}^p) &= \delta_{ij}\delta_{1p} + \sum_{m=1}^n \sum_{l=0}^p
\sigma_i\sigma_m c_{im}^l c_{mj}^{p-l}, \\
\d'(c_{ij}^p) &= \delta_{ij}\delta_{1p} + \sum_{m=1}^{n-2} \sum_{l=0}^p \sigma_i\sigma_m c_{im}^l c_{mj}^{p-l}.
\end{align*}
To define $\Phi$, we introduce some auxiliary elements of $\A$. We
extend our definition of $c_{ij}^p$ to allow one or both of the
indices $i,j$ to take the value $n-\half$, as follows:
\begin{alignat*}{2}
c_{n-\half,n}^0 &= c_{n-1,n-\half}^0 = 1, & \hspace{2ex}&\\
c_{n-\half,\alpha}^p &= c_{n-1,\alpha}^p - a c_{n\alpha}^p, && p \geq 1,~
\alpha\in\{1,\ldots,n-1,n-\half,n\}, \\
c_{\alpha,n-\half}^p &= c_{\alpha n}^p - c_{\alpha,n-1}^p a, &&
p\geq 0 \text{ and } \alpha\in\{1,\ldots,n-2\},\\
& && \text{or }
p\geq 1 \text{ and } \alpha\in\{n-1,n-\half,n\}, \\
c_{\alpha\beta}^0 &= 0, && \alpha\geq\beta.
\end{alignat*}
Note that the second and third lines overlap and agree in the definition of
$c_{n-\half,n-\half}^p$ for $p\geq 1$; in both cases, we find that
\[
c_{n-\half,n-\half}^p = c_{n-1,n}^p-c_{n-1,n-1}^p a-a c_{nn}^p+a c_{n,n-1}^p a.
\]
Also define
\[
\sigma_{n-\half} = \sigma_{n-1} = -\sigma_n.
\]

We collect a few facts about these new $c_{\alpha\beta}^p$ that are
easy to verify from the definition.

\begin{lemma} \label{lem:n-.5}
\begin{enumerate}
\item
For $\alpha,\beta\in \{1,\ldots,n-1,n-\half,n\}$, $p_2>0$, and either
$p_1>0$ or $p_1=0$ and $\alpha\geq n-2$,
we have
\[
c_{\alpha,n-1}^{p_1}c_{n-1,n-\half}^0c_{n-\half,\beta}^{p_2} -
c_{\alpha,n-\half}^{p_1}c_{n-\half,n}^0 c_{n\beta}^{p_2} =
c_{\alpha,n-1}^{p_1}c_{n-1,\beta}^{p_2} -
c_{\alpha n}^{p_1}c_{n\beta}^{p_2}.
\]
\item
For $p\geq 0$ and $\alpha,\beta\in\{1,\ldots,n-1,n-\half,n\}$ with at least one of $\alpha,\beta = n-\half$, we have
\[
\d(c_{\alpha\beta}^p) = \sum_{m=1}^n \sum_{l=0}^p \sigma_\alpha\sigma_m c_{\alpha m}^l c_{m\beta}^{p-l}
\]
where the sum on $m$ is only over integers.
\end{enumerate}
\end{lemma}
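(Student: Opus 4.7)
The plan is to verify both parts by direct expansion from the definitions, with part (1) being a telescoping cancellation and part (2) reducing to a bookkeeping exercise using part (1) together with the Leibniz rule and the formula $\d(a)=1-c_{n-1,n}^0$.

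For part (1), since $c_{n-1,n-\half}^0 = c_{n-\half,n}^0 = 1$, the left-hand side simplifies to
\[
c_{\alpha,n-1}^{p_1}\,c_{n-\half,\beta}^{p_2} - c_{\alpha,n-\half}^{p_1}\,c_{n\beta}^{p_2}.
\]
The plan is to substitute $c_{n-\half,\beta}^{p_2}=c_{n-1,\beta}^{p_2}-a\,c_{n\beta}^{p_2}$ and $c_{\alpha,n-\half}^{p_1}=c_{\alpha n}^{p_1}-c_{\alpha,n-1}^{p_1}\,a$ into the two terms; the two $a$-terms then cancel, leaving exactly the right-hand side. The hypothesis on $(p_1,\alpha)$ is precisely what is needed to ensure that $c_{\alpha,n-\half}^{p_1}$ is defined in the first place, so the substitution makes sense.

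For part (2), the approach is to split into cases according to which of $\alpha,\beta$ equals $n-\half$. In each case one expands $c_{\alpha\beta}^p$ using its definition (e.g. $c_{n-\half,\beta}^p=c_{n-1,\beta}^p-a\,c_{n\beta}^p$), applies the signed Leibniz rule, and uses the already-known differential formulas on the integer-indexed generators $c_{ij}^q$ together with $\d(a)=1-c_{n-1,n}^0$. The Leibniz contributions from $\d(a)=1-c_{n-1,n}^0$ will produce an extra $c_{n-1,n}^0$ in the middle of the sum, which is what allows the terms in the expansion to recombine: pairs of the form $c_{\alpha,n-1}^{p_1}c_{n-1,n}^0 c_{n\beta}^{p_2}$ and $c_{\alpha,n-1}^{p_1}c_{n-1,\beta}^{p_2}-c_{\alpha n}^{p_1}c_{n\beta}^{p_2}$ are rewritten via part (1) into the single expressions $c_{\alpha,n-1}^{p_1}c_{n-1,n-\half}^0c_{n-\half,\beta}^{p_2}$ and $c_{\alpha,n-\half}^{p_1}c_{n-\half,n}^0c_{n\beta}^{p_2}$. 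After this reorganization the result is precisely the sum $\sum_{m,l}\sigma_\alpha\sigma_m c_{\alpha m}^l c_{m\beta}^{p-l}$ over integer $m$. The case $\alpha=\beta=n-\half$ requires combining both substitutions simultaneously and is handled last; the Kronecker $\delta_{ij}\delta_{1p}$ terms arising from $\d(c_{n-1,n-1}^1)$, $\d(c_{nn}^1)$, and the $1$ in $\d(a)$ conveniently cancel, consistent with the fact that $\alpha=\beta=n-\half$ is not a "diagonal" integer index.

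The main obstacle is bookkeeping: keeping careful track of signs (noting $\sigma_{n-\half}=\sigma_{n-1}=-\sigma_n$, $|a|=1$, and that $|c_{n-\half,\beta}^p|=|c_{n-1,\beta}^p|$), and correctly handling the boundary cases $p=0$ and $\alpha$ or $\beta\in\{n-1,n-\half,n\}$ where some of the $c^0$ generators are declared to vanish. Once one organizes the expansion systematically case-by-case, the identity of part (1) does all the substantive work of collapsing the expansion into the desired Leibniz-type formula.
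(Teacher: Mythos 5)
Your verification is correct, and it is essentially what the paper intends: the paper gives no proof of this lemma at all, merely asserting that the facts are ``easy to verify from the definition,'' and your direct expansion from the definitions of $c_{\alpha\beta}^p$, together with $c_{n-\half,n}^0=c_{n-1,n-\half}^0=1$, the Leibniz rule, and $\d(a)=1-c_{n-1,n}^0$, is exactly that verification. Part (1) is the two-line cancellation you describe.

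One small inaccuracy in your sketch of part (2), which is presentational rather than a gap: the recombination does not actually route through part (1). What happens is a direct cancellation of the $c_{n-1,n}^0$ terms: e.g.\ for $\alpha=n-\half$, the term $+c_{n-1,n}^0c_{n\beta}^{p}$ coming from $-\d(a)\,c_{n\beta}^p$ cancels against the $l=0$, $m=n$ boundary term $\sigma_{n-1}\sigma_n\,c_{n-1,n}^0c_{n\beta}^{p}$ of $\d(c_{n-1,\beta}^p)$ (all other $l=0$ terms vanish since $c_{n-1,m}^0=0$ for $m\le n-1$), and the leftover $-c_{n\beta}^p$ is precisely the $l=0$, $m=n$ term $\sigma_{n-\half}\sigma_n\,c_{n-\half,n}^0c_{n\beta}^p$ of the target sum; the $a$-terms then match using $\sigma_{n-\half}=-\sigma_n$. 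Part (1) is not needed here; it is used later, in the proof of Lemma~\ref{lem:dd'}.
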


Now define $\Phi :\thinspace \A\to\A$ as
follows: $\Phi$ acts as the identity on all generators outside of
Group 2, and
if $p\geq 1$ and $i,j\leq n-2$, then
\[
\Phi(c_{ij}^p) = c_{ij}^p + \sum_{q\geq 1} (-1)^q \sum_{\lambda_0+\cdots+\lambda_q = p+1} c_{i,n-\half}^{\lambda_0-1} c_{n-\half,n-\half}^{\lambda_1} \cdots c_{n-\half,n-\half}^{\lambda_{q-1}} c_{n-\half,j}^{\lambda_q},
\]
where the second sum is over all ways to write $p+1 = \lambda_0+\cdots+\lambda_q$ for any $\lambda_0,\ldots,\lambda_q \geq 1$. For example, for $i,j\leq n-2$,
\begin{align*}
\Phi(c_{ij}^1) &= c_{ij}^1 - c_{i,n-\half}^0 c_{n-\half,j}^1, \\
\Phi(c_{ij}^2) &= c_{ij}^2 - c_{i,n-\half}^0 c_{n-\half,j}^2 - c_{i,n-\half}^1 c_{n-\half,j}^1 + c_{i,n-\half}^0 c_{n-\half,n-\half}^1 c_{n-\half,j}^1.
\end{align*}

Choose any ordering of $\A$ for which the $c_{ij}^p$ generators are ordered so that $c_{ij}^p<c_{i'j'}^p$ whenever $np+j-i<np'+j'-i'$. With respect to the induced filtration on $\A$, the differentials $\d$ and $\d'$ are filtered, sending $c_{ij}^p$ to a sum of terms lower in the filtration. It is then easy to check that $\Phi$ is elementary with respect to this ordering.

We can now establish \eqref{eq:gompf4} for $x$ in Group 2. This is a
result of the following lemma, along with the fact that the
differential $\d'$, applied to a generator in Group 2, only
involves generators in Groups 1 and 2.

\begin{lemma} \label{lem:dd'}
For all $p\geq 0$ and all $i,j\leq n-2$,
\[
\d \circ \Phi(c_{ij}^p) = \Phi \circ \d' (c_{ij}^p).
\]
\end{lemma}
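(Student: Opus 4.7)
}
The plan is to verify $\d\circ\Phi(c_{ij}^p)=\Phi\circ\d'(c_{ij}^p)$ by direct Leibniz expansion and a telescoping cancellation of ``excess'' contributions (indexed by $m\in\{n-1,n\}$) using Lemma~\ref{lem:n-.5}. The case $p=0$ is immediate: $\Phi$ fixes $c_{ij}^0$ and the $m\in\{n-1,n\}$ terms of $\d(c_{ij}^0)$ all contain a factor $c_{n-1,j}^0$ or $c_{nj}^0$, both of which vanish for $j\leq n-2$. For $p\geq 1$, write $\Phi(c_{ij}^p)=c_{ij}^p+T_{ij}^p$, with $T_{ij}^p$ the tail of the defining formula.

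Expanding $\d\Phi(c_{ij}^p)=\d(c_{ij}^p)+\d(T_{ij}^p)$ via Leibniz and differentiating each factor of the form $c_{\alpha,n-\half}^\mu$, $c_{n-\half,n-\half}^\mu$, or $c_{n-\half,\beta}^\mu$ via Lemma~\ref{lem:n-.5}(2), every internal sum $\sum_{m=1}^n\sigma_\alpha\sigma_m c_{\alpha m}^\ell c_{m\beta}^{\mu-\ell}$ splits into a ``core'' part ($m\leq n-2$) and an ``excess'' part ($m\in\{n-1,n\}$). In parallel, I would expand $\Phi\d'(c_{ij}^p)$ using $\d'(c_{ij}^p)=\delta_{ij}\delta_{1p}+\sum_{m\leq n-2}\sum_\ell\sigma_i\sigma_m c_{im}^\ell c_{mj}^{p-\ell}$; since $\Phi$ is multiplicative on generators with both subscripts $\leq n-2$, applying $\Phi$ amounts to inserting tails on each of the two factors $c_{im}^\ell$, $c_{mj}^{p-\ell}$. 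A bookkeeping check then pairs each resulting length-$(q{+}2)$ monomial with the core contribution obtained by differentiating a specific factor of a length-$(q{+}1)$ monomial in $T_{ij}^p$, so that the core of $\d(T_{ij}^p)+\d(c_{ij}^p)$ reassembles exactly into $\Phi\d'(c_{ij}^p)$.

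The heart of the argument is the cancellation of the excess. Using $\sigma_n=-\sigma_{n-1}$, the excess arising at a given factor combines into
\[
\sigma_\alpha\sigma_{n-1}\sum_\ell\bigl(c_{\alpha,n-1}^\ell c_{n-1,\beta}^{\mu-\ell}-c_{\alpha n}^\ell c_{n\beta}^{\mu-\ell}\bigr),
\]
which Lemma~\ref{lem:n-.5}(1) rewrites via the unit virtual insertions $c_{n-1,n-\half}^0=c_{n-\half,n}^0=1$. The rewritten excess, occurring at position $k$ of a length-$(q{+}1)$ monomial in $T_{ij}^p$, has the shape of a contribution from a neighboring length-$(q{+}2)$ monomial; together with the sign flip $(-1)^q\to(-1)^{q+1}$ built into the definition of $\Phi$, this produces a pairing across compositions of adjacent lengths. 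Summed over all $q$ and all compositions $(\lambda_0,\dots,\lambda_q)$, the pairings form a telescope whose interior terms cancel completely, while the residual boundary terms at $k=0$ and $k=q$ match and cancel the $m\in\{n-1,n\}$ excess from $\d(c_{ij}^p)$ itself after one final invocation of Lemma~\ref{lem:n-.5}(1).

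The main obstacle will be the sign bookkeeping. Three sources of signs must align throughout: the Koszul sign from Leibniz, interacting with $|c_{n-\half,n-\half}^\mu|=2\mu-1$ (always odd) and with the degrees of adjacent factors; the orientation coefficient $\sigma_\alpha\sigma_m$, for which the relation $\sigma_{n-\half}=\sigma_{n-1}=-\sigma_n$ furnishes the relative minus sign making Lemma~\ref{lem:n-.5}(1) applicable; and the alternating $(-1)^q$ in the definition of $\Phi$, which drives the telescope between consecutive lengths. I would organize the verification by indexing excess summands by tuples $\bigl((\lambda_0,\dots,\lambda_q),k,\ell\bigr)$ (composition, differentiation position, superscript split) and exhibiting an explicit sign-preserving involution that pairs each excess summand with its cancelling counterpart, using Lemma~\ref{lem:n-.5}(1) on the fly to identify their prefactors.
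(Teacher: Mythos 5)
Your plan follows essentially the same route as the paper's proof: expand $\d\circ\Phi(c_{ij}^p)$, identify the terms containing an interior integer subscript $m\le n-2$ with the expansion of $\Phi\circ\d'(c_{ij}^p)$ via Lemma~\ref{lem:n-.5}(2), and then kill the remaining ``excess'' by combining the $m=n-1$ and $m=n$ contributions with $\sigma_n=-\sigma_{n-1}$ and telescoping across adjacent word-lengths using the unit insertions $c_{n-1,n-\half}^0=c_{n-\half,n}^0=1$ of Lemma~\ref{lem:n-.5}(1); this is exactly the paper's decomposition $S=\sum_r(S_r^0+S_r^1)$ with $S_r^0=-S_{r-1}^1$. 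One concrete error in your sign discussion needs fixing: $|c_{n-\half,n-\half}^\mu|$ is \emph{not} $2\mu-1$ and is not odd. Since strands $n-1$ and $n$ emanate from a cusp, $m(n-1)-m(n)=\pm 1$, so $|c_{n-\half,n-\half}^\mu|=2\mu-1+m(n-1)-m(n)$ is \emph{even} (equivalently $\sgn(c_{n-\half,n-\half})=+1$, while $\sgn(c_{i,n-\half})=-\sigma_i\sigma_n$). This evenness is precisely what makes the Koszul signs from the Leibniz rule independent of which interior factor is differentiated, so that they align with the fixed $(-1)^q$ in the definition of $\Phi$; if you carry through your stated odd parity, the Leibniz signs alternate with position and the telescoping cancellation fails. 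With that parity corrected, the rest of your bookkeeping scheme goes through and reproduces the paper's argument.
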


\begin{proof}
We have
\[
\Phi \circ \d' (c_{ij}^p) =
\sum_{m\leq n-2} \sum_l \sigma_i\sigma_m\Phi(c_{im}^l)\Phi(c_{mj}^{p-l})+\delta_{ij}\delta_{1p}.
\]
When we expand this out using the definition of $\Phi$, besides $\delta_{ij}\delta_{1p}$, we obtain a sum of terms of the form
\[
\sigma_i\sigma_m (-1)^q c_{i,n-\half}\cdots c_{n-\half,m} c_{m,n-\half} \cdots c_{n-\half,j}
\]
with $m\leq n-2$, where we have suppressed the superscripts on the $c$'s, and $q$ is the length of the word $c_{i,n-\half}\cdots c_{n-\half,m} c_{m,n-\half} \cdots c_{n-\half,j}$. (Some terms are shorter than this, e.g., $\sigma_i\sigma_m (-1)^q c_{im} c_{m,n-\half}\cdots c_{n-\half,j}$, but the following argument still applies.)
Note that this term also appears, with the same sign, in
$\d((-1)^q c_{i,n-\half}\cdots c_{n-\half,n-\half} \cdots c_{n-\half,j})$, where
$q-1$ is the length of the word $c_{i,n-\half}\cdots c_{n-\half,n-\half} \cdots c_{n-\half,j})$,
by Lemma~\ref{lem:n-.5}(2) and the fact that $\sgn(c_{i,n-\half}) = -\sigma_i\sigma_n$ and $\sgn(c_{n-\half,n-\half}) = 1$. Thus every term in the expansion of $\Phi \circ \d' (c_{ij}^p)$ has a corresponding term in the expansion of $\d \circ \Phi(c_{ij}^p)$ (and this also holds for the extra term $\delta_{ij}\delta_{1p}$).

It suffices to check that the sum $S$ of the remaining terms in the expansion of $\d\circ\Phi(c_{ij}^p) = \d(\sum (\pm c_{i,n-\half}\cdots c_{n-\half,j}))$ is $0$. These are the terms for which, besides $i$ at the beginning and $j$ at the end, no subscripts $\leq n-2$ appear. We can write $S$ as a finite sum $S_1 + S_2 + \cdots$, where
$S_1$ is the contribution of $\partial(c_{ij}^p)$, $S_2$ is the contribution of $\partial(-\sum c_{i,n-\half}c_{n-\half,j})$, and so forth.
More precisely, we have
\begin{align*}
S_1 &= \sigma_i\sigma_{n-1} \sum_{\lambda_0+\lambda_1=p+1} (
c_{i,n-1}^{\lambda_0-1} c_{n-1,j}^{\lambda_1} - c_{in}^{\lambda_0-1} c_{nj}^{\lambda_1}), \\
S_2 &= -\sigma_i\sigma_{n-1} \sum_{\lambda_0+\lambda_1+\lambda_2=p+1}
\left( c_{i,n-1}^{\lambda_0-1}c_{n-1,n-\half}^{\lambda_1}
c_{n-\half,j}^{\lambda_2} - c_{in}^{\lambda_0-1}c_{n,n-\half}^{\lambda_1}
c_{n-\half,j}^{\lambda_2} \right. \\
& \qquad\qquad\qquad\qquad\qquad \left. + c_{i,n-\half}^{\lambda_0-1} c_{n-\half,n-1}^{\lambda_1} c_{n-1,j}^{\lambda_2} -
c_{i,n-\half}^{\lambda_0-1}c_{n-\half,n}^{\lambda_1}c_{nj}^{\lambda_2} \right)
\end{align*}
and so forth, where all sums are over $\lambda_0\geq 1$ and $\lambda_i \geq
0$ for $i\geq 1$. Each nonzero term in these sums either has all $\lambda_q \geq 1$,
or it has exactly one $\lambda_q = 0$ (where $q>0$) and incorporates either $c_{n-1,n-\half}^0$ or $c_{n-\half,n}^0$. We can then write $S_r = S_r^0 + S_r^1$, where $S_r^0$ sums the terms with some $\lambda_q=0$, and $S_r^1$ sums the terms with all $\lambda_q \geq 1$.

Note that $S_1^0 = 0$ and $S_1^1 = S_1$, while
\begin{align*}
S_2^0 &= -\sigma_i\sigma_{n-1} \sum_{\lambda_0+\lambda_2 = p+1} \left(c_{i,n-1}^{\lambda_0-1}c_{n-1,n-\half}^{0} c_{n-\half,j}^{\lambda_2}
-
c_{i,n-\half}^{\lambda_0-1}c_{n-\half,n}^{0}c_{nj}^{\lambda_2} \right) \\
&= -\sigma_i\sigma_{n-1} \sum_{\lambda_0+\lambda_2 = p+1}
\left(c_{i,n-1}^{\lambda_0-1}c_{n-1,j}^{\lambda_2} -
c_{in}^{\lambda_0-1} c_{nj}^{\lambda_2}\right) \\
&= -S_1^1,
\end{align*}
where the middle equality follows from Lemma~\ref{lem:n-.5}(1). Similarly $S_r^0 = -S_{r-1}^1$ for all $r \geq 2$, whence $S = \sum_r (S_r^0+S_r^1) = 0$.
\end{proof}

We next turn our attention to the definition of $\Psi$ and establishing \eqref{eq:gompf4} for Group 3 generators. First define the differential $\d'' :\thinspace \A\to\A$ by $\d'' = \Phi^{-1} \circ \d \circ \Phi$. For any $1\leq i\leq n$ and $p\geq 0$, we have
\begin{align*}
\d''(c_{in}^p) &= \Phi^{-1}\d\Phi(c_{in}^p) =
\Phi^{-1}\d(c_{in}^p)\\
& = \Phi^{-1}(\sigma_i\sigma_{n-1}c_{i,n-1}^pc_{n-1,n}^0+\cdots)
= \sigma_i\sigma_{n-1}c_{i,n-1}^pc_{n-1,n}^0+\cdots,
\end{align*}
where the ellipses involve only terms lower than $c_{i,n-1}^p$ in the filtration chosen earlier. Denote the final ellipsis by
\[
\d''_0(c_{in}^p) := \d''(c_{in}^p) - \sigma_i\sigma_{n-1}c_{i,n-1}^pc_{n-1,n}^0.
\]
Similarly, for any $1\leq j\leq n-2$ and $p\geq 1$, we can define $\d''_0(c_{n-1,j}^p)$ by
\[
\d''_0(c_{n-1,j}^p) := \d''(c_{n-1,j}^p) + c_{n-1,n}^0 c_{nj}^p,
\]
and $\d''_0(c_{n-1,j}^p)$ is lower in the filtration than $c_{nj}^p$.

Now define the algebra map $\Psi :\thinspace \A\to\A$ by
\begin{alignat*}{2}
\Psi(c_{i,n-1}^p) &= \sigma_i\sigma_{n-1}c_{i,n-1}^p+\d''_0(c_{in}^p)+\d''(c_{i,n-1}^p)a & \hspace{2ex} &
p\geq 1 \text{ and } 1\leq i\leq n,\\
& && \text{or } p=0 \text{ and } i\leq n-2\\
\Psi(c_{in}^p) &= c_{in}^p - c_{i,n-1}^p a && p\geq 1 \text{ and } 1\leq i\leq n,\\
& && \text{or } p=0 \text{ and } i\neq n-2, \\
\Psi(c_{nj}^p) &= -c_{nj}^p+\d''_0(c_{n-1,j}^p)+a\d''(c_{nj}^p) &&
p\geq 1 \text{ and } 1\leq j\leq n-2 \\
\Psi(c_{n-1,j}^p) &= c_{n-1,j}^p - a c_{nj}^p &&
p \geq 1 \text{ and } 1\leq j\leq n-2
\end{alignat*}
and $\Psi$ acts as the identity on all other generators of $\A$.
From the above discussion, $\Psi$ is elementary with respect to the chosen ordering on $\A$.

\begin{lemma}
If $x$ is a generator in Group 3, then \label{lem:gompf4psi}
\[
\d''\circ \Psi (x) = \Psi\circ\d'(x).
\]
\end{lemma}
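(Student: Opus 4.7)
The plan is to verify $\d''\Psi(x) = \Psi\d'(x)$ by direct computation on each generator $x$ of Group 3, handling the four branches of the definition of $\Psi$ in turn. The structural inputs are: (i) $(\d'')^2 = 0$, immediate from $\d^2 = 0$ and $\d'' = \Phi^{-1}\d\Phi$; (ii) $\d''(a) = 1 - c_{n-1,n}^0$, since $\Phi$ fixes both $a$ and $c_{n-1,n}^0$ and $\d(a) = 1 - c_{n-1,n}^0$ holds in $\A$ by inspection of Figure~\ref{fig:gompf4b}; (iii) the defining splittings $\d''(c_{in}^p) = \d''_0(c_{in}^p) + \sigma_i\sigma_{n-1}c_{i,n-1}^p c_{n-1,n}^0$ and $\d''(c_{n-1,j}^p) = \d''_0(c_{n-1,j}^p) - c_{n-1,n}^0 c_{nj}^p$, which are lower in the filtration chosen before Lemma~\ref{lem:dd'}; and (iv) the sign identity $(-1)^{|c_{ij}^p|} = -\sigma_i\sigma_j$, together with $\sigma_{n-1} = -\sigma_n$ forced by the extra cusp in Gompf move 4.

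For the representative case $x = c_{in}^p$ with $p \geq 1$, the extension of $\d'$ gives $\d'(x) = c_{i,n-1}^p$, so $\Psi\d'(x) = \sigma_i\sigma_{n-1}c_{i,n-1}^p + \d''_0(c_{in}^p) + \d''(c_{i,n-1}^p)a$ directly from the formula for $\Psi$. Meanwhile $\Psi(x) = c_{in}^p - c_{i,n-1}^p a$, and applying $\d''$ via Leibniz and substituting (ii)--(iv) yields
\[
\d''\Psi(x) = \d''_0(c_{in}^p) + \sigma_i\sigma_{n-1}c_{i,n-1}^p c_{n-1,n}^0 - \d''(c_{i,n-1}^p)a + \sigma_i\sigma_{n-1}c_{i,n-1}^p(1 - c_{n-1,n}^0),
\]
in which the two $\pm\sigma_i\sigma_{n-1}c_{i,n-1}^p c_{n-1,n}^0$ terms cancel, leaving exactly $\Psi\d'(x)$ once the signs are tracked. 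The cases $x = c_{n-1,j}^p$ and $x = c_{nj}^p$ are handled by the completely symmetric computation using the splitting for $\d''_0(c_{n-1,j}^p)$, with the roles of positive and negative corners at $c_{n-1,n}^0$ interchanged. The degree-$0$ generators $c_{i,n-1}^0, c_{in}^0$ for $i \leq n-2$ are slightly more delicate, because the leading term $\sigma_i\sigma_{n-1}c_{i,n-1}^0 c_{n-1,n}^0$ in the splitting of $\d''(c_{in}^0)$ is no longer strictly lower in the filtration than $c_{i,n-1}^0$; here the plan is to apply $(\d'')^2 = 0$ directly to the expression $\d''_0(c_{in}^0) = \d''(c_{in}^0) - \sigma_i\sigma_{n-1}c_{i,n-1}^0 c_{n-1,n}^0$ and use the Leibniz rule to deduce the required identity.

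The main obstacle, and the only non-formal ingredient of the argument, is bookkeeping signs consistently throughout the Leibniz expansions. The coefficient $\sigma_i\sigma_{n-1}$ in front of $c_{i,n-1}^p$ in $\Psi(c_{i,n-1}^p)$, and the signs on the $\d''(c_{i,n-1}^p)a$ and $a\,\d''(c_{nj}^p)$ corrections, are the precise choices needed to match the Leibniz cross-terms produced by $\d''(a) = 1 - c_{n-1,n}^0$ against the cross-terms arising from the $c_{n-1,n}^0$ factor distinguishing $\d''$ from $\d''_0$. Once the signs are in hand, the cancellation is purely formal, and together with the parallel verifications for Groups 1, 2, and 4 (with $\Phi$, $\Psi$, and $\Omega$ acting nontrivially only on their respective groups), the lemma yields invariance of the DGA under Gompf move 4.
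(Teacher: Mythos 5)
Your proposal is correct and follows essentially the same route as the paper: a direct Leibniz computation for $\d''\Psi(c_{in}^p)$ using $\d''(a)=1-c_{n-1,n}^0$ and the splitting $\d''(c_{in}^p)=\sigma_i\sigma_{n-1}c_{i,n-1}^pc_{n-1,n}^0+\d''_0(c_{in}^p)$ to show $\d''\Psi(c_{in}^p)=\Psi(c_{i,n-1}^p)=\Psi\d'(c_{in}^p)$, with the remaining generators of Group 3 handled by the symmetric computation and by $(\d'')^2=0$. The only cosmetic difference is that you single out the $p=0$ generators as needing a separate application of $(\d'')^2=0$, whereas the paper's computation treats $p\geq 1$ and $p=0$ (with $i\leq n-2$) uniformly; your proposed workaround reduces to the same argument.
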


\begin{proof}
If either $p\geq 1$ or $p=0$ and $i\leq n-2$, we have
\begin{align*}
\d''\Psi(c_{in}^p) &= \d''(c_{in}^p-c_{i,n-1}^p a) \\
&= (\sigma_i\sigma_{n-1}c_{i,n-1}^pc_{n-1,n}^0 + \d''_0(c_{in}^p))\\
& +
(\d''(c_{i,n-1}^p)a-\sigma_i\sigma_{n-1} c_{i,n-1}^p(1+c_{n-1,n}^0)) \\
&= \Psi(c_{i,n-1}^p) \\
&= \Psi\d'(c_{in}^p)
\end{align*}
whence $\d''\Psi(c_{i,n-1}^p) = (\d'')^2 \Psi(c_{in}^p) = 0 = \Psi\d'(c_{i,n-1}^p)$, and the lemma holds for $x=c_{in}^p$ and $x=c_{i,n-1}^p$.
If $p\geq 1$ and $j\leq n-2$, then a similar calculation yields $\d''\Psi(c_{n-1,j}^p) = \Psi\d'(c_{n-1,j}^p)$ and
$\d''\Psi(c_{nj}^p) = 0 = \Psi\d'(c_{nj}^p)$, and the lemma holds for $x=c_{n-1,j}^p$ and $x=c_{nj}^p$.
\end{proof}

It follows immediately from Lemma~\ref{lem:gompf4psi} that \eqref{eq:gompf4} holds when $x$ is a Group 3 generator, since then
$\d\Phi\Psi\Omega(x) = \d\Phi\Psi(x) = \Phi\d''\Psi(x) = \Phi\Psi\d'(x)
= \Phi\Psi\Omega\d'(x)$.

Finally, it remains to define $\Omega$ and establish \eqref{eq:gompf4} for Group 4 generators. Define the algebra map $\Omega :\thinspace \A\to\A$ by
\begin{align*}
\Omega(a_i) &= a_i-\sigma_i\sigma_{n-1}c_{in}^0, &\hspace{3ex} & 1\leq i\leq n-2, \\
\Omega(b_i) &= b_i+a_i a, && 1\leq i\leq n-2, \\
\Omega(b) &= b+a, &&
\end{align*}
and $\Omega$ is the identity on all other generators of $\A$. We can choose an ordering of $\A$ (independently from before) such that $a<b$ and $c_{in}^0 a < a_i < b_i$ for all $1\leq i\leq n-2$; then $\Omega$ is elementary with respect to this ordering.

An inspection of Figure~\ref{fig:gompf4b} yields, for $i\leq n-2$:
\begin{align*}
\d(a_i) &= \sum_{j=i+1}^{n-2} c_{ij}^0 a_j + c_{i,n-1}^0 - \sigma_i\sigma_{n-1} d_i, \\
\d'(a_i) &= \sum_{j=i+1}^{n-2} c_{ij}^0 a_j - \sigma_i\sigma_{n-1} d_i, \\
\d(b_i) &= \sum_{j=i+1}^{n-2} c_{ij}^0 b_j + c_{in}^0 - \sigma_i\sigma_{n-1} a_i c_{n-1,n}^0 - \sigma_i\sigma_n e_i - \sigma_i\sigma_n d_i b, \\
\d'(b_i) &= \sum_{j=i+1}^{n-2} c_{ij}^0 b_j - \sigma_i\sigma_{n-1} a_i  - \sigma_i\sigma_n e_i - \sigma_i\sigma_n d_i b, \\
\d(b) &= e+c_{n-1,n}^0, \\
\d'(b) &= e+1.
\end{align*}
Note in particular that there are no holomorphic disks whose only positive puncture is at $a_i$ or $b_i$ and extend into the dotted boxes in Figure~\ref{fig:gompf4b}. This can be shown in general via an area estimate, or more simply by assuming that the figures in the dotted boxes are the result of resolution, in which case the rightmost point in such a holomorphic disk would have to be a positive puncture (cf.\ the differential for simple fronts in \cite{bib:NgCLI}).

One can now check directly that \eqref{eq:gompf4} holds for generators in Group 4, i.e., $a_i$, $b_i$, and $b$. For instance, from the definition of $\Psi$, we have $\Psi(c_{in}^0) = c_{in}^0-c_{i,n-1}^0 a$, and thus
\begin{align*}
\d\Phi\Psi\Omega(a_i) &= \d\Phi\Psi(a_i-\sigma_i\sigma_{n-1}c_{in}^0) \\
&= \d(a_i-\sigma_i\sigma_{n-1}(c_{in}^0-c_{i,n-1}^0 a)) \\
&= -\sigma_i\sigma_{n-1} d_i + \sum_{j=i+1}^{n-2} c_{ij}(a_j-\sigma_j\sigma_{n-1}(c_{jn}^0-c_{j,n-1}^0a)) \\
&= \Phi\Psi(-\sigma_i\sigma_{n-1} d_i + \sum_{j=i+1}^{n-2} c_{ij}(a_j-\sigma_j\sigma_{n-1}c_{jn}^0)) \\
&= \Phi\Psi\Omega(-\sigma_i\sigma_{n-1} d_i + \sum_{j=i+1}^{n-2} c_{ij}^0 a_j) \\
&= \Phi\Psi\Omega\d'(a_i).
\end{align*}
Similar computations hold for $b_i$ and $b$.

This completes the proof of \eqref{eq:gompf4} for all generators of $\A$, and the proof of invariance under Gompf move 4.

\subsubsection{Invariance under Gompf move 5}
\label{ssec:invpf5}


\begin{proposition}
If $\Lambda$ and $\Lambda'$ are Legendrian links in $\#^k(S^1\times
S^2)$ in normal form, related by Gompf move 5, then the DGAs
associated to $\pi_{xy}(\Lambda)$ and $\pi_{xy}(\Lambda')$ are stable
tame isomorphic.
\label{prop:gompf5inv}
\end{proposition}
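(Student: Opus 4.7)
The plan is to follow the template of the proof of Proposition~\ref{prop:gompf4inv} and split the argument into a local model problem together with an appeal to Proposition~\ref{prop:isotopyinv}. First I would apply Proposition~\ref{prop:isotopyinv} to both $\Lambda$ and $\Lambda'$ to reduce to the situation in which their $xy$ projections agree outside a small box surrounding the $1$-handle where Gompf move 5 is performed, and in which inside that box the diagrams take a standard form (analogous to the one displayed in Figure~\ref{fig:gompf4b}) that makes all crossings involved in the move explicit and allows the differentials to be read off from the combinatorial formula of Section~\ref{ssec:dga}. Denote by $(\A,\d)$ and $(\A',\d')$ the DGAs of these two standardized diagrams; they share all generators outside the local model, and differ by a finite collection of new crossings in the local model together with (possibly) a reindexing of the internal generators $c_{ij;\ell}^p$ at the affected handle.

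Second, I would extend $\d'$ from $\A'$ to the full algebra $\A$ by declaring the extra local crossings to form acyclic pairs together with appropriate $c_{ij;\ell}^p$-generators, exactly in the spirit of the extension of $\d'$ done for Gompf move 4. This realizes $(\A,\d')$ as a (possibly countably infinite) stabilization of $(\A',\d')$, reducing the proposition to producing a tame automorphism $\phi\colon\A\to\A$ with $\d\circ\phi=\phi\circ\d'$.

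Third, the construction of $\phi$ would proceed as a composition $\phi=\Phi\circ\Psi\circ\Omega$ of three elementary automorphisms, each targeting a distinct group of generators. I would partition the generators of $\A$ into four groups exactly as in the proof of Proposition~\ref{prop:gompf4inv}: (1) generators on which $\d$ and $\d'$ already agree; (2) the modified internal generators $c_{ij;\ell}^p$ with $p\geq 1$, on which $\Phi$ acts by an infinite telescoping series of the type used in Lemma~\ref{lem:dd'}; (3) the degree-zero internal generators $c_{ij;\ell}^0$ and the low-action crossings in the half-twist region, handled by $\Psi$; and (4) the remaining crossings in the dipped region and their algebraic companions, handled by $\Omega$. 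For each elementary automorphism one picks the ordering on generators so that, with respect to the associated filtration, the map sends each generator to itself plus strictly lower-order terms; this step should be routine once the local differentials have been written out, and amounts to the same kind of bookkeeping (extending $c_{ij;\ell}^p$ to half-integer indices, introducing auxiliary quantities analogous to $c_{\alpha,n-\tfrac12}^p$, and verifying sign cancellations via combinatorial identities) that appears in Lemmas~\ref{lem:n-.5}, \ref{lem:dd'}, and~\ref{lem:gompf4psi}.

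The main obstacle will be the same as in Proposition~\ref{prop:gompf4inv}: checking the intertwining $\d\circ\phi=\phi\circ\d'$ on the infinite family $c_{ij;\ell}^p$, $p\geq 1$. The formula for $\Phi(c_{ij;\ell}^p)$ will involve sums over all compositions of $p+1$, and one must verify that when $\d$ is applied term-by-term the resulting telescoping series collapses to $\Phi\circ\d'(c_{ij;\ell}^p)$. The key point is to identify the analogue of the half-integer generators $c_{\alpha,n-\tfrac12}^p$ of Section~\ref{ssec:invpf4} for the crossings created by Gompf move 5 and to establish the two statements of Lemma~\ref{lem:n-.5} in that new setting; once these are in hand, Lemma~\ref{lem:dd'} and the remainder of the argument transport essentially verbatim, and Gompf move 5 invariance follows.
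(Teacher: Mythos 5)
Your high-level reduction (normalize via Proposition~\ref{prop:isotopyinv}, localize near the handle, stabilize, then build a tame automorphism) matches the paper's strategy, but the technical core of your plan is modeled on the wrong move, and as written it would not go through. First, a bookkeeping problem: for Gompf move 5 neither algebra contains the other's generators. In the paper's normal forms, $\A$ has the crossing $a$ and the internal chord $c_{q,q+1}^0$ while $\A'$ has a different crossing $\tilde{a}$ and the chord $c_{q+1,q}^0$ (the move swaps the labels of strands $q$ and $q+1$ at the handle, reversing which of the two degree-zero chords exists). So you cannot realize $(\A,\d')$ as a stabilization of $(\A',\d')$; the paper instead forms the algebra $\tilde{\A}$ on the union of the generators and exhibits it as a one-pair stabilization of \emph{each} side, via $\d(\tilde{a})=-c_{q+1,q}^0$ and $\d'(a)=c_{q,q+1}^0$. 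This is easily repaired, but your asymmetric setup is inherited from move 4, where the containment genuinely holds.

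The more serious issue is your third step. The telescoping sums over compositions of $p+1$, the half-integer generators $c_{\alpha,n-\half}^p$, and Lemma~\ref{lem:n-.5} are all artifacts of the Gompf move 4 geometry, where the new crossing satisfies $\d a = 1-c_{n-1,n}^0$, so that $c_{n-1,n}^0$ becomes a unit up to homotopy and two strands must be cancelled out of the handle; that is what forces the geometric-series corrections in Lemma~\ref{lem:dd'}. In Gompf move 5 the handle keeps all $n$ strands and the new crossings satisfy $\d a = c_{q,q+1}^0$ and $\d\tilde{a}=-c_{q+1,q}^0$: they are ordinary cancelling pairs, and the required automorphisms $\Phi,\Phi'$ are single elementary automorphisms with \emph{finite}, first-order corrections such as $\Phi(c_{qj}^p)=c_{qj}^p-\sigma_q\sigma_{q+1}\,a\,c_{q+1,j}^p$; the intertwining on internal generators is checked by projecting $c_{q,q+1}^0$ and $c_{q+1,q}^0$ to zero and a direct computation, with no analogue of Lemma~\ref{lem:n-.5}. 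Your stated "key point" --- finding the move-5 analogue of the half-integer generators and reproving Lemma~\ref{lem:n-.5} --- is a search for structure that does not exist here, so the plan as written stalls precisely at the step you identify as the main obstacle.
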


\begin{figure}
\centerline{
\includegraphics[width=\textwidth]{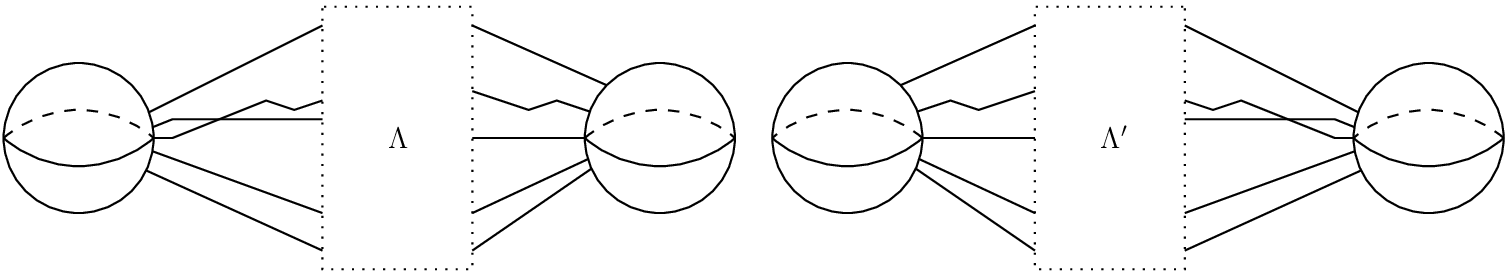}
}
\caption{
Front ($xz$) diagrams for Legendrian links $\Lambda,\Lambda'$ in normal
form related by Gompf
move 5. The corners in these fronts are understood to be smoothed
out.
}
\label{fig:gompf5a}
\end{figure}
\begin{figure}
\centerline{
\includegraphics[width=4.2in]{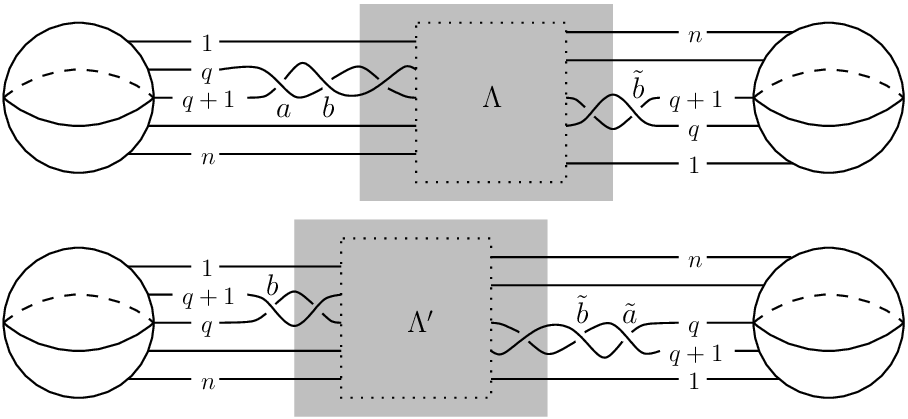}
}
\caption{
The $xy$ projections for
$\Lambda$ and $\Lambda'$ given in
Figure~\ref{fig:gompf5a}. The dotted boxes are identical (and correspond to the dotted boxes in Figure~\ref{fig:gompf5a}), as
are the two shaded regions. Note the unusual labeling of strands in
the $xy$ projection of $\Lambda'$: $q$ and $q+1$ are switched.
}
\label{fig:gompf5b}
\end{figure}

Let $\Lambda$ and $\Lambda'$ be Legendrian links in $\#^k(S^1\times
S^2)$ related by Gompf move 5. Perturb the fronts of $\Lambda$
and $\Lambda'$ to be in normal form, and then further perturb them to
take the form shown in Figure~\ref{fig:gompf5a}. Then the
corresponding $xy$ projections are
in $xy$-normal form as shown in
Figure~\ref{fig:gompf5b}.

By Proposition~\ref{prop:isotopyinv}, it
suffices to show that the DGAs for $\Lambda$ and $\Lambda'$ associated
to the $xy$ projections shown in Figure~\ref{fig:gompf5b} are stable
tame isomorphic. Denote these DGAs by $(\A,\d)$ and $(\A',\d')$
respectively. (Note that we have labeled particular
crossings $a,b,\tilde{b}$ in $\pi_{xy}(\Lambda)$ and
$\tilde{a},b,\tilde{b}$ in $\pi_{xy}(\Lambda')$.)

The generator sets for $\A$ and $\A'$ are
identical, except that $a$ and $c_{q,q+1}^0$ are generators for $\A$ but
not $\A'$, and $\tilde{a}$ and $c_{q+1,q}^0$ are generators for $\A'$
but not $\A$. Note that the presence of $c_{q+1,q}^0$ rather than
$c_{q,q+1}^0$ in $\A'$ is due to the switching of labels $q$ and $q+1$
in the bottom diagram of Figure~\ref{fig:gompf5b}. As for grading, the
strands passing through the $1$-handle in $\pi_{xy}(\Lambda)$ and
$\pi_{xy}(\Lambda')$ share a common Maslov potential
$(m(1),\ldots,m(n))$, with $m(i)$ being the potential for the strand
labeled $i$, and it follows that all common
generators of $\A$ and $\A'$ have the same grading in both. For future
reference, write $\sigma_i = (-1)^{m(i)}$ and note that
\begin{equation}
\sgn a = \sgn \tilde{a} = \sgn b = \sgn \tilde{b} =
-\sgn c_{q,q+1}^0 = -\sgn c_{q+1,q}^0 = \sigma_q\sigma_{q+1}
\label{eq:gompf5signs}
\end{equation}
where $\sgn a = (-1)^{|a|}$, etc.

Define $\tilde{\A}$ to be the graded tensor algebra generated by the union of
the sets of generators of $\A$ and $\A'$. We can extend the
differential $\d$ from $\A$ to $\tilde{\A}$, and $\d'$ from $\A'$ to $\tilde{\A}$, by setting
\begin{align*}
\d(\tilde{a}) &= -c_{q+1,q}^0, &
\d(c_{q+1,q}^0) &= 0, \\
\d'(a) &= c_{q,q+1}^0, &
\d'(c_{q,q+1}^0) &= 0.
\end{align*}
Then $(\tilde{\A},\d)$ is a stabilization of $(\A,\d)$,
$(\tilde{\A},\d')$ is a stabilization of $(\A',\d')$, and it suffices
to show that $(\tilde{\A},\d)$ and $(\tilde{\A},\d')$ are tamely isomorphic.

Define an algebra automorphism $\Phi$ of $\tilde{\A}$ as follows:
\begin{alignat*}{2}
\Phi(b) &= b-\tilde{a} & \hspace{3ex} & \\
\Phi(c_{qj}^p) &= c_{qj}^p - \sigma_q \sigma_{q+1} a c_{q+1,j}^p, &&
j\neq q+1;~p\geq 1 \text{ if } j\leq q, \\
\Phi(c_{i,q+1}^p) &= c_{i,q+1}^p + c_{iq}^p a, && i\neq q;~
p\geq 1 \text{ if } i\geq q+1, \\
\Phi(c_{q,q+1}^p) &= c_{q,q+1}^p + c_{qq}^p a && \\
& - \sigma_q\sigma_{q+1} (a c_{q+1,q+1}^p + a c_{q+1,q}^p a), && p\geq 1,
\end{alignat*}
and $\Phi$ acts as the identity on all other generators of
$\tilde{\A}$. (In particular, $\Phi(c_{q,q+1}^0) = c_{q,q+1}^0$ and
$\Phi(c_{q+1,q}^0)=c_{q+1,q}^0$.) It is easy to check that $\Phi$ is
elementary: choose any ordering such that $a,\tilde{a}$ are at the
bottom and for fixed $p$, the $c_{ij}^p$ are ordered so that
$c_{ij}^p < c_{i'j'}^p$ if $j-i<j'-i'$.
Similarly, define an elementary automorphism $\Phi'$ of $\tilde{\A}$
by
\begin{alignat*}{2}
\Phi'(\tilde{b}) &= \tilde{b}-a & \hspace{3ex} & \\
\Phi'(c_{q+1,j}^p) &= c_{q+1,j}^p+\sigma_q\sigma_{q+1}\tilde{a}c_{qj}^p
&&
j\neq q;~p\geq 1 \text{ if } j\leq q+1 \\
\Phi'(c_{iq}^p) &= c_{iq}^p - c_{i,q+1}^p \tilde{a} &&
i\neq q+1;~p\geq 1 \text{ if } i\geq q \\
\Phi'(c_{q+1,q}^p) &= c_{q+1,q}^p-c_{q+1,q+1}^p\tilde{a} &&\\
&+
\sigma_q\sigma_{q+1}(\tilde{a}c_{qq}^p-\tilde{a}c_{q,q+1}^p\tilde{a})
&& p\geq 1,
\end{alignat*}
and $\Phi'$ acts as the identity on all other generators of $\tilde{\A}$.

To complete the proof of invariance for Gompf move 5, we will
establish that
\begin{equation}
\Phi^{-1}\circ\d\circ\Phi = (\Phi')^{-1}\circ\d'\circ\Phi'
\label{eq:gompf5}
\end{equation}
on $\tilde{\A}$. It suffices to establish \eqref{eq:gompf5} on
generators of $\tilde{\A}$, which we divide into groups:
$a,b,\tilde{a},\tilde{b}$; internal
generators $c_{ij}^p$, including $c_{q,q+1}^0$ and $c_{q+1,q}^0$; and
external generators, which correspond to crossings in the shaded
regions of Figure~\ref{fig:gompf5b}. For $a,b,\tilde{a},\tilde{b}$,
note that $\partial(a) = c_{q,q+1}^0$, $\partial(b) = x$, and
$\partial'(b) = c_{q+1,q}^0+x$, where $x$ only involves external
generators (in fact, up to sign, $x$ is just the external generator
corresponding to the leftmost crossing in the shaded region), and so
$\Phi^{-1}\partial\Phi a = c_{q,q+1}^0 = (\Phi')^{-1}\partial'\Phi' a$
and
\begin{align*}
\Phi^{-1}\partial\Phi b &= \Phi^{-1}\partial(b-\tilde{a}) =
\Phi^{-1}(x+c_{q+1,q}^0)\\
& =x+c_{q+1,q}^0=
(\Phi')^{-1}(x+c_{q+1,q}^0)=(\Phi')^{-1}\partial'\Phi'b.
\end{align*}
Similarly, \eqref{eq:gompf5} holds for $\tilde{a}$ and $\tilde{b}$.

Next, we check \eqref{eq:gompf5} for internal generators
$c_{ij}^p$. Let $\pi :\thinspace \tilde{\A} \to \tilde{\A}$ be the
algebra map that sends $c_{q,q+1}^0$ and $c_{q+1,q}^0$ to $0$ and acts
as the identity on all other generators of $\tilde{\A}$. Since the
internal differentials $\d$ and $\d'$ only differ in terms that
involve either $c_{q,q+1}^0$ and $c_{q+1,q}^0$, we have
$\pi\d c_{ij}^p = \pi\d' c_{ij}^p$ for all $i,j,p$. Equation
\eqref{eq:gompf5} for internal generators is the content of the following result.

\begin{lemma}
For all $i,j,p$, we have
\[
\Phi^{-1}\d\Phi(c_{ij}^p) = \pi\d c_{ij}^p=\pi\d' c_{ij}^p
= (\Phi')^{-1}\d'\Phi'(c_{ij}^p).
\]
\end{lemma}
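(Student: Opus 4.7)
The middle equality $\pi\d(c_{ij}^p) = \pi\d'(c_{ij}^p)$ is immediate from Section~\ref{ssec:intdga}: both $\d$ and $\d'$ use the same universal internal-DGA formula on internal generators, and the only terms in which they can disagree are products containing $c_{q,q+1}^0$ or $c_{q+1,q}^0$ as a factor, all of which lie in the kernel of $\pi$. The swap of strand labels $q \leftrightarrow q+1$ together with the identifications $\Phi \leftrightarrow \Phi'$, $a \leftrightarrow \tilde{a}$, and $c_{q,q+1}^0 \leftrightarrow c_{q+1,q}^0$ visibly exchanges the first and last equalities in the lemma, so it suffices to prove $\d\Phi(c_{ij}^p) = \Phi\pi\d(c_{ij}^p)$ for all $i,j,p$.

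The plan is a direct case analysis on the position of $(i,j)$ relative to $\{q,q+1\}$: (a) $i,j \notin \{q,q+1\}$; (b) exactly one of $i,j$ lies in $\{q,q+1\}$; (c) $\{i,j\} = \{q,q+1\}$; (d) $i=j \in \{q,q+1\}$. In every case we expand $\d\Phi(c_{ij}^p)$ via the Leibniz rule, using the internal-DGA formula for $\d$ on the $c$-generators together with the identity $\d(a) = c_{q,q+1}^0$ (and $\d(c_{q,q+1}^0)=\d(c_{q+1,q}^0)=0$). The recurring mechanism is the following. In $\d(c_{ij}^p)$ the terms with $m=q$ and $m=q+1$ read
\[
\sigma_i\sigma_q c_{iq}^\ell c_{qj}^{p-\ell} + \sigma_i\sigma_{q+1} c_{i,q+1}^\ell c_{q+1,j}^{p-\ell};
\]
after applying $\Phi$, the substitutions $\Phi(c_{qj}^{p-\ell}) = c_{qj}^{p-\ell} - \sigma_q\sigma_{q+1}\,a\,c_{q+1,j}^{p-\ell}$ and $\Phi(c_{i,q+1}^{\ell}) = c_{i,q+1}^{\ell} + c_{iq}^{\ell}\,a$ each contribute a cross-term $\pm\sigma_i\sigma_{q+1} c_{iq}^\ell\, a\, c_{q+1,j}^{p-\ell}$, and these cancel thanks to the sign identity $\sgn(c_{q,q+1}^0) = -\sigma_q\sigma_{q+1}$ recorded in \eqref{eq:gompf5signs}. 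This cancellation is the arithmetic heart of the argument; once one checks it in case~(a), the remaining cases are analogous modulo further bookkeeping.

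The main obstacle will be case (c), where $\Phi(c_{q,q+1}^p) = c_{q,q+1}^p + c_{qq}^p a - \sigma_q\sigma_{q+1}\bigl(a\,c_{q+1,q+1}^p + a\,c_{q+1,q}^p\,a\bigr)$ has four summands, each contributing via Leibniz. Here the internal $a$'s produce $\d(a) = c_{q,q+1}^0$ factors, which when $\pi$ is applied on the right-hand side must be absent; the bookkeeping is that these $c_{q,q+1}^0$-terms precisely match, with the correct sign coming from $(-1)^{|c_{qq}^p|} = -\sigma_q\sigma_{q+1}$ and analogous Koszul signs, the $m=q$ and $m=q+1$ contributions to $\Phi\pi\d(c_{q,q+1}^p)$ that are killed by $\pi$. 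The internal differential of the stabilization generator $c_{q+1,q}^p$ (which lives in $\tilde{\A}$ but not in $\A$) contributes its own expansion $\sum \sigma_{q+1}\sigma_m c_{q+1,m}^\ell c_{m,q}^{p-\ell}$; the convention $c_{\alpha\beta}^0 = 0$ for $\alpha \geq \beta$ creates a mild asymmetry that must be tracked carefully. Once these terms are assembled, the same sign cancellation as in case~(a), together with the symmetry between $\Phi(c_{qj}^p)$ and $\Phi(c_{i,q+1}^p)$, produces the required identity; the analogous computation for case~(d), using $\Phi(c_{qq}^p)$ and $\Phi(c_{q+1,q+1}^p)$ determined by the generic formulas above with $j=q$ or $i=q+1$, completes the proof.
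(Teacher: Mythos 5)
Your proposal follows essentially the same route as the paper: reduce to $\d\Phi(c_{ij}^p) = \Phi\pi\d(c_{ij}^p)$ (equivalently $\Phi^{-1}\d\Phi = \pi\d$), dispose of the $\Phi'$ half by the $q\leftrightarrow q+1$ symmetry, and verify the identity by a case analysis on $(i,j)$ using the Leibniz rule together with the cancellation of the cross terms $\pm\sigma_i\sigma_{q+1}\,c_{iq}^{\ell}\,a\,c_{q+1,j}^{p-\ell}$ --- which is exactly the mechanism in the paper's computation of $\Phi\pi\d(c_{qj}^p)=\d\Phi(c_{qj}^p)$. One small correction to your case-(c) bookkeeping: $(-1)^{|c_{qq}^p|}=-1$ since $|c_{qq}^p|=2p-1$, and this generally differs from $-\sigma_q\sigma_{q+1}$ (which is instead $(-1)^{|c_{q+1,q}^p|}$); both signs do appear in matching the $c_{q,q+1}^0$-terms, but attached to different Koszul factors, and the cancellation still goes through as you claim.
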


\begin{proof}
We will show that $\Phi^{-1}\d\Phi(c_{ij}^p) = \pi\d c_{ij}^p$; the
proof that $(\Phi')^{-1}\d'\Phi'(c_{ij}^p)=\pi\d' c_{ij}^p$ is
essentially identical. We first establish this when $i\neq q$ and $j \neq q+1$. In this case, from the definition of $\Phi$, we have
\[
\Phi(\sigma_i\sigma_q c_{iq}^l c_{qj}^{p-l} +
\sigma_i \sigma_{q+1} c_{i,q+1}^l c_{q+1,j}^{p-l}) =
\sigma_i\sigma_q c_{iq}^l c_{qj}^{p-l} +
\sigma_i \sigma_{q+1} c_{i,q+1}^l c_{q+1,j}^{p-l},
\]
whence $\Phi(\d c_{ij}^p) = \d c_{ij}^p$. Thus
\[
\Phi^{-1}\d\Phi(c_{ij}^p) =
\Phi^{-1}\d c_{ij}^p = \d c_{ij}^p = \pi \d c_{ij}^p,
\]
where the last equality follows from the fact that $c_{q,q+1}^0$ does not appear in $\d c_{ij}^p$.

It remains to prove the lemma when $i=q$ or $j=q+1$. This breaks into
three cases: $i=q$ and $j\neq q+1$, $i\neq q$ and $j=q+1$, and $i=q$
and $j=q+1$. We will establish the first case and leave the similar
calculations for the other two cases to the reader.
For $j\neq q+1$, we have
\begin{align*}
\Phi\pi\d(c_{qj}^p) &=
\delta_{qj}\delta_{1p} + \sum_{(m,l) \neq (q+1,0)} \sigma_q\sigma_m\Phi(c_{qm}^l) \Phi(c_{mj}^{p-l}) \\
&= \delta_{qj}\delta_{1p} + \sum_{(m,l) \neq (q+1,0)} \sigma_q\sigma_m(c_{qm}^l-\sigma_q\sigma_{q+1} a
c_{q+1,m}^l)c_{mj}^{p-l} \\
& \quad + \sum_{l>0} (c_{qq}^l-\sigma_q\sigma_{q+1} a
c_{q+1,q}^l)(-\sigma_q\sigma_{q+1}a c_{q+1,j}^{p-l}) \\
& \quad + \sum_{l>0} \sigma_q\sigma_{q+1} (c_{qq}^l a - \sigma_q
\sigma_{q+1} a c_{q+1,q}^{p-l} a) c_{q+1,j}^{p-l} \\
& = \delta_{qj}\delta_{1p} + \sum_{(m,l) \neq (q+1,0)} \sigma_q\sigma_m(c_{qm}^l-\sigma_q\sigma_{q+1} a
c_{q+1,m}^l)c_{mj}^{p-l} \\
&= \delta_{qj}\delta_{1p} + \sum_{m,l} \sigma_q\sigma_m c_{qm}^l
c_{mj}^{p-l} - \sigma_q\sigma_{q+1} c_{q,q+1}^0 c_{q+1,j}^p\\
&\quad - \sum_{m,l}
\sigma_{q+1}\sigma_m a c_{q+1,m}^l c_{mj}^{p-l} \\
&= \d(c_{qj}^p - \sigma_q\sigma_{q+1} a c_{q+1,j}^p) \\
&= \d\Phi(c_{qj}^p),
\end{align*}
where the second and third sums after the second equality arise from
the fact that $\Phi(c_{mj}^{p-l})$ and $\Phi(c_{qm}^l)$ have
additional terms when $m=q$ and $m=q+1$, respectively. Thus
$\Phi^{-1}\d\Phi(c_{qj}^p) = \pi\d c_{qj}^p$, as desired.
\end{proof}

\begin{figure}
\centerline{
\includegraphics[width=5.5in]{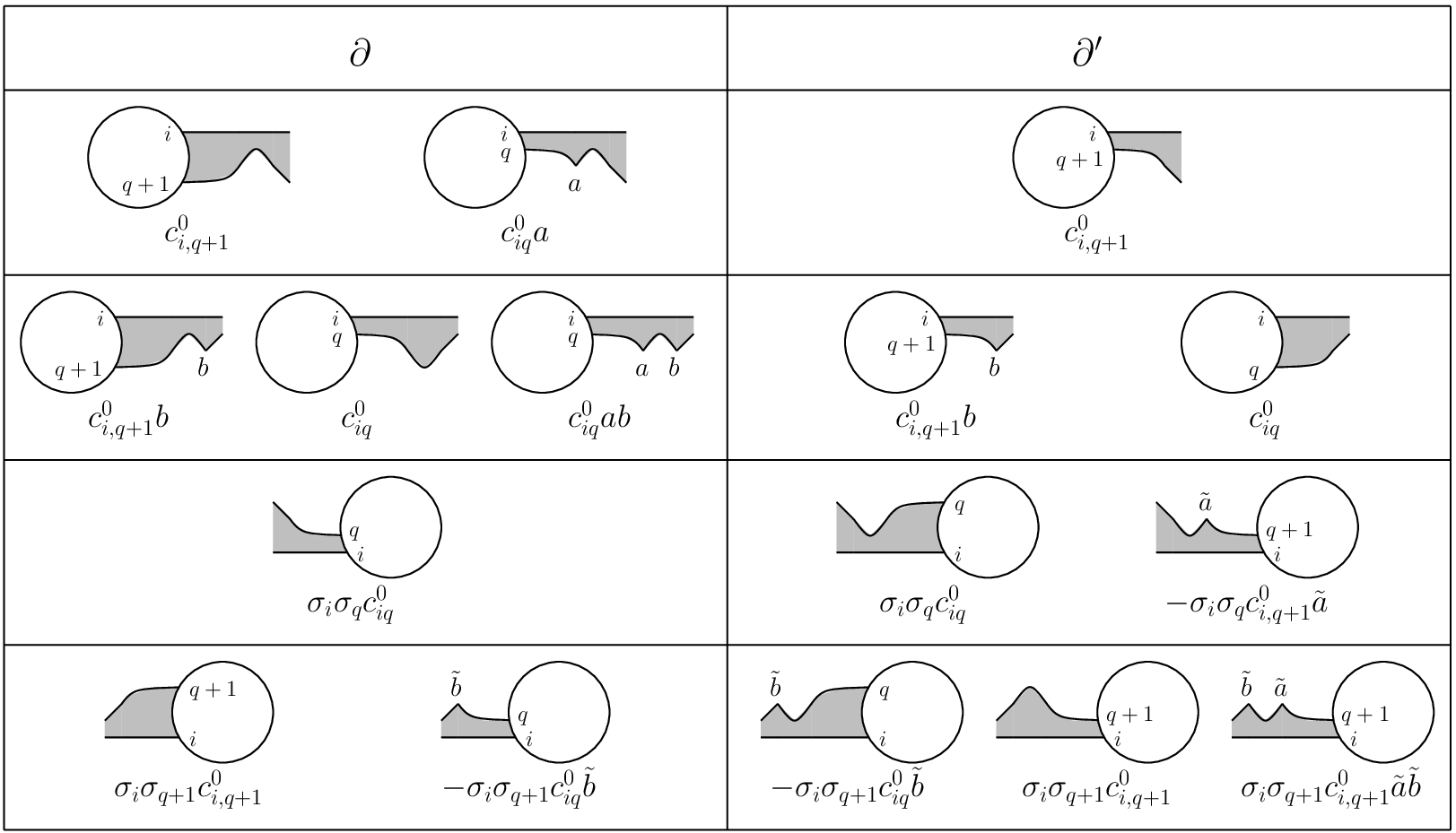}
}
\caption{
Disks contributing to the differentials $\d x$
or $\d'x$ and involving
$c_{iq}^0,c_{i,q+1}^0$ for $i<q$, arranged into families; the depicted
portions are the parts outside the shaded region in Figure~\ref{fig:gompf5b}. Some of
these disks also have negative corners at $a$, $\tilde{a}$, $b$, or
$\tilde{b}$, as shown. Circles represent the handle $2$-spheres. The
positive corner $x$ is not shown but is to the right of each diagram
where the $2$-sphere is on the left, and vice versa.
The words underneath each disk are the contributions of the depicted
portion of the disk to the corresponding word in $\d$ or $\d'$.
Orientation signs are calculated from the rules in
Section~\ref{ssec:dga}, with some simplification due to \eqref{eq:gompf5signs}.
}
\label{fig:gompf5disks}
\end{figure}

Finally, we need to check \eqref{eq:gompf5} for external generators,
corresponding to crossings in the shaded region of
Figure~\ref{fig:gompf5b}. Let $x$ be such a crossing; we need to show
that $\Phi^{-1}\d x = (\Phi')^{-1}\d' x$. The only generators
appearing in $\d x$ or $\d' x$ that are affected by $\Phi$ or $\Phi'$
are $b$, $\tilde{b}$, and generators of the form
$c_{iq}^0,c_{i,q+1}^0$ for $i<q$ and $c_{qj}^0,c_{q+1,j}^0$. The terms
in $\d x$ and $\d' x$ that do not include any of these generators are
identical, while the terms that do include these generators can be
grouped as shown in Figure~\ref{fig:gompf5disks} (which shows terms involving
$c_{iq}^0$ or $c_{i,q+1}^0$ for $i<q$; there is an entirely analogous
group of diagrams for terms involving $c_{qj}^0$ or $c_{q+1,j}^0$ for
$j>q+1$). For instance, the top left set of diagrams corresponds to a
collection of terms $y(c_{i,q+1}^0+c_{iq}^0 a)z$ in $\d x$ and a term
$y c_{i,q+1}^0 z$ in $\d' x$. The fact that $\Phi^{-1}\d x =
(\Phi')^{-1}\d' x$ now follows from the following identities, easily
checked from the definitions of $\Phi$ and $\Phi'$:
\begin{align*}
\Phi^{-1}(c_{i,q+1}^0+c_{iq}^0 a) &=
(\Phi')^{-1}(c_{i,q+1}^0) = c_{i,q+1}^0 \\
\Phi^{-1}(c_{iq}^0+c_{i,q+1}^0 b+c_{iq}^0 ab) &=
(\Phi')^{-1}(c_{iq}^0+c_{i,q+1}^0 b) = c_{iq}^0+c_{i,q+1}^0(b+\tilde{a})\\
\Phi^{-1}(c_{iq}^0) &= (\Phi')^{-1}(c_{iq}^0-c_{i,q+1}^0\tilde{a}) = c_{iq}^0\\
\Phi^{-1}(c_{i,q+1}^0-c_{iq}^0\tilde{b}) &=
(\Phi')^{-1}(c_{i,q+1}^0-c_{iq}^0\tilde{b}+c_{i,q+1}^0\tilde{a}\tilde{b})\\
& = c_{i,q+1}^0-c_{iq}^0(a+\tilde{b}),
\end{align*}
along with the analogous identities involving $c_{qj}^0$ and $c_{q+1,j}^0$.

\subsubsection{Invariance under Gompf move 6, part 1}
\label{ssec:invpf6}


\begin{proposition}
If $\Lambda$ and $\Lambda'$ are Legendrian links in $\#^k(S^1\times
S^2)$ in normal form, related by Gompf move 6, then the DGAs
associated to $\pi_{xy}(\Lambda)$ and $\pi_{xy}(\Lambda')$ are stable
tame isomorphic.
\label{prop:gompf6inv}
\end{proposition}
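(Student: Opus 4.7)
The plan is to follow the same overall strategy used for Propositions~\ref{prop:gompf4inv} and~\ref{prop:gompf5inv}: perturb the fronts on either side of Gompf move 6 into normal form, compute the DGAs $(\A,\d)$ and $(\A',\d')$ from the two $xy$-projections, and construct an explicit stable tame isomorphism between them. The essential new feature of this move, compared to moves 4 and 5, is that it changes the number of strands passing through one of the $1$-handles. Accordingly, one side of the move (say $\A'$) will contain an entirely new family of internal generators $c_{ij;\ell}^{p}$ indexed by all $p \ge 0$, together with finitely many new non-internal generators associated to the cusps, crossings, and half-twist crossings introduced by the move. This is exactly the situation for which the extended notion of stable tame isomorphism developed in Section~\ref{ssec:sti}---allowing countably infinite stabilizations and compositions of infinitely many elementary automorphisms---was designed.

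First, I would restrict attention to the local picture: away from the region where Gompf move 6 acts, the $xy$-projections coincide, and the usual area/maximum principle argument (analogous to the one used in the discussion after the definition of $\d(b_i)$ in the proof of Proposition~\ref{prop:gompf4inv}) rules out holomorphic disks with positive puncture outside the local region that extend into it. Second, within the local region, I would identify a triangular pairing of the new generators: after a sequence of tame automorphisms supported near the move, each new generator should be paired with a partner so that the differential restricted to these pairs takes the form $\d(e_j) = f_j$, with the remaining generators forming a sub-DGA tamely isomorphic to $(\A,\d)$. Because the internal generators $c_{ij;\ell}^{p}$ occur for every $p \ge 0$, this pairing is countably infinite, and a single countably infinite stabilization of $(\A,\d)$ (as in the definition of stabilization in Section~\ref{ssec:sti}) produces the partner generators on the other side.

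Third, I would construct the needed automorphism as an infinite composition $\cdots \circ \Phi_2 \circ \Phi_1 \circ \Phi_0$ of elementary automorphisms, where $\Phi_p$ normalizes the differential on the level-$p$ internal generators and their auxiliary partners, following the same template used in the proof of Proposition~\ref{prop:gompf4inv} to handle the $c_{ij}^{p}$ with $p \ge 1$. The key technical step is to choose one ordering $\sigma$ of the generators of $\A'$---for example, ordering by superscript $p$ first, then by $j - i$, then by some lexicographic rule, with the new external generators placed appropriately---so that each $\Phi_p$ is elementary in the sense of Definition~\ref{def:elementary} with respect to $\sigma$, and so that when the composition is applied to any fixed generator, only finitely many $\Phi_p$ act nontrivially. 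After the composition, one can apply (countably infinitely many times) a variant of Lemma~\ref{lem:eliminate} to destabilize all the new paired generators, leaving a DGA tamely isomorphic to $(\A,\d)$.

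The main obstacle will be bookkeeping: verifying that all $\Phi_p$ preserve the grading, that the Maslov potentials on the two sides of the move match correctly (since one of the strands in the altered local picture is oriented opposite to its neighbor, the parities $\sigma_i = (-1)^{m(i)}$ shift and the orientation-sign rules of Section~\ref{ssec:dga} must be tracked through each $\Phi_p$), and that the infinite composite genuinely satisfies the convergence requirement of the generalized framework. A secondary but nontrivial point is isolating, as in the proof for move 5, the finite set of combinatorial identities among $\Phi_p^{-1} \d \Phi_p$ applied to the new external generators that ensure compatibility with $\d$ on the common generators. Once these are in place, the countably infinite stabilization together with the explicit infinite composition of elementary automorphisms yields the required stable tame isomorphism.
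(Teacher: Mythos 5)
Your proposal starts from a mistaken reading of what Gompf move 6 does. It is move 4, not move 6, that changes the number of strands passing through a $1$-handle and thereby introduces a new infinite family of internal generators (the $c_{ij}^p$ with an index in $\{n-1,n\}$); that situation is handled in Proposition~\ref{prop:gompf4inv} by a single stabilization pairing $c_{in}^p$ with $c_{i,n-1}^p$ and $c_{n-1,j}^p$ with $c_{nj}^p$, followed by tame automorphisms. For move 6 the internal algebras on the two sides are literally identical ($c_{ij}^p$ for $1\le i,j\le n$ in both $\A$ and $\A'$); what differs is a finite collection of \emph{external} crossings, among them $n$ generators $a_1,\dots,a_n$ on each side with $\d(a_i)=-\sum_{j<i}\sigma\sigma_j a_jc_{ji}^0$. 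So the machinery you set up --- an infinite composition of automorphisms indexed by the internal level $p$, and the bookkeeping of Maslov potentials across a change in strand count --- is aimed at the wrong target.

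More seriously, your central mechanism (find a triangular pairing $\d(e_j)=f_j+\cdots$ among the new generators, stabilize once countably to manufacture partners, then destabilize via a variant of Lemma~\ref{lem:eliminate}) cannot absorb the actual discrepancy. The $n$ generators $a_1,\dots,a_n$ come in no such pairs: $\d(a_1)=0$, the $a_i$ are not closed for $i>1$, and there are no corresponding generators on the other side of the move to pair them with; the candidate partners $e_i$ satisfy $\d(e_i)=a_i-f_i+\cdots$, so cancelling $(e_i,a_i)$ would strand $f_i$ and break the sub-DGA structure of what remains. The idea your proposal is missing is an infinite-shift (Eilenberg-swindle) argument: both $(\A,\d)$ and $(\A',\d')$ are compared to the \emph{quotient} DGA $(\A'',\d'')$ in which the respective $a_i$ are set to $0$, by stabilizing with countably many new pairs $(a_q,b_q)$, $q\ge n+1$, $\d(a_q)=b_q$, and then conjugating by an infinite composition of elementary automorphisms so that the differential becomes $\d''(b_{q+n})=a_q$ for all $q\ge 1$ --- pairing the original $a_1,\dots,a_n$ with the newly created $b_{n+1},\dots,b_{2n}$ and shifting every added $a_q$ onto $b_{q+n}$ (Lemmas~\ref{lem:gompf6} and~\ref{lem:gompf6more}). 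This is where the countable stabilizations and infinite compositions of Section~\ref{ssec:sti} are genuinely needed for this move, and no finite triangular pairing replaces it.
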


The proof of Proposition~\ref{prop:gompf6inv} occupies this section and the following section \ref{ssec:invpf6more}.

\begin{figure}
\centerline{
\includegraphics[width=\textwidth]{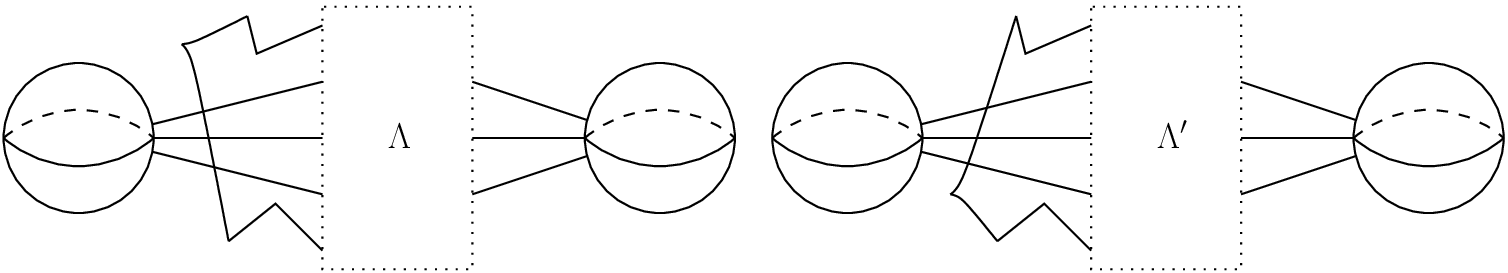}
}
\caption{
Front ($xz$) diagrams for Legendrian links $\Lambda,\Lambda'$ in normal
form related by Gompf
move 6. The corners in these fronts are understood to be smoothed
out.
}
\label{fig:gompf6a}
\end{figure}
\begin{figure}
\centerline{
\includegraphics[width=\textwidth]{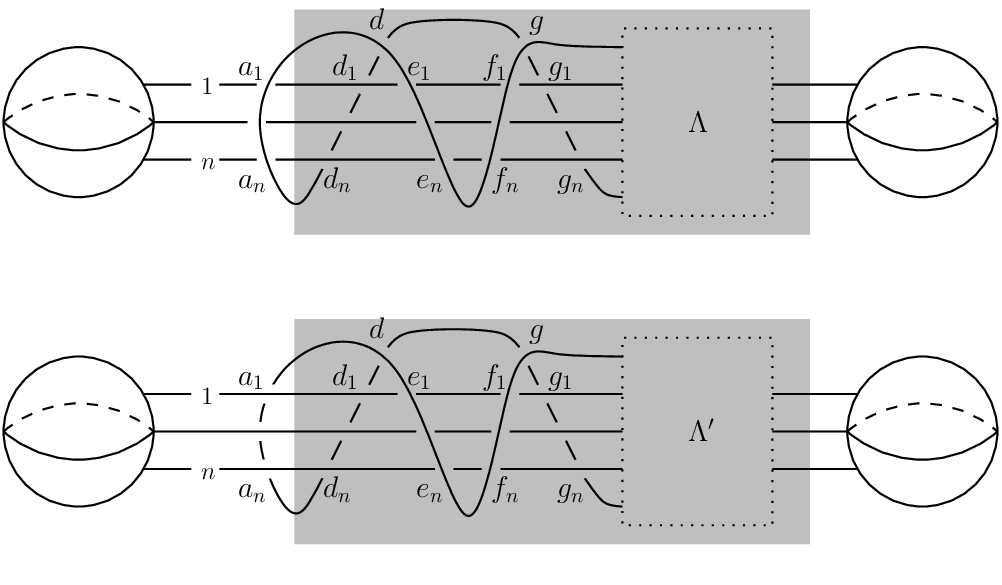}
}
\caption{
The $xy$ projections for
$\Lambda$ and $\Lambda'$ given in
Figure~\ref{fig:gompf6a}, with some crossings labeled. The dotted boxes are identical (and correspond to the dotted boxes in Figure~\ref{fig:gompf6a}), as
are the two shaded regions.
}
\label{fig:gompf6b}
\end{figure}

Let $\Lambda$ and $\Lambda'$ be Legendrian links in $\#^k(S^1\times
S^2)$ related by Gompf move 6. Perturb the fronts of $\Lambda$
and $\Lambda'$ to be in normal form, and then further perturb them to
take the form shown in Figure~\ref{fig:gompf6a}. Then the
corresponding $xy$ projections are
in $xy$-normal form as shown in
Figure~\ref{fig:gompf6b}.

By Proposition~\ref{prop:isotopyinv}, it
suffices to show that the DGAs for $\Lambda$ and $\Lambda'$ associated
to the $xy$ projections shown in Figure~\ref{fig:gompf6b} are stable
tame isomorphic. Denote these DGAs by $(\A,\d)$ and $(\A',\d')$
respectively. We will show that these DGAs are both stable tame
isomorphic to a third DGA $(\A'',\d'')$, which we now define.

Note that $\A$ is generated by internal generators $c_{ij}^p$, where $1\leq
i,j\leq n$, $p\geq 0$, and $i<j$ if $p=0$, along with external
generators corresponding to crossings in $\pi_{xy}(\Lambda)$,
including $a_1,\ldots,a_n$ as marked in Figure~\ref{fig:gompf6b} as
well as $b_1,\ldots,b_n$ and so forth. Let $\A''$ be the algebra
generated by the internal generators $c_{ij}^p$, along with all
external generators \textit{except} $a_1,\ldots,a_n$. This algebra
inherits a grading from $\A$. There is an algebra projection map $\pi
:\thinspace \A\to\A''$ that maps $a_1,\ldots,a_n$ to $0$ and every
other generator of $\A$ to itself; that is, it projects away any word
involving any $a_i$. We now define $\d'':\thinspace\A''\to\A''$ by
\[
\d'' = \pi\circ\d.
\]

We will prove that $(\A,\d)$ and $(\A'',\d'')$ are stable tame
isomorphic. The proof that $(\A',\d')$ and $(\A'',\d'')$ are stable
tame isomorphic is entirely similar. (Note for this purpose that there is also a
projection map $\pi':\thinspace \A'\to\A''$ that sends the generators
$a_1,\ldots,a_n$ of $\A'$ to $0$ and acts as the identity on all other
generators, and that $\d'' = \pi'\circ\d'$.)

For grading purposes, let $(m(1),\ldots,m(n))$ be the Maslov
potentials associated to strands $1,\ldots,n$ passing through the
handle, and let $m(0)$ be the Maslov potential associated to the
strand in the front projection of $\Lambda$ in
Figure~\ref{fig:gompf6a} that crosses over the other strands (and
includes the bottom half of the left cusp). Then we have
\[
|a_i| = m(0)-m(i)
\]
for $1\leq i\leq n$. Write $\sigma_i = (-1)^{m(i)}$ for $1\leq i\leq
n$, and $\sigma = (-1)^{m(0)}$; then
$\sgn(c_{ij}^p) = -\sigma_i\sigma_j$ and
$\sgn(a_i) = \sigma \sigma_i$.

Let $\tilde{\A}$ be the algebra generated by the generators of $\A$,
along with infinite families of generators
\[
\{a_q\}_{q=n+1}^{\infty} \text{ and } \{b_q\}_{q=n+1}^{\infty}
\]
with grading given by $|a_{q+n}|=|a_q|+2$ for all $q\geq 1$ and
$|b_q| = |a_q|-1$ for all $q\geq n+1$. Extend $\d$ from $\A$ to
$\tilde{\A}$ by setting
\[
\d(a_q)=b_q,~~~~~~~ \d(b_q)=0 \text{ for } q\geq n+1.
\]
Then $(\tilde{\A},\d)$ is a stabilization of $(\A,\d)$.

We can also view $\tilde{\A}$ as being generated by the generators of
$\A''$, along with $\{a_q\}_{q=1}^{\infty}$ and
$\{b_q\}_{q=n+1}^{\infty}$. Extend $\d''$ from $\A''$ to $\tilde{\A}$
by setting
\[
\d''(b_{q+n}) = a_q,~~~~~~ \d''(a_q)=0 \text{ for } q\geq 1;
\]
note that this is a graded differential since
$|b_{q+n}|=|a_q|+1$. Then $(\tilde{\A},\d'')$ is a stabilization of
$(\A'',\d'')$. It thus suffices to establish the following result.

\begin{lemma}
$(\tilde{\A},\d)$ and
$(\tilde{\A},\d'')$ are tamely isomorphic.
\label{lem:gompf6}
\end{lemma}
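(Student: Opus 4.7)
The plan is to construct an explicit tame isomorphism $\Psi\colon (\tilde{\A},\d)\to (\tilde{\A},\d'')$ as a (possibly infinite) composition of elementary automorphisms in the sense of Definition~\ref{def:elementary}, using crucially the fact that countable compositions are now allowed by our extension of Chekanov's stable tame isomorphism. The starting observation is that the two differentials $\d$ and $\d''$ differ in a very controlled way: they agree on all generators in $\A''$ modulo the projection $\pi$ (which removes any word containing an $a_i$-factor, $1\le i\le n$), while on the infinite stabilization generators the acyclic pairs shift by $n$: for $\d$ they are $(a_q,b_q)$ for $q\ge n+1$, whereas for $\d''$ they are $(a_q,b_{q+n})$ for $q\ge 1$. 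So the first $n$ generators $a_1,\dots,a_n$ are ``the discrepancy'', and we have a countable reservoir of acyclic pairs available to absorb them.

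First I would isolate the obstruction. Write $v_i=\d(a_i)\in\tilde{\A}$ for $1\le i\le n$; a priori $v_i$ can involve other $a_j$'s (coming from disks with positive corner at $a_i$ and a negative corner at $a_j$) as well as elements of $\A''$. An inductive argument on the ordering of the $a_i$'s (say by the height function of Lemma~\ref{lem:action}, applied to a normal-plus perturbation) lets us rewrite $v_i=w_i+\sum_j u_{ij} a_j u'_{ij}$ with $w_i,u_{ij},u'_{ij}\in \A''$, and use $\d^2=0$ recursively to keep track of the compatibility relations these quantities satisfy. Next I would define $\Psi$ by specifying it on generators: set $\Psi(b_{i+n})=b_{i+n}+a_i+(\text{corrections})$ for $1\le i\le n$, letting the corrections be chosen inductively so that $\d\Psi(b_{i+n})=\Psi(a_i)=\Psi(\d''(b_{i+n}))$; then shift the old acyclic pairs by setting $\Psi(b_{q})=b_{q-n}$-type formulas for $q\ge 2n+1$ as a ``staircase'', together with corrections on any generator $x\in \A''$ whose $\d(x)$ involves an $a_i$-factor, where we use the newly modified $b_{i+n}$ as a reservoir to absorb the excess so that $\d\Psi(x)=\Psi(\pi\d(x))=\Psi(\d''(x))$. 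I would order the generators so that each correction step is elementary with respect to that ordering (corrections for fixed subscript only involve things strictly below in the ordering, by $\d^2=0$), and verify that only finitely many of the elementary automorphisms affect any given generator, so that the countable composition converges and is tame in the sense of Section~\ref{ssec:sti}.

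The main obstacle will be the combinatorial-algebraic bookkeeping: when $\d(a_i)$ contains other $a_j$'s, a naive substitution $b_{i+n}\mapsto b_{i+n}+a_i$ produces terms $u_{ij}a_j u'_{ij}$ that must themselves be absorbed by further modifications, and the signs dictated by Section~\ref{ssec:dga} must be tracked carefully through the Leibniz rule. The key identities needed to close the induction all come from $\d^2(a_i)=0$, which tells us that $\sum_j u_{ij}v_j u'_{ij}\pm \d(w_i)\in \A''$ is a coboundary-free expression that can be written as $\d$ of something explicitly constructible from the $a_j$-free part of the algebra; this is precisely what lets the correction procedure terminate at each finite level. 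The identical argument, applied with $\d'$ in place of $\d$ via the projection $\pi'\colon\A'\to\A''$ (which also satisfies $\d''=\pi'\circ\d'$), shows that $(\tilde{\A},\d')$ and $(\tilde{\A},\d'')$ are likewise tamely isomorphic after a corresponding countable stabilization, completing the proof of Proposition~\ref{prop:gompf6inv}.
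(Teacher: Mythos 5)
Your overall strategy is the same as the paper's: re-pair the acyclic generators by absorbing each $a_i$ ($1\le i\le n$) into $b_{i+n}$, shift the remaining pairs down the ``staircase,'' and correct the external generators whose differentials involve the $a_i$. The gap is at the step you dispose of in one clause: ``verify that \ldots{} the countable composition converges and is tame in the sense of Section~\ref{ssec:sti}.'' A tame automorphism is by definition a composition of \emph{finitely} many elementary automorphisms in the sense of Definition~\ref{def:elementary}; the extension to the infinite setting lets a single elementary automorphism modify countably many generators, but only when all of its corrections are lower-triangular with respect to \emph{one} ordering. Your staircase cannot be made compatible with a single ordering: the substitution $b_{i+n}\mapsto b_{i+n}+a_i+\cdots$ (and, further up, $b_s\mapsto b_s-\sum_m \sigma\sigma_m a_m c_{ms}$) requires $a_m<b_s$ for $s-n\le m\le s-1$, while the induced correction on $a_q$ (of the form $a_q\mapsto a_q-\sigma\sigma_s b_s c_{s,q+n}$ for $s-n+1\le q\le s$) requires $b_s<a_q$; taking $q=m=s-1$ these demands contradict each other, and the contradiction recurs at every rung. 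So the infinite composition is a countable product of elementary automorphisms with respect to \emph{alternating, incompatible} orderings, which is not covered by the definition of tame, and convergence alone does not rescue it.

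The paper closes exactly this gap in Lemma~\ref{lem:gompf6more} and Section~\ref{ssec:invpf6more}: it introduces the interpolating differentials $\d_s$, the elementary steps $\phi_s,\psi_s$ conjugating $\d_{s-1}$ to $\d_s$, and then proves (Lemma~\ref{lem:gompf6limit}) that the limit of the partial compositions $\Phi_s$ stabilizes on each generator \emph{and factors as} $\Psi_1\circ\Psi_2$ for two explicit maps, each elementary with respect to its own single ordering ($c<a_1<b_{n+1}<a_2<b_{n+2}<\cdots$ for $\Psi_1$, and $c<a_1<\cdots<a_n<b_{n+1}<a_{n+1}<b_{n+2}<\cdots$ for $\Psi_2$). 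Exhibiting this finite factorization is the real content of the proof, and your proposal needs it (or an equivalent device) to produce a tame isomorphism rather than merely a well-defined algebra automorphism intertwining $\d$ and $\d''$. The rest of your outline---isolating $v_i=\d(a_i)$, using $\d^2=0$ to control the cross terms $u_{ij}a_ju'_{ij}$, and correcting the external generators $e_i$ and $d$ (the paper's $\Omega$)---matches the paper and would go through once the factorization issue is resolved.
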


We will construct the tame isomorphism in Lemma~\ref{lem:gompf6} in
two steps, one of which is deferred to Section~\ref{ssec:invpf6more}.
Let $\tilde{\A}_0$ denote the subalgebra of $\tilde{\A}$ generated by
internal generators $c_{ij}^p$ along with $a_i$ for $i\geq 1$ and
$b_i$ for $i\geq n+1$. This is in fact a differential subalgebra of
$\tilde{\A}$ with respect to $\d$: indeed, by inspection of
Figure~\ref{fig:gompf6b}\footnote{For the purposes of computing signs in the differentials of $a_i$ here as well as other external generators later, we choose the following crossing quadrants to have orientation signs: southwest, southeast for $a_i$, $e_i$, and $f_i$; southeast, northeast for $d_i$; northwest, southwest for $g_i$; south, east for $d$; and south, west for $g$.}
and the definition of the internal
differential, we have
\begin{align*}
\d(a_i) &= -\sum_{j=1}^{i-1} \sigma \sigma_j a_j c_{ji}^0, \hspace{3ex}
\text{ for }
i\leq n, \\
\d(c_{ij}^p) &= \delta_{ij}\delta_{1p} + \sum_{\ell=0}^p \sum_{m=1}^n
\sigma_i\sigma_m c_{im}^{\ell}c_{mj}^{p-\ell}.
\end{align*}

In Section~\ref{ssec:invpf6more}, we will establish the following
result.

\begin{lemma}
There is a tame automorphism $\Psi$ of $\tilde{\A}$ with the following
properties:
\begin{enumerate}
\item \label{gompf6more1}
$\Psi$ acts as the identity on all generators of $\A$ except for
$\{a_i\}_{i\geq 1}$ and $\{b_i\}_{i\geq n+1}$, and restricts to a tame
automorphism of $\tilde{\A}_0 \subset \tilde{\A}$;
\item \label{gompf6more2}
for $q\leq n$,
\[
\Psi(a_q) = a_q - \sum_{m=n+1}^{q+n-1} \sigma\sigma_m b_m c_{m-n,q}^0;
\]
\item \label{gompf6more3}
on $\tilde{\A}_0$,
\[
\Psi\circ\d = \d''\circ\Psi.
\]
\end{enumerate}
\label{lem:gompf6more}
\end{lemma}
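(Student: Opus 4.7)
The plan is to exhibit $\Psi$ as a single elementary automorphism of $\tilde{\A}$ with respect to a carefully chosen ordering of generators. First I would order the generators of $\tilde{\A}_0$ as follows: all internal generators $c_{ij}^p$ come first (in some order respecting the natural filtration on $\A_n$), and then for each $r\in\{1,\dots,n\}$ list the \emph{chain} $a_r,\,b_{r+n},\,a_{r+n},\,b_{r+2n},\,a_{r+2n},\dots$ sequentially; place external generators of $\A$ afterwards. I would declare $\Psi$ to act as the identity on the internal and external generators, on $a_q$ for $1\le q\le n$ by the formula in condition~(2), and on the stabilization pairs for $q\ge n+1$ by $\Psi(a_q) = a_q - \sigma\sigma_q\, b_q\, z$ and $\Psi(b_q) = b_q - \sigma\sigma_q\, a_{q-n}\, z$, where $z\in\A_n$ is an element of the internal DGA satisfying $\d''(z) = 1$.

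The verification of the chain-map condition~(3) splits into cases. On internal generators $\d$ and $\d''$ agree and $\Psi$ is the identity. For $1\le q\le n$ the identity $\Psi\circ\d(a_q) = \d''\circ\Psi(a_q)$ reduces, via the key computation $\d''(b_m c_{m-n,q}^0) = a_{m-n}\, c_{m-n,q}^0 - \sigma\sigma_m\, b_m\,\d''(c_{m-n,q}^0)$ and the substitution $j = m-n$, to the formula $\d''(c_{jq}^0) = \sum_{j<l<q}\sigma_j\sigma_l\, c_{jl}^0\, c_{lq}^0$ for the internal differential. For $q\ge n+1$ a direct computation shows $\d''(a_q - \sigma\sigma_q\, b_q\, z) = b_q - \sigma\sigma_q\, a_{q-n}\, z = \Psi(b_q)$, matching $\Psi(\d(a_q)) = \Psi(b_q)$; the identity on $b_q$ then follows from $(\d'')^2 = 0$.

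Elementarity is immediate from the ordering: in each formula the correction to a modified generator is a polynomial in strictly earlier generators. For the formula in~(2) the indices $m = n+1,\dots,q+n-1$ of the $b_m$'s lie in chains $1,\dots,q-1$, all before chain $q$; for $q\ge n+1$ the generator $b_q$ immediately precedes $a_q$ in its chain, and $a_{q-n}$ immediately precedes $b_q$, so both corrections lie in the filtration below the generator they modify. Thus $\Psi$ is a single elementary automorphism --- in particular tame --- and it restricts to a tame automorphism of $\tilde{\A}_0$ since all formulas produce elements of $\tilde{\A}_0$.

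The principal obstacle is producing an element $z\in\A_n$ with $\d''(z) = 1$. For $n = 1$ this is immediate from $\d''(c_{11}^1) = 1$. For $n\ge 2$, one has $\d''(c_{ii}^1) = 1 + X_i$ with $X_i$ a nonzero polynomial in the $c$-generators, and one must find a primitive $W_i$ for $X_i$ in $\A_n$ so that $z := c_{ii}^1 - W_i$ works; equivalently, $[1]$ must vanish in $H_\ast(\A_n,\d_n)$. This is consistent with the geometric interpretation of $\A_n$ as the DGA of the standard Legendrian strand through a handle --- a configuration that is loose in the sense of Remark~\ref{rmk:loose}. If a direct algebraic construction of $W_i$ proves intricate, the argument can instead be arranged as a finite composition of elementary automorphisms, one per chain, exploiting the countably-generated-DGA tame-isomorphism framework developed in Section~\ref{ssec:sti}.
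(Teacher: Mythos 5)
Your construction hinges on producing a degree-one element $z\in\A_n$ of the internal DGA with $\d_n(z)=1$, and this element does not exist when $n\geq 2$. The internal DGA of an $n$-strand handle is \emph{not} acyclic for $n\geq 2$: if $1=\d_n(w)$ for some $w\in\A_n$, then since $\A_n$ includes as a sub-DGA into the full algebra of, say, the knot $\Lambda_1$ of Section~\ref{ssec:ex}, the unit would be exact there as well, contradicting Proposition~\ref{prop:LCHT2} (which computes $H_0\cong\Z[x_1^{\pm1},x_2^{\pm1}]\neq 0$). Your appeal to Remark~\ref{rmk:loose} is a misreading: that remark concerns a link passing through a handle exactly \emph{once} ($n_j=1$), where indeed $\d c_{11}^1=1$; for $n\geq 2$ strands the class $[1]$ survives in $H_*(\A_n,\d_n)$ --- this is precisely what makes the $T^2$ and $\Sigma_g$ computations nontrivial. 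So the corrections $\Psi(a_q)=a_q-\sigma\sigma_q b_qz$, $\Psi(b_q)=b_q-\sigma\sigma_q a_{q-n}z$ for $q\geq n+1$ cannot be written down, and the fallback you sketch (``a finite composition of elementary automorphisms, one per chain'') does not address this obstruction, since each chain would still require killing the unit in homology. (Your verification of the chain-map identity for $q\leq n$, which uses only the $c^0$ generators, is fine; the gap is entirely in the $q\geq n+1$ range.)

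The paper's proof takes a genuinely different route that circumvents this. It introduces an interpolating family of differentials $\d_s$, $s\geq n$, and a sequence of elementary automorphisms $\phi_s,\psi_s$ (handle-slide-type moves whose correction terms involve the \emph{higher} internal generators $c_{ij}^p$, $p\geq 1$, via the shifted notation $c_{ab}$) satisfying $\psi_s\phi_s\d_{s-1}=\d_s\psi_s\phi_s$. The composite $\Phi_s$ conjugates $\d=\d_n$ to $\d_s$, and the key stabilization statement (Lemma~\ref{lem:gompf6limit}) shows that $\Phi_s$ applied to any fixed generator is eventually constant in $s$, so the infinite composition converges to a well-defined map $\Psi=\Psi_1\circ\Psi_2$ that is a composition of two elementary automorphisms in the extended sense of Definition~\ref{def:elementary} and intertwines $\d$ with $\d_\infty=\d''$. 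No primitive of $1$ is ever needed. A secondary issue with your write-up: the ordering you propose (all $c_{ij}^p$ first, then each infinite chain listed in full before the next) is not an ordering in the sense of Definition~\ref{def:elementary}, which requires order type $\omega$; this would be repairable by interleaving, but the argument fails earlier for the reason above.
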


We now use Lemma~\ref{lem:gompf6more} to prove Lemma~\ref{lem:gompf6} and thus invariance under Gompf move 6.

\begin{proof}[Proof of Lemma~\ref{lem:gompf6}]
Define $\Omega :\thinspace \tilde{\A} \to \tilde{\A}$ to be the algebra map given by
\begin{align*}
\Omega(e_i) &= e_i + b_{i+n}, \hspace{5ex} 1\leq i\leq n, \\
\Omega(d) &= d+\sum_{i=1}^n b_{n+i}d_i,
\end{align*}
and $\Omega(x) = x$ for all other generators of $\tilde{\A}$. Note that $\Omega$ is an elementary automorphism with respect to any ordering for which $b_{n+1},\ldots,b_{2n}$ are less than the $e_i$, $d_i$, and $d$. We will show that
\begin{equation}
\Omega\circ\Psi\circ\d(x) = \d''\circ\Omega\circ\Psi(x)
\label{eq:gompf6}
\end{equation}
for all generators $x$ of $\tilde{\A}$, and thus $\Omega\circ\Psi$ is the desired tame automorphism of $\tilde{\A}$ intertwining $\d$ and $\d''$.

For a generator $x$ of $\tilde{\A}_0 \subset \tilde{\A}$, since $\Omega$ restricts to the identity on $\tilde{\A}_0$, we have $\Omega\Psi\d(x) = \Psi\d(x) = \d''\Psi(x) = \d''\Omega\Psi(x)$ by Lemma~\ref{lem:gompf6more}(\ref{gompf6more3}). Otherwise, $x$ corresponds to a crossing in the shaded region of the top diagram in Figure~\ref{fig:gompf6b}. An inspection of Figure~\ref{fig:gompf6b} shows that unless $x=d$ or $x=e_i$ for some $1\leq i\leq n$, $\d(x)$ does not involve any of $a_1,\ldots,a_n,e_1,\ldots,e_n$; then by Lemma~\ref{lem:gompf6more}(\ref{gompf6more1}), $\d(x) = \Psi\d(x)$, and so
$\Omega\Psi\d(x) = \Omega\d(x) = \d(x) = \d''(x) = \d''\Omega\Psi(x)$.

It remains to check \eqref{eq:gompf6} for $x=d$ or $x=e_i$. To this end, another inspection of Figure~\ref{fig:gompf6b} yields
\begin{align*}
\d(d_i) &= -g_i + \sum_{j=i+1}^n c_{ij}^0 d_j, & \hspace{2ex} & 1\leq i\leq n, \\
\d(e_i) &= a_i -f_i +  \sum_{j<i} \sigma\sigma_j e_j c_{ji}^0, && 1\leq i\leq n, \\
\d(d) &= 1-g+\sum_{i=1}^n a_i d_i - \sum_{i=1}^n \sigma\sigma_i e_i g_i.
\end{align*}
Then by direct calculation and Lemma~\ref{lem:gompf6more}(\ref{gompf6more2}), we have
\begin{align*}
\Omega\Psi\d(e_i) &= \Omega\left(a_i-f_i+\sum_{j=1}^{i-1} \sigma\sigma_j(e_j-b_{j+n})c_{ji}^0 \right) \\
&= a_i-f_i+\sum_{j<i} \sigma\sigma_je_jc_{ji}^0 \\
&= \d''(e_i+b_{n+i}) \\
&= \d''\Omega\Psi(e_i)
\end{align*}
and
\begin{align*}
\Omega\Psi\d(d) &= 1-g+\sum_{i=1}^n \left((a_i-\sum_{j=1}^{i-1} \sigma\sigma_j b_{j+n}c_{ji}^0)d_i-\sigma\sigma_i (e_i+b_{i+n}) g_i \right) \\
&= \d''(d) + \sum_i (a_id_i-\sigma\sigma_i b_{i+n}g_i) - \sum_{1\leq j<i\leq n} \sigma\sigma_j b_{n+j} c_{ji}^0 d_i \\
&= \d''(d+\sum_{i=1}^n b_{i+n}d_i) \\
&= \d''\Omega\Psi(d). \qedhere
\end{align*}
\end{proof}

\subsubsection{Invariance under Gompf move 6, part 2}
\label{ssec:invpf6more}

In this section we prove Lemma~\ref{lem:gompf6more}, completing the proof
of invariance for Gompf move 6.
When defining the tame isomorphism $\Psi$, it will be convenient
to replace the notation $c_{ij}^p$ by $c_{ab}$ for $a,b\in\Z$, where
$c_{ab}$ is defined as follows: write $a=a_1n+a_2$ and $b=b_1n+b_2$
for $1\leq a_2,b_2\leq n$; then
\[
c_{ab} = \begin{cases} 0 & a\leq b \\
c_{a_2,b_2}^{b_1-a_1} & a>b.
\end{cases}
\]
Also extend $\sigma_1,\ldots,\sigma_n$ to $\sigma_a$ for all $a\in\Z$
by setting $\sigma_a = \sigma_{a_2}$.
With this notation, the following result is trivial to check.

\begin{lemma}
\begin{enumerate}
\item
For any $a,b\in\Z$, $c_{ab} = c_{a+n,b+n}$.
\item
For any $a,b\in\Z$, $\d(c_{ab}) = \delta_{a+n,b} + \sum_{m=a+1}^{b-1}
\sigma_a\sigma_m c_{am}c_{mb}$.
\end{enumerate}
\end{lemma}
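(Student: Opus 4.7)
The plan is to unwind the definition of $c_{ab}$ directly and match it term by term against the intrinsic differential formulas from Section~\ref{ssec:intdga}; both parts are pure indexing.

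For part (1), write $a = a_1 n + a_2$ and $b = b_1 n + b_2$ with $1 \leq a_2, b_2 \leq n$. Then $a + n = (a_1 + 1)n + a_2$, so the ``residue'' $(a+n)_2$ equals $a_2$ and the ``quotient'' $(a+n)_1$ equals $a_1 + 1$, and similarly for $b+n$. Therefore $(b+n)_1 - (a+n)_1 = b_1 - a_1$, and the inequality $a < b$ is equivalent to $a+n < b+n$. Substituting into the definition of $c_{\cdot,\cdot}$ gives $c_{a+n,b+n} = c_{a_2,b_2}^{b_1-a_1} = c_{ab}$ when $a < b$, and $0 = 0$ when $a \geq b$.

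For part (2), first dispose of the case $a \geq b$: then $a + n > a \geq b$ so $\delta_{a+n,b} = 0$, and the summation range $a+1 \leq m \leq b-1$ is empty, so both sides vanish. Assume now $a < b$ and set $p = b_1 - a_1 \geq 0$. Combining the three cases in Section~\ref{ssec:intdga} into a single formula, we have
\[
\d(c_{a_2,b_2}^p) = \delta_{a_2,b_2}\delta_{p,1} + \sum_{\ell=0}^p \sum_{m_2=1}^n \sigma_{a_2}\sigma_{m_2}\, c_{a_2,m_2}^\ell\, c_{m_2,b_2}^{p-\ell},
\]
with the convention $c_{ij}^0 = 0$ for $i \geq j$. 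The constant term $\delta_{a_2,b_2}\delta_{p,1}$ equals $\delta_{a+n,b}$ since $a+n = b$ forces precisely $a_2 = b_2$ and $b_1 - a_1 = 1$.

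The remaining step is to reindex the double sum by setting $m = (a_1 + \ell)n + m_2$. As $(\ell, m_2)$ ranges over $\{0,\dots,p\}\times\{1,\dots,n\}$, the variable $m$ ranges bijectively over $\{a_1 n + 1, \dots, (b_1+1)n\}$, with $m_1 = a_1 + \ell$ and $m_2$ the residue; in particular $\sigma_{m_2} = \sigma_m$, and by part (1) $c_{a_2,m_2}^\ell = c_{am}$ and $c_{m_2,b_2}^{p-\ell} = c_{mb}$. Terms with $m \leq a$ have $c_{am} = 0$, and terms with $m \geq b$ have $c_{mb} = 0$, so the only surviving contributions are from $a+1 \leq m \leq b-1$, reproducing the right-hand side. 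No part of the argument is an obstacle; it is entirely bookkeeping forced by the indexing conventions.
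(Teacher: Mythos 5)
Your proof is correct: the paper itself gives no argument here (it declares the lemma ``trivial to check'' after introducing the $c_{ab}$ notation), and your direct unwinding of the definition — matching the constant term to $\delta_{a+n,b}$ and reindexing the double sum via $m=(a_1+\ell)n+m_2$ — is exactly the intended verification. The only cosmetic point is that the identities $c_{a_2,m_2}^{\ell}=c_{am}$ follow from the definition of $c_{am}$ together with the convention $c_{ij}^0=0$ for $i\geq j$ (with part (1) used only to shift by multiples of $n$), which you have in substance.
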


For $s\geq n$, define a map $\d_s :\thinspace \tilde{\A}_0 \to
\tilde{\A}_0$ as follows:
\begin{align*}
\d_s(a_q) &= \begin{cases} b_q, & q>s, \\
-\displaystyle{\sum_{m=s-n+1}^{q-1} \sigma\sigma_m a_m c_{mq}}, & q\leq
s, \\
0, & q\leq s-n+1,
\end{cases} \\
\d_s(b_q) &= \begin{cases} 0, & q>s \\
a_{q-n}, & (n+1\leq\,)\,q\leq s,
\end{cases} \\
\d_s(c_{ab}) &= \d(c_{ab}).
\end{align*}
(The case $q\leq s-n+1$ in the definition of $\d_s(a_q)$ is
superfluous, following from the expression for $q\leq s$, but is
included for ease of reference.) Extend $\d_s$ to all of
$\tilde{\A}_0$ by the Leibniz rule. Note that $\d_n=\d$.

We next claim that $\d_n,\d_{n+1},\ldots$ are related by tame
automorphisms of $\tilde{\A}_0$. To this end, for $s\geq n+1$, define
algebra maps $\phi_s,\psi_s :\thinspace \tilde{\A}_0 \to \tilde{\A}_0$
as follows:
\begin{align*}
\phi_s(b_s) &= b_s - \sum_{m=s-n}^{s-1} \sigma\sigma_m  a_m c_{ms}, &\\
\psi_s(a_q) &= a_q-\sigma\sigma_s b_s c_{s,q+n}, & \text{for }
(s-n+1\leq\,)\, q\leq s, \\
\end{align*}
and $\phi_s,\psi_s$ act as the identity on all generators of
$\tilde{\A}_0$ besides
$b_s$ for $\phi_s$ and $a_{s-n+1},\ldots,a_s$ for $\psi_s$.

\begin{lemma}
For all $s\geq n+1$, we have $\psi_s\phi_s\d_{s-1} = \d_s\psi_s\phi_s$.
\label{lem:gompf6psiphi}
\end{lemma}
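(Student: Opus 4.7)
My plan is to verify the identity $\psi_s\phi_s\d_{s-1}=\d_s\psi_s\phi_s$ on each generator of $\tilde{\A}_0$, which suffices because the difference $D:=\d_s\psi_s\phi_s-\psi_s\phi_s\d_{s-1}$ is a $(\psi_s\phi_s,\psi_s\phi_s)$-derivation. I first dispose of the trivial cases. On the internal generators $c_{ab}$ both $\phi_s$ and $\psi_s$ act as the identity, and $\d_{s-1}(c_{ab})=\d(c_{ab})=\d_s(c_{ab})$ lies in the subalgebra generated by the $c_{ab}$, so the identity is automatic. For $b_q$ with $q\neq s$, the map $\phi_s$ fixes $b_q$, the map $\psi_s$ fixes $a_{q-n}$ (whenever $\d_{s-1}(b_q)=a_{q-n}$, which only happens for $q\leq s-1$, placing $q-n\leq s-n-1<s-n+1$), and $\d_{s-1}(b_q)=\d_s(b_q)$, so again $D(b_q)=0$. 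The analogous check eliminates $a_q$ for $q>s$ or $q\leq s-n$. The remaining nontrivial cases are $b_s$ and $a_q$ for $s-n+1\leq q\leq s$.

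For $b_s$ the plan is to expand
\[
\psi_s\phi_s(b_s)=b_s-\sum_{m=s-n}^{s-1}\sigma\sigma_m \psi_s(a_m)\,c_{ms}=b_s-\sum_{m=s-n}^{s-1}\sigma\sigma_m\, a_m c_{ms}+\sum_{m=s-n+1}^{s-1}\sigma_m\sigma_s\, b_s c_{s,m+n}c_{ms},
\]
apply $\d_s$, and show the result vanishes (matching $\psi_s\phi_s\d_{s-1}(b_s)=0$). The decisive input is that $c_{s-n,s}$ is a generator of the type $c_{ii}^1$, so $\d(c_{s-n,s})=1+\sum_{k=s-n+1}^{s-1}\sigma_{s-n}\sigma_k c_{s-n,k}c_{ks}$. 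The constant term $1$ produces (via $\sigma_s=\sigma_{s-n}$ and $(\sigma\sigma_{s-n})^2=1$) an $a_{s-n}$ that cancels $\d_s(b_s)=a_{s-n}$. The $b_s\!\cdot\!ccc$ terms will vanish because, using $c_{a,b}=c_{a+n,b+n}$, the expression $\sum_{m}\sigma_m\sigma_s c_{s,m+n}c_{ms}$ equals $\d(c_{s-n,s})-1$, whose own differential is zero by $\d^2=0$. The remaining $a\!\cdot\!cc$ contributions then collapse by a straightforward index shift, with the $m=s-n$ boundary contribution of $\d(c_{ms})$ being absorbed by the first part of the third group.

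For the generators $a_q$ with $s-n+1\leq q\leq s-1$, the plan is to expand $\d_s\psi_s(a_q)=\d_s(a_q-\sigma\sigma_s b_s c_{s,q+n})$ and compare with $\psi_s\phi_s\d_{s-1}(a_q)=-\sum_{m=s-n}^{q-1}\sigma\sigma_m\psi_s(a_m)c_{mq}$. Since $q<s$, $\d(c_{s,q+n})=\sum_{k=s+1}^{q+n-1}\sigma_s\sigma_k c_{sk}c_{k,q+n}$ has no constant term, and matching the two sides reduces to the identification $c_{s,q+n}=c_{s-n,q}$ (on the $a_{s-n}$ contributions) plus the reindexing $k=m+n$ (on the $b_s\!\cdot\!cc$ contributions). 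The main obstacle will be the case $q=s$: here $\d_{s-1}(a_s)=b_s$, so the right-hand side is precisely the full expansion of $\psi_s\phi_s(b_s)$ above, while the left-hand side $\d_s(a_s-\sigma\sigma_s b_s c_{s,s+n})$ now uses $\d(c_{s,s+n})=1+\sum_k\sigma_s\sigma_k c_{sk}c_{k,s+n}$ \emph{with} its constant term. Balancing the two sides then requires using all three mechanisms at once: the Kronecker term in $\d(c_{s,s+n})$ reproduces $b_s$ on the left, the identification $c_{s,s+n}=c_{s-n,s}$ equates the two $a_{s-n}\!\cdot\!c$ contributions, and the index shift $k\leftrightarrow m+n$ matches the $b_s\!\cdot\!cc$ sums. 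I expect this $a_s$ case to account for the bulk of the bookkeeping, but once it is worked out the lemma follows.
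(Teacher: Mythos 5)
Your proposal is correct and, for the case decomposition and for the generators $a_q$ ($q\le s$), it is essentially the paper's argument: check the identity on generators, dispose of the trivial cases exactly as you do, and for $a_q$ use the periodicity $c_{ab}=c_{a+n,b+n}$ together with the Kronecker term in $\d(c_{s,s+n})$ and the reindexing $k=m+n$. The one place you genuinely diverge is the generator $b_s$: you expand $\psi_s\phi_s(b_s)$ completely, recognize the correction as $b_s\bigl(\d(c_{s-n,s})-1\bigr)$, and kill the resulting terms by $\d^2=0$ on the internal subalgebra plus an index shift --- a computation I have checked goes through. The paper instead avoids this expansion by writing $\d_s\psi_s\phi_s(b_s)=a_{s-n}-\d_s\psi_s\phi_s\bigl(\sum_{m=s-n}^{s-1}\sigma\sigma_m a_m c_{ms}\bigr)$, substituting the already-established commutation on the generators $a_m$ ($m<s$) and $c_{ms}$ to replace $\d_s\psi_s\phi_s$ by $\psi_s\phi_s\d_{s-1}$ on that sum, and then verifying the single identity $\d_{s-1}\bigl(\sum_{m=s-n}^{s-1}\sigma\sigma_m a_m c_{ms}\bigr)=a_{s-n}$. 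The paper's route buys a shorter computation for $b_s$ at the cost of having to order the cases so that $a_m$ and $c_{ms}$ come first; your route is self-contained for each generator but carries the full cancellation bookkeeping (which, as you anticipate, also reappears verbatim in the $a_s$ case, since $\psi_s\phi_s\d_{s-1}(a_s)=\psi_s\phi_s(b_s)$). Either organization is fine.
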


\begin{proof}
We will show that $\psi_s\phi_s\d_{s-1}(x) = \d_s\psi_s\phi_s(x)$ for
all generators $x$ of $\tilde{\A}_0$. Since
$\psi_s,\phi_s$ do not affect generators of the form $c_{ij}^p$,
$\psi_s\phi_s\d_{s-1}(c_{ij}^p) = \d(c_{ij}^p) =
\d_s\psi_s\phi_s(c_{ij}^p)$. Similarly, it follows from the
definitions of $\d_s,\d_{s-1},\psi_s,\phi_s$ that if $q>s$, we have
$\psi_s\phi_s\d_{s-1}(a_q) = b_q = \d_s\psi_s\phi_s(a_q)$ and
$\psi_s\phi_s\d_{s-1}(b_q) = 0 = \d_s\psi_s\phi_s(a_q)$; and if $q<s$,
we have $\psi_s\phi_s\d_{s-1}(b_q) = a_{q-n} = \d_s\psi_s\phi_s(b_q)$.

It remains to check $x=a_q$ for $q\leq s$ and $x=b_s$. For $x=a_q$
with $q<s$, we have
\begin{align*}
\psi_s\phi_s\d_{s-1}(a_q) &= \psi_s\phi_s\left(-\sum_{m=s-n}^{q-1}
  \sigma\sigma_m a_m c_{mq} \right) \\
&= -\sum_{m=s-n}^{q-1} \sigma\sigma_m (a_m-\sigma\sigma_s b_s
c_{s,m+n}) c_{mq} \\
&= -\left(\sum_{m=s-n+1}^{q-1} \sigma\sigma_m a_m c_{mq}\right)
-\sigma\sigma_s a_{s-n}c_{s,q+n}+b_s \d(c_{s,q+n}) \\
&= \d_s(a_q-\sigma\sigma_s b_s c_{s,q+n}) \\
&= \d_s\psi_s\phi_s(a_q),
\end{align*}
while for $x=a_s$, we have
\begin{align*}
\psi_s\phi_s\d_{s-1}(a_s) &= \psi_s\left(b_s-\sum_{m=s-n}^{s-1}
  \sigma\sigma_m a_m c_{ms}\right) \\
&= b_s - \sum_{m=s-n}^{s-1} \sigma\sigma_ma_mc_{ms} +
\sum_{m=s-n}^{s-1} \sigma_m\sigma_s b_sc_{s,m+n}c_{m+n,s+n} \\
&=
b_s+(\d_s(a_s)-\sigma\sigma_{s-n}a_{s-n}c_{s-n,s})+b_s(\d(c_{s,s+n})-1)
\\
&= \d_s(a_s-\sigma\sigma_s b_s c_{s,s+n}) \\
&= \d_s\psi_s\phi_s(a_s).
\end{align*}
Finally, for $x=b_s$, we have
\begin{align*}
\d_s\psi_s\phi_s(b_s) &= \d_s(b_s)-\d_s\psi_s\phi_s\left(
\sum_{m=s-n}^{s-1} \sigma\sigma_m a_m c_{ms} \right) \\
&= a_{s-n}-\psi_s\phi_s \d_{s-1}\left(\sum_{m=s-n}^{s-1}
  \sigma\sigma_m a_m c_{ms}\right) \\
&= a_{s-n}-\psi_s\phi_s(a_{s-n}) \\
&= 0 \\
&= \psi_s\phi_s\d_{s-1}(b_s),
\end{align*}
where the second equality follows from $\psi_s\phi_s\d_{s-1}(x) =
\d_s\psi_s\phi_s(x)$ for $x=a_m$ and $x=c_{ms}$, and the third
equality is a direct computation:
\begin{align*}
\d_{s-1}&\left(\sum_{m=s-n}^{s-1}
  \sigma\sigma_m a_m c_{ms}\right)\\
&=
\sum_{m=s-n}^{s-1} \left(-\sum_{\ell=s-n}^{m-1} \sigma_m\sigma_\ell
  a_\ell c_{\ell m} c_{ms} +
  a_m\left(\sum_{\ell=m+1}^{s-1}\sigma_m\sigma_\ell c_{m\ell}c_{\ell
      s}+\delta_{m+n,s} \right)\right) \\
&= a_{s-n}-\sum_{s-n\leq\ell<m\leq s-1} \sigma_m\sigma_\ell a_\ell
c_{\ell m} c_{ms} +
\sum_{s-n\leq m<\ell\leq s-1} \sigma_m\sigma_\ell a_mc_{m\ell}c_{\ell
  s}\\
&= a_{s-n}.
\end{align*}
This completes the verification of the lemma for all generators of
$\tilde{\A}_0$.
\end{proof}

For $s\geq n+1$, define $\Phi_s :\thinspace \tilde{\A}_0 \to
\tilde{\A}_0$ by
\[
\Phi_s = \psi_s \circ \phi_s \circ \psi_{s-1} \circ \phi_{s-1} \circ
\cdots \circ \psi_{n+1} \circ \phi_{n+1}.
\]
By Lemma~\ref{lem:gompf6psiphi}, we have
$\Phi_s\circ\d_n\circ\Phi_s^{-1} = \d_s$ on $\tilde{\A}_0$.

Now define $\d_\infty :\thinspace \tilde{\A}_0\to\tilde{\A}_0$ by
\begin{align*}
\d_\infty(c_{ij}^p) &= \d(c_{ij}^p) && \\
\d_\infty(a_q) &= 0 && q\geq 1 \\
\d_\infty(b_q) &= a_{q-n} && q\geq n+1.
\end{align*}
Intuitively, this is the limit of $\d_s$ as $s\to\infty$. Note that on
$\tilde{\A}_0 \subset \tilde{\A}$, $\d_\infty$ agrees with $\d''$. If
we can construct the ``limit'' automorphism $\Phi = \lim_{s\to\infty}
\Phi_s$, then we should have $\Phi\circ\d_n\circ\Phi^{-1} = \d_\infty$
on $\tilde{\A}_0$, and $\Phi$ intertwines $\d=\d_n$ and
$\d''=\d_\infty$ on $\tilde{\A}_0$.

To make this rigorous, consider the following algebra maps
$\Psi_1,\Psi_2:\thinspace \tilde{\A}_0\to\tilde{\A}_0$: $\Psi_1$ is
the identity on all generators except for the $a_q$, and
\[
\Psi_1(a_q) = a_q-\sum_{m=\max(q,n+1)}^{q+n-1} \sigma\sigma_m b_m
c_{m,q+n}
\]
for $q\geq 1$; $\Psi_2$ is the identity on all generators except for
the $b_q$, and
\[
\Psi_2(b_q) = b_q - \sum_{m=q-n}^{q-1} \sigma\sigma_m
(\psi_{n+1}^{-1}\psi_{n+2}^{-1}\cdots\psi_{q-1}^{-1}(a_m))c_{mq}
\]
for $q\geq n+1$. Then $\Psi_1$ is an elementary automorphism with respect to
any ordering of $\tilde{\A}_0$ for which
\[
c_{ij}^0,c_{ij}^1 < a_1 < b_{n+1} < a_2 < b_{n+2} < a_3 < b_{n+3} <
\cdots
\]
where $c_{ij}^0,c_{ij}^1$ are ordered before all $a_q$ and $b_q$
for all $1\leq i,j\leq n$, and $\Psi_2$ is an elementary automorphism
with respect to any ordering for which
\[
c_{ij}^0,c_{ij}^1 < a_1<\cdots < a_n<b_{n+1}<a_{n+1}<b_{n+2}<\cdots.
\]
(To verify this second statement, note that
$\psi_s^{-1}(a_m) = a_m+\sigma\sigma_sb_sc_{s,m+n}$ for $m\leq s$, and
so when $m\leq q-1$,
$\psi_{n+1}^{-1}\cdots\psi_{q-1}^{-1}(a_m)$ can only involve $a_m$,
$b_s$ with $s\leq q-1$, and $c_{s,m+n}$ with $s\geq n+1$.)

We now define the map $\Psi :\thinspace \tilde{\A}_0 \to \tilde{\A}_0$
by $\Psi = \Psi_1\circ\Psi_2$. By the above discussion, $\Psi$ is a
tame automorphism of $\tilde{\A}_0$.

\begin{lemma}
\begin{enumerate}
\item \label{gompf6lim1}
$\d_s(a_q) = \d_\infty(a_q)$ for $s\geq q+n-1$, and
$\d_s(b_q) = \d_\infty(b_q)$ for $s \geq q$.
\item \label{gompf6lim2}
$\Phi_s(a_q) = \Psi(a_q)$ for $s\geq q+n-1$, and
$\Phi_s(b_q) = \Psi(b_q)$ for $s\geq q+n-2$.
\end{enumerate}
\label{lem:gompf6limit}
\end{lemma}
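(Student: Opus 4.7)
The plan is to prove Part~(1) by direct substitution and Part~(2) by induction on $s$, combining a support argument for the inductive step with a step-by-step computation for the base case. For Part~(1), when $s\ge q+n-1$ we have $q\le s-n+1$, which places $\d_s(a_q)$ in the branch that returns $0=\d_\infty(a_q)$; when $q\ge n+1$ and $s\ge q$, we land in the middle branch of the definition of $\d_s(b_q)$, giving $a_{q-n}=\d_\infty(b_q)$.

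For Part~(2), I would induct on $s$. Since $\Phi_{s+1}=\psi_{s+1}\phi_{s+1}\Phi_s$, the inductive step reduces to checking that $\psi_{s+1}\phi_{s+1}$ fixes $\Psi(a_q)$ once $s\ge q+n-1$ and fixes $\Psi(b_q)$ once $s\ge q+n-2$. This is a support argument: $\phi_{s+1}$ acts as the identity on every generator except $b_{s+1}$, and $\psi_{s+1}$ acts as the identity on every generator except $a_r$ for $s-n+2\le r\le s+1$. From the closed form $\Psi(a_q)=a_q-\sum_{m=\max(q,n+1)}^{q+n-1}\sigma\sigma_m b_m c_{m,q+n}$, the only $a$-generator present is $a_q$ with $q\le s-n+1$, and every $b$-index $m$ satisfies $m\le q+n-1\le s$. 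Expanding $\Psi(b_q)=\Psi_1(\Psi_2(b_q))$ using the identity $\psi_t^{-1}(a_m)=a_m+\sigma\sigma_t b_t c_{t,m+n}$ (valid for $m\le t\le m+n-1$) shows that its $a$-indices are bounded by $q-1$ and its $b$-indices lie in $\{q\}\cup[\max(q-n,n+1),q+n-2]$; for $s\ge q+n-2$ (and $n\ge 2$) these all lie outside the active range of $\psi_{s+1}\phi_{s+1}$.

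For the base cases, I would compute $\Phi_s$ directly by unwinding the composition. For $a_q$ this is straightforward: a short induction on $s\ge n+1$ gives
\[
\Phi_s(a_q)=a_q-\sum_{m=\max(n+1,q)}^{\min(s,q+n-1)}\sigma\sigma_m b_m c_{m,q+n},
\]
using that $\phi_m$ fixes $\Phi_{m-1}(a_q)$ (no $b_m$ has yet appeared in the expression) and that $\psi_m$ alters $a_q$ exactly when $m-n+1\le q\le m$; evaluating at $s=q+n-1$ recovers $\Psi(a_q)$. The base case $\Phi_{q+n-2}(b_q)=\Psi(b_q)$ is the main obstacle: $\phi_q$ genuinely modifies $b_q$, producing $a_m$-generators with $q-n\le m\le q-1$ that are then successively altered by $\psi_q$ (reintroducing $b_q$-terms with coefficient $\sigma_m\sigma_q c_{q,m+n}c_{mq}$) and by $\psi_s\phi_s$ for $q<s\le q+n-2$ (each contributing $b_s$-terms for $m$ in the narrowing range $[s-n+1,q-1]$). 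Collecting these contributions matches the double sum obtained by expanding $\Psi_1(\Psi_2(b_q))$: the $b_\ell$-terms with $\ell<q$ arising from $\Psi_2$ via the factor $\psi_{n+1}^{-1}\cdots\psi_{q-1}^{-1}$ cancel the $\ell<q$ portion of the $\Psi_1$-contributions, leaving only the $\ell\ge q$ terms, which correspond precisely to the $b_s$-terms accumulated in $\Phi_{q+n-2}(b_q)$. The main technical work lies in carefully tracking indices and signs through this bookkeeping.
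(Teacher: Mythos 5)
Your proposal is correct. Part~(1) and the closed form $\Phi_s(a_q)=a_q-\sum_{m=\max(q,n+1)}^{\min(s,q+n-1)}\sigma\sigma_m b_m c_{m,q+n}$ are obtained essentially as in the paper (the paper records the same formula by induction on $s$, split into three cases rather than written with a $\min$). Where you diverge is the $b_q$ case, which is indeed the crux. The paper avoids your double-sum bookkeeping by a conjugation trick: it rewrites $\Phi_s=\psi_s\circ\cdots\circ\psi_{n+1}\circ\tilde{\phi}_s\circ\cdots\circ\tilde{\phi}_{n+1}$ with $\tilde{\phi}_r=(\psi_{r-1}\cdots\psi_{n+1})^{-1}\circ\phi_r\circ(\psi_{r-1}\cdots\psi_{n+1})$, observes that $\tilde{\phi}_q(b_q)=\Psi_2(b_q)$ essentially by definition of $\Psi_2$, and then only needs $(\psi_s\cdots\psi_{n+1})(a_\ell)=\Psi_1(a_\ell)$ for $\ell<q$, which is exactly the already-established $a$-formula; so $\Phi_s(b_q)=\Psi_1(\Psi_2(b_q))=\Psi(b_q)$ with no further computation. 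Your route instead expands $\Phi_{q+n-2}(b_q)$ and $\Psi_1(\Psi_2(b_q))$ explicitly; the cancellation you describe is the right one — the $\Psi_1$-corrections to $a_m$ run over $b_\ell$ with $\max(m,n+1)\le\ell\le m+n-1$, the $\psi_{n+1}^{-1}\cdots\psi_{q-1}^{-1}$-corrections run over $\max(m,n+1)\le\ell\le q-1$ with opposite sign, and the survivors $q\le\ell\le m+n-1$ match the $b_s$-terms accumulated by $\psi_q,\dots,\psi_{q+n-2}$ acting on $\phi_q(b_q)$ — so the argument closes, at the cost of the index and sign bookkeeping you flag. Your extra stabilization step (the support argument that $\psi_{s+1}\phi_{s+1}$ fixes $\Psi(a_q)$ and $\Psi(b_q)$ once $s$ is in range) is not needed in the paper, whose closed-form expressions already cover all admissible $s$, but it is correct and harmless.
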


\begin{proof}
(\ref{gompf6lim1}) is clear from the definition of $\d_s$ and
$\d_\infty$.

To establish (\ref{gompf6lim2}), rewrite $\Phi_s$ for $s\geq n+1$ as follows:
\begin{equation}
\Phi_s = \psi_s \circ \psi_{s-1} \circ \cdots \circ \psi_{n+1} \circ
\tilde{\phi}_s \circ \tilde{\phi}_{s-1} \circ \cdots \circ
\tilde{\phi}_{n+1},
\label{eq:gompf6lim}
\end{equation}
where $\tilde{\phi}_q = (\psi_{q-1}\cdots \psi_{n+1})^{-1}
\circ\phi_q\circ(\psi_{q-1}\cdots \psi_{n+1})$ for $q\geq n+1$. Since
$\phi_q$ acts as the identity on all generators but $b_q$, and
the definitions of $\psi_{q-1},\ldots,\psi_{n+1}$ do not involve
$b_q$, we conclude that $\tilde{\phi}_q$ acts as the identity on
all generators of $\tilde{\A}_0$ except $b_q$, and
\begin{align*}
\tilde{\phi}_q(b_q) &=
\psi_{n+1}^{-1}\cdots\psi_{q-1}^{-1}(\phi_q(b_q)) =
b_q - \sum_{m=q-n}^{q-1}
\sigma\sigma_m(\psi_{n+1}^{-1}\cdots\psi_{q-1}^{-1})(a_m) c_{mq}\\
&= \Psi_2(b_q).
\end{align*}
Note that by the discussion preceding the lemma,
$\Psi_2(b_q)$ involves only $a_\ell$ with $\ell<q$, along with $b$'s
and $c$'s.

From \eqref{eq:gompf6lim}, we have $\Phi_s(a_q) =
(\psi_s\cdots\psi_{n+1})(a_q)$. Using the definitions of $\Phi_s$, $\phi_s$,
and $\psi_s$, one can easily check by
induction on $s\geq n+1$ that
\begin{align*}
\Phi_s(a_q) &= (\psi_s\cdots\psi_{n+1})(a_q)\\
& = \begin{cases} a_q, & s<q \\
a_q - \sum_{m=\max(q,n+1)}^s \sigma\sigma_m b_m c_{m,q+n}, & q\leq
s\leq q+n-2, \\
a_q - \sum_{m=\max(q,n+1)}^{q+n-1} \sigma\sigma_m b_m c_{m,q+n}, & s
\geq q+n-1,
\end{cases}
\end{align*}
and it follows that $\Phi_s(a_q) = \Psi_1(a_q) = \Psi(a_q)$ when $s\geq q+n-1$.

Finally, let $s\geq q+n-2$. We have just shown that
$(\psi_s\cdots\psi_{n+1})(x) = \Psi_1(x)$ if $x=a_\ell$ with $\ell<q$,
and the same equation holds trivially if $x$ is a $b$ or $c$
generator. It follows that
\begin{align*}
\Phi_s(b_q) &= (\psi_s\cdots\psi_{n+1})(\tilde{\phi}_q(b_q)) =
(\psi_s\cdots\psi_{n+1})(\Psi_2(b_q)) =
\Psi_1(\Psi_2(b_q))\\
&=\Psi(b_q),
\end{align*}
and this completes the proof of (\ref{gompf6lim2}).
\end{proof}

We are now in position to prove Lemma~\ref{lem:gompf6more}.

\begin{proof}[Proof of Lemma~\ref{lem:gompf6more}]
The tame automorphism $\Psi$ of $\tilde{\A}_0$ is the one defined in
the above discussion. Properties (\ref{gompf6more1}) and
(\ref{gompf6more2}) in the statement of Lemma~\ref{lem:gompf6more} are
direct consequences of the construction of $\Psi$. It remains to establish
property (\ref{gompf6more3}), which we can do by checking that
$\Psi\d(x) = \d''\Psi(x)$ for all generators $x$ of $\tilde{\A}_0$.

If $x=c_{ij}^p$, then
$\Psi\d(c_{ij}^p) = \d(c_{ij}^p) = \d''(c_{ij}^p) =
\d''\Psi(c_{ij}^p)$. If $x=a_q$ with $q\geq 1$, then choose any $s
\geq q+n-1$. By Lemma~\ref{lem:gompf6limit}(\ref{gompf6lim2}), we have
$\Phi_s(a_q) = \Psi(a_q)$;
since $\d_n(a_q)$ involves only $a_1,\ldots,a_{q-1}$ and $c$, we
also have
$\Psi\d_n(a_q) = \Phi_s\d_n(a_q)$. Furthermore, $\Psi(a_q) =
\Psi_1(a_q)$ involves only $a_\ell$ with $\ell\leq q$, $b_\ell$
with $\ell\leq n+q-1$, and $c$, and so by
Lemma~\ref{lem:gompf6limit}(\ref{gompf6lim1}) we have
$\d_\infty(\Psi(a_q))=\d_s(\Psi(a_q))$. Thus
\begin{align*}
\Psi\d(a_q) &= \Psi\d_n(a_q) = \Phi_s\d_n(a_q) = \d_s\Phi_s(a_q)
= \d_s\Psi(a_q)\\
& = \d_\infty\Psi(a_q) = \d''\Psi(a_q).
\end{align*}

Finally, consider the case $x=b_q$ with $q\geq n+1$, and choose any
$s\geq q+n-2$. From the discussion preceding
Lemma~\ref{lem:gompf6limit}, $\Psi_2(b_q)$ involves only $c$,
$a_\ell$ with $\ell\leq q-1$, and $b_\ell$ with $\ell\leq q$; then
$\Psi(b_q) = \Psi_1(\Psi_2(b_q))$ involves only $c$,
$a_\ell$ with $\ell\leq q-1$, and $b_\ell$ with $\ell\leq
n+q-2$. By Lemma~\ref{lem:gompf6limit}(\ref{gompf6lim1}), it follows
that $\d_s\Psi(b_q) = \d_\infty\Psi(b_q)$. Also, by
Lemma~\ref{lem:gompf6limit}(\ref{gompf6lim2}), $\Phi_s(b_q) =
\Psi(b_q)$. Thus
\[
\Psi\d(b_q) = 0 = \Phi_s\d_n(b_q) = \d_s\Phi_s(b_q)
= \d_s\Psi(b_q) = \d_\infty\Psi(b_q) = \d''\Psi(b_q)
\]
and the proof is complete.
\end{proof}

\subsubsection{Proof of Theorem~\ref{thm:invariance}}
\label{sssec:combpf}
Consider two front diagrams that represent
Legendrian-isotopic links in normal form in $\#^k(S^1\times S^2)$. By
\cite{bib:Gompf}, these diagrams are related by a sequence of the
basic moves mentioned above. (Since our definition of the DGA requires a choice of base point on each component of the link, we should also include moves that fix the fronts but move base points along the components. However, such moves simply change the differential by conjugation by a particular tame automorphism of the algebra, replacing each generator by itself times some product of powers of $t_i$. See \cite[section~2.5]{bib:NgCLI}.)

By Proposition~\ref{prop:isotopyinv}, two Legendrian links in
normal form that are related by Legendrian isotopy inside the
box $[0,A] \times [-M,M]$ have DGAs that are stable tame
isomorphic. Propositions~\ref{prop:gompf4inv}, \ref{prop:gompf5inv},
and
\ref{prop:gompf6inv} show invariance under Gompf moves 4, 5, and 6.
Invariance for the alternate ($180^\circ$ rotated) versions of Gompf
moves 4 and 6 follows from invariance under the usual versions, along
with Lemma~\ref{lem:rotation} and the fact that two semifree DGAs
$(\A,\d),(\A',\d')$ are
stable tame isomorphic if and only if $(\A,\d_{\op}), (\A',\d'_{\op})$
are stable tame isomorphic. \qed


\bibliographystyle{plain}
\bibliography{biblio}

\end{document}